\renewcommand{\Re}{\mathop{\rm Re}\nolimits} \renewcommand{\Im}{\mathop{\rm
Im}\nolimits} \def\S{\mathhexbox278}
\theoremstyle{plain} \newtheorem{theorem}{Theorem}[section]
\newtheorem{lemma}[theorem]{Lemma}
\newtheorem{proposition}[theorem]{Proposition}
 \theoremstyle{definition}
\newtheorem{definition}[theorem]{Definition} \theoremstyle{remark}
\newtheorem{remark}[theorem]{Remark} 
\newcommand{\R}{{\mathbb R}}
\newcommand{\Z}{{\mathbb Z}}
\newcommand{\N}{{\mathbb N}}
\newcommand{\resto}{{\mathcal R}} \def\im{{\rm i}}
\newcommand{\C}{\mathbb{C}}
\def\({\left(}
\def\){\right)}
\def\<{\left\langle}
\def\>{\right\rangle}
\numberwithin{equation}{section}
\begin{document}

  \title{On weak interaction between  a ground state and a  trapping  potential}

 \author {Scipio Cuccagna , Masaya Maeda}

 \maketitle
\begin{abstract}  We continue our study   initiated in \cite{CM} of the interaction of a ground state with
a potential considering here a class of trapping potentials. We track the precise asymptotic behavior  of the solution if the interaction is weak, either because
the ground state moves away from the potential or is very fast.

\end{abstract}

\section{Introduction}

We consider    as in \cite{CM} the   nonlinear Schr\"odinger equation with a potential

\begin{equation}\label{NLSP}
 \im \textbf{u}_{t }=-\Delta  \textbf{u} +  V(x)\textbf{u}+\beta  (|\textbf{u}|^2)\textbf{u}  \ , \quad
 (t,x)\in\mathbb{ R}\times
 \mathbb{ R}^3.
\end{equation}
For the linear potential $V$ and the nonlinearity $\beta$, we assume the following.
\begin{itemize} \item[(H1)] Here    $V\in {\mathcal S}(\R
^3, \R)$ a fixed  Schwartz function.  We assume   that the set of eigenvalues $\sigma _p(  -\Delta +  V)
$ is formed by exactly one element:  $\sigma _p(  -\Delta +  V)
=\{  e_0 \}$ with $e_0<0$.
Further, we assume 0 is not a resonance (that is, if $(-\Delta +  V)u=0$  with $u\in  C^\infty$ and     $|u(x)|\le C|x|^{-1} $ for a fixed $C$, then $u=0$).
\end{itemize}

\begin{itemize}

 \item[(H2)]
$\beta  (0)=0$, $\beta\in C^\infty(\R,\R)$.

\item[(H3)] There exists a
$p\in(1,5)$ such that for every $k\ge 0$ there is a fixed $C_k$ with $$\left|
\frac{d^k}{dv^k}\beta(v^2)\right|\le C_k |v|^{p-k-1} \quad\text{if $|v|\ge
1$}.$$
\end{itemize}
It is well known that under the above assumptions, \eqref{NLSP} is locally wellposed.

Let $\phi _0\in \ker ( -\Delta +  V- e_0)$ be  everywhere  positive  with $\|  \phi _0\| _{L^2} =1$. We recall that
\eqref{NLSP} admits small ground states, that is the solutions of the form $e^{\im E t}Q(x)$ with $E \in \R$ and $Q(x)>0$.  Indeed  if for   $  \delta>0$ we set $B_\C (\delta )=  \{    w\in \C :|w|<\delta \}$,
then  we have the following well known result, see \cite{CM1}.

\begin{proposition} \label{prop:bddst}
There exist  a constant
$  \mathbf{a}(\text{P\ref{prop:bddst}})>0$ and   $ Q_{ w}\in
C^\infty (   B_\C (\mathbf{a}(\text{P\ref{prop:bddst}}) ), H^{2})$ s.t.
\begin{equation}\label{eq:sp}
\begin{aligned}
&(-\Delta +V) Q_{w} + \beta(|Q_{ w}|^2)Q_{ w} = E_{ w}Q_{ w},\\&
Q_{w}=w\phi_0+ q_{ w},\ \langle q_{w},\phi_0\rangle =0.
\end{aligned}
\end{equation}
We have  $E_ w\in
C^\infty (   B_\C (\mathbf{a}(\text{P\ref{prop:bddst}}) ), \R)$  with  $|E_{ w}-e_0| \le C|w|^2$,
and, for any   $ k $, we have
$ Q_{ w}\in
C^\infty (   B_\C (\mathbf{a}(\text{P\ref{prop:bddst}})),\Sigma _k)$
and $\|q_ w\|_{\Sigma _k} \leq C_{k }|w|^3 $  (for $\Sigma _k$ see \eqref{eq:sigma} below).
Furthermore,  we have  the identity
\begin{equation}\label{eq:sp11}
\begin{aligned}
&  \im Q_{ w} =- w_2 \partial _{w_1} {Q} +w_1 \partial _{w_2} {Q} \text{  where $w_1=\Re w $ and $w_2=\Im w $}.
\end{aligned}
\end{equation}
\end{proposition}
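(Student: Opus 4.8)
The plan is to produce the ground states by a Lyapunov–Schmidt / implicit function argument on the orthogonal complement of $\phi_0$, using $w$ as the bifurcation parameter. Write the ansatz $Q_w = w\phi_0 + q$ with $\langle q,\phi_0\rangle = 0$ and substitute into \eqref{eq:sp}. Projecting onto $\phi_0$ and onto $\phi_0^\perp$ yields a system: the $\phi_0$-component fixes $E_w$ via
\[
(E_w - e_0)w = \langle \beta(|Q_w|^2)Q_w,\phi_0\rangle,
\]
while on $\phi_0^\perp$ we must solve $(-\Delta + V - e_0)q = E_w q + (E_w-e_0)w\phi_0 - P_\perp\beta(|Q_w|^2)Q_w$, where $P_\perp$ is the projection off $\phi_0$. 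On $\operatorname{Ran}P_\perp$ the operator $-\Delta + V - e_0$ is invertible (here (H1) — exactly one eigenvalue, $0$ not a resonance — is what guarantees the essential spectrum stays at distance $|e_0|$ and that the inversion is bounded on the relevant spaces). So I would set up a fixed-point map $q \mapsto (-\Delta+V-e_0)^{-1}P_\perp\big(E_w q + (E_w-e_0)w\phi_0 - \beta(|Q_w|^2)Q_w\big)$ on a small ball in $\Sigma_k$, simultaneously treating $E_w$ as determined by the scalar equation above.

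To run the contraction I need the nonlinear term to be well-behaved. Since $\beta(0)=0$ and $\beta\in C^\infty$, near $v=0$ we have $\beta(v^2) = O(v^2)$, so $\beta(|Q_w|^2)Q_w = O(|w|^3)$ in the relevant norms; (H3) controls large-amplitude growth but here everything is small, so only the smallness and smoothness of $\beta$ near $0$ really matter. The weighted Sobolev spaces $\Sigma_k$ (to be defined in \eqref{eq:sigma}) are preserved because $\phi_0$ and $q_w$ decay exponentially — $\phi_0$ is the eigenfunction of a Schwartz-perturbed Laplacian with negative eigenvalue, hence Gaussian-type decay, and $(-\Delta+V-e_0)^{-1}P_\perp$ maps exponentially-decaying, rapidly-decaying data to the same class. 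This gives $Q_w\in\Sigma_k$ and the bound $\|q_w\|_{\Sigma_k}\le C_k|w|^3$; the estimate $|E_w - e_0|\le C|w|^2$ follows by inserting $\|Q_w - w\phi_0\|=O(|w|^3)$ into the scalar equation. Smoothness of $w\mapsto(Q_w,E_w)$ as maps into $H^2$ (and into each $\Sigma_k$) comes from the smooth dependence of the fixed-point data on the parameter $w$ — here one must be slightly careful since $\beta(|Q_w|^2)$ depends on $w$ through the modulus, which is only real-analytic, not holomorphic, in $w$; this is why the statement asserts $C^\infty$ (real-smooth) dependence on $w\in B_\C(\mathbf{a})\cong B_{\R^2}(\mathbf{a})$ rather than holomorphy.

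Finally, the identity \eqref{eq:sp11} is the infinitesimal version of the $U(1)$ gauge symmetry of \eqref{NLSP}: if $Q_w$ solves \eqref{eq:sp} with frequency $E_w$, then $e^{\im\theta}Q_w$ also solves it with the same $E_w$ (since $\beta$ depends only on $|Q_w|^2$), and by uniqueness in the construction $Q_{e^{\im\theta}w} = e^{\im\theta}Q_w$. Differentiating this relation in $\theta$ at $\theta=0$ — the left side gives $(\partial_{w_1}Q)(-w_2) + (\partial_{w_2}Q)(w_1)$ by the chain rule applied to $\theta\mapsto e^{\im\theta}w = (w_1\cos\theta - w_2\sin\theta, w_1\sin\theta + w_2\cos\theta)$, the right side gives $\im Q_w$ — yields \eqref{eq:sp11}. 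The main obstacle is bookkeeping: verifying that the fixed-point map is a contraction on $\Sigma_k$ for every $k$ uniformly in small $w$, and tracking the $|w|^3$ gain through the projection and the inverse operator; once the functional-analytic framework ($P_\perp$-invertibility from (H1), exponential decay of $\phi_0$) is in place, the rest is standard implicit-function-theorem machinery.
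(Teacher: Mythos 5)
Your proposal is correct and takes essentially the same approach as the paper: the paper gives no proof of Proposition \ref{prop:bddst} beyond citing \cite{CM1}, where the small bound states are constructed by precisely this Lyapunov--Schmidt/implicit-function bifurcation from the simple isolated eigenvalue $e_0$ (with the cubic gain on $q_w$ coming from $\beta(0)=0$), and your derivation of \eqref{eq:sp11} by differentiating the gauge covariance $Q_{e^{\im\theta}w}=e^{\im\theta}Q_w$ coincides with the paper's one-line remark that \eqref{eq:sp11} follows from $Q_w=e^{\im\theta}Q_r$. One cosmetic point: the invertibility of $-\Delta+V-e_0$ on $\phi_0^{\perp}$ uses only that $e_0<0$ is a simple isolated eigenvalue below the essential spectrum $[0,\infty)$, not the non-resonance of $0$ assumed in (H1), which instead enters the dispersive estimates elsewhere in the paper.
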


  \eqref{eq:sp11} is an immediate   consequence of $Q_w=e^{\im\theta}Q_{r}$, where $w_1=r\cos \theta$ and $w_2=r\sin \theta$.

\noindent
We set the continuous modes space as follows:
\begin{equation}\label{eq:contsp}
\begin{aligned}
\mathcal{H}_c[w]:=\left\{\eta\in L^2 ;\ \<\im \eta,  \partial  _{  w _1}Q_{w}\>=\<\im \eta,  \partial  _{  w _2}Q_{w}\>=0\right\}.
\end{aligned}
\end{equation}
A pair $(p,q)$   is
{\it admissible} when \begin{equation}\label{admissiblepair}
2/p+3/q= 3/2\,
 , \quad 6\ge q\ge 2\, , \quad
p\ge 2. \end{equation}
We recall
the following   result  by \cite{GNT} on    dynamics of small energy  solutions of
\eqref{NLSP} (for an analogous result with weaker hypotheses on the spectrum
see \cite{CM1}).

\begin{theorem}\label{thm:small en}   There exist  $\delta >0$ and $C>0$ such that  for $\| u (0)\| _{H^1}<\delta $ then the  solution  $u(t)$ of  \eqref{NLSP} can be written uniquely  for all times as
 \begin{equation}\label{eq:small en1}
\begin{aligned}&    u(t)=Q_{w(t)}+\eta (t) \text{ with $\eta (t) \in
\mathcal{H}_c[w(t)]$}
\end{aligned}
\end{equation}
 with for    all admissible pairs  $(p,q)$
 \begin{equation}\label{eq:small en2}
\begin{aligned}&     \|w \| _{L^\infty _t( \mathbb{R}_+ )}+ \|
\eta   \| _{L^p_t( \mathbb{R}_+,W^{1,q}_x)} \le C \| u (0)\| _{H^1}   \ , \\& \| \dot w +\im  E_{ w}w \|  _{L^\infty _t( \mathbb{R}_+ )\cap L^1 _t( \mathbb{R}_+ )} \le C  \| u (0)\| _{H^1}^2\  .
\end{aligned}
\end{equation}
 Moreover, there exist   $w _+\in \C$  with $| w_+ - w(0) | \le C  \| u (0)\| _{H^1}^2 $ and $\eta _+\in H^1$
with $\|  \eta _+\| _{H^1}\le C  \| u (0)\| _{H^1}$, such that
\begin{equation}\label{eq:small en3}
\begin{aligned}&     \lim_{t\to +\infty}\| \eta (t,x)-
e^{\im t\Delta }\eta  _+ (x)   \|_{H^1_x}=0 , \\&
 \lim_{t\to +\infty} w(t) e^{\im \int _0^t E_{w(s)}  ds}=w_+ .
\end{aligned}
\end{equation}

\end{theorem}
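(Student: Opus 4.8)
The plan is to adapt the now-standard modulation-plus-dispersive-estimate scheme to the small-ground-state setting. First I would set up the decomposition \eqref{eq:small en1}: because $Q_w$ depends smoothly on $w$ and the orthogonality conditions defining $\mathcal H_c[w]$ are nondegenerate near $w=0$ (the Fr\'echet derivative of the constraint map is invertible by \eqref{eq:sp11} and the fact that $\partial_{w_1}Q_w,\partial_{w_2}Q_w$ are, to leading order, $\phi_0$ and $\im\phi_0$), the implicit function theorem gives a unique $C^1$ choice of $w(t)$ and $\eta(t)$ as long as $\|u(t)\|_{H^1}$ stays small. Substituting into \eqref{NLSP} and projecting onto the two-dimensional tangent space $\mathrm{span}\{\partial_{w_1}Q_w,\partial_{w_2}Q_w\}$ yields the modulation equation for $w$; projecting onto $\mathcal H_c[w]$ yields the equation for $\eta$, schematically $\im\eta_t = H\eta + (\text{nonlinear and modulation remainders})$, where $H=-\Delta+V$ restricted to (a perturbation of) the continuous subspace. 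The key algebraic point, exactly as in \cite{GNT}, is that since $\sigma_p(-\Delta+V)=\{e_0\}$ and there is no resonance at $0$, the linearized operator around the small ground state has no nonzero discrete spectrum other than the ``gauge'' mode that has been removed by the orthogonality conditions, so there is no internal oscillation to worry about and the Fermi-golden-rule analysis of \cite{CM} is not needed here.

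Next I would derive the a priori estimates \eqref{eq:small en2}. The modulation equation should read $\dot w + \im E_w w = \mathcal O(|w|\,\|\eta\|^2 + \|\eta\|^3)$ or better, using that the nonlinearity $\beta$ vanishes quadratically and that the orthogonality kills the linear-in-$\eta$ contribution to the projected equation; integrating this and using $\|\eta\|_{L^p_t W^{1,q}_x}$ control gives the $L^1_t\cap L^\infty_t$ bound on $\dot w+\im E_w w$ with the quadratic constant. For $\eta$ I would use the Strichartz estimates for $e^{\im t\Delta}$ together with the fact that $V$ is Schwartz and $0$ is regular, so that the wave operators are bounded on $W^{1,q}$ and one may transfer Strichartz estimates from $e^{-\im tH}P_c$ (or simply treat $V\eta$ as a perturbation absorbed by a local smoothing / weighted estimate since $V\in\mathcal S$). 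The nonlinear term $\beta(|u|^2)u - (\text{linearization})$ is, by (H2)--(H3), at least quadratic in $(Q_w,\eta)$ and $p$-subcritical, so a contraction/continuity argument on the Strichartz norm closes provided $\|u(0)\|_{H^1}=\delta$ is small; here one also needs that the modulation terms, which are multiples of the Schwartz functions $\partial_{w_j}Q_w$ with $L^1_t$ coefficients, are harmless source terms.

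For the asymptotics \eqref{eq:small en3}: once $\|\eta\|_{L^p_tW^{1,q}_x}<\infty$ is known, the Duhamel formula $\eta(t)=e^{\im t\Delta}\eta(0) - \im\int_0^t e^{\im(t-s)\Delta}F(s)\,ds$ (after conjugating away $V$ via wave operators, or keeping $e^{-\im tH}P_c$ and noting $e^{-\im tH}P_c$ is asymptotically free) shows that $e^{-\im t\Delta}\eta(t)$ is Cauchy in $H^1$ as $t\to\infty$, defining $\eta_+$; the bound $\|\eta_+\|_{H^1}\le C\|u(0)\|_{H^1}$ is inherited from the uniform Strichartz bound. Convergence of $w$: from $\dot w = -\im E_w w + g(t)$ with $g\in L^1_t$, the function $w(t)e^{\im\int_0^t E_{w(s)}ds}$ has derivative $e^{\im\int_0^t E_{w(s)}ds}g(t)\in L^1_t$, hence converges to some $w_+$, and $|w_+-w(0)|\le\|g\|_{L^1_t}\le C\|u(0)\|_{H^1}^2$. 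I expect the main obstacle to be the bookkeeping that makes the modulation remainder genuinely quadratic (so that the second line of \eqref{eq:small en2} has the stated $\|u(0)\|_{H^1}^2$ size): this requires carefully expanding $Q_w$ to the order where $q_w=\mathcal O(|w|^3)$ enters, checking that the $\langle\cdot,\partial_{w_j}Q_w\rangle$ projections of the linear term $H\eta$ vanish (they do, up to $\mathcal O(|w|^2)$ corrections coming from the $w$-dependence of $\mathcal H_c[w]$ and from $q_w$, which are themselves paired against $\eta$ and hence acceptable), and controlling the time-dependence of the projections $P[w(t)]$ — i.e. the $\dot w\,\partial_w(\text{projection})$ terms — which is exactly where the near-orthogonality between $\dot\eta$-type quantities and the moving subspace must be exploited, as in \cite{GNT}.
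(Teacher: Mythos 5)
This theorem is not proved in the paper at all: it is recalled verbatim from \cite{GNT} (with the weaker-hypothesis variant in \cite{CM1}), so there is no internal proof to compare against, and your outline is essentially the argument of \cite{GNT} itself — modulation via the implicit function theorem with the symplectic orthogonality defining $\mathcal H_c[w]$, cancellation of the linear-in-$\eta$ term in the projected equation (the same mechanism as in Sect.~\ref{sec:varw}, where $\langle\eta,\widetilde Q\rangle=0$ kills $\langle J\widetilde{\mathfrak h}\eta,J\partial_{w_i}\widetilde Q\rangle$), Strichartz/endpoint and local-decay estimates for $e^{-\im tH}P_c$ with $H=-\Delta+V$ under (H1), a Duhamel argument giving $\eta_+$, and integration of $\dot w+\im E_w w\in L^1_t$ giving $w_+$. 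The only point to keep in mind is that closing the quadratic $L^1_t\cap L^\infty_t$ bound on $\dot w+\im E_w w$ genuinely requires the localized (endpoint Strichartz or smoothing) norms of $\eta$, not just the $L^p_tW^{1,q}_x$ bounds, which is exactly how \cite{GNT} proceed and which your sketch correctly anticipates.
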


\noindent We are interested to a different class of solutions of \eqref{NLSP}.
We think of $  V(x)u$  as a perturbation  of
\begin{equation}\label{NLS}
 \im u_{t }=-\Delta u   +\beta  (|u|^2) u  .
\end{equation}
We assume that   \eqref{NLSP} has a family of orbitally stable ground states $e^{\im \omega t}\phi_\omega(x)$.
By orbital stability, we mean that for any small $\epsilon>0$, there exists $\delta>0$ such that if $\|\phi-u_0\|_{H^1}<\delta$,
then the solution $u$ of \eqref{NLS} with $u(0)=u_0$ exists globally in time and satisfies
\begin{equation*}
\sup_{t>0}\inf_{s\in\R, y\in\R^3}\|e^{\im s }\phi(\cdot-y)-u(t)\|_{H^1}<\epsilon.
\end{equation*}
  Specifically we assume  what follows,  which implies   by \cite{W2}, the existence of orbital stability of the ground states of
\eqref{NLS}.

\begin{itemize} \item[(H4)] There exists an open interval $\mathcal{O}\subset \R_+$ such
that
\begin{equation}
  \label{eq:B}
  -\Delta u + \omega u+\beta(|u|^2)u=0\quad\text{for $x\in \R^3$},
\end{equation}
admits a positive radial solutions $\phi _ {\omega }$
for all $\omega\in\mathcal{O}$.
Furthermore the map $\omega\mapsto \phi_\omega$ is in $C^\infty(\mathcal{O},\Sigma_n)$ for any $n\in \N$.
\end{itemize}

\begin{remark}
It suffices to assume that the map $\omega\mapsto \phi_\omega$ is in $C^1(\mathcal{O},H^2)$. Indeed this implies  that
  $\omega\mapsto \phi_\omega$ is  in $C^\infty(\mathcal{O},\Sigma_n)$ for any $n\in \N$.
See   Appendix \ref{sec:reg}.
\end{remark}

\begin{itemize}
\item [(H5)] We have  $
\frac d {d\omega } \| \phi _ {\omega }\|^2_{L^2(\R^3)}>0 $  for
$\omega\in\mathcal{O}$.

\item [(H6)] Let $L_+=-\Delta   +\omega +\beta (\phi _\omega ^2 )+2\beta
    '(\phi _\omega ^2) \phi_\omega^2$ be the operator whose domain is $H^2
 (\R^3)$. Then we assume that $L_+$ has exactly one negative eigenvalue
 and
the kernel is spanned by $\partial_{x_j}\phi_{\omega}$ (j=1,2,3).

\end{itemize}

We add to the previous  hypotheses few more about the linearized operator  $\mathcal{H}_\omega$
defined in \eqref{eq:linearization1}. \begin{itemize}

\item [(H7)]  $\exists$ $\textbf{n}$  and $0<\textbf{e} _1(\omega )\le \textbf{e} _2(\omega )
\le ...\le \textbf{e} _{\textbf{n}}(\omega )$, s.t.
$\sigma_p(\mathcal{H}_\omega)$ consists    of  $\pm   \textbf{e} _j(\omega )$ and  $0$ for $j=1,\cdots, \textbf{n}$.
We   assume $0<N_j\textbf{e} _j(\omega )<
\omega < (N_j+1)\textbf{e} _j(\omega )$ with $N_j \in \N$. We set $N=N_1$. Here
 each  eigenvalue  is repeated a number of times equal to its  multiplicity.   Multiplicities  and $\textbf{n}$ are  constant in $\omega$.

 \item [(H8)]  There is no multi
index $\mu \in \mathbb{Z}^{\textbf{n}}$ with $|\mu|:=|\mu_1|+...+|\mu_k|\leq 2N_1+3$
such that $\mu \cdot \textbf{e}(\omega) =\omega $, where
 $\textbf{e}(\omega)=(\textbf{e}_1(\omega),\cdots,\textbf{e}_{\textbf{n}}(\omega))$.

\item[(H9)]For $\textbf{e} _{j_1}(\omega)<...<\textbf{e} _{j_k}(\omega)$   and $\mu\in \Z^k$ s.t.
  $|\mu| \leq 2N_1+3$, then we have
\[ \mu _1\textbf{e} _{j_1}(\omega)+\dots +\mu _k\textbf{e} _{j_k}(\omega)=0 \iff \mu=0\ . \]
\item[(H10)]   $\mathcal{H}_\omega$ has no other eigenvalues except for $0$ and
the $ \pm \textbf{e} _j (\omega )$. The points $\pm \omega$ are not resonances.
  {For the definition of resonance, see Sect.3 \cite{Cu2}.}

\item [(H11)] The Fermi golden rule  Hypothesis (H11)   in Sect.
    \ref{sec:pfprop}, see \eqref{eq:FGR}, holds.

\end{itemize}

We are interested to study how  a solution $u(t)$  of  \eqref{NLSP}
initially close to a   ground state of  \eqref{NLS}   which moves at a large speed is affected by the potential  $  V$. Notice that  $u(t)$  at no time
has small $H^1$ norm and so is not covered by Theorem \ref{thm:small en}.   Unsurprisingly,
in view of \cite{CM,Cu0,Cu3}, we prove that the ground state survives the impact, but that as $t\to \infty$
the solution   $u(t)$
approaches the orbit of a ground state of   \eqref{NLS}, up to a certain amount of radiation which satisfies
Strichartz estimates, a term localized in spacetime, and a small amount of  energy trapped by the Schr\"odinger operator  $-\Delta +V$,
which behaves like in Theorem  \ref{thm:small en}. The difference  with \cite{CM}  is that in  \cite{CM} we   had
$\sigma _p(  -\Delta +  V)
=\emptyset $  while here $\sigma _p(  -\Delta +  V)
=\{  e_0 \}$.

If  the initial ground state has velocity
	$\mathbf{v}\in \R ^3$,    by setting  $u(t,x):= e^{-\frac \im 2 \mathbf{v} \cdot x-\frac \im 4 t |\mathbf{v} |^2  }    \textbf{u}(t,x+\mathbf{v} t+y_0)$
	we can equivalently assume that the ground state has initial velocity 0 and  rewrite   \eqref{NLSP} as
\begin{equation}\label{eq:NLSP}
\begin{aligned}
 &\im \dot u =-\Delta   u   +  V(x+\mathbf{v} t+y_0) u  +\beta  (|u|^2) u\ ,\quad
 u(0,x)=u_0(x).
\end{aligned}
\end{equation}
%We will focus on \eqref{eq:NLSP}.

\noindent
Solutions of the \eqref{eq:NLSP}  starting close to a
ground state of \eqref{eq:B}, for some time can be written as
 \begin{equation} \label{eq:Ansatz1}\begin{aligned}   u(t,x) &=  e^{\im \left ( \frac
12 v(t)\cdot  x  + \vartheta (t)\right )}
 \phi _{\omega (t)} (x -D(t))   \\&   +  e^{-\frac \im 2 \mathbf{v} \cdot x-\frac \im 4 t |\mathbf{v} |^2  } Q _{w(t)} (x+t\mathbf{v}+y_0)   + r(t,x  ) .
\end{aligned}
\end{equation}

\begin{theorem}\label{theorem-1.1}
  Let $\omega_1\in\mathcal{O}$    and $\phi_{\omega_1}(x)$
  a ground state of \eqref{NLS}.  Assume $(\mathrm{H1})$--$(\mathrm{H11})$   and assume furthermore that $u_0\in H^1(\R ^3)$.
Fix $M_0>1$ and $\mathbf{v},y_0\in \R ^3$ with $|\textbf v|>M_0$.  Fix a $\varepsilon  _1>0$.  We set
	
\begin{equation}\label{def:eps} \epsilon:=\inf_{\theta  \in \R }\|u_0-e^{\im
\theta }  \phi_ {\omega _1 }(\cdot  )
\|_{H^1}   + \sup _{\text{dist} _{S^2}(  \overrightarrow{{e}}, \frac{\mathbf{v}}{|\mathbf{v}|} ) \le \varepsilon  _1} \int _0^\infty  (1+|  |\mathbf{v}| \overrightarrow{{e}} t+y_0   |^{ 2} )^{-1} dt  .
\end{equation}
	Then, there exist an $\varepsilon_0=\varepsilon_0(M_0,\omega_1,\varepsilon  _1 )>0$ and a
$C>0$ s.t. if
$u(t,x)$ is a solution of
\eqref{eq:NLSP} with

\begin{equation}
	\label{eq:sizeindata} \epsilon   <\varepsilon_0,
\end{equation}
  there exist $\omega _+\in\mathcal{O}$ , $w_+\in \C$, $v_+ \in
\R^3$   , $ \theta\in C^1(\R_+;\R)$, $ y\in C^1(\R_ +;\R ^3)$  , $ w\in C^1(\R_ +;\C)$ and $h _+ \in H^1$ with
$\| h_+ \| _{H^1}+|\omega _+ -\omega_1  |+ |v_+  |+ |w_+  |\le C \epsilon $
such that

\begin{equation}\label{eq:scattering}\begin{aligned} & \lim_{t\nearrow \infty}\|u(t ,x)-e^{\im
\theta(t)+  \frac  \im 2{v_+ \cdot x}  } \phi_{\omega _+} (x -y(t) )\\& -e^{-\frac \im 2 \mathbf{v} \cdot x-\frac \im 4 t |\mathbf{v} |^2  } Q _{w(t)} (x+t\mathbf{v}+y_0)  -
e^{\im t\Delta }h _+ (x)   \|_{H^1_x}=0 ,  \\&   \lim_{t\nearrow \infty} w(t) e^{ \im \int _0^t E_{w(s)}  ds} =w_+. \end{aligned}\end{equation}
Furthermore,  there is a representation
\eqref{eq:Ansatz1} valid for all $t\ge 0$ such that
 we have  $   r(t,x  )=A(t,x)+\widetilde{r}(t,x)$  such that
$A(t, \cdot ) \in \mathcal{S}(\R^3, \C)$, $|A(t,x)|\le C (t)$ with  $\lim
_{t\to + \infty }C (t)=0$ and such that for any admissible pair $(p,q)$  we have \begin{equation}\label{Strichartz} \|
\widetilde{r} \| _{L^p_t( \mathbb{R}_+,W^{1,q}_x)}\le
 C\epsilon .
\end{equation}
\end{theorem}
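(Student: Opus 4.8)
The plan is to combine a modulation-theoretic treatment of the large-speed ground state of \eqref{NLS} with the analysis of the small trapped ground state $Q_{w}$ governed by Theorem \ref{thm:small en}, the coupling between the two being controlled by the weak-interaction hypothesis encoded in the second term of $\epsilon$ in \eqref{def:eps}. First I would introduce the Ansatz \eqref{eq:Ansatz1} and impose the orthogonality (symplectic) conditions that fix the modulation parameters $(\omega,v,D,\vartheta)$ for the moving soliton and put $r(t,\cdot)$ in the continuous spectral subspace of $\mathcal{H}_\omega$, while simultaneously decomposing the ``trapped'' part $Q_{w}$ together with $\eta\in\mathcal{H}_c[w]$ exactly as in Theorem \ref{thm:small en}; the genuine radiation $\widetilde r$ is what remains after subtracting $A$, the spacetime-localized ``internal modes'' contribution. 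The modulation equations then take the form $\dot\omega, \dot v, \dot D - 2v, \dot\vartheta - (\text{phase})$ all $O(\|r\|^2 + \text{interaction})$, $\dot w + \im E_w w = O(\epsilon^2 + \text{interaction})$, and $r$ solves a Schrödinger equation driven by $\mathcal{H}_\omega$ plus source terms; the crucial point is that the potential $V(x+\mathbf v t+y_0)$, evaluated along the moving-soliton profile $\phi_\omega(x-D(t))$ with $D(t)\approx 2vt$ drifting away from $-\mathbf v t$, produces a forcing term whose time integral is bounded by $\int_0^\infty (1+||\mathbf v|\overrightarrow e\,t+y_0|^2)^{-1}\,dt$, i.e. by $\epsilon$.

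Next I would set up the bootstrap: on a maximal interval $[0,T)$ assume $\|\widetilde r\|_{L^p_tW^{1,q}_x} + \|w\|_{L^\infty_t} + \sup_t(|\omega(t)-\omega_1| + |v(t)| + |w(t)|)\le \Lambda\epsilon$ for a large constant $\Lambda$, and close it with constant $\Lambda/2$. The estimate for $r$ uses the dispersive and Strichartz estimates for $e^{\im t\mathcal{H}_\omega}$ on the continuous subspace (available by (H10), no resonance at $\pm\omega$), after the standard Cuccagna-type procedure of Darboux/normal-form transformations that remove the non-resonant quadratic and higher terms in the discrete variables $z$ (the amplitudes along $\pm\textbf e_j(\omega)$) up to order $2N_1+3$; hypotheses (H7)--(H9) guarantee the normal form is well defined and (H11), the Fermi golden rule, forces the leading resonant term to be dissipative, so that $z(t)\to 0$ and its contribution is absorbed into $A$. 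The trapped component is handled almost verbatim by Theorem \ref{thm:small en}: since the relevant data for the $(-\Delta+V)$ problem stays below $\delta$ (its size is $O(\epsilon)$ and the soliton, moving away, contributes only the integrable interaction), one gets the decomposition $Q_{w(t)}+\eta(t)$ with $\eta$ Strichartz-bounded and $w(t)e^{\im\int_0^t E_{w(s)}\,ds}\to w_+$; the interaction back-reaction of this trapped piece onto $r$ and onto the moving soliton is again $O(\epsilon^2 + \text{interaction})$ because the two localized bumps are spatially separated for all large $t$ and their overlap integral is $\lesssim (1+||\mathbf v|t|^2)^{-1}$, hence integrable.

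With the bootstrap closed, the asymptotics \eqref{eq:scattering} follow in the usual way: integrability of $\dot\omega,\dot v$ gives $\omega(t)\to\omega_+$, $v(t)\to v_+$; the phase and translation parameters $\vartheta(t),D(t)$ converge after subtracting their (computable, asymptotically linear-in-$t$) leading behavior, which gets absorbed into $\theta(t)$ and $y(t)$; $z(t)\to 0$ by the FGR argument, so $r(t)=A(t)+\widetilde r(t)$ with $A$ Schwartz-valued and decaying in $L^\infty$; and $\widetilde r(t)-e^{\im t\Delta}h_+\to 0$ in $H^1$ by the standard argument comparing $e^{\im t\mathcal{H}_\omega}$ on the continuous subspace with $e^{\im t\Delta}$ (the wave operator exists, $V$ Schwartz, soliton localized), choosing $h_+$ as the limit of $e^{-\im t\Delta}\widetilde r(t)$; the trapped $Q_{w(t)}$ term is simply carried along. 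The main obstacle I anticipate is the interplay of the two distinct modulation systems and two distinct linearized evolutions ($\mathcal{H}_\omega$ and $-\Delta+V$, with their different sets of discrete modes and their own normal forms / Fermi golden rules), together with verifying that every cross term between the fast-moving soliton, the static trapped state, and the radiation is controlled purely by the integrable interaction quantity in \eqref{def:eps} — in particular that the normal form transformation for the $\mathcal{H}_\omega$-system and the decomposition underlying Theorem \ref{thm:small en} can be performed simultaneously without the error terms of one spoiling the orthogonality structure of the other. Bookkeeping of these mixed quadratic terms, and ensuring the spatial separation genuinely downgrades them to time-integrable forcing, is where the real work lies.
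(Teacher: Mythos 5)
Your high-level skeleton (ansatz \eqref{eq:Ansatz1}, modulation, normal form with (H7)--(H9), Fermi golden rule, bootstrap, scattering comparison) does match the paper, but the part that is actually new in this paper relative to \cite{CM} --- the treatment of the trapped energy --- is where your plan has a genuine gap. You decompose $u$ into the moving soliton, the trapped pair $Q_w+\eta$ with $\eta\in\mathcal H_c[w]$ ``exactly as in Theorem \ref{thm:small en}'', and a separate radiation $r$ in the continuous subspace of $\mathcal H_\omega$. First, this decomposition is underdetermined: nothing in your conditions fixes how the radiation content of $u$ is split between $\eta$ and $r$. The paper instead uses a single remainder, constrained simultaneously to $N_g^\perp(\mathcal L_{p}^*)$ and to be symplectically orthogonal to the two trapped directions $\partial_{w_i}Q_w$ (Proposition \ref{prop:modulation}, conditions \eqref{prop:modeq2}--\eqref{prop:modeq4}, Lemma \ref{lem:cmgnt}); there is no $\eta$ at all, and the trapped part is only the two-dimensional mode $Q_w$, whose evolution is tracked by the modulation equation \eqref{eq:prepw1} and Lemma \ref{lem:estw}, not by invoking Theorem \ref{thm:small en} as a black box. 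Second, and more seriously, your claim that the back-reaction on the trapped system is controlled by the spatial-separation integral in \eqref{def:eps} only accounts for the two localized bumps; the soliton's radiation is delocalized, the fast-moving potential sweeps through it for all times, and this radiation--potential coupling is not time-integrable by separation. This is exactly why the linear estimate for the remainder must be a charge-transfer-type Strichartz/smoothing estimate for the time-dependent operator containing both the linearization $\mathcal V_1$ and the moving $V$ (Theorem \ref{thm:strich}), and plain dispersive estimates for $e^{\im t\mathcal H_\omega}$, as you propose, do not suffice.

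Moreover, since $-\Delta+V$ has the bound state $\phi_0$, the linear flow with the moving potential does not disperse the component of the radiation along the (moving) trapped state, so even the charge-transfer estimate fails unless that component is removed. The paper's key device here is absent from your proposal: one adds an artificial damping term $-\im\delta\mathbf P_{\mathbf D}h$ with $\delta>|e_0+\omega_0|$ to the equation \eqref{eq:eqh222} and to the propagator in Theorem \ref{thm:strich}/Proposition \ref{prop:weights2}, and the compensating term $+\im\delta\mathbf P_{\mathbf D}h$ is shown to be an admissible error precisely because the orthogonality \eqref{eq:coordinate01}, through Lemma \ref{lem:orth}, makes $\mathbf P_{\mathbf D}h$ integrably small (Lemma \ref{lem:projpert}). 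Without either this mechanism or a genuinely perturbed version of the small-energy theory of Theorem \ref{thm:small en} with nonlocalized forcing (which would amount to redoing that theory, not quoting it ``almost verbatim''), your bootstrap cannot close: the trapped amplitude $w$ and the trapped component of the radiation could grow secularly under the forcing by the soliton's radiation. You correctly flag the interplay of the two linearized evolutions as ``where the real work lies'', but the proposal does not supply the idea that resolves it.
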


  Theorem \ref{theorem-1.1}  extends to the case of potentials with 1 eigenvalue the result in \cite{CM}.
	 Our approach   here is  the same  of  \cite{CM}. We    represent   solutions   $u(t)$  of \eqref{eq:NLSP} as a sum of a  moving ground state   of    \eqref{NLS}  and a small energy trapped solution of     \eqref{eq:NLSP} in a way similar to the ansatz in \cite{MMT,perelman3}.

 Thanks to the weakness of the interaction with the potential, we are able  to show that this representation is preserved for all times and that there  is a separation of moving ground state and of trapped energy. Furthermore, we prove that  the
 stabilization processes around the  energy  trapped by the potential, described in Theorem \ref{thm:small en}, and around the ground state, described in \cite{Cu3,CM},
 continue to hold.
 
 In \cite{CM},  in the absence of trapped energy, we described $u(t)$ in terms
 of the local analysis of the NLS around solitons developed in the series
 \cite{Cu0,Cu2,Cu3}. The main two  novelties  in \cite{CM} consisted in the fact
 that the  coordinate changes and the  effective Hamiltonian  in \cite{CM} 
 depend on the time variable and that proof of the dispersion 
 of continuous modes require the theory of   {\it charge transfer models} as  in \cite{RSS1} instead of the simpler dispersive analysis of  \cite{Cu2,Cu3}.
 
 These features of \cite{CM} are present here. The additional complication 
 is that, along with a part  of $u(t)$  which has the same description as in
 \cite{CM},  $u(t)$ has also a term representing the energy trapped
 by the potential. In this paper we will describe in detail in Sect. \ref{sec:lin} the decomposition and coordinates representation
 of $u(t)$. In the following sections we  will  focus mainly on the
 coupling terms  between trapped energy and the rest of $u(t)$,
 often referring to   \cite{CM}.  Notice that in view of the result
 in \cite{CM1} it could be possible to relax substantially the hypotheses
 on  $\sigma _p(-\Delta +V)$ obtaining a result similar to  Theorem \ref{theorem-1.1}.

In the proof we will assume at first that  additionally
 \begin{equation} \label{eq:add}   \begin{aligned} &
 u_0\in \Sigma _2,
\end{aligned}
\end{equation}
   see right below \eqref{eq:sigma}. Notice that  in  \cite{CM}
   it is assumed that $u_0\in \Sigma _n$  for sufficiently large $n$, but 
   inspection of the proof shows easily that  \eqref{eq:add} suffices.
   We will then show that in fact the result extends rather easily  to   $u_0\in H^1 $. 
   
   We will make extensive use of
notation  and results in  \cite{Cu0,CM}.
We refer to \cite{CM} for a more extended discussion to the problem
and for more references and
we end the introduction with some notation.

 Given two Banach spaces $X$ and $Y$ we denote by $B(X,Y)$ the space of bounded linear operators from $X$ to $Y$.
For $x\in X$ and $\varepsilon>0$, we set
\begin{align*}
B_X(x,\varepsilon):=\{x'\in X\ |\ \|x-x'\|_X<\varepsilon\}.
\end{align*}
 We set $\langle x \rangle = (1+|x|^2)^{\frac{1}{2}}$ and
\begin{equation} \label{eq:hermitian}\begin{aligned}& \langle f,g\rangle =\Re  \int
_{\mathbb{R}^3}f(x)  \overline{g}(x)  dx \text{ for $f,g:\mathbb{R}^3\to \mathbb{C}$ }
 .  \end{aligned}\end{equation}

\noindent For any   $n \ge 1$  and for $K=\R , \C$
we consider the the Banach space  $\Sigma _n=\Sigma _n(\R ^3, K^2   )$  defined by
 \begin{equation}\label{eq:sigma}
\begin{aligned} &
      \| u \| _{\Sigma _n} ^2:=\sum _{|\alpha |\le n}  (\| x^\alpha  u \| _{L^2(\R ^3   )} ^2  + \| \partial _x^\alpha  u \| _{L^2(\R ^3   )} ^2 )  <\infty .      \end{aligned}
\end{equation}
	 We set $\Sigma _0= L^2(\R ^3, K^2   )$.   Equivalently we can define $\Sigma _{r}$ for $r\in \R$  by the norm
 \begin{equation}
\begin{aligned} &
      \| u \| _{\Sigma _r}  :=  \|  ( 1-\Delta +|x|^2)   ^{\frac{r}{2}} u \| _{L^2}   <\infty    .   \end{aligned}\nonumber
\end{equation}
 For $r\in \N$ the two definitions are equivalent, see    \cite{Cu3}.

\noindent
From now on,
we  identify $\C =\R^2$ and set $J=\begin{pmatrix}  0 & 1  \\ -1 & 0
 \end{pmatrix}$, so that multiplication by $\im $ in $\C$ is $J^{-1}=-J$.
Later on, we complexify $\R^2$ and $\im$ will appear in such meaning.
That is for $U=\ ^t(u_1,u_2)$, $\im U=\ ^t(\im u_1,\im u_2)$.
So, be careful not to confuse $-J$ with $\im$ which has the different meaning.

\section{The Ansatz} \label{sec:lin}

We consider the energy

\begin{equation} \label{eq:energyfunctional}
\begin{aligned}
&\textbf{E}(u)=\textbf{E}_0(u)+\textbf{E}_V(u)\\&
\textbf{E}_0(u):=\frac{1}{2} \|  \nabla u \| ^2_{L^2} + \textbf{E}_P(u) \ , \   \textbf{E}_P(u):=\frac{1}{2}
 \int _{\R ^3}B(| u|^2  ) dx
\\&  \textbf{E}_V(u):= \frac{1 }{2}    \langle    V (\cdot + \textbf v t + y_0 ) u , u\rangle     ,
\end{aligned}
\end{equation}
with $B(0)=0$ and $ B'(t)=\beta (t) $.
It is well known that
%$\textbf{E}$ and
$\textbf{E}_0$ is conserved by the flow of
%\eqref{NLSP} and
\eqref{NLS}.
% respectively
For    $u\in H^1( \R ^3, \C ) $,  its charge and momenta, invariants of motion of \eqref{NLS}, are defined as follows: 	
\begin{equation}\label{eq:charge}
\begin{aligned}
 &   \Pi _4(u)= \frac{1}{2}\|  u  \|  ^2_{L^2}= \frac{1}{2}\langle   \Diamond _4 u , u \rangle
\, ,  \quad  \Diamond _4:=1 ;\\
&     {\Pi }_a(u)=  \frac{1}{2} \Im \langle  u_{x_a},  {u}  \rangle
= \frac{1}{2} \langle   \Diamond _a u ,  {u} \rangle  \, , \quad \Diamond _a:=J\partial _{x_a}\text{ for $a = 1,2, 3$.}
\end{aligned}
\end{equation}
The charge $\Pi_4$ is conserved by the flow of both \eqref{NLSP} and \eqref{NLS}.
However, $\Pi_a$, $a=1,2,3$ are   conserved only by   \eqref{NLS} but not by the perturbed equation \eqref{NLSP} which is not translation invariant.
We set $\Pi  (u)=( \Pi _1(u),...,\Pi _4(u))$. We have
$\textbf{E}\in C^2 ( H^1( \R ^3, \C  ), \C  )$ and $\Pi _j\in C^\infty (  H^1(
\R ^3, \C ) , \C  )$.
 Recall the following formulas
\begin{align}
&\Pi _4 (e^{-\frac 1 2 Jv\cdot x   }u )= \Pi _4(u) \, ; \nonumber \\&
   \Pi _a (e^{-\frac 1 2 J   v\cdot x   }u )= \Pi _a(u) + \frac{1}{2} v_a \Pi _4 (u   )  \text{ for $a = 1,2, 3$} \, ; \label{eq:charge1}
\\& \textbf{E}_0(   e^{-\frac 1 2 J v\cdot x   }u )= \textbf{E}_0(   u) +
v\cdot  \Pi   (u)
 + \frac{v^2 }{4}\Pi _4 (u )    \, ,  \,   v\cdot  \Pi   (u)=\sum _{a=1}^3v_a \Pi _a (u).  \nonumber
\end{align}

By (H5) and \eqref{eq:charge}, setting $p=\Pi (e^{ -\frac 1 2 Jv\cdot x   }\phi _\omega)$, we have
\begin{equation*}
\frac{\partial p}{\partial (\omega, v)}=
\begin{pmatrix}
\frac{1}{2}\Pi_4(\phi_\omega)I_3 & *\\
0 & 2\frac{d}{d \omega}\|\phi_\omega\|_{L^2}^2
\end{pmatrix},
\end{equation*}
where $I_3$ is $3\times 3$ identity matrix.
Therefore, we have that $(\omega , v)\to  p=\Pi (e^{ -\frac 1 2 Jv\cdot x   }\phi _\omega     ) $ is a diffeomorphism  into an open
subset of $\mathcal{P}\subset \R ^4$.  For $p=p(\omega, v)\in \mathcal{P}$ set $\Phi
_p=e^{-\frac 1 2 Jv\cdot x   } \phi _\omega    $ for $p=\Pi (e^{-\frac 1 2 Jv\cdot x   }\phi _\omega     )$.

\subsection{Linearized operator and its generalized null space}
\label{subsec:lin}
We will consider the group $\tau =( D,  -\vartheta  )\to  e^{J \tau \cdot
\Diamond} u(x):=e^{  \im \vartheta   }u(x-D) $.   The $\Phi _p   $ are
constrained critical points of $\textbf{E}_0$ with associated Lagrange
multipliers $\lambda  (p) \in \R^4$ so that  $
 \nabla \textbf{E}_0(\Phi _p  )= \lambda   (p) \cdot \Diamond \Phi _p$, where
 we have
 \begin{equation}
	 \label{eq:LagrMult} \lambda _4(p) =-\omega  (p) -\frac{v^2 (p)}{4}   \, ,    \quad  \lambda _a(p):=v_a (p)
	\, \text{ for $a=1,2, 3$.}
 \end{equation}
We set also

 \begin{equation}
	 \label{eq:dp} d(p):=\textbf{E}_0(\Phi _{p  }) -   \lambda  (p) \cdot      \Pi   (\Phi _{p  }).
 \end{equation}
For  any fixed  vector $\tau _0$ 		a function    $u(t):=  e^{J( t   \lambda
(p) +\tau _0)\cdot \Diamond}\Phi _p  $ 		 is a  {solitary wave} solution of
\eqref{NLS}. 		
We now introduce the linearized operator
\begin{equation}\label{eq:linearizationL}
 \mathcal {L}_p:=   J(\nabla ^2
 \textbf{E}_0(\Phi _p  )- \lambda  (p) \cdot \Diamond )
 \end{equation}
 where  $\nabla ^2
 \textbf{E}_0\in C ^{0}(H^1, B(H^1, H ^{-1}))$  is the differential of  $\nabla
 \textbf{E}_0\in C ^{0}(H^1,   H ^{-1} )$.

 \noindent By an abuse of notation, we set
\begin{equation}\label{eq:linearization0}  \text{
   $\mathcal {L}_\omega := \mathcal {L}_p$    when $v(p)=0$ and $\omega (p)=\omega$.}
 \end{equation}
We have the  following  identity,  see \cite{Cu0} Sect.7,    which implies  $\sigma ( {\mathcal L}_p)=\sigma ( {\mathcal L}_{\omega (p)})$,
\begin{equation}\label{eq:conjNLS}
 \begin{aligned}
&   {\mathcal L}_p =e^{-\frac 12 J v (p)\cdot x}
  {\mathcal L}_{\omega (p)} e^{\frac 12 J v (p)\cdot x}    ,
\end{aligned}
\end{equation}
and which follows by $$e^{-\frac{1}{2} J v\cdot x}(-\Delta)e^{\frac{1}{2} J v\cdot x}=-\Delta - v\cdot \Diamond +\frac {|v|^2}{4}.$$

\noindent Hypothesis (H5) implies that $\text{rank}  \left [ \frac{\partial
  \lambda _i}{\partial p _j}   \right ]      _{ \substack{ i\downarrow  \  , \
  j \rightarrow}}= 4$. This  and (H6) imply
\begin{equation} \label{eq:kernel1}\begin{aligned}
&\ker {\mathcal L}_p  =\text{Span}\{ J \Diamond _j    \Phi _p:j=1,..., 4   \} \text{ and}\\
&N_g ( {\mathcal L}_p ) = \text{Span}\{ J \Diamond _j    \Phi _p, \partial _{ p _j}     \Phi _p :j=1,..., 4 \} ,
\end{aligned}
\end{equation}
where   $N_g ( L ) :=\cup _{j=1}^\infty \ker (L^j)$.
Recall that we have a well known decomposition \begin{align} 	
\label{eq:begspectdec2}& L^2= N_g(\mathcal{L}_p)\oplus N_g^\perp
(\mathcal{L}_p^{\ast}) \  ,
   \\& N_g (\mathcal{L}_p^{\ast})  =\text{Span}\{   \Diamond _j    \Phi _p,
   J^{-1}\partial _{ \lambda _j}     \Phi _p :j=1,..., 4  \}   .
   \label{eq:begspectdec3}
\end{align}
We denote by $ P_{N_g}(p)$ the projection on $N_g(\mathcal{L}_p)$ and by  $ P(p)$ the projection on $N_g^\perp
(\mathcal{L}_p^{\ast})$ associated to \eqref{eq:begspectdec2}.
\begin{equation}\label{eq:defproj}
P_{N_g}(p)=-J\Diamond_j\Phi_p\<\cdot, J^{-1}\partial_{p_j} \Phi_p \> + \partial_{p_j}\Phi_p\<\cdot, \Diamond _j \Phi_p \>,\quad P(p)=1-P_{N_g}(p).
\end{equation}

We now decompose the solution of \eqref{eq:NLSP} in to the large solitary wave given in (H4), small bound state given in Prop. \ref{prop:bddst} and the remainder part which will belong in both the $N^{\perp}_g (\mathcal{L}_p ^*)$ and the galilean transform of $\mathcal H_c[w]$.

\begin{proposition}\label{prop:modulation}
Fix $\varepsilon_1>0$ and $\omega_1\in \mathcal O$.
Let $\varkappa\in   \mathcal{P}$ be s.t.\ $v(\varkappa)=0$ and $\omega (\varkappa)=\omega _1$.
Then there exists $\varepsilon_2>0$ s.t.\ if
\begin{align}\label{prop:modeq0}
 \sup _{\text{dist} _{S^2}(  \overrightarrow{{e}}, \frac{\mathbf{v}}{|\mathbf{v}|} ) \le \varepsilon  _1} \int _0^\infty  (1+|  |\mathbf{v}| \overrightarrow{{e}} t+y_0   |^{ 2} )^{-1} dt<\varepsilon_2
\end{align}
and for all $t\geq 0$, $\tau_0\in B_{\R^3}(0,\varepsilon_2\<t\>) \times \R$ and $u\in e^{J\tau_0\Diamond}B_{H^1}(\Phi_{\varkappa},\varepsilon_2)$, there exists
\begin{align*}
(\tau,p,w)\in C^\infty(B(\varepsilon_2);\R^4\times\R^4\times \R^2),
\end{align*}
where
\begin{align}\label{prop:modeq0.1}
B(\varepsilon_2):=\{(t,u)\in [0,\infty)\times H^1 \ |\ \exists \tau\in B_{\R^3}(0,\varepsilon_2\<t\>)\times \R \text{ s. t. } u\in e^{J\tau\Diamond}B_{H^1}(\Phi_{\varkappa},\varepsilon_2)\},
\end{align}
s.t.\
\begin{align}\label{prop:modeq1}
p(t,e^{J \tau  _{0}     \cdot \Diamond}\phi _{\omega _1})=\varkappa,\  \tau
(t,e^{J \tau  _{0} \cdot \Diamond}\phi _{\omega _1}) = \tau _{0}\   \mathrm{and}\ w(t,e^{J \tau  _{0}   \cdot \Diamond}\phi _{\omega _1}) =0,
\end{align}
\begin{align}
&\mathcal F_j(t,u,\tau (t,u),p(t,u),w(t,u)) = \mathcal G_j(t,u,\tau (t,u),p(t,u),w(t,u)) =0 \text{ for } j=1,2,3,4\text{ and} \nonumber\\&
\mathcal L_j(t,u,\tau (t,u),p(t,u),w(t,u)) =0 \text{ for } j=1,2  \nonumber
\end{align}
with
\begin{align}
&\mathcal F_j(t,u,\tau,p,w):=\<\tilde R(t,u,\tau,p,w),  e^{J\tau \Diamond}J^{-1}\partial_{p_j}\Phi_p\>=0,\ j=1,2,3,4,\label{prop:modeq2}\\&
\mathcal G_j(t,u,\tau,p,w):=\<\tilde R(t,u,\tau,p,w), e^{J\tau \Diamond}\Diamond_j\Phi_p\>=0,\ j=1,2,3,4,\label{prop:modeq3}\\&
\mathcal L_j(t,u,\tau,p,w):=\<\tilde R(t,u,\tau,p,w), e^{J(\frac 1 2 vx +\frac t 4 |v|^2)}\partial_{w_j}Q_w(\cdot + tv +y_0)\>=0,\ j=1,2 \label{prop:modeq4}
\end{align}
where
\begin{align}
\tilde R(t,u,\tau,p,w):=u-e^{J\tau\cdot \Diamond}\Phi_p-e^{ J(\frac 1 2 \mathbf{v} \cdot x+\frac t 4  |\mathbf{v} |^2)  } Q _{w } (\cdot +t\mathbf{v}+y_0).\label{prop:modeq5}
\end{align}
\end{proposition}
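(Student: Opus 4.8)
The plan is to prove Proposition \ref{prop:modulation} by the implicit function theorem, the only real point being the control of the coupling between the two profiles $\Phi_p$ and $Q_w$, where hypothesis \eqref{prop:modeq0} enters. Regarding $(t,u)$ as parameters and $(\tau,p,w)\in\R^4\times\R^4\times\R^2$ as unknowns, I would consider the map $G:=(\mathcal F_1,\dots,\mathcal F_4,\mathcal G_1,\dots,\mathcal G_4,\mathcal L_1,\mathcal L_2)$ with values in $\R^{10}$, which is well defined and $C^\infty$ near the relevant points thanks to (H4), Proposition \ref{prop:bddst}, and the smoothness of the group action $\tau\mapsto e^{J\tau\cdot\Diamond}$. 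Since $v(\varkappa)=0$ and $\omega(\varkappa)=\omega_1$ we have $\Phi_\varkappa=\phi_{\omega_1}$, and $Q_0=0$ by Proposition \ref{prop:bddst}; hence at $u=e^{J\tau_0\cdot\Diamond}\phi_{\omega_1}$ and $(\tau,p,w)=(\tau_0,\varkappa,0)$ one gets $\tilde R=0$, so $G=0$. This is the base point around which I apply the implicit function theorem, and it also yields the normalization \eqref{prop:modeq1}.

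The next step is to show that the Jacobian $\mathcal J:=D_{(\tau,p,w)}G$ at the base point is invertible, with $\|\mathcal J^{-1}\|$ bounded uniformly in $t\ge0$ and $\tau_0\in B_{\R^3}(0,\varepsilon_2\langle t\rangle)\times\R$. Since $\tilde R=0$ there, in every derivative only the term in which the differentiation hits $\tilde R$ survives, and one uses $\partial_{\tau_m}\tilde R=-e^{J\tau\cdot\Diamond}J\Diamond_m\Phi_p$, $\partial_{p_m}\tilde R=-e^{J\tau\cdot\Diamond}\partial_{p_m}\Phi_p$ and $\partial_{w_m}\tilde R=-e^{J(\frac12\mathbf v\cdot x+\frac t4|\mathbf v|^2)}\partial_{w_m}Q_w(\cdot+t\mathbf v+y_0)$. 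This gives $\mathcal J$ a $2\times2$ block structure with no cross terms on the diagonal: the $(\tau,p)$–derivatives of $(\mathcal F,\mathcal G)$ reproduce (up to the $L^2$–isometry $e^{J\tau_0\cdot\Diamond}$) the standard Gram matrix pairing $N_g(\mathcal L_\varkappa)=\mathrm{Span}\{J\Diamond_m\Phi_\varkappa,\partial_{p_m}\Phi_\varkappa\}$ against $\mathrm{Span}\{J^{-1}\partial_{p_j}\Phi_\varkappa,\Diamond_j\Phi_\varkappa\}$, which equals $N_g(\mathcal L_\varkappa^{\ast})$ because $[\partial\lambda_i/\partial p_j]$ is invertible by (H5); this pairing is nondegenerate, being exactly the content of \eqref{eq:begspectdec2}–\eqref{eq:defproj}. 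The $w$–derivatives of $(\mathcal L_1,\mathcal L_2)$ give $-\mathrm{Id}_2$, since $\partial_{w_1}Q_0=\phi_0$, $\partial_{w_2}Q_0=\im\phi_0$, $\|\phi_0\|_{L^2}=1$, $\langle\phi_0,\im\phi_0\rangle=0$ by \eqref{eq:hermitian}, and $e^{J(\frac12\mathbf v\cdot x+\frac t4|\mathbf v|^2)}$ is an isometry for that pairing.

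It remains to estimate the two off–diagonal blocks. Their entries are inner products of a factor localized near the origin of the ground–state frame (translated by at most $\varepsilon_2\langle t\rangle$) and carrying the slow modulation $e^{-\frac12 Jv(p)\cdot x}$ with $|v(p)|=O(\varepsilon_2)$, against a factor localized near $-t\mathbf v-y_0$ and carrying the fast modulation $e^{\frac12 J\mathbf v\cdot x}$. Since all profiles are Schwartz and $|\mathbf v|>M_0$, integration by parts in $x$ (nonstationary phase) bounds each such entry by $C_N M_0^{-N}$ for every $N$, uniformly in $t\ge0$ and $\tau_0$ (with extra spatial decay when $t$ is bounded away from $0$); this is where \eqref{prop:modeq0} is used, as it forces $|\mathbf v|$ (effectively $\ge M_0$) to be large, while the supremum over directions $\overrightarrow{e}$ near $\mathbf v/|\mathbf v|$ in \eqref{prop:modeq0} absorbs the $O(\varepsilon_2)$ tilt of the relative velocity produced by $v(p)$ and by the drift of $D$. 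Consequently, choosing $M_0$ large (equivalently $\varepsilon_2$ small), $\mathcal J$ is block diagonal up to an arbitrarily small perturbation, hence invertible with $\|\mathcal J^{-1}\|$ bounded uniformly in $(t,\tau_0)$.

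With $\mathcal J$ uniformly invertible and the second derivatives of $G$ bounded uniformly in $(t,\tau_0)$ — again by the Schwartz decay of the profiles together with $|\tau_0|\le\varepsilon_2\langle t\rangle$ and the fixed $\mathbf v,y_0$ — the implicit function theorem produces a $C^\infty$ solution $(\tau,p,w)(t,u)$ near the base point, unique in a neighborhood whose size is independent of $(t,\tau_0)$; shrinking $\varepsilon_2$ this neighborhood contains $e^{J\tau_0\cdot\Diamond}B_{H^1}(\Phi_\varkappa,\varepsilon_2)$. Local uniqueness glues these solutions into a single $C^\infty$ map on $B(\varepsilon_2)$, independent of the particular $\tau_0$ realizing $u\in B(\varepsilon_2)$ because two admissible such $\tau_0$ are $O(\varepsilon_2)$–close (the phase modulo $2\pi$), and for $\varepsilon_2$ small one has $p(t,u)\in\mathcal P$ and $w(t,u)\in B_\C(\mathbf{a}(\text{P\ref{prop:bddst}}))$, so $\Phi_{p(t,u)}$ and $Q_{w(t,u)}$ are defined, while \eqref{prop:modeq1} was already checked at the base point. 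I expect the main obstacle to be precisely this off–diagonal estimate of the Jacobian: near $t=0$ the two profiles need not be spatially separated (indeed $y_0$ may be small), so one cannot rely on spatial decay and must exploit the rapid oscillation generated by the large relative speed $|\mathbf v|>M_0$, which is exactly the mechanism quantified by \eqref{prop:modeq0}.
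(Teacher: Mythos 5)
Your overall IFT scheme (same functional $\mathbf F$, same base point, same block structure of the Jacobian) matches the paper's Lemmas \ref{lem:propmodu1}--\ref{lem:propmodu2}, but there is a genuine gap in the only step that is actually delicate: the smallness of the off--diagonal (coupling) entries, i.e.\ of the quantity the paper calls $X(\tau,t)$. You derive this smallness exclusively from nonstationary phase, claiming bounds $C_NM_0^{-N}$ and asserting that \eqref{prop:modeq0} ``forces $|\mathbf v|$ to be large''. It does not. In Proposition \ref{prop:modulation} there is no lower bound on $|\mathbf v|$ at all, and even in Theorem \ref{theorem-1.1} the constant $M_0>1$ is fixed in advance (with $\varepsilon_0$ depending on it), so it is not at your disposal to take large; shrinking $\varepsilon_2$ does not increase $|\mathbf v|$. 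The hypothesis \eqref{prop:modeq0} can perfectly well hold with $|\mathbf v|$ of order one provided $y_0$ is far away in a suitable direction (this is exactly the ``ground state moves away from the potential'' regime announced in the abstract), and in that regime integration by parts in the oscillatory factor $e^{\frac12 J\mathbf v\cdot x}$ yields only a fixed constant, not an arbitrarily small one. Your closing remark, that near $t=0$ one ``cannot rely on spatial decay and must exploit the rapid oscillation'', is precisely backwards in this regime: there the smallness must come from spatial separation, and oscillation gives nothing.

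What is missing is the dichotomy and the geometric argument of the paper's Lemma \ref{lem:propmodu2}: either $|\mathbf v|\ge C\delta^{-1}$, in which case nonstationary phase does give $X(\tau,t)<\delta$; or $|\mathbf v|\le C\delta^{-1}$, and then one must show that \eqref{prop:modeq0} with $\varepsilon_2$ small forces $\bigl||\mathbf v|\overrightarrow e\,t+y_0\bigr|\ge M$ for all $t>0$ and all directions $\overrightarrow e$ with $\mathrm{dist}_{S^2}(\overrightarrow e,\mathbf v/|\mathbf v|)\le\varepsilon_1$, uniformly with respect to the translation $D\in B_{\R^3}(0,\varepsilon_2\langle t\rangle)$ (one writes $D=\mathbf v_\varepsilon t+y_\varepsilon$ and absorbs the tilt into the supremum over $\overrightarrow e$, which is why the sup over nearby directions appears in \eqref{def:eps} in the first place). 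Proving this separation is not a triviality: the paper argues by contradiction, first showing $|y_0|$ must be large, then using the positivity of the discriminant of the quadratic $t^2|\mathbf v|^2+2t\tilde{\mathbf v}\cdot y_0+|y_0|^2-M^2$ to conclude that $-\widehat y_0$ is almost parallel to $\widehat{\mathbf v}$, and finally evaluating the integral in \eqref{prop:modeq0} along the direction $-\widehat y_0$ to obtain $\varepsilon_2\gtrsim|\mathbf v|^{-1}$, contradicting $|\mathbf v|\le C\delta^{-1}$. Without this step your Jacobian estimate, and hence the uniform-in-$(t,\tau_0)$ invertibility needed for the implicit function theorem, is unproved in the bounded-velocity case, which is an essential part of the statement. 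The rest of your argument (base point, Gram-matrix block, $w$-block, gluing by local uniqueness) is in line with the paper's Lemma \ref{lem:propmodu1}.
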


\begin{remark}
The solution $u$ which we consider in Theorem \ref{theorem-1.1} will always belong to $(t,u(t))\in B(\varepsilon_2)$ provided $\varepsilon_0$ sufficiently small.
Therefore, we can always decompose the solution as
\begin{align*}
u=e^{J\tau\cdot \Diamond}\Phi_p+e^{ J(\frac 1 2 \mathbf{v} \cdot x+\frac t 4  |\mathbf{v} |^2)  } Q _{w } (\cdot +t\mathbf{v}+y_0)+e^{J\tau \Diamond}R,
\end{align*}
were $\tilde R=e^{J\tau \Diamond}R$.
\end{remark}

Proposition \ref{prop:modulation} is a direct consequence of the following two lemmas.

\begin{lemma}\label{lem:propmodu2}
Fix $\delta>0$. Set
\begin{align*}
X(\tau,t)=\max_{j,l=1,2,3,4,k,l=1,2,a+b=1}  \left|\<e^{J(\frac 1 2 \mathbf v \cdot x +\frac t 4 |\mathbf v|^2)}J^{k-1}\phi_0(\cdot + t \mathbf v +y_0)  , e^{J\tau \Diamond}J^{l-1}\partial_{p_j}^a\Diamond_l^b\Phi_{\varkappa}\>\right|
\end{align*}
and
\begin{align*}
\mathcal T(t,\delta)=\{\tau \in \R^4\ |\ X(\tau, t)<\delta\}.
\end{align*}
Then, there exists $\varepsilon=\varepsilon(\delta)>0$ s.t.\ if \eqref{prop:modeq0} is satisfied with $\varepsilon_2$ replaced to $\varepsilon$, then
\begin{align*}
B_{\R^3}(0, \varepsilon\<t\>)\times \R \subset \mathcal T (t,\delta),\ \forall t\geq 0.
\end{align*}
\end{lemma}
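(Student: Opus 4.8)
The plan is to exploit the fact that the quantity $X(\tau,t)$ is, for each fixed $t$, an inner product between a function $e^{J(\frac12\mathbf v\cdot x+\frac t4|\mathbf v|^2)}J^{k-1}\phi_0(\cdot+t\mathbf v+y_0)$ living essentially near the moving point $-t\mathbf v-y_0$, and a fixed Schwartz-type profile $e^{J\tau\Diamond}J^{l-1}\partial_{p_j}^a\Diamond_l^b\Phi_{\varkappa}$ which is concentrated near $D$ (the translation part of $\tau$). The first step is therefore to estimate this inner product by an $L^2$--$L^2$ Cauchy--Schwarz bound localized in space: write $\langle f,g\rangle\le \|f\langle x+t\mathbf v+y_0\rangle^{-1}\|_{L^2}\cdot\|g\langle x+t\mathbf v+y_0\rangle\|_{L^2}$ is not quite the right split; instead split space into the region $|x+t\mathbf v+y_0|\le \tfrac12(|t\mathbf v+y_0|+|D|)$ and its complement. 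In the first region $\phi_0$ is a fixed Schwartz function, so $|\phi_0(x+t\mathbf v+y_0)|$ is controlled, but the profile $\partial_{p_j}^a\Diamond_l^b\Phi_\varkappa(x-D)$ has argument of size comparable to $|D|+$ (the localization scale), so since $\Phi_\varkappa\in\Sigma_n$ for all $n$ one gains $\langle D\rangle^{-N}$ for any $N$; symmetrically in the complementary region one gains $\langle t\mathbf v+y_0\rangle^{-N}$ from the decay of $\phi_0$. Using the Galilean phases, which are unimodular, and the oscillation $e^{\frac12 J\mathbf v\cdot x}$ only helps (integration by parts in $x$ against the smooth rapidly decaying profiles gains extra powers of $|\mathbf v|^{-1}$), one arrives at a clean bound of the shape
\begin{align*}
X(\tau,t)\le C_N\left(\langle t\mathbf v+y_0\rangle^{-N}+\langle D(\tau)\rangle^{-N}\right),
\end{align*}
where $D(\tau)\in\R^3$ is the spatial component of $\tau$. (Here $C_N$ depends on $\varkappa$, i.e.\ on $\omega_1$, and on $N$ and the finitely many indices $j,l,k,a,b$ over which we take the max; these are harmless.)

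Given this estimate, the second step is purely arithmetic. Fix $\delta>0$ and pick $N=2$. On the set $B_{\R^3}(0,\varepsilon\langle t\rangle)\times\R$ we have $|D(\tau)|<\varepsilon\langle t\rangle$, hence $\langle D(\tau)\rangle^{-2}\le (1+\varepsilon^2\langle t\rangle^2)^{-1}\le \varepsilon^{-2}\langle t\rangle^{-2}$ is the wrong direction; rather we only need $\langle D(\tau)\rangle^{-2}\le 1$ trivially and, more usefully, we want the bound on $X$ to be $<\delta$. So we need both terms small: the term $\langle D(\tau)\rangle^{-2}$ is \emph{not} automatically small just from $\tau\in B_{\R^3}(0,\varepsilon\langle t\rangle)$, so we must instead organize the argument around the first term. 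The right way: note $\langle t\mathbf v+y_0\rangle^{-2}=(1+|t\mathbf v+y_0|^2)^{-1}$, which is exactly the integrand appearing in \eqref{prop:modeq0}; since $\overrightarrow e=\mathbf v/|\mathbf v|$ is in the admissible cone, hypothesis \eqref{prop:modeq0} (with $\varepsilon_2$ replaced by $\varepsilon$) forces $\int_0^\infty(1+||\mathbf v|\overrightarrow e t+y_0|^2)^{-1}\,dt<\varepsilon$, and in particular, for the \emph{specific} direction $\overrightarrow e=\mathbf v/|\mathbf v|$, the integrand $(1+|t\mathbf v+y_0|^2)^{-1}$ is an integrable function of $t$ with integral $<\varepsilon$; but pointwise smallness is not implied by small integral.

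This forces the correct reading of the lemma, and hence the correct final step: one does \emph{not} claim $X(\tau,t)<\delta$ pointwise in $t$ from \eqref{prop:modeq0} alone — rather, $D(\tau)$ is a genuine free parameter and the inclusion $B_{\R^3}(0,\varepsilon\langle t\rangle)\times\R\subset\mathcal T(t,\delta)$ must follow because the profile $\partial_{p_j}^a\Diamond_l^b\Phi_\varkappa$ is localized and when $|D(\tau)|<\varepsilon\langle t\rangle$ with $\varepsilon$ small the two bumps $\phi_0(\cdot+t\mathbf v+y_0)$ and $\Phi_\varkappa(\cdot-D(\tau))$ have centers separated by at least $|t\mathbf v+y_0|-\varepsilon\langle t\rangle\ge (|\mathbf v|-\varepsilon)t-|y_0|-\varepsilon\gtrsim M_0 t$, which is large uniformly once $t\gtrsim 1/M_0$, and for $t\lesssim 1/M_0$ one has $|D(\tau)|<\varepsilon\langle t\rangle\lesssim \varepsilon$ small so $\Phi_\varkappa(\cdot-D(\tau))$ is close to $\Phi_\varkappa$ while $\phi_0(\cdot+t\mathbf v+y_0)$ is centered at distance $\ge|y_0|-O(\varepsilon)$ — if $y_0$ is large this already separates, and if $y_0$ is of moderate size then one invokes the oscillation $e^{\frac12 J\mathbf v\cdot x}$ with $|\mathbf v|>M_0$: integrating by parts against the two Schwartz profiles gains $|\mathbf v|^{-N}\le M_0^{-N}$, again making $X(\tau,t)$ small. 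Combining the separation estimate, the Schwartz decay of $\Phi_\varkappa$, the Schwartz decay of $\phi_0$, and (for the small-$t$, moderate-$y_0$ regime) the non-stationary phase gain in $|\mathbf v|$, one gets $X(\tau,t)<\delta$ for all $t\ge0$ and all $\tau\in B_{\R^3}(0,\varepsilon\langle t\rangle)\times\R$ once $\varepsilon=\varepsilon(\delta)$ is small enough, which is the claim. The main obstacle is precisely isolating and correctly combining these three distinct smallness mechanisms — spatial separation of the two profiles, rapid decay of the profiles, and oscillatory gain from the large velocity — and checking that they cover the full range of $t$ and of $(\mathbf v, y_0)$ allowed by the hypotheses, rather than any single calculation.
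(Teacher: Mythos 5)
There is a genuine gap, and it sits exactly where you stopped using the hypothesis \eqref{prop:modeq0}. First, a smaller issue: your intermediate bound $X(\tau,t)\le C_N\big(\langle t\mathbf v+y_0\rangle^{-N}+\langle D(\tau)\rangle^{-N}\big)$ is not the right quantity; what the decay of the two Schwartz profiles controls is the distance between their \emph{centers}, i.e.\ one gets smallness from $\langle \mathbf v t+y_0-D\rangle^{-N}$ (up to sign conventions on $D$), and this can be order one even when both $|t\mathbf v+y_0|$ and $|D|$ are large. More seriously, the dangerous regime is not ``small $t$, moderate $y_0$'' but any time at which the trajectory $t\mapsto \mathbf v t+y_0$ passes near the origin; this happens at $t\approx |y_0|/|\mathbf v|$ when $y_0$ is large and nearly anti-parallel to $\mathbf v$, and there your separation estimate $(|\mathbf v|-\varepsilon)t-|y_0|-\varepsilon\gtrsim M_0 t$ fails because you dropped the $-|y_0|$ term, while $D=0$ is always allowed in $B_{\R^3}(0,\varepsilon\langle t\rangle)$, so the two bumps can sit on top of each other. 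Your fallback, the oscillatory gain from $|\mathbf v|>M_0$, only yields $O(M_0^{-N})$, which is not smaller than an arbitrary preassigned $\delta$, since $M_0>1$ is fixed independently of $\delta$. You correctly noted that pointwise smallness of $(1+|t\mathbf v+y_0|^2)^{-1}$ does not follow from smallness of its integral, but the conclusion is not that \eqref{prop:modeq0} can be discarded: it is indispensable, and it enters through a geometric argument, not pointwise.

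The paper's proof resolves precisely this case by a dichotomy in $|\mathbf v|$ calibrated to $\delta$, not to $M_0$. If $|\mathbf v|\ge C\delta^{-1}$, integration by parts gives $\mathcal T(t,\delta)=\R^4$ and there is nothing to prove. If $|\mathbf v|\le C\delta^{-1}$, one shows that \eqref{prop:modeq0} with $\varepsilon$ small forces $\big||\mathbf v|\overrightarrow e\,t+y_0\big|\ge M(\delta)$ for all $t>0$ and all $\overrightarrow e$ in the $\varepsilon_1$-cone: assuming a close approach, the resulting quadratic inequality in $t$ first forces $|y_0|\ge A$ (otherwise the integral bound along $\overrightarrow e=\mathbf v/|\mathbf v|$ already gives $|\mathbf v|\gtrsim\varepsilon^{-1}$, contradicting $|\mathbf v|\le C\delta^{-1}$), and then, via positivity of the discriminant, that $-\widehat y_0$ lies inside the cone; the integral along the direction $-\widehat y_0$ then gives $|\mathbf v|\ge \frac\pi2\varepsilon^{-1}$, again a contradiction. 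Note also that the sup over the whole cone in \eqref{prop:modeq0} (not just the single direction $\mathbf v/|\mathbf v|$, which is all you invoke) is essential: since $|D|<\varepsilon\langle t\rangle$ one writes $D=\mathbf v_\varepsilon t+y_\varepsilon$ with $|\mathbf v_\varepsilon|,|y_\varepsilon|<\varepsilon$ and applies the lower bound with the perturbed direction $\frac{\mathbf v-\mathbf v_\varepsilon}{|\mathbf v-\mathbf v_\varepsilon|}$ to get $|\mathbf v t+y_0-D|\ge M-\varepsilon$, and only then do the Schwartz decay of $\phi_0$ and $\Phi_\varkappa$ give $X(\tau,t)<\delta$. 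Without this chain your argument leaves uncovered the case $M_0<|\mathbf v|\le C\delta^{-1}$, $y_0$ large and anti-parallel to $\mathbf v$, $t\approx|y_0|/|\mathbf v|$, $D$ small, where none of your three mechanisms produces smallness below $\delta$.
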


\begin{lemma}\label{lem:propmodu1}
There exists $\delta>0$ s.t.\ for any $t_0\geq 0$ and any $\tau_0\in \mathcal T(t_0,\delta)$,
there exists $(\tau,p,w)\in C^1(X;\R^4\times\R^4\times \R^2)$, with $X:=(t_0-\delta,t_0+\delta)\times e^{J\tau _0\cdot \Diamond}B_{H^1}(\Phi_\varkappa,\delta)$,
 which satisfies \eqref{prop:modeq1}--\eqref{prop:modeq5}. Furthermore,
 in any  open  subset of $X$ there is only one such function $(\tau,p,w)$.
\end{lemma}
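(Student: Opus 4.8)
The plan is to obtain $(\tau,p,w)$ from the implicit function theorem applied to the map
\[
F:=(\mathcal F_1,\dots,\mathcal F_4,\mathcal G_1,\dots,\mathcal G_4,\mathcal L_1,\mathcal L_2):[0,\infty)\times H^1\times\R^4\times\R^4\times\R^2\longrightarrow\R^{10},
\]
whose components are defined in \eqref{prop:modeq2}--\eqref{prop:modeq4}. This $F$ is smooth in $(\tau,p,w)$ and jointly $C^1$, because $\phi_{\omega_1},\phi_0,Q_w$ are Schwartz and smooth in their parameters (by (H1), (H4) and Proposition \ref{prop:bddst}), $\tilde R$ from \eqref{prop:modeq5} is affine in $u$, and the $t$-dependence enters only through the smooth map $t\mapsto e^{J(\frac12\mathbf v\cdot x+\frac t4|\mathbf v|^2)}Q_w(\cdot+t\mathbf v+y_0)$; since \eqref{eq:hermitian} is real-valued, $F=0$ is a square system (ten equations in ten unknowns). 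I would fix $t_0\ge0$ and $\tau_0\in\mathcal T(t_0,\delta)$, and set $u_0:=e^{J\tau_0\cdot\Diamond}\phi_{\omega_1}=e^{J\tau_0\cdot\Diamond}\Phi_\varkappa$ (using $v(\varkappa)=0$, $\omega(\varkappa)=\omega_1$). Since $Q_0=0$, $\tilde R(t_0,u_0,\tau_0,\varkappa,0)=0$ and hence $F(t_0,u_0,\tau_0,\varkappa,0)=0$: this is the base point.

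The core of the argument is that the differential $D:=\partial_{(\tau,p,w)}F$ at the base point is invertible with bounded inverse. As $\tilde R$ vanishes there, only the $(\tau,p,w)$-variation of $-e^{J\tau\cdot\Diamond}\Phi_p-e^{J(\ldots)}Q_w(\cdot+t\mathbf v+y_0)$ survives, so $D$ is a block matrix with an $8\times8$ upper-left block $A$, a lower-right $2\times2$ block $D_w$, and off-diagonal blocks $B,C$. Using $\partial_{\tau_k}(e^{J\tau\cdot\Diamond}\Phi_p)=e^{J\tau\cdot\Diamond}J\Diamond_k\Phi_p$ and $\partial_{p_k}(e^{J\tau\cdot\Diamond}\Phi_p)=e^{J\tau\cdot\Diamond}\partial_{p_k}\Phi_p$, and that $e^{J\tau_0\cdot\Diamond}$ preserves \eqref{eq:hermitian}, the block $A$ becomes (minus) the matrix of pairings between the basis $\{J\Diamond_j\Phi_\varkappa,\partial_{p_j}\Phi_\varkappa\}$ of $N_g(\mathcal L_\varkappa)$ in \eqref{eq:kernel1} and the functions $\{J^{-1}\partial_{p_j}\Phi_\varkappa,\Diamond_j\Phi_\varkappa\}$; by the bi-orthogonality relations behind the projection formula \eqref{eq:defproj} (valid by (H5)--(H6)) this equals $\mathrm{diag}(I_4,-I_4)$, so $\|A^{-1}\|=1$. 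For $D_w$: from $Q_w=w\phi_0+q_w$ with $\|q_w\|_{\Sigma_n}\le C|w|^3$ one gets $\partial_{w_1}Q_0={}^{t}(\phi_0,0)$, $\partial_{w_2}Q_0={}^{t}(0,\phi_0)$, whence $D_w=-\langle\partial_{w_k}Q_0,\partial_{w_j}Q_0\rangle=-\|\phi_0\|_{L^2}^2 I_2=-I_2$. Finally, the entries of $B$ and $C$ are pairings of the phase-rotated, translated $\partial_{w_k}Q_0$ — built from $\phi_0(\cdot+t_0\mathbf v+y_0)$ with the plane wave $e^{\frac\im2\mathbf v\cdot x}$ — against $e^{J\tau_0\cdot\Diamond}$ applied to $\partial_{p_j}\Phi_\varkappa$ or $\Diamond_l\Phi_\varkappa$; these are precisely the quantities occurring in $X(\tau_0,t_0)$ of Lemma \ref{lem:propmodu2} (its factors $J^{k-1}$ and the case $a+b=1$ accounting for all of them), so $\|B\|+\|C\|\le C\,X(\tau_0,t_0)<C\delta$ by $\tau_0\in\mathcal T(t_0,\delta)$. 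Hence, for $\delta$ small, $D$ is invertible with $\|D^{-1}\|\le 2$.

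To run the implicit function theorem over all of $X$, I would check that $\partial_{(\tau,p,w)}F$ keeps this block structure — $A,D_w$ perturbed by $O(\delta)$, off-diagonal blocks bounded by $CX(\tau,t)$ — on the slab $(t_0-\delta,t_0+\delta)\times e^{J\tau_0\cdot\Diamond}B_{H^1}(\Phi_\varkappa,\delta)$ with $|\tau-\tau_0|+|p-\varkappa|+|w|<C_1\delta$; on it $X(\tau,t)$ stays small for $t,\tau$ within $O(\delta)$ of $t_0,\tau_0$, because the overlap integrals defining $B,C$ are suppressed by the large relative momentum $\sim\mathbf v$ (via the oscillation $e^{\frac\im2\mathbf v\cdot x}$, $|\mathbf v|>M_0$) and by the spatial decay of $\phi_0$ and of the soliton modes — cf.\ Lemma \ref{lem:propmodu2}. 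Shrinking $\delta$ once more, $\partial_{(\tau,p,w)}F$ is then invertible with uniformly bounded inverse on a neighborhood of the slab, and the quantitative implicit function theorem provides a $C^1$ map $(\tau,p,w):X\to\R^4\times\R^4\times\R^2$ solving $F=0$, i.e.\ \eqref{prop:modeq2}--\eqref{prop:modeq4}, with $(\tau,p,w)(t_0,u_0)=(\tau_0,\varkappa,0)$; since the Jacobian stays invertible throughout $X$, this solution is unique on every open subset of $X$. The normalization \eqref{prop:modeq1} then comes from uniqueness: for $u=e^{J\tau_0\cdot\Diamond}\phi_{\omega_1}$ (the center of the $H^1$-ball in $X$) and any $t\in(t_0-\delta,t_0+\delta)$, $\tilde R(t,u,\tau_0,\varkappa,0)=0$, so $F=0$ there and $(\tau,p,w)(t,u)=(\tau_0,\varkappa,0)$.

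The main obstacle will be the uniform smallness of the coupling blocks $B,C$, i.e.\ of $X(\tau,t)$ over the slab — this is exactly the content of the hypothesis $\tau_0\in\mathcal T(t_0,\delta)$ and of Lemma \ref{lem:propmodu2}, which encode the weak interaction between the fast bound state $\phi_0(\cdot+t\mathbf v+y_0)$ (speed $|\mathbf v|>M_0$, momentum $\sim-\mathbf v/2$) and the slowly drifting large soliton $e^{J\tau\cdot\Diamond}\Phi_\varkappa$ (drift $\le\varepsilon_2\langle t\rangle$). By contrast, the identification $A=\mathrm{diag}(I_4,-I_4)$ through the generalized-kernel structure \eqref{eq:kernel1}--\eqref{eq:defproj}, the computation $D_w=-I_2$, and the regularity and square-system bookkeeping are all routine.
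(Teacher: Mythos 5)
Your architecture is the same as the paper's: apply a quantitative implicit function theorem (the paper's Theorem \ref{thm:1}) to the ten--dimensional system $(\mathcal F_j,\mathcal G_j,\mathcal L_j)$, observe that at the base point the Jacobian in $(\tau,p,w)$ is an invertible constant block matrix (the $8\times 8$ soliton block $\mathrm{diag}(I_4,-I_4)$ coming from the biorthogonality relations behind \eqref{eq:defproj}, the $w$--block $\pm I_2$ from $\|\phi_0\|_{L^2}=1$), and that the error terms are bounded by $C\|u-e^{J\tau\cdot\Diamond}\Phi_p\|_{L^2}+C|w|+C\,X(\tau,t)$; then the uniform-in-$(t_0,\tau_0)$ size of the slab, the normalization \eqref{prop:modeq1} and the uniqueness all come out of the quantitative IFT. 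This is exactly the paper's computation, and those parts of your proposal are fine.

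The genuine gap is in your justification that $X(\tau,t)$ stays small as $t$ ranges over the whole interval $(t_0-\delta,t_0+\delta)$ (and $\tau$ near $\tau_0$). You attribute this to suppression of the overlap integrals by the oscillation $e^{\frac 12 J\mathbf v\cdot x}$ with $|\mathbf v|>M_0$ and by the spatial decay of $\phi_0$ and of the soliton modes. That mechanism does not carry the step: $M_0>1$ is only a fixed constant, so for $|\mathbf v|$ of order one the oscillation gives nothing, and decay alone does not make the overlaps small when the two centers are close --- this is precisely the situation that the hypothesis $\tau_0\in\mathcal T(t_0,\delta)$ excludes, and it excludes it only at time $t_0$. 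Moreover, continuity in $t$ is not uniform in $\mathbf v$, because changing $t$ translates $\phi_0$ by $(t-t_0)\mathbf v$ and rotates the phase by $\frac{t-t_0}{4}|\mathbf v|^2$, which can be huge. The missing ingredient is the dichotomy the paper uses: if $|\mathbf v|\geq C\delta^{-1}$, integration by parts in the oscillatory factor shows $\mathcal T(t,\delta)=\R^4$ and there is nothing to prove; if instead $|\mathbf v|\leq C\delta^{-1}$, then for $|t-t_0|$ small compared with suitable powers of $\delta$ both $(t-t_0)\mathbf v$ and $\frac{t-t_0}{4}|\mathbf v|^2$ are small, so $X(\tau,t)\leq X(\tau_0,t_0)+o(1)$ by $L^2$--continuity of translations and phase rotations, and only here does the hypothesis $\tau_0\in\mathcal T(t_0,\delta)$ enter. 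A minor additional point: the uniqueness assertion should be drawn from the contraction/ball statement of Theorem \ref{thm:1} (uniqueness of the solution with $(\tau,p,w)$ in a fixed ball around $(\tau_0,\varkappa,0)$), not from the mere invertibility of the Jacobian throughout the slab, which by itself does not rule out distinct solution branches.
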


\begin{proof}[Proof of Lemma \ref{lem:propmodu2}]
First, notice that if $|\mathbf v|\geq C \delta^{-1}$ for some constant $C>0$, then we have $\mathcal T(t,\delta)=\R^4$.
This can be easily shown by integration by parts.
Therefore, we can assume $|\mathbf v|\leq C\delta^{-1}$.
Notice that there is an $M=M(\delta)$ such that, if
\begin{align}\label{eq:calU0}
\inf _{\text{dist} _{S^2}(  \overrightarrow{{e}}, \frac{\mathbf{v}}{|\mathbf{v}|} ) \le \varepsilon  _1} | |\mathbf{v}|\overrightarrow{{e}} \tilde t+ y_0|\ge M,\ \mathrm{ for\ all}\  \tilde t> 0,
\end{align}
then for sufficiently small $\varepsilon>0$, we have
$ B_{\R^3}(0, \varepsilon\<t\>)\times \R\subset {\mathcal T} (t,\delta )$ for all $t\ge 0$.
Indeed, for any $\tau=(D,-\vartheta)\in B_{\R^3}(0, \varepsilon\<t\>)\times \R$, there exists $\mathbf v_{\varepsilon}\in \R^3$ with $|\mathbf v_\varepsilon|<\varepsilon$ and $y_\varepsilon\in \R^3$ with $|y_\varepsilon|<\varepsilon$ s.t.\ $D=\mathbf v_\varepsilon t + y_\varepsilon$.
Therefore,
\begin{align*}
|\mathbf v t + y_0 -D|=|(\mathbf v -\mathbf v_\varepsilon)t + y_0-y_\varepsilon|\geq \left||\mathbf v|\(\frac{\mathbf v-\mathbf v_\varepsilon}{|\mathbf v-\mathbf v_\varepsilon|}\)\(\frac{|\mathbf v-\mathbf v_\varepsilon|}{|\mathbf v|}t\)-y_0\right|-|y_\varepsilon|\geq M-\varepsilon,
\end{align*}
where we have used \eqref{eq:calU0} with $\overrightarrow{{e}}=\frac{\mathbf v-\mathbf v_\varepsilon}{|\mathbf v-\mathbf v_\varepsilon|}$ and $\tilde t=\frac{|\mathbf v-\mathbf v_\varepsilon|}{|\mathbf v|}t$. This in turn implies $X(\tau , t)<\delta$ for all
$t\ge 0$ if $M$ is large enough, and so $\tau \in \mathcal{T}(t, \delta ) $.

\noindent  We fix  such an  $M$ and suppose now that for some $t> 0$ and some $\tilde{\mathbf v}=|\mathbf v|\overrightarrow{{e}}$ with $\mathrm{dist}_{\S^2}(\overrightarrow{{e}},\frac{\mathbf v}{|\mathbf v|})<\varepsilon_1$, we have  $| \tilde {\mathbf{v}} t+ y_0|< M $. We will
show that  for $\varepsilon $ small this is incompatible with $| {\mathbf{v}}|< C\delta^{-1}$.
We have
\begin{equation}\label{eq:calU1}
\begin{aligned}
& t ^2|{\mathbf{v}}|^2+2t \tilde{\mathbf{v}}\cdot y_0 +|y_0| ^2-M^2<0.
\end{aligned}
\end{equation}
Next we claim that for $ \varepsilon  $ sufficiently small we have $|y_0| \ge A:=\max\(16 \frac{M^2}{\varepsilon _1^2}, 2M+C\delta^{-1} \)$ with $\varepsilon _1>0$ the fixed constant used in \eqref{def:eps}. Indeed, if this is not the case, then
\begin{equation}\label{eq:calU2}
\begin{aligned}
&   \int _0^\infty   \langle  | {\mathbf{v}}|  t+|y_0|      \rangle ^{-2} dt \le  \int _0^\infty \langle \tilde{\mathbf{v}}  t+y_0    \rangle ^{-2}  dt \le \varepsilon \Rightarrow  |{\mathbf{v}}| \geq (\frac{\pi}{2} -\arctan A) \varepsilon^{-1} .
\end{aligned}
\end{equation}
But for $\varepsilon \in (0, \varepsilon _0)$, with $\varepsilon _0 >0$    small enough this contradicts with  $| \mathbf{v}|< C\delta^{-1}$.
So we can assume  $|y_0| \ge A$.
Further, we can assume $t\geq 1$ since if $0<t<1$, then
\begin{align*}
|\tilde{\mathbf v}t+y_0|\geq |y_0|-|\mathbf v|\geq A-C\delta^{-1}\geq M.
\end{align*}
For $\widehat{y} := \frac{y}{|y|} $ and $\widehat{\mathbf{v}} := \frac{\tilde{\mathbf{v}}}{|\mathbf{v} |} $
   the discriminant of the quadratic in $t$ polynomial in \eqref{eq:calU1}  is positive:
\begin{align}\label{eq:calU2.5}
\cos ^2 \alpha >  1-M^2 |y_0|^{-2}> 1-\frac{\varepsilon_1^2}{16} \text{ where } -\widehat{y}_0\cdot \widehat{\mathbf{v}} =\cos (\alpha )
\end{align} with $\alpha =\text {dist} _{S^2} (-\widehat{y}_0, \widehat{\mathbf{v}}) $ the angle  between
$-\widehat{y}_0$ and $ \widehat{\mathbf{v}}  $.
    \eqref{eq:calU1} requires also $\cos (\alpha )>0$,
  so
\begin{equation}\label{eq:calU3}
\begin{aligned}
&   \cos (\alpha ) > \sqrt{1-\varepsilon _1^{2}/16}.
\end{aligned}
\end{equation}
 Since $\varepsilon _1$ has been chosen
sufficiently small, from \eqref{eq:calU3} we obtain $\alpha<\varepsilon _1/3 $.
This implies
\begin{equation}\label{eq:calU4}
\begin{aligned}
&    \varepsilon \ge     \int _0^\infty \langle  -|\mathbf{v}|\widehat{y}_0   t+y_0    \rangle ^{-2}  dt = |\mathbf{v}| ^{-1}  \int _0^\infty \langle       t-|y_0|    \rangle ^{-2}  dt \ge  \frac{\pi}{2}  |\mathbf{v}| ^{-1}    .
\end{aligned}
\end{equation}
But this again  contrasts with  $| \mathbf{v}|< C\delta^{-1}$. Hence we conclude that  $| \mathbf{v}|< C\delta^{-1}$ and $\varepsilon$ sufficiently  small  imply  $| \tilde{\mathbf{v}} t+ y_0|\ge M $ for all $t> 0$ for any preassigned $M$.

\end{proof}

\begin{proof}[Proof of Lemma \ref{lem:propmodu1}]
We apply the implicit function theorem (Theorem \ref{thm:1}) to
$X=\R\times H^1(\R^3)$, $Y=\R^{10}$ and
$\textbf{F}\in C^\infty ( [0,\infty )\times H^1 \times \R ^{4} \times \mathcal{P}  \times B_{\R^2} (\mathbf{a}(\text{P\ref{prop:bddst}}) ), \R ^{10} )$ for $$\textbf{F}=(\mathcal{F}_1,....,\mathcal{F}_4,- \mathcal{G}_1,....,-\mathcal{G}_4,-\mathcal{L}_1,\mathcal{L}_2 ).$$

\noindent
We first compute the Jacobian matrix of $\textbf F$.
We compute the derivatives of  $\tilde R$.
\begin{align*}
&\partial_{\tau_k}\tilde R = -e^{J\tau \Diamond}J\Diamond_k \Phi_p,\ k=1,2,3,4,\\&
\partial_{p_k}\tilde R = -e^{J\tau \Diamond}\partial_{p_k} \Phi_p,\ k=1,2,3,4,\\&
\partial_{w_k}\tilde R = -e^{J(\frac 1 2 vx +\frac t 4 |v|^2)}\partial_{w_k}Q_w(\cdot + tv +y_0), \ k=1,2.
\end{align*}
Therefore, we have
\begin{align*}
\partial_{\tau_k}\mathcal F_j&=-\<e^{J\tau \Diamond}J\Diamond_k \Phi_p, e^{J\tau \Diamond}J^{-1}\partial_{p_j}\Phi_p\>+\<\tilde R, e^{J\tau \Diamond}\Diamond_k\partial_{p_j}\Phi_p\>\\&
=\delta_{jk}+\<\tilde R, e^{J\tau \Diamond}\Diamond_k\partial_{p_j}\Phi_p\>\\
\partial_{p_k}\mathcal F_j&=-\<e^{J\tau \Diamond}\partial_{p_k} \Phi_p, e^{J\tau \Diamond}J^{-1}\partial_{p_j}\Phi_p\>+\<\tilde R, e^{J\tau \Diamond}J^{-1}\partial_{p_k}\partial{p_j}\Phi_p\>\\&
=\<\tilde R, e^{J\tau \Diamond}J^{-1}\partial_{p_k}\partial_{p_j}\Phi_p\>\\
\partial_{w_k}\mathcal F_j&=-\<e^{J(\frac 1 2 vx +\frac t 4 |v|^2)}\partial_{w_l}Q_w(\cdot + tv +y_0) , e^{J\tau \Diamond}J^{-1}\partial_{p_j}\Phi_p\>\\&
=-\<e^{J(\frac 1 2 vx +\frac t 4 |v|^2)}J^{k-1}\phi_0(\cdot + tv +y_0)  , e^{J\tau \Diamond}J^{-1}\partial_{p_j}\Phi_p\>\\&\quad-\<e^{J(\frac 1 2 vx +\frac t 4 |v|^2)}\partial_{w_l}q_w(\cdot + tv +y_0) , e^{J\tau \Diamond}J^{-1}\partial_{p_j}\Phi_p\>,
\end{align*}
where we have used
\begin{align*}
-\<e^{J\tau \Diamond}J\Diamond_k \Phi_p, e^{J\tau \Diamond}J^{-1}\partial_{p_j}\Phi_p\>&=\frac 1 2 \partial_{p_j}\<\Diamond_k \Phi_p, \Phi_p\>
=\partial_{p_j}\Pi_k(\Phi_p)=\partial_{p_j}p_k=\delta_{jk}.
\end{align*}

\noindent
Further, we have
\begin{align*}
\partial_{\tau_k}\mathcal G_j&=-\<e^{J\tau \Diamond}J\Diamond_k \Phi_p, e^{J\tau \Diamond}\Diamond_j\Phi_p\>+\<\tilde R, Je^{J\tau \Diamond}\Diamond_k\Diamond_j\Phi_p\>\\
&=\<\tilde R, Je^{J\tau \Diamond}\Diamond_k\Diamond_j\Phi_p\>\\
\partial_{p_k}\mathcal G_j&=-\<e^{J\tau \Diamond}\partial_{p_k} \Phi_p, e^{J\tau \Diamond}\Diamond_j\Phi_p\>+\<\tilde R, e^{J\tau \Diamond}\Diamond_j\partial_{p_k}\Phi_p\>\\
&=-\delta_{jk} + \<\tilde R, e^{J\tau \Diamond}\Diamond_j\partial_{p_k}\Phi_p\>\\
\partial_{w_k}\mathcal G_j&=-\<e^{J(\frac 1 2 vx +\frac t 4 |v|^2)}\partial_{w_k}Q_w(\cdot + tv +y_0) , e^{J\tau \Diamond}\Diamond_j\Phi_p\>\\&
=-\<e^{J(\frac 1 2 vx +\frac t 4 |v|^2)}J^{k-1}\phi_0(\cdot + tv +y_0) , e^{J\tau \Diamond}\Diamond_j\Phi_p\>\\&\quad-\<e^{J(\frac 1 2 vx +\frac t 4 |v|^2)}\partial_{w_k}q_w(\cdot + tv +y_0) , e^{J\tau \Diamond}\Diamond_j\Phi_p\>,
\end{align*}
and
\begin{align*}
\partial_{\tau_k}\mathcal L_j&=-\<e^{J\tau \Diamond}J\Diamond_k \Phi_p, e^{J(\frac 1 2 vx +\frac t 4 |v|^2)}\partial_{w_j}Q_w(\cdot + tv +y_0)\>\\&=-\<e^{J\tau \Diamond}J\Diamond_k \Phi_p, e^{J(\frac 1 2 vx +\frac t 4 |v|^2)}J^{j-1}\phi_0(\cdot + tv +y_0)\>\\&\quad-\<e^{J\tau \Diamond}J\Diamond_k \Phi_p, e^{J(\frac 1 2 vx +\frac t 4 |v|^2)}\partial_{w_j}q_w(\cdot + tv +y_0)\>\\
\partial_{p_k}\mathcal L_j&=-\<e^{J\tau \Diamond}\partial_{p_k} \Phi_p, e^{J(\frac 1 2 vx +\frac t 4 |v|^2)}\partial_{w_j}Q_w(\cdot + tv +y_0)\>\\&=-\<e^{J\tau \Diamond}\partial_{p_k} \Phi_p, e^{J(\frac 1 2 vx +\frac t 4 |v|^2)}J^{j-1}\phi_0(\cdot + tv +y_0)\>\\&\quad-\<e^{J\tau \Diamond}\partial_{p_k} \Phi_p, e^{J(\frac 1 2 vx +\frac t 4 |v|^2)}\partial_{w_j}q_w(\cdot + tv +y_0)\>\\
\partial_{w_k}\mathcal L_j&=-\<e^{J(\frac 1 2 vx +\frac t 4 |v|^2)}\partial_{w_l}Q_w(\cdot + tv +y_0) , e^{J(\frac 1 2 vx +\frac t 4 |v|^2)}\partial_{w_j}Q_w(\cdot + tv +y_0)\>\\&\quad+\<\tilde R,e^{J(\frac 1 2 vx +\frac t 4 |v|^2)}\partial_{w_k}\partial_{w_j}Q_w(\cdot + tv +y_0)\>,\\&=
-\<\partial_{w_k}Q_w, \partial_{w_j}Q_w\>+\<\tilde R,e^{J(\frac 1 2 vx +\frac t 4 |v|^2)}\partial_{w_k}\partial_{w_j}q_w(\cdot + tv +y_0)\>,
\end{align*}
Now, since
$
Q_w=(w_1-J w_2)\phi_0 + q_w,$ and
$\partial_{w_j}q_w=O(|w|^2)$,
we have
\begin{align}\label{eq:orth}
-\<\partial_{w_k}Q_w, \partial_{w_j}Q_w\>=\<(-J)^{k-1}\phi_0,(-J)^{j-1}\phi_0\>+O(|w|^2)=(-1)^{j}\<\phi_0,J^{k+j-2}\phi_0\>+O(|w|^2).
\end{align}
Therefore,
\begin{align*}
\begin{pmatrix}
-\partial_{w_1}\mathcal L_1 & -\partial_{w_2}\mathcal L_1 \\
\partial_{w_1}\mathcal L_2 & \partial_{w_2}\mathcal L_2 \\
\end{pmatrix}
&=
\begin{pmatrix}
\<\partial_{w_1}Q_w, \partial_{w_2}Q_w\> & \<\partial_{w_2}Q_w, \partial_{w_2}Q_w\> \\
-\<\partial_{w_1}Q_w, \partial_{w_1}Q_w\> & -\<\partial_{w_2}Q_w, \partial_{w_1}Q_w\>
\end{pmatrix}+O(|w|^2)
\\&
=
\begin{pmatrix}
 1 & 0 \\ 0 & 1
\end{pmatrix}
+O(|w|^2)
\end{align*}
Therefore, we have
\begin{align*}
\frac{\partial \mathbf F}{\partial (\tau,p,w)}=I_{10}+A.
\end{align*}
where $I_{10}$ is the unit matrix and each component in $A$ can be bounded by
\begin{align}\label{4}
C\|u-e^{J\tau\cdot \Diamond}\Phi_p\|_{L^2}+C|w|+X(\tau,t),
\end{align}
where $C$ is independent of $(p,\tau, w)\in \mathcal P\times \R^4 \times B_{\R^2}(0;\delta_0)$.

Now, there exists a universal constant $\tilde \delta$ s.t.\ if the absolute value of each component of $A$ is less than $\tilde \delta$, then $(I_{10}+A)^{-1}$ exists and its operator norm is bounded by $2$.
Now we claim there exists $\tilde \delta_1>0$ s.t.\ if $(\tau,p,w)\in B_{\R^{10}}((\tau_0,\varkappa,0),\tilde \delta_1)$ and $(t,u)\in (t_0-\tilde \delta_1,t_0+\tilde \delta_1)\times B_{H^1}(\Phi_\varkappa, \tilde \delta_1)$, we have
$\|(I_{10}+A)^{-1}\|\leq 2$.
The bounds for $C\|u-e^{J\tau\cdot \Diamond}\Phi_p\|_{L^2}+C|w|$ is obvious so we only consider the bound of $X(\tau,t)$.
Notice that if $|\mathbf v|\geq C\tilde \delta^{-1}$, then since $\mathcal T(t,\tilde \delta)=\R^4$, we only have to consider the case $|\mathbf{v}|\leq C\tilde \delta^{-1}$.
In this case, since
\begin{align}
&|\<e^{J(\frac 1 2 \mathbf v \cdot x +\frac t 4 |\mathbf v|^2)}J^{k-1}\phi_0(\cdot + t \mathbf v +y_0)  , e^{J\tau \Diamond}J^{l-1}\partial_{p_j}^a\Diamond_l^b\Phi_{\varkappa}\>|\\&
=|\<e^{J\frac{t-t_0}{4}|\mathbf v|}e^{J(\frac 1 2 \mathbf v \cdot x +\frac t 4 |\mathbf v|^2)}J^{k-1}\phi_0(\cdot + t_0 \mathbf v +y_0 +(t-t_0)\mathbf v)  , e^{J\tau \Diamond}J^{l-1}\partial_{p_j}^a\Diamond_l^b\Phi_{\varkappa}\>|\\&
\leq
 C\tilde \delta + C|e^{J\frac{t-t_0}{4}|\mathbf v|}-1|+ \|\phi_0(\cdot + (t-t_0)\mathbf v)-\phi_0\|_{L^2}.
\end{align}
Therefore, we see there exists $\tilde \delta_1$ which satisfies the claim.

Finally, setting $\delta_1=\delta_2=\tilde \delta_1$, by Theorem \label{thm:1}, there exists $\delta_3,\delta_4>0$ independent to the choice of $t_0,\tau_0$ s.t.\ the desired $(\tau,p,w)\in C^1((t_0-\delta,t_0+\delta)\times e^{J\tau \cdot \Diamond}B_{H^1}(\Phi_\varkappa,\delta);B_{\R^{10}}((\tau_0,\varkappa,0),\delta_4))$ exists.
\end{proof}

We  choose  $p_0, v_0, \omega _0$  such that if
$u_0$ is the initial value in  \eqref{eq:NLSP}, then
\begin{equation}\label{eq:p0}
 \Pi (\Phi _{p_0})=\Pi (u_0),  \text{ $v_0=v(p_0)$ and $\omega _0 =\omega  (p_0)$.}
\end{equation}
We fix $\pi \in {\mathcal P}$.  Now, Proposition \ref{prop:modulation}  can be reframed as follows.

\begin{lemma} \label{lem:cmgnt}   For $|\pi - p _0|< \delta _0$  and  $|\varkappa - p _0|< \delta _0$ for  sufficiently small $\delta _0$
and for $(t,u)\in  B(\varepsilon_2)$ as in Proposition \ref{prop:modulation},
there exists $r\in N_g^\perp(\mathcal{L}_{p_0}^*) $  s.t. for the   $(\tau , w) $
of  Propostion \ref{prop:modulation} we have
\begin{align} & \label{eq:coordinate0}
   u=U[ t,u]+    {Q}[t, u]  \text{  where   }   U[t, u]:=
e^{J
\tau \cdot \Diamond }
  (\Phi _{p} + P(p)P(\pi ) r) \text{  and}    \\&  {Q}[t,  u]
:=   e^{ J \Theta \cdot \Diamond }  Q_w    \text{  for   }     \Theta :=\left   (     - \mathbf{v}t -y_0,   2 ^{-1}\mathbf{v} \cdot x+ 4 ^{-1} t    |\mathbf{v} |^2\right  )  \nonumber
\end{align}
with
\begin{equation} \label{eq:coordinate01} \begin{aligned} & \text{$\langle e^{J
\tau \cdot \Diamond }
    P(p)P(\pi ) r    , J e^{ J \Theta \cdot \Diamond } \partial _{w_i}   Q_w \rangle =0    $  for $i=1,2$.}
   \end{aligned}
\end{equation}
\end{lemma}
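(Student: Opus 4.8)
The plan is to read the statement off Proposition~\ref{prop:modulation} by \emph{changing the reference space} for the soliton remainder, rather than re-running a modulation argument. First apply Proposition~\ref{prop:modulation} to $(t,u)\in B(\varepsilon_2)$: it produces $(\tau,p,w)$ with
\[
u=e^{J\tau\cdot\Diamond}\Phi_p+e^{J\Theta\cdot\Diamond}Q_w+\tilde R,\qquad \tilde R:=e^{J\tau\cdot\Diamond}R ,
\]
and with $\mathcal F_j=\mathcal G_j=0$ ($j=1,\dots,4$) and $\mathcal L_i=0$ ($i=1,2$). Since $e^{J\tau\cdot\Diamond}$ is an isometry for $\langle\cdot,\cdot\rangle$, the eight equations \eqref{prop:modeq2}--\eqref{prop:modeq3} say precisely that $R\perp N_g(\mathcal L_p^{\ast})$: indeed by \eqref{eq:begspectdec3}, and since $p\mapsto\lambda(p)$ is a diffeomorphism, $\{J^{-1}\partial_{p_j}\Phi_p\}_{j=1}^4$ and $\{J^{-1}\partial_{\lambda_j}\Phi_p\}_{j=1}^4$ span the same subspace. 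Hence $R\in N_g^\perp(\mathcal L_p^{\ast})$, i.e.\ $P(p)R=R$. The task is thus to produce $r\in N_g^\perp(\mathcal L_{p_0}^{\ast})$ with $P(p)P(\pi)r=R$ and to check \eqref{eq:coordinate01}.

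The core step is a continuity-of-projections argument. From the explicit formula \eqref{eq:defproj} together with the smoothness of $p\mapsto(\Phi_p,\partial_{p_j}\Phi_p,\Diamond_j\Phi_p)$ with values in Schwartz functions (Proposition~\ref{prop:bddst} and (H4)), the map $p\mapsto P(p)$ is smooth from a neighbourhood of $p_0$ into $B(L^2)$ (and into $B(H^1)$), so $\|P(q)-P(p_0)\|_{B(L^2)\cap B(H^1)}=O(|q-p_0|)$ for $q\in\{p,\pi\}$ when $|p-p_0|,|\pi-p_0|,|\varkappa-p_0|<\delta_0$. Consequently, for $\delta_0$ small the operator
\[
T:=P(p_0)\,P(p)\,P(\pi)\big|_{N_g^\perp(\mathcal L_{p_0}^{\ast})}\ :\ N_g^\perp(\mathcal L_{p_0}^{\ast})\longrightarrow N_g^\perp(\mathcal L_{p_0}^{\ast})
\]
equals $\mathrm{id}+O(\delta_0)$ and is invertible by a Neumann series with $\|T^{-1}\|\le 2$; moreover $\ker P(p_0)=N_g(\mathcal L_{p_0})$ meets the codimension‑$8$ subspace $N_g^\perp(\mathcal L_p^{\ast})$ only in $\{0\}$ for $|p-p_0|<\delta_0$ (true at $p=p_0$ by \eqref{eq:begspectdec2}, hence nearby by continuity of the spectral projections), so $P(p_0)\big|_{N_g^\perp(\mathcal L_p^{\ast})}$ is injective. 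Now set $r:=T^{-1}\big(P(p_0)R\big)\in N_g^\perp(\mathcal L_{p_0}^{\ast})$, so $\|r\|_{H^1}\lesssim\|\tilde R\|_{H^1}$. Then $P(p_0)\big(P(p)P(\pi)r-R\big)=0$, and since both $P(p)P(\pi)r$ and $R=P(p)R$ lie in $N_g^\perp(\mathcal L_p^{\ast})$, injectivity of $P(p_0)$ there forces $P(p)P(\pi)r=R$. Hence $\tilde R=e^{J\tau\cdot\Diamond}P(p)P(\pi)r$ and
\[
u=e^{J\tau\cdot\Diamond}\big(\Phi_p+P(p)P(\pi)r\big)+e^{J\Theta\cdot\Diamond}Q_w=U[t,u]+Q[t,u],
\]
which is \eqref{eq:coordinate0}; uniqueness of such $r$ is automatic since $T$ is invertible.

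It remains to verify \eqref{eq:coordinate01}. Using $\tilde R=e^{J\tau\cdot\Diamond}P(p)P(\pi)r$ and the isometry property,
\[
\big\langle e^{J\tau\cdot\Diamond}P(p)P(\pi)r,\ Je^{J\Theta\cdot\Diamond}\partial_{w_i}Q_w\big\rangle=\big\langle \tilde R,\ Je^{J\Theta\cdot\Diamond}\partial_{w_i}Q_w\big\rangle ,\qquad i=1,2 ,
\]
and since $J$ commutes with $e^{J\Theta\cdot\Diamond}$ this equals $\langle e^{-J\Theta\cdot\Diamond}\tilde R,\ J\partial_{w_i}Q_w\rangle=\langle \im\, e^{-J\Theta\cdot\Diamond}\tilde R,\ \partial_{w_i}Q_w\rangle$ up to sign, i.e.\ it records exactly the membership $e^{-J\Theta\cdot\Diamond}\tilde R\in\mathcal H_c[w]$, which is the content of the conditions \eqref{prop:modeq4} of Proposition~\ref{prop:modulation} (one may also use \eqref{eq:sp11} and $\partial_{w_j}Q_w=(-J)^{j-1}\phi_0+O(|w|^2)$ to match the two pairs of test functions). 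Thus \eqref{eq:coordinate01} holds. I expect the only genuine work here to be the \emph{uniform} choice of $\delta_0$ making $P(p),P(\pi),P(p_0)$ simultaneously close enough in $B(L^2)\cap B(H^1)$ and keeping $N_g^\perp(\mathcal L_p^{\ast})\cap N_g(\mathcal L_{p_0})=\{0\}$ throughout $|p-p_0|,|\pi-p_0|,|\varkappa-p_0|<\delta_0$; once that quantitative closeness is in place, the rest is a Neumann series plus bookkeeping.
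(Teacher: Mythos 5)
Your construction is sound and, in effect, supplies the details the paper leaves out: the authors state Lemma \ref{lem:cmgnt} as an immediate reframing of Proposition \ref{prop:modulation} (no written proof), and your two ingredients --- reading $\mathcal F_j=\mathcal G_j=0$ as $R:=e^{-J\tau\cdot\Diamond}\tilde R\in N_g^\perp(\mathcal L_p^{\ast})$ via \eqref{eq:begspectdec3} and the $p\mapsto\lambda(p)$ diffeomorphism, and then inverting $P(p_0)P(p)P(\pi)$ on $N_g^\perp(\mathcal L_{p_0}^{\ast})$ by a Neumann series --- are exactly the mechanism the paper relies on elsewhere (your operator $T$ is the $P$-analogue of the isomorphism $P_c(\pi)P_c(p_0)$ of Lemma \ref{lem:projections}). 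The injectivity of $P(p_0)$ on $N_g^\perp(\mathcal L_p^{\ast})$ and the identification $P(p)P(\pi)r=R$ are argued correctly. (Minor slip: the smoothness of $p\mapsto\Phi_p,\partial_{p_j}\Phi_p$ comes from (H4) and the boost formula $\Phi_p=e^{-\frac12 Jv(p)\cdot x}\phi_{\omega(p)}$, not from Proposition \ref{prop:bddst}, which concerns $Q_w$.)

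The one step you should not wave through is the verification of \eqref{eq:coordinate01}. As printed, \eqref{prop:modeq4} is orthogonality of $\tilde R$ to $e^{J(\frac12\mathbf v\cdot x+\frac t4|\mathbf v|^2)}\partial_{w_j}Q_w(\cdot+t\mathbf v+y_0)$, whereas \eqref{eq:coordinate01} (and membership in $\mathcal H_c[w]$, cf. \eqref{eq:contsp}) is orthogonality to $J\partial_{w_j}Q_w$ transported the same way. These two pairs of conditions are \emph{not} exactly equivalent: $\mathrm{Span}\{\partial_{w_1}Q_w,\partial_{w_2}Q_w\}$ is not $J$-invariant once $q_w\neq 0$ (it is so only modulo $O(|w|^2)$), so matching the test functions via $\partial_{w_j}Q_w=(-J)^{j-1}\phi_0+O(|w|^2)$, as your parenthetical suggests, yields \eqref{eq:coordinate01} only up to $O(|w|^2)$ errors, not the exact vanishing the lemma asserts. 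The resolution is that \eqref{prop:modeq4} is evidently intended with the factor $J$ (equivalently $\im$) in front of $\partial_{w_j}Q_w$: this is what makes the lemma a literal restatement, it is what the definition of $\mathcal H_c[w]$, Lemma \ref{lem:orth}, and the step $\langle\eta,J\partial_{w_i}\widetilde Q\rangle=0\Rightarrow\langle\eta,\widetilde Q\rangle=0$ before \eqref{eq:prepw1} all use, and it is consistent with the nondegeneracy of the $(w_1,w_2)$-block in the implicit-function argument. So either state that you are reading \eqref{prop:modeq4} with the $J$ (flagging the typo), or rerun Lemma \ref{lem:propmodu1} with the $J$-version of $\mathcal L_j$; asserting that the printed \eqref{prop:modeq4} ``records exactly'' the $\mathcal H_c[w]$ condition is the only genuine gap in your write-up.
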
 \qed

	Notice that   $ e^{ J \Theta \cdot \Diamond }  Q_w (x)= e^{ J(\frac 1 2 \mathbf{v} \cdot x+\frac t 4  |\mathbf{v} |^2)  }  Q_w (\cdot +\mathbf{v}t +y_0) $.

	Eventually we will set  $\pi =\Pi ( U[ u(t)])$,  but for the moment
	we will take $\pi $ as a parameter.

We will consider  the following   notation:
   \begin{equation} \label{eq:enexpnota}\begin{aligned}& \widetilde{Q}:= {Q}[t,u] \, , \, U:=U[t,u] \, ,
   \,\widetilde{H}:=-\Delta +V(\cdot +\mathbf{v}t+y_0)  .
  \end{aligned}\end{equation}

Since
   $w \in C^\infty ( B(\varepsilon_2)   , \R ^{2   })$,  $w(0,e^{J \tau      \cdot \Diamond}\phi _{\omega _1}) =0 $  for all $\tau \in  {\mathcal T} (0,\delta _1)$,  $\tau _j \in C^\infty ( B(\varepsilon_2)     , \R  )$, $\tau _j ( 0, e^{\im \vartheta}\phi _{\omega _1}) =0 $ for $j\le 3$
 and by the definition of $\epsilon$ in   Theor. \ref{theorem-1.1} we have $|w(0,u_0)|\le c \epsilon$  and    $|\tau _j (  0, u_0) |\le c \epsilon $ for $j\le 3$ for a fixed $c$. For another fixed $c$ we have \begin{equation} \label{eq:U0}
 \begin{aligned}  &    \inf_{\theta  \in \R }\|  U[0,u_0]-e^{\im
\theta }  \phi_ {\omega _1 }(\cdot  )
\|_{H^1}  \le c \epsilon  .
\end{aligned}
\end{equation}

\subsection{Spectral coordinates   associated to $\mathcal{L} _{p}$}

We will  summarize   in this section a number of facts  about  equation  \eqref{NLSP}
when $V\equiv 0$ which have been proved in \cite{Cu0,CM}  or which can be easily proved following
the ideas therein.

First of all we observe that  we have coordinates $(\tau , p , r)$  for the quantity $U$  defined by
\begin{equation} \label{eq:coordinate} \begin{aligned} &
  \R ^4 \times \{ p: |p-p_0  |<a  \} \times ( N_g^\perp(\mathcal{L}_{p_0}^*) \cap  \Sigma _k)  \to \Sigma _k (\R ^3, \R^2 ), \\&  (\tau , p , r) \to U=   e^{J
\tau \cdot \Diamond }
  (\Phi _{p} + P(p)P(\pi ) r) .
\end{aligned} \end{equation}
$(\tau , p , r)$ are coordinates for $U$ in an open set
 \begin{equation}
\label{eq:coounp}\begin{aligned} & \aleph = \cup _{\tau   \in  \R ^4}e^{J \tau      \cdot \Diamond}  B_{H^1}(\phi_{\omega_1},\delta)\end{aligned}
\end{equation}
  with $\delta>0$ sufficiently small.
For any $U\in  H^1(\R ^3, \R ^{2 }) $ we have also
   $\Pi _j=\Pi _j(U)$.
    Then  $(\tau , \Pi , r)$ is also a system of coordinates in $ \aleph$.
 The functions $(\tau , \Pi  )$ depend smoothly  in $U$  while we have $r \in
C^l ( \aleph \cap \Sigma  _{k },  \Sigma _{k -l})$.  Obviously, if we set $(t,u)\to U=U[t,u ]$, which  is a smooth function,   functions
$(t,u)\to (\tau , \Pi , r )$ remain defined.

The next task is to further decompose the variable $r$.  This is done in terms
of the spectral decomposition  of the operator $\mathcal{L} _{p_0}$ as we explain now.

\label{subsec:speccoo}
We now consider the complexification of $L^2(\R ^3, \R ^2)$ into $L^2(\R ^3, \C ^2)$ and think of     $\mathcal{L}_p$ and $J$
as   operators  in  $L^2(\R ^3, \C ^2)$.  Then we     set
\begin{equation}\label{eq:linearization1}
   \text{   $\mathcal{H}_p:=\im \mathcal{L}_p$  with   $ \mathcal {H}_\omega := \mathcal {H}_p$   when $v(p)=0$ and $\omega (p)=\omega$.}
\end{equation}
	We have
\begin{equation}\label{eq:Homega}
\begin{aligned} &
 {\mathcal H}_{\omega  }  =   \im  J  (-\Delta  + \omega ) +  \im  J
  \begin{pmatrix}     \beta  (\phi _\omega ^2 )+2 \beta  '(\phi _\omega ^2 ) \phi_\omega^2  &
0  \\
0 &     \beta  (\phi _\omega ^2 )
 \end{pmatrix}  .
\end{aligned}
\end{equation}
and
\begin{align} &  \label{eq:Homega1}
M^{-1} {\mathcal H}_{\omega  } M =  {\mathcal K}_{\omega  } ,
 \\&
 {\mathcal K}_{\omega  }:=   \sigma _3  (-\Delta  + \omega ) +
 \begin{pmatrix}     \beta  (\phi _\omega ^2 )+  \beta  '(\phi _\omega ^2 ) \phi_\omega^2  &
   \beta  '(\phi _\omega ^2 ) \phi_\omega^2   \\
  - \beta  '(\phi _\omega ^2 ) \phi_\omega^2  &  -\beta  (\phi _\omega ^2 )-  \beta  '(\phi _\omega ^2 ) \phi_\omega^2
 \end{pmatrix} \nonumber \\&
M:=
 \frac{1}{2} \begin{pmatrix}   1  &
1  \\
-\im  &   \im
 \end{pmatrix}   \, , \quad   M^{-1} =
  \begin{pmatrix}   1  &
\im   \\
1  &   -\im
 \end{pmatrix}   \, , \quad   \sigma _3=\begin{pmatrix} 1 & 0\\0 & -1 \end{pmatrix}
   .  \nonumber
\end{align}

\begin{remark}
Notice that $M\begin{pmatrix} u \\ \bar u \end{pmatrix}=\begin{pmatrix} \mathrm{Re}\ u \\ \mathrm{Im}\ u \end{pmatrix}$.
\end{remark}

We  extend
the bilinear map      $\langle \cdot , \cdot \rangle $  and   $\Omega   (\cdot , \cdot ) = \langle J^{-1}\cdot , \cdot \rangle$ as bilinear maps in  $L^2(\R ^3, \C ^2)$.
That is, for $u=(u_1,u_2)$, $v=(v_1,v_2)\in L^2(\R ^3, \C ^2)$, we have $\<u,v\>=\int_{\R^3}u_1  v_1 +u_2   v_2$.
In particular,   $\langle \cdot , \cdot \rangle $ extends into  a bilinear form  in

	\begin{equation*} \begin{aligned} & \mathcal{S}'(\R ^3, \C ^2)\times  L^2_d(\mathcal{H}_{p  }^*)
	\, ,\,   L^2_d(\mathcal{H}_{p  }^*):=
	N_g(\mathcal{H}_{p  } ^\ast)\oplus \left (\oplus _{\mu    \in \sigma
_p({\mathcal H} _{p  }  ^*)\backslash \{ 0\}}   \ker (\mathcal{H}_{p  } ^*- \mu
 )\right )  .
	\end{aligned}
\end{equation*}
Set now $(L^2_d(\mathcal{H}_{p  } ^*)) ^\perp $ the subspace of   $\mathcal{S}'$
orthogonal to  $L^2_d(\mathcal{H}_{p  }^*)$.

\begin{lemma}
Let $\lambda$ be the non-zero eigenvalue of $\mathcal H_p$.
Then   algebraic  and geometric multiplicity of $\lambda$ coincide.
Furthermore, for $\lambda>0$ and $\xi\in \mathrm{ker}\mathcal (H_p-\lambda)$, we have $-\im\<J^{-1}\xi, \bar \xi\>>0$.
\end{lemma}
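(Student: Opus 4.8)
The statement concerns the spectral structure of $\mathcal H_p = \im \mathcal L_p$ near a nonzero eigenvalue $\lambda$, and by the conjugation identity \eqref{eq:conjNLS} (which gives $\sigma(\mathcal L_p)=\sigma(\mathcal L_{\omega(p)})$ and the corresponding statement for $\mathcal H_p$ via \eqref{eq:linearization1}) we may assume $v(p)=0$, i.e.\ work with $\mathcal H_\omega$. The plan is to exploit the symplectic/self-adjointness structure: $\mathcal L_\omega = J B_\omega$ where $B_\omega = \nabla^2\textbf{E}_0(\phi_\omega)-\lambda(\omega)\cdot\Diamond$ is self-adjoint with respect to $\langle\cdot,\cdot\rangle$, and $\mathcal H_\omega = \im J B_\omega$. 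The nondegeneracy input is (H6): $L_+$ has exactly one negative eigenvalue and kernel spanned by the $\partial_{x_j}\phi_\omega$; together with (H5) this is the standard Weinstein–Grillakis–Shatah–Strauss condition, and it forces $B_\omega$ to have a finite-dimensional negative subspace whose dimension is controlled. The idea is that on the spectral subspace associated to a nonzero eigenvalue pair $\pm\lambda$ the quadratic form $\langle B_\omega\cdot,\cdot\rangle$ is nondegenerate, and this nondegeneracy is exactly what prevents Jordan blocks.

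First I would record the symmetry relations: since $\overline{\mathcal H_\omega} = \mathcal H_\omega$ (real operator, complexified) and $\mathcal L_\omega^* = -B_\omega J = J^{-1}(J B_\omega J)\cdots$, one gets the involutions $\lambda\in\sigma \Rightarrow -\lambda,\bar\lambda,-\bar\lambda\in\sigma$, so nonzero eigenvalues come in quadruples (or real/imaginary pairs). Next, for $\xi\in\ker(\mathcal H_p-\lambda)$ with $\lambda\ne 0$, I would compute $\Omega(\xi,\bar\xi)=\langle J^{-1}\xi,\bar\xi\rangle$ and show it is real and nonzero: the key computation is that $\langle B_\omega\xi,\bar\xi\rangle$ relates to $\lambda\,\Omega(\xi,\bar\xi)$ because $\mathcal H_\omega\xi=\lambda\xi$ means $\im J B_\omega\xi=\lambda\xi$, hence $B_\omega\xi=-\im\lambda J^{-1}\xi$, and then $\langle B_\omega\xi,\bar\xi\rangle = -\im\lambda\langle J^{-1}\xi,\bar\xi\rangle$. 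Self-adjointness of $B_\omega$ and reality give that $\langle B_\omega\xi,\bar\xi\rangle$ is real, so $\im\lambda\,\Omega(\xi,\bar\xi)$ is real; combined with $\lambda$ real and $\ne 0$ this yields the $-\im\langle J^{-1}\xi,\bar\xi\rangle\in\R$. To get the strict sign $>0$ one invokes (H6)–(H5): the relevant quadratic form $\langle B_\omega\cdot,\cdot\rangle$, restricted to the $\lambda$-eigenspace (which lies in the positive cone of $B_\omega$ because the negative/zero directions are used up by $L_+$'s single negative eigenvalue and the generalized kernel, by the standard counting argument of \cite{W2}), is positive definite, forcing $-\im\lambda\,\Omega(\xi,\bar\xi)>0$; after fixing the sign convention via \eqref{eq:linearization1} and the choice $\lambda>0$ this is precisely $-\im\langle J^{-1}\xi,\bar\xi\rangle>0$.

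Finally, for the coincidence of algebraic and geometric multiplicity: suppose a Jordan chain existed, i.e.\ $(\mathcal H_p-\lambda)\zeta=\xi$ with $\xi\in\ker(\mathcal H_p-\lambda)\setminus\{0\}$. Pairing with $\bar\xi$ under $\Omega$ and using $\mathcal H_p^* $-symmetry of $\Omega$ (the relation $\Omega(\mathcal H_p u,v)+\Omega(u,\mathcal H_p v)=0$ coming from skew-symmetry of $J^{-1}\mathcal H_p = B_\omega$ being symmetric), one gets $\Omega(\xi,\bar\xi) = \Omega((\mathcal H_p-\lambda)\zeta,\bar\xi) = -\Omega(\zeta,(\mathcal H_p-\lambda)\bar\xi)$; but $\bar\xi\in\ker(\mathcal H_p+\lambda)$ wait — one must be careful here, $\overline{\mathcal H_p\xi}=\mathcal H_p\bar\xi=\bar\lambda\bar\xi=\lambda\bar\xi$ since $\lambda$ real, so $\bar\xi\in\ker(\mathcal H_p-\lambda)$ too, and then $(\mathcal H_p-\lambda)\bar\xi=0$ gives $\Omega(\xi,\bar\xi)=0$, contradicting the nonvanishing just established. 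Hence no Jordan block, so the two multiplicities agree. The main obstacle I anticipate is the second part of the sign statement: establishing the strict positivity $-\im\langle J^{-1}\xi,\bar\xi\rangle>0$ rather than merely $\ne 0$ requires carefully quoting the correct index/signature count from \cite{W2} under (H5)–(H6) and verifying it is compatible with the sign conventions fixed in \eqref{eq:linearization1} and in the normalization $\lambda>0$; getting all the $\im$'s, $J$'s and $M$-conjugations consistent (the paper itself warns about confusing $-J$ with $\im$) is the delicate bookkeeping.
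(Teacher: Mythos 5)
Your overall route is the right one, and it is essentially the mathematics that the paper outsources to \cite{CPV}: after reducing to $v(p)=0$ via \eqref{eq:conjNLS}, the identity $B_\omega\xi=-\im\lambda J^{-1}\xi$, hence $\<B_\omega \xi,\bar\xi\>=\lambda\,\bigl(-\im\<J^{-1}\xi,\bar\xi\>\bigr)$ with $B_\omega=\nabla^2\textbf{E}_0(\Phi_p)+\omega$ (for $v(p)=0$), correctly reduces the sign statement to strict positivity of the energy form on the eigenspace, and your Jordan-chain argument (a nonvanishing pairing $\<J^{-1}\xi,\bar\xi\>$ is incompatible with a generalized eigenvector) is a legitimate way to get equality of algebraic and geometric multiplicities. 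The paper instead verifies Assumption 2.8 of \cite{CPV} and quotes Corollary 2.12 there, so your plan is more self-contained, but it must supply what that citation provides.

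The genuine gap is exactly at the step you flag: the counting argument from (H5)--(H6) \`a la \cite{W2} only yields that $\<B_\omega u,\bar u\>\ge 0$ on the constrained subspace $N_g^\perp(\mathcal H_p^*)$ in which the eigenvectors live (by \eqref{eq:spectraldecomp}); it does not by itself exclude a null vector of this semidefinite form inside $\ker(\mathcal H_p-\lambda)$, i.e.\ an eigenvector with $\<B_\omega\xi,\bar\xi\>=0$, which would leave you with $\ge 0$ instead of $>0$ and would also destroy the contradiction in your Jordan-chain step. The paper closes precisely this hole: by \cite{GSS1}, Corollary 3.3.1, a vector with $\<(\nabla^2\textbf{E}_0(\Phi_p)+\omega)\xi,\bar\xi\>=0$ in the relevant class must be a multiple of $J^{-1}\Phi_p$, which lies in $N_g(\mathcal H_p)$, and since $\xi\in N_g^\perp(\mathcal H_p^*)$ and $N_g(\mathcal H_p)\cap N_g^\perp(\mathcal H_p^*)=\{0\}$ this forces $\xi=0$; only after this nondegeneracy is the signature machinery (Corollary 2.12 of \cite{CPV}, or your direct argument) able to give the strict sign. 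Separately, there is a fixable slip in your semisimplicity step: since $\mathcal H_p=\im\mathcal L_p$ with $\mathcal L_p$ real, conjugation anticommutes with $\mathcal H_p$, so $\overline{\mathcal H_p\xi}=-\mathcal H_p\bar\xi$ and $\bar\xi\in\ker(\mathcal H_p+\lambda)$, not $\ker(\mathcal H_p-\lambda)$; the computation still closes because $\Omega\bigl((\mathcal H_p-\lambda)\zeta,\bar\xi\bigr)=-\Omega\bigl(\zeta,(\mathcal H_p+\lambda)\bar\xi\bigr)=0$, but the intermediate identities as written should be corrected accordingly.
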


\begin{proof}
By \eqref{eq:conjNLS}, it suffices to consider $p$ with $\omega(p)=\omega$ and $v(p)=0$.
First, we show there are no $\xi\in \mathrm{ker}\mathcal (H_p-\lambda)$ s.t. $\<(\nabla^2 {\bf E_0} (\Phi_p)+\omega)\xi,\overline{\xi}\>=0$.
Suppose, there exists such $\xi$.
Then, by \cite{GSS1} Corollary 3.3.1  p.171, we have $\xi=a J^{-1}\Phi_p$. However, since $\xi\in N_g(\mathcal H_p^*)^{\perp}$ and $N_g(\mathcal H_p)\cap N_g(\mathcal H_p^*)^{\perp}=\{0\}$, we have $\xi=0$.
So, we see there are no $\xi\in \mathrm{ker}\mathcal (H_p-\lambda)$ s.t. $\<(\nabla^2 {\bf E_0} (\Phi_p)+\omega)\xi,\overline{\xi}\>=0$.
Therefore, Assumption 2.8 of \cite{CPV} is satisfied and by \cite{CPV} Corollary 2.12, we see that $-\im\<J^{-1}\xi,\overline{\xi}\>>0$ for $\lambda>0$.
\end{proof}

\begin{lemma}
  \label{lem:basis}   There is a neighborhood $ \mathcal{P}_{p_0}$  of $p_0$ in $\mathcal{P}$
  and a $C ^{\infty}(\mathcal{P}_{p_0} , \Sigma _m ^{\mathbf{n}})$  map (for any preassigned $m$)  $\pi \to (\xi _1(\pi ),...,
  \xi  _{\mathbf{n}}(\pi ))$ such that the following facts hold.
  \begin{itemize}
\item[(1)] $\xi _j(\pi )\in \ker (\mathcal{H}_{\pi} -\textbf{e} _j ) $ for all $j$.

 \item[(2)]  $-\im\<J^{-1}\xi _j  {(\pi )} , {\xi} _k  {(\pi )}\>
 =0$ for all $j$ and $k$ and
  $ -\im\<J^{-1}\xi _j  {(\pi )} , \overline{{\xi}} _k  {(\pi )}\> =  \delta _{jk} $.
\end{itemize}
\end{lemma}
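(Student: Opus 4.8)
The plan is to build the basis $(\xi_1(\pi),\dots,\xi_{\mathbf{n}}(\pi))$ in two stages: first produce a smooth family of bases for each eigenspace $\ker(\mathcal H_\pi-\mathbf e_j)$, then Gram--Schmidt-orthonormalize with respect to the symplectic pairing $-\im\<J^{-1}\cdot,\overline{\cdot}\>$, using the preceding lemma to know this pairing is nondegenerate on each eigenspace and that eigenvectors for distinct eigenvalues pair to zero. For the first stage I would fix $p_0$ (or rather use \eqref{eq:conjNLS} to reduce to $v(p)=0$, $\omega(p)=\omega_1$, where $\mathcal H_\pi=\mathcal H_\omega$ has the explicit form \eqref{eq:Homega}) and invoke analytic/smooth perturbation theory for the isolated eigenvalues $\mathbf e_j(\omega)$ whose multiplicities are constant in $\omega$ by (H7): the Riesz spectral projection $P_j(\pi)=\frac{1}{2\pi\im}\oint_{\gamma_j}(\zeta-\mathcal H_\pi)^{-1}\,d\zeta$ along a small circle $\gamma_j$ enclosing $\mathbf e_j$ and no other spectrum depends smoothly on $\pi$ in $B(\Sigma_m^{\mathbf n},\Sigma_m^{\mathbf n})$ (and in $B(L^2,\Sigma_m)$ since the eigenfunctions are Schwartz by (H1), (H10) and elliptic regularity). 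Applying $P_j(\pi)$ to a fixed basis of $\ker(\mathcal H_{p_0}-\mathbf e_j)$ and keeping $\pi$ in a small enough neighborhood $\mathcal P_{p_0}$ yields a smooth family of bases of $\ker(\mathcal H_\pi-\mathbf e_j)$. That the algebraic and geometric multiplicities coincide — needed so that $P_j(\pi)$ projects onto the genuine eigenspace and not onto a larger generalized eigenspace — is exactly the content of the previous lemma.

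The second stage is the orthonormalization. The previous lemma gives, for each $j$, a positive-definite Hermitian form $\<\xi,\zeta\>_j := -\im\<J^{-1}\xi,\overline{\zeta}\>$ on $\ker(\mathcal H_\pi-\mathbf e_j)$ when $\mathbf e_j>0$; standard Gram--Schmidt in each such finite-dimensional space produces, smoothly in $\pi$ (since the Gram matrix entries are smooth and positive-definite, hence its inverse square root is smooth), an orthonormal family satisfying $-\im\<J^{-1}\xi_j,\overline{\xi_k}\>=\delta_{jk}$ within a single eigenvalue block. The cross terms between different eigenvalues are handled by the symplectic orthogonality statement: for $\xi\in\ker(\mathcal H_\pi-\lambda)$ and $\zeta\in\ker(\mathcal H_\pi-\mu)$ with $\lambda\neq\mu$ one has $\<J^{-1}\xi,\overline{\zeta}\>=0$, because $\<J^{-1}\mathcal H_\pi\xi,\overline{\zeta}\>=-\<J^{-1}\xi,\overline{\mathcal H_\pi\zeta}\>$ (the operator $J^{-1}\mathcal H_\pi=\nabla^2{\bf E}_0(\Phi_\pi)-\lambda(\pi)\cdot\Diamond$ is symmetric for the real pairing, hence this $\C$-bilinear identity holds on the complexification, with the sign change coming from complex conjugation of $\mathcal H_\pi$'s action), so $(\lambda-\mu)\<J^{-1}\xi,\overline{\zeta}\>=0$. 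The same computation gives $-\im\<J^{-1}\xi_j,\xi_k\>=0$ for all $j,k$: pairing eigenvectors $\xi_j$ (eigenvalue $\mathbf e_j>0$) with $\xi_k$ (also eigenvalue $\mathbf e_k>0$, not $-\mathbf e_k$), the above yields zero when $\mathbf e_j\neq\mathbf e_k$, and within a block one checks $\<J^{-1}\xi,\overline{\overline{\zeta}}\>=\<J^{-1}\xi,\zeta\>$ vanishes because $\overline{\zeta}$ lies in $\ker(\mathcal H_\pi+\mathbf e_j)\neq\ker(\mathcal H_\pi-\mathbf e_j)$ again by the distinct-eigenvalue orthogonality (here $\mathbf e_j\neq -\mathbf e_j$ since $\mathbf e_j>0$).

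The main obstacle I expect is not any single computation but the bookkeeping needed to make everything simultaneously smooth in $\pi$ and in the correct Schwartz-type topologies: one must verify that $(\zeta-\mathcal H_\pi)^{-1}$, restricted to the resolvent set near $\mathbf e_j$, is smooth as a map into $B(\Sigma_m^{\mathbf n},\Sigma_{m'}^{\mathbf n})$ for the relevant $m,m'$, which relies on the fact that the potential term in \eqref{eq:Homega}, built from $\phi_\omega$, depends smoothly on $\omega$ in every $\Sigma_n$ by (H4) and decays rapidly, so that it maps $\Sigma_m$ boundedly to $\Sigma_{m+2}$ or similar; combined with the resolvent identity this upgrades $L^2$-smoothness of $P_j(\pi)$ to $\Sigma_m$-smoothness. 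A secondary technical point is ruling out eigenvalue collisions or multiplicity jumps as $\pi$ varies, but this is precisely excluded by hypothesis (H7) (constant multiplicities and $\mathbf n$), so it costs nothing beyond citing it. Once these smoothness facts are in place, the two-stage construction above delivers $(\xi_1(\pi),\dots,\xi_{\mathbf n}(\pi))$ with properties (1) and (2).
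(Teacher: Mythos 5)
Your proposal is correct and follows essentially the same route as the paper: reduce to $v(\pi)=0$ via \eqref{eq:conjNLS}, use the smooth ($\Sigma_m$-valued) Riesz projections on each eigenvalue block (constant multiplicities by (H7)) applied to a fixed frame at $p_0$, then perform a symplectic Gram--Schmidt using the positivity of $-\im\<J^{-1}\xi,\overline{\xi}\>$ from the preceding lemma, with cross-block and $\<J^{-1}\xi_j,\xi_k\>$ vanishing handled by the standard eigenvalue-symmetry computation (which the paper cites from Lemma 5.2 of \cite{Cu0}). The only quibble is notational: $J^{-1}\mathcal H_\pi=\im\,(\nabla^2\textbf{E}_0(\Phi_\pi)-\lambda(\pi)\cdot\Diamond)$, i.e.\ you dropped a factor $\im$, but this does not affect the symmetry argument or its conclusion.
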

\proof For the proof of the existence of a such a frame for any fixed $\pi$ we refer to Lemma 5.2 \cite{Cu0}. Here we discuss the fact that the dependence in $\pi $ is smooth.
Let us pick   $l_1=1<l_ 2<...<l_{k  }\le \textbf{n}$  and set  $\l _{k+1}=\textbf{n}+1$,   with $\mathbf{e}_j (\omega )=\mathbf{e}_{i} (\omega ) $ if and only if $j,i \in [l_a, l_{a+1})$  for some $a$. The numbers $l_1,...,l_{k  }$
       do  not depend on $\omega $ by the  constancy of multiplicity in Hypothesis   (H7).

 By \eqref{eq:conjNLS}  we can set   $\xi _j   {(\pi )}= e^{-\frac 12 J v(\pi )\cdot x}\widehat{\xi}  _j (\omega (\pi ) )   $, with $\widehat{\xi}  _j (\omega   )\in \ker (\mathcal{H}_{\omega} -\textbf{e} _j(\omega (\pi ) )  )$ appropriate vectors dependent now only on $\omega$.  It is easy to conclude that it is enough to focus on the case $v(\pi )\equiv 0$.

  For $\omega _0=\omega (p_0)$ we can  suppose we have a frame $\{ \widehat{{\xi}}  _j (\omega   ) \}$
  satisfying  the equalities in claim (2), that is    for $\omega =\omega _0$, $L= \mathbf{n}+1$ and $\ell =1$ we have
 \begin{equation}  \label{eq:Pd1} -\im\<J^{-1}\widehat{\xi} _j  {(\omega  )} , \overline{\widehat{{\xi}}} _k  {(\omega )}\> =  \delta _{jk} \text{ for $j,k\in [ \ell , L )$}.
\end{equation}

For $\delta V_\omega := \mathcal{H}_{\omega}-\mathcal{H}_{\omega _0}$ we have  that
$\omega \to \delta V_\omega \in C^\infty ( I  _{\omega _0}, B(\Sigma _m, \Sigma _m))$ for any $m$ for a  small interval $I  _{\omega _0}$ with center $\omega _0$.  Fix now an index $l_a $ and let
  $\gamma _a$  be a small circle with counter clock orientation and centered in $\mathbf{e}_{l_a}(\omega _0)$. By taking $I  _{\omega _0}$ small we can assume that
   $\mathbf{e}_{l_a}(\omega  )$ is for all $\omega \in I  _{\omega _0}$ contained in a compact subset of the interior of the disk encircled by  $\gamma _a$. Then  the following is a projection on $\ker (\mathcal{H}_{\omega} -\mathbf{e}_{l_a}(\omega  )  ) $:
\begin{equation}  \label{eq:Pd}P_{a}(\omega  )=\frac{\im }{2\pi}
\oint _{\gamma} \frac{1}{ \mathcal{{H}}_{\omega  }-z} dz.
\end{equation}
 We  have $\omega \to P_{a}(\omega  ) \in C^\infty ( I  _{\omega _0},  B(\Sigma _{-m} , \Sigma _m))$.
   Now focus on the     frame $\{  \widehat{{\xi}}  _j (\omega _0 )\}$   for $j\in [l_a, l_{a+1})$
   s.t.   \eqref{eq:Pd1} is true for    $\omega =\omega _0$, $L= l_{a+1}$ and $\ell =l_a$
        We first set $\widetilde{\xi}  _1 (\omega   ) =P_{ a}(\omega  ) \widehat{{\xi}}  _1 (\omega _0 ) $
which we can normalize  into a  $ \widehat{{\xi}}  _1 (\omega   )$ s.t.
$-\im\<J^{-1}\widehat{\xi} _1  {(\omega )} , \overline{\widehat{{\xi}}} _1  {(\omega )}\> = 1 $.
Suppose now that we have for some $l< l_{a+1}$ a frame $\{  \widehat{\xi}  _j (\omega   ): j\in [l_a, l ) \}$ which is  $C^\infty $ in $\omega \in I  _{\omega _0}$ and  s.t.
 \eqref{eq:Pd1} is true for all    $\omega \in I  _{\omega _0}$, for  $L= l $ and $\ell =l_a$.
Set now
 \begin{equation*}  \begin{aligned}&  \widetilde{\xi}  _l (\omega   ) = P_{ a}(\omega  )\widehat{\xi}  _l (\omega _0 ) +\im \sum _{j\in [l_a, l )}  \widehat{\xi}  _j (\omega   )  \<J^{-1}P_{ a}(\omega  )\widehat{\xi}  _l (\omega _0 )  ) , \overline{\widehat{{\xi}}} _j  (\omega )\> .
   \end{aligned}
\end{equation*}
Then $\<J^{-1} \widetilde{\xi}  _l (\omega   ) , \overline{\widehat{{\xi}}} _j  (\omega )\> =0$
for all $j\in [l_a, l )$. Notice that $\widetilde{\xi}  _l (\omega   )$ depends smoothly
on $\omega$ and that $\widetilde{\xi}  _l (\omega  _0 )= \widehat{{\xi}}  _l (\omega  _0 )$.
    Then by continuity $-\im \<J^{-1} \widetilde{\xi}  _l (\omega   ) , \overline{\widetilde{\xi}} _l  (\omega )\> =:a^2(\omega ) >0$. Setting   $\widehat{{\xi}}  _l (\omega   ) =a ^{-1}(\omega )\widetilde{\xi}  _l (\omega   ) $ we obtain a frame  $\{  \widehat{\xi}  _j (\omega   ): j\in [l_a, l ] \}$ which is  $C^\infty $ in $\omega \in I  _{\omega _0}$ and  s.t.
 \eqref{eq:Pd1} is true for all    $\omega \in I  _{\omega _0}$, for  $L= l +1$ and $\ell =l_a$.

 Finally, notice that if $ \mathbf{e}  _j (\omega   )\neq \mathbf{e}  _k(\omega   )$, then
 $  \<J^{-1}  \widehat{{\xi}}  _j (\omega   ) , \overline{ \widehat{{\xi}}} _k  (\omega )\> = 0$.
 So we have built a   frame  smooth in $\omega$ which satisfies
 \eqref{eq:Pd1}  for $L= \mathbf{n}+1$ and $\ell =1$ and for all  $\omega \in I  _{\omega _0}$. The identities   $  \<J^{-1}  \widehat{{\xi}}  _j (\omega   ) ,  { \widehat{{\xi}}} _k  (\omega )\> = 0$ hold
 for all $j,k$, see Lemma 5.2 \cite{Cu0}. So Lemma  \ref{lem:basis}  is proved.

\qed

The following spectral decomposition remains determined
\begin{align}  \label{eq:spectraldecomp}
&  N_g ^\perp (\mathcal L _p^*) = N_g ^\perp  (\mathcal{H}_{p } ^*
 ) =  \big (\oplus _{\mu  \in \sigma _p({\mathcal H} _{p }  ^*)\backslash \{ 0\}}
\ker (\mathcal{H}_{p}  - \mu
 ) \big) \oplus L^2_c (p)\\& \nonumber  L^2_c (p ):= L^2(\R ^3, \C ^2)\cap
(L^2_d(\mathcal{H}_{p}^*)) ^\perp  .\end{align}

\noindent Correspondingly for any
$r \in N^\perp _g({\mathcal H} _{p _0}  ^*)$
with $r =\overline{r} $
we have, for a    $z \in \C ^{\textbf{n}}$ and an $ f \in L_c^2 ({p _0} )$,    \begin{equation}
  \label{eq:decomp2}
  P(\pi  )r  =\sum _{j=1}^{\mathbf{n}}z_j  \xi
  _j  (\pi )  +
\sum _{j=1}^{\mathbf{n}}\overline{z}_j \overline{\xi  } _{  j} (\pi )   + P_c(\pi )   f  , \end{equation}
with a frame $\{  {\xi  } _{  j} (\pi ): j\in {1,..., \mathbf{n}} \} $ as in Lemma  \ref{lem:basis}. Notice that    $\langle J^{-1} \xi _j  (\pi )  , P_c(\pi ) f '\rangle = 0$. We also have
\begin{equation}\label{eq:defproj2}
P_c(p)=1-P_{N_g}(p) + \sum _{j=1}^{\textbf{n}} \im \< J^{-1}\cdot, \overline \xi _j(p) \> \xi_j(p) + \im \<J^{-1}\cdot, \xi_j(p)\> \overline \xi _j(p).
\end{equation}
The representation   \eqref{eq:decomp2}  is possible because  of the following fact.

\begin{lemma}
  \label{lem:projections} Under (H4)--(H7) and
     (H10),  given $p_0$ and for any fixed $n \in \N$,  there exists $ a>0$ such that for
$\pi \in {\mathcal P}$ with $|\pi - p_0|<a$  the maps
 \begin{equation}
  \label{eq:decomp21}  P_c(\pi ) P_c(p_0 ) : L_c^2 ({p _0} ) \cap \Sigma _{k}(\R ^3, \R ^2)\to     L_c^2 ({\pi } ) \cap \Sigma _{k}(\R ^3, \R ^2) \text{ }        \end{equation}
 for all $k\ge -n$ are isomorphisms.

\end{lemma}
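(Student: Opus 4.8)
The plan is to treat $P_c(\pi)P_c(p_0)$ as a small perturbation of the identity (or rather of $P_c(p_0)$ itself) when $\pi$ is close to $p_0$, exactly as one handles projections onto slowly varying spectral subspaces in perturbation theory. First I would note that by the conjugation identity \eqref{eq:conjNLS} it suffices to treat the case $v(\pi)=v(p_0)=0$, since the Galilean factor $e^{\frac12 Jv\cdot x}$ is a bounded invertible map on each $\Sigma_k$ with bounded inverse depending smoothly on $v$; this reduces everything to the one-parameter family $\omega\mapsto\mathcal H_\omega$. Then, using \eqref{eq:defproj2} together with Lemma \ref{lem:basis} (smoothness of the frame $\xi_j(\pi)$ in $\Sigma_m^{\mathbf n}$ for any $m$) and \eqref{eq:defproj} (smoothness of $P_{N_g}(p)$, which is a finite-rank operator built from $\Phi_p$ and its $p$-derivatives, all smooth in $p$ by (H4)), I get that $\pi\mapsto P_c(\pi)$ is continuous — indeed $C^\infty$ — as a map into $B(\Sigma_{-n},\Sigma_n)$ for any $n$. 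In particular $\|P_c(\pi)-P_c(p_0)\|_{B(\Sigma_{-n},\Sigma_n)}\to 0$ as $\pi\to p_0$.

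Next I would use the standard fact that if $P,Q$ are two projections on a Banach space with $\|P-Q\|<1$, then $QP$ restricted to $\mathrm{ran}\,P\to\mathrm{ran}\,Q$ is an isomorphism, with inverse given explicitly by the Sz.-Nagy–type formula $U = (1-(P-Q)^2)^{-1/2}$ intertwining the two ranges, or more simply: $PQ\colon \mathrm{ran}\,Q\to\mathrm{ran}\,P$ is invertible and $QP\colon \mathrm{ran}\,P\to\mathrm{ran}\,Q$ is its "adjoint" in the sense that $(QP)(PQ)$ and $(PQ)(QP)$ are invertible (being $1-(P-Q)^2$ on the respective ranges). Here one must be careful that we are working on the scale of spaces $\Sigma_k$, not on a single Hilbert space, so I would phrase the argument as follows: fix $k\ge -n$; for $|\pi-p_0|<a$ with $a$ small enough (depending on $k$, but by exhausting finitely many... actually $a$ can be taken uniform for $k\in[-n,n]$ using the $B(\Sigma_{-n},\Sigma_n)$ bound and the fact that $P_c(p_0)$ maps $\Sigma_k$ to $\Sigma_k$ boundedly for all such $k$), the operator $P_c(\pi)P_c(p_0) - P_c(p_0) = (P_c(\pi)-P_c(p_0))P_c(p_0)$ has small norm as an operator $\Sigma_k\to\Sigma_k$, hence on the range $L^2_c(p_0)\cap\Sigma_k$ the operator $P_c(p_0)P_c(\pi)P_c(p_0)$ is invertible by Neumann series. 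From the identity $P_c(p_0)P_c(\pi)P_c(p_0)=(P_c(p_0)P_c(\pi))(P_c(\pi)P_c(p_0))$ on $L^2_c(p_0)$ one concludes that $P_c(\pi)P_c(p_0)$ is injective on $L^2_c(p_0)\cap\Sigma_k$ with closed range; a symmetric argument exchanging $\pi$ and $p_0$ shows $P_c(p_0)P_c(\pi)$ is injective on $L^2_c(\pi)\cap\Sigma_k$, and since $P_c(p_0)P_c(\pi)$ is then a left inverse (up to an invertible factor) of $P_c(\pi)P_c(p_0)$, surjectivity onto $L^2_c(\pi)\cap\Sigma_k$ follows. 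This gives the claimed isomorphism \eqref{eq:decomp21}.

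The main obstacle I anticipate is not conceptual but bookkeeping: controlling the perturbation uniformly across the whole scale $\Sigma_k$, $k\in[-n,n]$, simultaneously, i.e. making sure the threshold $a$ does not degenerate. The key point that makes this work is that $P_c(\pi)-P_c(p_0)$ is a smoothing operator — by \eqref{eq:defproj2} its non-identity part is a finite sum of rank-one operators with kernels in $\mathcal S(\R^3,\C^2)$ (from Lemma \ref{lem:basis}) plus the finite-rank $P_{N_g}$ terms — so in fact $P_c(\pi)-P_c(p_0)\in B(\Sigma_{-m},\Sigma_m)$ for every $m$ with norm $O(|\pi-p_0|)$, and composing with $P_c(p_0)$, itself bounded $\Sigma_k\to\Sigma_k$, keeps the $\Sigma_k\to\Sigma_k$ norm $O(|\pi-p_0|)$ uniformly for $|k|\le n$. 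Once this uniform smallness is in hand, the Neumann-series/two-projections argument above is routine, and one should also record (as will be needed later in the paper) that the isomorphism and its inverse depend smoothly on $\pi$, which is immediate from smoothness of $\pi\mapsto P_c(\pi)$ and of operator inversion. This is precisely the argument in \cite{Cu0,CM} and I would simply indicate it, referring there for the finer continuity statements.
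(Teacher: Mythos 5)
Your proposal follows essentially the same route as the paper's proof: write $P_c(p_0)P_c(\pi)P_c(p_0)$ restricted to $L^2_c(p_0)\cap\Sigma_k$ as the identity plus a finite-rank, smoothing perturbation of size $O(|\pi-p_0|)$ (using \eqref{eq:defproj}, \eqref{eq:defproj2} and the $C^\infty(\mathcal P,\Sigma_m)$ dependence of the frame from Lemma \ref{lem:basis}), invert by Neumann series, treat $P_c(\pi)P_c(p_0)P_c(\pi)$ symmetrically, and deduce that $P_c(\pi)P_c(p_0)$ is an isomorphism; you even spell out the last deduction, which the paper leaves implicit. Two caveats. First, your opening reduction ``it suffices to treat $v(\pi)=v(p_0)=0$'' via \eqref{eq:conjNLS} is not literally available: $\pi$ and $p_0$ generally carry different velocities and a single conjugation cannot gauge both away; fortunately you never use this reduction, since the smooth $\pi$-dependence of $\xi_j(\pi)$ and of $P_{N_g}(p)$ already incorporates the velocity.

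Second, and this is the one genuine shortfall: the lemma asserts a single threshold $a$ valid for \emph{all} $k\ge -n$, i.e.\ for arbitrarily large positive $k$, while you only claim uniformity of $a$ on the bounded range $k\in[-n,n]$ (and you correctly note that the naive smallness constant degenerates as $k$ grows, since it involves $\Sigma_k$-norms of the frame). The paper disposes of this by invoking the argument of Lemma 2.3 of \cite{Cu0}; the point, which your own observation that the perturbation $D$ is smoothing essentially supplies, is a bootstrap: once $1+D$ is invertible on $L^2_c(p_0)\cap\Sigma_{-n}$, any solution of $(1+D)u=g$ with $g\in\Sigma_k$, $k\ge -n$, satisfies $u=g-Du\in\Sigma_k$ with $\|u\|_{\Sigma_k}\le\|g\|_{\Sigma_k}+C_k\|u\|_{\Sigma_{-n}}\lesssim\|g\|_{\Sigma_k}$, so invertibility at the lowest regularity upgrades to every $k\ge -n$ with the same $a$. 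Adding this one step (or the citation) closes the gap; otherwise the argument is correct and matches the paper's.
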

\proof  Consider the composition $P_c(p_0 ) P_c(\pi ) P_c(p_0 )    $. Then    in $L_c^2 ({p _0} ) \cap \Sigma _{k}$
its restriction equals
 \begin{equation}\label{eq:ions1}  \begin{aligned} &  P_c(p_0 ) P_c(\pi ) P_c(p_0 ) =  1 +P_c(p_0 )(P_{N_g }(\pi )  -P_{N_g }(p_0) ) P_c(p_0 )  \\& + \sum _{j=1}^{\textbf{n}}P_c(p_0 ) \big \{  \left (   \xi _j  (\pi )    \langle     \quad   ,\im J^{-1} \overline{\xi} _j  (\pi )\rangle    -   \xi _j  (p_0)    \langle   \quad ,\im J^{-1} \overline{\xi} _j  (p_0 )\rangle \right  ) P_c(p_0 ) \\& -
 \left (   \overline{\xi}_j  (\pi )    \langle     \quad   ,\im J^{-1}  {\xi} _j  (\pi )\rangle    -  \overline{\xi} _j  (p_0)    \langle   \quad ,\im J^{-1}  {\xi} _j  (p_0 )\rangle \right  ) \
\big \}  P_c(p_0 ) . \end{aligned}
 \end{equation}
Using now the fact that   $ {\xi}_j  (\pi ) \in C^\infty ( {\mathcal P}, \Sigma _k)$, we conclude that if
$|\pi - p_0|<a_k$ with $a_k>0$ sufficiently small, the operator in  \eqref{eq:ions1} is an isomorphism in
  $L_c^2 ({p _0} ) \cap \Sigma _{k}$. Similarly,    $P_c(\pi ) P_c( p_0) P_c(\pi )    $   is an isomorphism in
  $L_c^2 (\pi  ) \cap \Sigma _{k}$.   Finally, by the argument in Lemma 2.3 \cite{Cu0}, we can pick a fixed
	$a_k$ for all $k\ge -n$.  \qed

	\bigskip

\section{Change of coordinate}
\label{sec:enexp}

To distinguish between an   initial system of coordinates
obtained from Lemma  \ref{lem:cmgnt} and the further decomposition of
$r$ due to \eqref{eq:decomp2}  and a "final"  system  of coordinates
in Theorem \ref{thm:effham}  below, we will add a "prime" to the initial coordinates, except for the pair $(\Pi ,w)$.  In particular we have functions $(t,u)\to
(\tau ', \Pi , z', f')$. In particular, with $\aleph$ defined in \eqref{eq:coounp}, we have
\begin{equation} \label{eq:maps1}\begin{aligned}
&   (t,\pi ,  U )\to f'\in
C^l(\R \times \{ |\pi -p_0|<a  \}\times (\aleph \cap \Sigma  _{k }), \Sigma _{k -l})\text{ and} \\& \text{$(t,\pi , U )\to z'$ smooth.} \end{aligned}
\end{equation}
  We introduce now appropriate symbols.
	\begin{definition}\label{def:scalSymb} Let  $\mathcal{A} $ be
  a neighborhood of $(p_0, p_0  , 0 ,0,0)$  in the $(\pi , \Pi  , \varrho  ,z,f)$  space  with
	$(\pi , \Pi  , \varrho   ) \in \R ^{12}$, $ z  \in \C ^{\textbf{n}}$ and $f \in L_c^2(p_0)\cap \Sigma _{-n}(\R^3, \R ^{2 })$.  Let $I \subset \R$ be an interval.
	Then
	 we  say that   $ F \in C^{m}( I\times  \mathcal{A},\R)$
 is $\mathcal{R}^{i,j}_{n, m}$
 if    there exists    a $C>0$   and a smaller neighborhood
  $\mathcal{A}'$ of  $(p_0, p_0  , 0 ,0,0)$,
  s.t.    in $I\times
  \mathcal{A}'$
 \begin{equation}\label{eq:scalSymb}
  |F(t,  \pi ,  \Pi ,\varrho , z,f )|\le C (\|  f\|
  _{\Sigma   _{-n}}+|z| )^j (\|  f\|
  _{\Sigma   _{-n}}|z|+|\varrho | +|\Pi -\pi  |)^{i}  .
\end{equation}   We  will write also  $F=\mathcal{R}^{i,j}_{ n,m}$ or
 $F=\mathcal{R}^{i, j}_{ n,m} (t, \pi, \Pi ,\varrho , z,f)$.
\end{definition}

\begin{definition}\label{def:opSymb}  A    $T \in C^{m}( I\times
\mathcal{A},\Sigma   _{n}  (\R^3, \R ^{2 }))$,  with $I$ and $  \mathcal{A}$
like above,
 is $ \mathbf{{S}}^{i,j}_{n,m} $   and  we  write as above
 $T= \mathbf{{S}}^{i,j}_{n,m}$  or   $T= \mathbf{{S}}^{i,j}_{n,m} (t, \pi, \Pi
 ,\varrho ,   z,f  )$,
 if      there exists  a $C>0$   and a smaller neighborhood
  $\mathcal{A}'$ of  $(p_0, p_0  , 0 ,0)$,   s.t.  in
  $  \mathcal{A}'$
 \begin{equation}\label{eq:opSymb}
  \|T(t,\pi ,  \Pi ,\varrho ,  z,f )\| _{\Sigma   _{n}}\le  C (\|  f\|
  _{\Sigma   _{-n}}+|z| )^j (\|  f\|
  _{\Sigma   _{-n}}|z|+|\varrho | +|\Pi -\pi  |)^{i}   .
\end{equation}

\end{definition}
 Notice that in the coordinates $u\to (\tau , \Pi , z ,f )$ introduced using \eqref{eq:coordinate} and \eqref{eq:decomp2}  (and omitting the "primes"), we have we have $ p_j=\Pi _j-\varrho _j + \mathcal{R}^{0,2}_{n, m}(  \pi ,  \Pi , \varrho ,z ,f )
  $ with $\varrho =\Pi ( f)$. Then we have
\begin{equation} \label{eq:coordin1} \begin{aligned}
    U&=   e^{J
\tau \cdot \Diamond }
   \Phi _{p} +  \sum _{j=1}^{\mathbf{n}}   z_j  e^{J
\tau \cdot \Diamond } P(p) \xi
  _j  (\pi ) +  \sum _{j=1}^{\mathbf{n}}   \overline{z}_j  e^{J
\tau \cdot \Diamond } P(p) \overline{\xi}
  _j  (\pi ) +  e^{J
\tau \cdot \Diamond } P(p)P_c(\pi )   f  \\& = \mathbf{S}^{0,0}_{n,m}(  \pi ,  \Pi , \varrho ,z ,f )+\mathbf{S}^{0,1}_{n,m}(  \pi ,  \Pi , \varrho ,z ,f )+  e^{J
\tau \cdot \Diamond } P(p)P_c(\pi )   f
\end{aligned} \end{equation}
for arbitrary $(n,m)$ and for $\varrho =\Pi ( f)$.

We introduce now
\begin{equation} \label{eq:enexpnota}\begin{aligned}&   {K}_0(\pi ,U): =\mathbf{E}_0 (U)- \mathbf{E}_0\left (   \Phi  _{\pi   }\right
 ) +\lambda (p(U)) \cdot (\Pi (U)-\pi ) .
  \end{aligned}\end{equation}

\begin{definition}[Normal Forms]
\label{def:normal form} A function $Z( z,f, \varrho ,\pi , \Pi )$ is in normal form if  $
  Z=Z_0+Z_1
$
where $Z_0$ and $Z_1$ are finite sums of the following type:
\begin{equation}
\label{e.12a}Z_1=  \im \sum _{\mathbf{e} (\omega (\pi ))    \cdot(\mu-\nu)\in \sigma _e(\mathcal{H} _{\pi })}
z^\mu \overline{z}^\nu \langle  J  G_{\mu \nu}( \pi , \Pi , \varrho   ),f\rangle
\end{equation}
where the vector $\mathbf{e} (\omega  )$ is introduced in (H8) and where
  $G_{\mu \nu}( \cdot  ,\pi , \Pi ,\varrho )\in  C^{m} (  \widetilde{U},\Sigma _{k }(\R ^3, \C ^{2 }))$
 for  fixed $k,m\in \N$, with $\widetilde{U} =\{ p: |p-p_0|< a  \}^2 \times U$   and
 $U\subseteq \R ^{4}$ a neighborhood of 0;
\begin{equation}
\label{e.12c}Z_0= \sum _{  \mathbf{e} (\omega (\pi ))  \cdot(\mu-\nu)=0} g_{\mu   \nu}
( \pi , \Pi ,  \varrho  )z^\mu \overline{z}^\nu
\end{equation}
and $g_{\mu   \nu}  \in  C^{m} (   \widetilde{U},
\mathbb{C})$.
We assume furthermore that  $Z_0$ and $Z_1$ are real valued for $f=\overline{f}$, and hence
 their  coefficients
satisfy the following symmetries:  $\overline{g}_{\mu \nu}=g_{\nu\mu  }$ and $\overline{G}_{\mu \nu}=-G_{\nu \mu }$.
 \end{definition}

We have the following elementary fact, proved in Remark 5.6 \cite{CM1}, which tells us that the pairs $(\mu , \nu )$
in Def. \ref{def:normal form} in the case of the polynomials
which interest us,  do not depend on $\pi$.

\begin{lemma} \label{lem:normfor} Consider the $N$ in (H7). Then there exists
an $\delta _0>0$ such that for $|\pi - p_0|<\delta _0$  the following  are independent of $\pi $:
\begin{itemize}
\item[(1)]     the formula $\omega (\pi )     \cdot(\mu-\nu)\in \sigma _e(\mathcal{H} _{\pi })$ for  $|\mu
    +\nu |\le   N+1$;
\item[(2)]  the equality   $ \mathbf{e} (\omega (\pi ))  \cdot(\mu-\nu)=0$ for  $|\mu
    +\nu |\le  2N+2$.
\end{itemize}

\end{lemma}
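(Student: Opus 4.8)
The plan is to reduce both statements to the corresponding assertions for the fixed parameter $\pi=p_0$ together with a perturbative continuity argument in $\pi$. The key point is that all objects appearing in (1) and (2) — the spectrum $\sigma_e(\mathcal{H}_\pi)$, the eigenvalues $\mathbf{e}_j(\omega(\pi))$, and the threshold $\omega(\pi)$ — depend continuously on $\pi$ near $p_0$, while the quantities $\omega(\pi)\cdot(\mu-\nu)$ and $\mathbf{e}(\omega(\pi))\cdot(\mu-\nu)$ range over a \emph{finite} set of real numbers once $|\mu+\nu|$ is bounded. So each of the two equivalences we want to be $\pi$-independent is really a statement about finitely many real conditions each of which is either strictly satisfied or strictly violated at $\pi=p_0$, hence is preserved on a neighborhood. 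I would quote Remark 5.6 of \cite{CM1} for the precise bookkeeping, but reconstruct the argument as follows.

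First, for part (2): by hypothesis (H9), for the distinct eigenvalues $\textbf{e}_{j_1}(\omega)<\dots<\textbf{e}_{j_k}(\omega)$ and any $\mu\in\Z^k$ with $|\mu|\le 2N_1+3$ one has $\sum_i\mu_i\textbf{e}_{j_i}(\omega)=0$ if and only if $\mu=0$; since $2N+2\le 2N_1+3$ this covers the range in (2). For $\mu=0$ the equality $\mathbf{e}(\omega(\pi))\cdot(\mu-\nu)=0$ holds for every $\pi$, and for $\mu-\nu\neq 0$ (grouped by equal eigenvalues, so that it becomes a genuine $\Z^k$-combination of the \emph{distinct} $\textbf{e}_{j_i}$) it fails for every $\pi$ in the neighborhood $\mathcal O$ where (H9) is assumed; one only needs $|\pi-p_0|$ small enough that $\omega(\pi)$ stays in $\mathcal O$ and that the pattern of coincidences among the $\textbf{e}_j(\omega(\pi))$ is the one fixed by the constancy of multiplicities in (H7). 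This gives a $\delta_0>0$ for which the set of $(\mu,\nu)$ satisfying the equality in (2) is exactly $\{(\mu,\nu): \mu-\nu \text{ lies in the coincidence lattice}\}$, independent of $\pi$.

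For part (1): the essential spectrum of $\mathcal{H}_\pi$ is, by (H10) and the conjugation identity \eqref{eq:conjNLS}, $\sigma_e(\mathcal{H}_\pi)=\{x\in\R: |x|\ge\omega(\pi)\}$, i.e. determined solely by the threshold $\omega(\pi)$ (the continuous-spectrum structure is the same for all $\pi$ near $p_0$ since $\sigma(\mathcal{H}_\pi)=\sigma(\mathcal{H}_{\omega(\pi)})$). The condition $\omega(\pi)\cdot(\mu-\nu)\in\sigma_e(\mathcal{H}_\pi)$ is then $|\mathbf{e}(\omega(\pi))\cdot(\mu-\nu)|\ge\omega(\pi)$ — wait, more precisely it is $\mathbf{e}(\omega(\pi))\cdot(\mu-\nu)$ lying in the half-lines $\{|x|\ge\omega(\pi)\}$. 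By hypothesis (H8) there is no multi-index with $|\mu|\le 2N_1+3$ (in particular $|\mu+\nu|\le N+1$) with $\mu\cdot\mathbf{e}(\omega)=\omega$, so the boundary case $|\mathbf{e}(\omega(\pi))\cdot(\mu-\nu)|=\omega(\pi)$ never occurs; hence for each of the finitely many $(\mu,\nu)$ with $|\mu+\nu|\le N+1$ the strict inequality $|\mathbf{e}(\omega(\pi_0))\cdot(\mu-\nu)|>\omega(\pi_0)$ or $<\omega(\pi_0)$ holds at $p_0$ and, by continuity of $\omega\mapsto\mathbf{e}(\omega)$ and of $\pi\mapsto\omega(\pi)$, persists for $|\pi-p_0|$ small. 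Shrinking $\delta_0$ to handle both (1) and (2) simultaneously finishes the proof.

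The only mild subtlety — not really an obstacle — is the grouping step: the sums in \eqref{e.12a}–\eqref{e.12c} run over $\mu,\nu\in\N^{\mathbf n}$ indexing the full (with-multiplicity) list of eigenvalues, whereas (H8)–(H9) are phrased in terms of the distinct eigenvalues. One must check that the pattern of which entries of $\mathbf{e}(\omega)$ coincide is $\omega$-independent — this is exactly the constancy of multiplicities asserted in (H7) — so that the reduction of $\mathbf{e}(\omega)\cdot(\mu-\nu)$ to a $\Z^k$-combination of the distinct values $\textbf{e}_{j_1}<\dots<\textbf{e}_{j_k}$ is itself done by a $\pi$-independent linear map. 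Given that, everything reduces to finitely many continuity statements as above.
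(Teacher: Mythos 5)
Your argument is correct and is essentially the intended one: the paper itself gives no proof beyond citing Remark 5.6 of \cite{CM1}, and the "elementary fact" referred to there is exactly your reduction to finitely many conditions that are either structurally fixed (the coincidence pattern of the $\mathbf{e}_j$, constant by (H7), combined with (H9) for part (2)) or strict at $p_0$ and hence stable under small perturbation of $\pi$ (the threshold conditions in part (1), where (H8) rules out the borderline case $|\mathbf{e}(\omega)\cdot(\mu-\nu)|=\omega$). Your handling of the grouping of repeated eigenvalues and of the identification $\sigma_e(\mathcal{H}_\pi)=\sigma_e(\mathcal{H}_{\omega(\pi)})$ via \eqref{eq:conjNLS} is consistent with the paper's conventions, so nothing is missing.
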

\qed

The main result of \cite{Cu0}, see also \cite{CM}, is the following.

\begin{theorem}
  \label{thm:effham}  There is an $\varepsilon_3 >0$ and a map
  \begin{equation} \label{eq:quasilin51}
    \begin{aligned} &
    \tau '=\tau + \mathcal{T}( \pi , \Pi ,\Pi (f),  z, f)\, , \quad \Pi '=\Pi  \, , \\&
     z'  =   z  +
 \mathcal{Z}( \pi , \Pi ,\Pi (f),  z, f)  \, , \\
  & f'  =e^{Jq( \pi , \Pi ,\Pi (f),  z, f )\cdot \Diamond } ( f+ \textbf{S}( \pi , \Pi ,\Pi (f),  z, f ) )
\end{aligned} \end{equation}
which is in
\begin{align} &\label{eq:in52}
  C ^{1}(\R ^4 \times B_{\C ^{\mathbf{n}} } (  \varepsilon_3) \times (\Sigma _{2} \cap B_{H^1 }(  \varepsilon_3) \cap L_c^2(p_0)), \R ^4 \times  \C ^{\mathbf{n}} \times ( H^1   \cap L_c^2(p_0)) \\& \label{eq:in53}
     C ^{0}(\R ^4 \times B_{\C ^{\mathbf{n}} } (  \varepsilon_3) \times (   B_{  H^1}(  \varepsilon_3) \cap L_c^2(p_0)), \R ^4 \times  \C ^{\mathbf{n}} \times (  H^1      \cap L_c^2(p_0)) \\&
     C ^{0}(\R ^4 \times B_{\C ^{\mathbf{n}} } (  \varepsilon_3) \times (\Sigma _{2}  \cap B_{H ^{1} }(  \varepsilon_3) \cap L_c^2(p_0)), \R ^4 \times  \C ^{\mathbf{n}} \times ( \Sigma _{2}   \cap L_c^2(p_0)),\label{eq:in54}
\end{align}
   in the sense of \eqref{eq:in53}--\eqref{eq:in54}    is a  homeomorphism
   in its image with the image containing $ \R ^4 \times B_{\C ^{\mathbf{n}} } (  \frac{\varepsilon_3}{2}) \times (  B_{  H^1 }(  \frac{\varepsilon_3}{2}) \cap L_c^2(p_0))$ in the case of \eqref{eq:in53} (resp.  $ \R ^4 \times B_{\C ^{\mathbf{n}} } (  \frac{\varepsilon_3}{2}) \times (\Sigma _{2} \cap B_{H^1 }(  \frac{\varepsilon_3}{2}) \cap L_c^2(p_0))
   $ in the case of \eqref{eq:in54})
    and such that  in the  new variables $(\tau , \Pi , z, f)$ we have
\begin{equation}
 \label{eq:partham} {K}_0(\pi ,U)=\psi(\pi , \Pi, \Pi(f)) + H'_2+Z_0+Z_1+\resto  + \textbf{E}_P(f)
\end{equation}
where we have  for $k,m\in \N$  preassigned and arbitrarily large: \begin{itemize}
\item[(1)]  $\psi$ is smooth and with $\psi(\Pi , \Pi, \Pi(f))=O(|\Pi(f)|^2)$
near 0.

\item[(2)]  $ \displaystyle  H_2 '= \sum _{j=1 }^{\mathbf{n}}
 a_{ j} ( \pi , \Pi , \Pi  (f) ) | z_j |^2 - \frac{\im }{2} \langle J^{-1}\mathcal{H}_{\pi
 } P_c(\pi ) f,  P_c(\pi )  f\rangle$ where   we have  $a_{ j} ( \pi , \Pi , \Pi  (f) )
 =\mathbf{e}_j +O(|\Pi - \pi | +|\Pi  (f)|)$.

\item[(3)] $ \displaystyle { Z _0} $  is in normal form as in \eqref{e.12c} with $|\mu
    +\nu |\le  2N+2$.

\item[(4)] $\displaystyle Z_1 $  is in normal form as in \eqref{e.12a} with $|\mu
    +\nu |\le   N+1$.

\item[(5)]   We have $\resto \in C^1$ with $\| \nabla _f  \resto   \| _{\Sigma _k}\le C (|z| ^{N+2} +\|   f  \| _{L^{2,-k}} \|   f  \| _{H^{1}}  )$  near the origin  and similarly with  $ | \nabla _z  \resto    |  \le C (|z| ^{2 {N}+2} +\|   f  \| _{L^{2,-k}} \|   f  \| _{H^{1}}  )$.

\item[(6)]  The functions $q$, $\mathcal{T}_j$, $\mathcal{Z}_j$ in \eqref{eq:quasilin51} are
of type $ \mathcal{R}^{1,2}_{ k,m}$,   see  Def.  \ref{def:opSymb} above.

\item[(7)]  The function   $\textbf{S}$ in  \eqref{eq:quasilin51}  is
of type $ \textbf{S}^{1,1}_{ k,m}$, see  Def.  \ref{def:scalSymb} above.

\item[(8)]  For each fixed $\pi,$ the pullback of  $\Omega =\langle J^{-1} \ , \ \rangle $ by means of the map  \eqref{eq:in52}
equals    \begin{equation}
  \label{eq:OmegaCoo}\begin{aligned} & \Omega ^{(\pi )}  =   \sum _{j=1}^{4}d\tau _j \wedge  d\Pi  _j  +   \im
   \sum _{j=1}^{\mathbf{n}}  dz_j\wedge d \overline{z}_j
+  \Omega (  P_c(\pi )  df  , P_c(\pi ) df  )  .  \end{aligned}\end{equation}

\end{itemize}

\end{theorem}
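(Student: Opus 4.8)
The plan is to follow the Birkhoff normal form procedure of \cite{Cu0}, adapted to the present time-dependent setting exactly as in \cite{CM}, since the map in \eqref{eq:quasilin51} and the expansion \eqref{eq:partham} concern only the $V\equiv 0$ Hamiltonian ${K}_0(\pi,U)$ and the parameter $\pi$ is frozen throughout. First I would expand ${K}_0(\pi,U)=\mathbf{E}_0(U)-\mathbf{E}_0(\Phi_\pi)+\lambda(p(U))\cdot(\Pi(U)-\pi)$ around $U=\Phi_\pi$ using \eqref{eq:coordin1}; the linear term vanishes because $\Phi_\pi$ is a constrained critical point of $\mathbf{E}_0$ (via $\nabla\mathbf{E}_0(\Phi_p)=\lambda(p)\cdot\Diamond\Phi_p$), and the quadratic term, after using the decomposition \eqref{eq:decomp2} and the bi-orthogonality relations of Lemma \ref{lem:basis}, takes the form $\sum_j a_j|z_j|^2-\tfrac{\im}{2}\langle J^{-1}\mathcal{H}_\pi P_c(\pi)f,P_c(\pi)f\rangle$ up to terms absorbed into $\psi$ and $\resto$, with $a_j=\mathbf{e}_j+O(|\Pi-\pi|+|\Pi(f)|)$; this gives item (2). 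The function $\psi(\pi,\Pi,\Pi(f))$ collects the $z,f$-independent pieces together with the $O(|\Pi(f)|^2)$ contribution coming from $\Pi(f)$-dependence of $p(U)$, giving item (1).

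Next I would run the normal form iteration: the higher-order terms in the Taylor expansion of ${K}_0$ in $(z,\overline z,f)$ are polynomials in $z,\overline z$ with $\Sigma_k$-valued (or scalar) coefficients depending smoothly on $(\pi,\Pi,\Pi(f))$, and one removes the non-resonant monomials by successive symplectic changes of variables generated using the symplectic form $\Omega$ restricted to the $(z,f)$ variables. The monomials $z^\mu\overline z^\nu$ paired with $f$ are kept only when $\mathbf{e}(\omega(\pi))\cdot(\mu-\nu)\in\sigma_e(\mathcal{H}_\pi)$ (these become $Z_1$, item (4), with $|\mu+\nu|\le N+1$), and the purely polynomial monomials $g_{\mu\nu}z^\mu\overline z^\nu$ are kept only when $\mathbf{e}(\omega(\pi))\cdot(\mu-\nu)=0$ (these become $Z_0$, item (3), with $|\mu+\nu|\le 2N+2$); the solvability of the homological equations at each non-resonant step is exactly where hypotheses (H7)--(H9) enter, while Lemma \ref{lem:normfor} guarantees that the set of surviving pairs $(\mu,\nu)$ does not depend on $\pi$ for $|\pi-p_0|<\delta_0$. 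The cubic-and-higher leftover, together with all terms where $f$ appears at least quadratically or where the coefficients carry extra smallness, is collected into $\resto$, and one checks the gradient bounds of item (5) directly from the structure of the generating transformations. Items (6) and (7) on the sizes of the generating functions $q,\mathcal{T}_j,\mathcal{Z}_j,\mathbf{S}$ follow by tracking, at each step of the iteration, that a change of variables removing a monomial of the indicated order produces corrections of type $\mathcal{R}^{1,2}_{k,m}$ (resp. $\mathbf{S}^{1,1}_{k,m}$), since each such monomial carries at least one power of $z$ or of $f$ and at least one further small factor.

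For the mapping properties \eqref{eq:in52}--\eqref{eq:in54} I would note that each elementary transformation is the time-one flow of a smooth (in the relevant topology) Hamiltonian vector field whose nonlinear part gains regularity or decay, so the composition is $C^1$ on $\Sigma_2\cap B_{H^1}\cap L_c^2(p_0)$, merely $C^0$ on $B_{H^1}\cap L_c^2(p_0)$, and preserves $\Sigma_2$; the local invertibility and the statement that the image contains a ball of radius $\varepsilon_3/2$ follow from a standard contraction/inverse-function argument once $\varepsilon_3$ is small, exactly as in \cite{Cu0,CM}. Finally, item (8): since every change of variables in the iteration is symplectic for $\Omega$ with $\pi$ fixed, the pullback of $\Omega$ is unchanged, and one computes it on the original coordinates \eqref{eq:coordinate}, \eqref{eq:decomp2} using the definition of $P_c(\pi)$ in \eqref{eq:defproj2} and the relations in Lemma \ref{lem:basis} to obtain $\Omega^{(\pi)}=\sum_j d\tau_j\wedge d\Pi_j+\im\sum_j dz_j\wedge d\overline z_j+\Omega(P_c(\pi)df,P_c(\pi)df)$. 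The main obstacle is the bookkeeping in the normal form iteration: ensuring that, step after step, the error terms genuinely fall into the symbol classes $\mathcal{R}^{i,j}_{k,m}$ and $\mathbf{S}^{i,j}_{k,m}$ with the stated exponents, and that the procedure terminates after finitely many steps producing $Z_0,Z_1$ with exactly the claimed degree bounds $|\mu+\nu|\le 2N+2$ and $|\mu+\nu|\le N+1$; this is where the argument is most delicate and where I would lean most heavily on the detailed constructions of \cite{Cu0} and \cite{CM}.
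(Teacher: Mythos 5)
Your overall route coincides with the paper's: the paper does not actually prove Theorem \ref{thm:effham}, but declares it a minor modification of the construction in \cite{Cu0} (as adapted in \cite{CM}), with the fixed $p_0$ of the normal forms argument replaced by the time-varying $\pi(t)$; your sketch of the expansion of $K_0$ around $\Phi_\pi$, the Birkhoff iteration with resonance bookkeeping through (H7)--(H9) and Lemma \ref{lem:normfor}, and the mapping properties \eqref{eq:in52}--\eqref{eq:in54} is consistent with that.

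There is, however, one genuine gap, concerning item (8). You assert that the pullback of $\Omega$ is ``unchanged'' because every step of the iteration is symplectic, and that $\Omega^{(\pi)}$ is obtained by computing $\Omega$ directly in the original coordinates \eqref{eq:coordinate}, \eqref{eq:decomp2}. That is not the case: in the initial coordinates the pullback of $\Omega$ has non-constant coefficients depending on $(z,f)$ and on $\Pi-\pi$, because of the $p$-dependence of $\Phi_p$ and of the projections $P(p)P(\pi)$ entering the ansatz. This is exactly why the construction of \cite{Cu0} (whose title refers to the ``Darboux and Birkhoff steps'') begins with a Darboux/Moser deformation that flattens the form to $\Omega^{(\pi)}$; this Darboux step is itself part of the map \eqref{eq:quasilin51} and accounts for a substantial share of the corrections $\mathcal{T},\mathcal{Z},q,\mathbf{S}$ whose symbol classes are claimed in items (6)--(7). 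Only after that step can the subsequent Birkhoff transformations be taken canonical with respect to $\Omega^{(\pi)}$ and generated by Hamiltonian vector fields of the flat form; without it, your iteration is generated by vector fields that are not Hamiltonian for $\Omega^{(\pi)}$, claim (8) does not follow, and the clean Poisson-bracket identities used later (e.g.\ in \eqref{eq:eqD}--\eqref{eq:eqf}) would be lost. So the plan should be amended to: first a Darboux step producing \eqref{eq:OmegaCoo}, then the Birkhoff normal form iteration you describe.
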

\qed

Here we skip the proof of Theorem \ref{thm:effham} which is a minor modification of the
arguments in \cite{Cu0}. It is important to observe that  here and in \cite{CM} the role of the fixed  $p_0$ in the normal forms argument is taken by the time varying
$\pi (t)$, with $\pi (t) =\Pi (u(t))$  in  \cite{CM}  and by $\pi (t) =\Pi (U[t,u(t)])$ in here.

It is important to check the dependence of various coordinates on the variables $(\pi , u)$ and
$(\pi , U)$.

\begin{lemma}\label{lem:reg1} Consider the   variables   $(\tau      , z,f  )$   in  Theorem \ref{thm:effham}.
Set  $\varrho  =\Pi  (f)$. Then, for any preassigned pair $(k,m)$, they   have the following dependence on
   $(\pi , U)$.   Then there exists an $a>0$
such that for $B _{\R^4} (\kappa , a)$  (resp.   $B _{H^1} (\phi _{\omega _1}, a)$), and for    ${\mathcal V}=\cup _{\tau \in \R^4}e^{J\tau \cdot \Diamond} B _{H^1} (\phi _{\omega _1}, a)=\cup _{\tau \in \R^4} B _{H^1} (e^{J\tau \cdot \Diamond}\phi _{\omega _1}, a) $, we have:

\begin{itemize}
\item[(1)]    $\tau (\pi , U), \varrho (\pi , U) \in C ^1 (   B _{\R^4} (\kappa, a) \times  {\mathcal V} , \R ^4  )$;
\item[(2)]     $z (\pi , U) \in C ^1 (  B _{\R^4} (\kappa, a) \times {\mathcal V}  , \R ^4  )$;
\item[(3)]     $f (\pi , U)
\in C ^i (   B _{\R^4} (\kappa , a) \times {\mathcal V}   , H^{1-i})$  for $i=0,1$.
\end{itemize}

  \end{lemma}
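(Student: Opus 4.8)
The plan is to derive Lemma \ref{lem:reg1} from the regularity already established for the "primed" coordinates together with the regularity of the change of coordinates in Theorem \ref{thm:effham}. Recall that the primed coordinates $(\tau ', \Pi , z', f')$ were obtained in Lemma \ref{lem:cmgnt} (via Proposition \ref{prop:modulation}) and the spectral decomposition \eqref{eq:decomp2}, and by \eqref{eq:maps1} we already know that $(t,\pi , U)\to \tau '$ and $(t,\pi , U)\to z'$ are smooth in a neighborhood of the base point, while $(t,\pi ,U)\to f'$ lies in $C^l$ with a loss of $l$ derivatives in the weighted Sobolev scale; in particular $f'(\pi , U)\in C^i(\,\cdot\,, H^{1-i})$ for $i=0,1$ on the relevant open set, and $\varrho = \Pi(f)=\Pi(f')$ depends on $f'$ through the smooth functionals $\Pi _j$. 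So the starting data for the lemma is in hand.

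The main step is then to compose with the map \eqref{eq:quasilin51}. By Theorem \ref{thm:effham}(6)--(7), the correction terms $\mathcal T$, $\mathcal Z$, $q$ are of type $\mathcal R^{1,2}_{k,m}$ and $\textbf S$ is of type $\textbf S^{1,1}_{k,m}$; in particular $\mathcal T$ and $\mathcal Z$ are $C^m$ as scalar-valued functions of $(\pi , \Pi , \Pi(f), z', f')$ with $f'$ entering only through the norm $\|f'\|_{\Sigma _{-n}}$ and through $\Pi(f')$, so composing them with the $C^1$-in-$(\pi ,U)$ maps $(\pi ,U)\to (\tau ',\Pi ,z',f')$ produces $C^1$ functions of $(\pi ,U)$ valued in $\R^4$; this gives claims (1) and (2) for $\tau =\tau '-\mathcal T$ (after inverting, using that the map \eqref{eq:quasilin51} is a homeomorphism onto its image by Theorem \ref{thm:effham}) and $z=z'-\mathcal Z$, and likewise $\varrho =\Pi(f)$ is $C^1$. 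For claim (3), $f = e^{-Jq\cdot\Diamond}f' - \textbf S$: the operator $\textbf S$ is $\textbf S^{1,1}_{k,m}$, i.e. $C^m$ valued in $\Sigma _n$, so it costs no derivatives in $U$ beyond what $f'$ already costs, and the group action $f'\mapsto e^{-Jq\cdot\Diamond}f'$ is smooth in the finite-dimensional parameter $q$ and bounded (indeed isometric on $L^2$, bounded on $H^s$) in $f'$; differentiating once in $U$ hits either the $H^s$-bounded group, which costs nothing, or $f'$ itself, which by \eqref{eq:maps1} forces the value to drop from $H^1$ to $L^2=H^0$. Thus $f(\pi ,U)\in C^i(\,\cdot\,,H^{1-i})$ for $i=0,1$, matching (3). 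One must also record that the primed-coordinate maps, which are a priori defined on the time-dependent neighborhoods $B(\varepsilon _2)$ from Proposition \ref{prop:modulation}, restrict to the $t$-independent open set ${\mathcal V}=\cup _{\tau}e^{J\tau\cdot\Diamond}B_{H^1}(\phi _{\omega _1},a)$ once $a$ is small enough, and that the neighborhood $B_{\R^4}(\kappa ,a)$ of the parameter $\pi$ can be taken inside the domain $\{|\pi -p_0|<a\}$ on which \eqref{eq:maps1} and Theorem \ref{thm:effham} apply; shrinking $a$ a finite number of times handles this.

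The step I expect to be the actual obstacle is the bookkeeping of the derivative loss and of which arguments the symbols genuinely depend on. The symbol classes $\mathcal R^{i,j}_{k,m}$ and $\textbf S^{i,j}_{k,m}$ are defined by size estimates, not by differentiability in $f$ as an element of $H^1$; one must use that in Theorem \ref{thm:effham} these objects are in fact $C^m$ functions of the finite-dimensional block $(\pi ,\Pi ,\Pi(f),z)$ with $f$ appearing linearly (through $\langle J^{-1}\cdot ,f\rangle$-type pairings and through $\Pi(f)$), so that composition with a $C^1$-in-$U$, $H^1$- or $L^2$-valued $f'$ really does land in $C^1$ with exactly the stated derivative count, and no hidden loss occurs when differentiating the pairings. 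A second, milder, point is to make sure the inversion $\tau '\mapsto \tau$, $z'\mapsto z$ is legitimately $C^1$: this is where one invokes that \eqref{eq:quasilin51} is, by Theorem \ref{thm:effham}, a homeomorphism of the stated neighborhoods and that its finite-dimensional part differs from the identity by the $C^m$ corrections $\mathcal T, \mathcal Z$ whose differentials are small near the base point, so the implicit function theorem applies on the finite-dimensional factor with the infinite-dimensional variable $f'$ carried along as a $C^1$ parameter. Once these two points are dispatched, claims (1)--(3) follow by direct composition.
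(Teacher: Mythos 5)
The paper does not actually prove this lemma: it is stated with a \qed and the reader is referred to Lemmas 6.1--6.2 of \cite{CM}. Your sketch follows exactly the route one expects those lemmas (and the remark after \eqref{eq:maps1}) to take: regularity of the primed coordinates $(\tau',\Pi,z',f')$ from the modulation/spectral decomposition, composed with the normal-form change \eqref{eq:quasilin51} using claims (6)--(7) of Theorem \ref{thm:effham}, with the one-derivative loss in $f$ coming from differentiating in $U$. So in substance your proposal is the intended argument, and I see no gap in the overall structure. Two small points of bookkeeping. First, for the inversion of \eqref{eq:quasilin51} you propose an implicit function theorem on the finite-dimensional block with $f'$ as a parameter; the paper's own mechanism (used explicitly in the proof of Lemma \ref{lem:conditional4.2}) is simpler and cleaner: the transformation is generated by flows, so reversing the flows shows the inverse has the same symbol structure, $f=e^{J\widetilde q\cdot\Diamond}(f'+\widetilde{\mathbf S})$ with $\widetilde q=\mathcal R^{1,2}_{k,m}$ and $\widetilde{\mathbf S}=\mathbf S^{1,1}_{k,m}$ in the primed variables, and claims (1)--(3) then follow by direct composition without any fresh fixed-point argument. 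Second, your statement that differentiating the group action ``costs nothing'' is imprecise: the $U$-derivative falling on the parameter $q$ produces the unbounded generator $\Diamond$ acting on $f'$ (the translation components are genuine derivatives), so this term also costs one derivative; it just happens to cost the same single derivative as the term where the derivative falls on $f'$ itself, so the stated count $f(\pi,U)\in C^i(\cdot,H^{1-i})$, $i=0,1$, is unaffected. With these two clarifications your argument is correct and matches the approach the paper delegates to \cite{CM}.
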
\qed

For this and more see     Lemmas 6.1--6.2 \cite{CM}.

Notice that since we initially  are assuming \eqref{eq:add}, that is  $u_0\in \Sigma _2$,
we have $u(t)\in \Sigma _2$  and so also $U(t):=U[t,u(t)]\in  \Sigma _2$  and that for $t\in
[0,T]$ for some $T>0$ we have that  the coordinates $(\tau (t, U(t)), p (t, U(t)), z(t, U(t)), f(t, U(t)))  $ belong to the   image of
the maps in \eqref{eq:in52}--\eqref{eq:in54}.
Notice that later we will drop \eqref{eq:add} and assume only  $u_0\in H ^1$.

\section{Equations}
\label{sec:equations}

Equation \eqref{eq:NLSP} can be written as $u_t=J \nabla   \mathbf{E}(u) = X_\textbf{E}(u)=\{ u,\textbf{E} \}$
 where we have the following notions:

\begin{itemize}
\item  the exterior differential $ dF(u)$ of a Frech\'et differentiable function $F$  defined in an open subset of $H^1$;
    \item the gradient $\nabla F (u)$
defined by $  \langle \nabla F (u), X\rangle = dF(u)  X $;
\item  the   symplectic form
$ \Omega (X,Y) :=  \langle  J^{-1} X , Y \rangle
  $;
  \item the Hamiltonian vectorfield $X_F $  of  $F$ with respect to a   $\Omega $ defined by  $ \Omega (X_F ,Y)=  dF  Y $, that is $X_F=J\nabla F$;
\item the Poisson bracket of two scalar functions
$
  \{ F,G \}  :=  dF  X_G ,
$
\item if $\mathcal{G}  $   has values in a given
Banach space $\mathbb{E}$    and is Frech\'et differentiable with
Frech\'et derivative $d\mathcal{G} $,   and if $G$ is a scalar valued function, then we set  $
  \{ \mathcal{G} ,G \}  :=  d\mathcal{G} X_G .$
  \end{itemize}

We have introduced in Lemma \ref{lem:cmgnt} the functional $B(\varepsilon_2) \ni u\to U[ t,u]$ for the set $B(\varepsilon_2)$ defined in \eqref{prop:modeq0.1}.
 The following elementary  lemma relates Poisson brackets
associated to $\Omega $  in the $u$  and the $U$ space.

\begin{lemma} \label{lem:brackU} Consider    the map $ B(\varepsilon_2) \ni (t,u)\to U=U[t, u]$ and fix $t$. Then, given a
differentiable  function $u\to {\mathcal E}(u)$  and a   differentiable    function $U\to F(U)$,
we have, for $\widetilde{Q}:= {Q}[t,u]$, see  \eqref{eq:coordinate0} and \eqref{eq:enexpnota},
\begin{equation} \label{eq:brackU1} \begin{aligned}   &  \{ F(U[t, u]),  {\mathcal E}  \}
=d _{U}F(U[t, u])    J\nabla _u \mathcal E  (u)  - \sum _{k=1}^2 \{  w_k  ,{\mathcal E}   \}
     d _{U}F(U[t, u])   \partial _{w_k} \widetilde{{Q}}       .\end{aligned}
\end{equation}
    For   ${\mathcal E}(u)= G(U[t, u])$, summing on repeated indexes we have
\begin{align} \nonumber &  \{ F(U[t, u]),G(U[ t, u]) \}
=dF (U[ u] ) J\nabla _{U}G(U[ t, u]) -
dw_j( J\nabla _{U}G(U[t,  u])) dF (U[t,  u] ) \partial _{w_j} \widetilde{{Q}} \\&-\langle  \nabla G(U[ t, u]) , \partial _{w_k} \widetilde{{Q}} \rangle
dF (U [ t, u]) \partial _{w_k} \widetilde{{Q}}  + \langle  \nabla _{U}G(U[t,  u]) , \partial _{w_k} \widetilde{{Q}} \rangle \{ w_j, w_k  \}dF (U [ t, u]) \partial _{w_j} \widetilde{{Q}} \label{eq:brackU2} .\end{align}

\end{lemma}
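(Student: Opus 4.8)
This is a bookkeeping identity, and I would prove it directly from the definitions, the only structural input being that at fixed $t$ the map $u\mapsto U[t,u]$ is a rank‑two perturbation of the identity. Since $u=U[t,u]+\widetilde Q$ with $\widetilde Q=Q[t,u]=e^{J\Theta\cdot\Diamond}Q_{w(t,u)}$ depending on $u$ only through the two real coordinates $w=w(t,u)$, the chain rule gives $d_uU[t,u](h)=h-\sum_{k=1}^{2}\partial_{w_k}\widetilde Q\;dw_k(h)$. Inserting this into $\{F(U[t,u]),\mathcal E\}=d(F\circ U)(X_{\mathcal E})=d_UF(U[t,u])\big(d_uU(J\nabla_u\mathcal E)\big)$ and recalling $dw_k(J\nabla_u\mathcal E)=\{w_k,\mathcal E\}$ gives \eqref{eq:brackU1} at once.

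For \eqref{eq:brackU2} I would put $\mathcal E=G(U[t,u])$ in \eqref{eq:brackU1} and compute the two ingredients this requires. Dualizing $d_u(G(U))(h)=d_UG(U)(d_uU(h))$ with respect to $\langle\cdot,\cdot\rangle$ gives $\nabla_u(G(U))=\nabla_UG(U)-\sum_j\langle\nabla_UG(U),\partial_{w_j}\widetilde Q\rangle\nabla_uw_j$; substituting this into $\{w_k,G(U)\}=dw_k(J\nabla_u(G(U)))$ and using that $J$ is skew for $\langle\cdot,\cdot\rangle$, together with $\{w_j,w_k\}=\langle\nabla_uw_j,J\nabla_uw_k\rangle=-dw_k(J\nabla_uw_j)$, yields $\{w_k,G(U)\}=dw_k(J\nabla_UG(U))+\sum_j\langle\nabla_UG(U),\partial_{w_j}\widetilde Q\rangle\{w_j,w_k\}$. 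Feeding both expressions back into \eqref{eq:brackU1} and sorting the resulting terms according to whether they carry a $J\nabla_UG$, a $\{w_j,w_k\}$, or neither, reproduces the four terms displayed in \eqref{eq:brackU2}.

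Putting the "pure correction" term into exactly the stated form uses, in addition, the relation between $\nabla_uw_j$ and $\partial_{w_j}\widetilde Q$ dictated by the chosen normalizations. I would get this by differentiating in $u$ the orthogonality conditions \eqref{prop:modeq4}–\eqref{prop:modeq5}, equivalently \eqref{eq:coordinate01}, that define $w(t,u)$: this produces a linear system for $d_uw$ whose leading‑order coefficient matrix is the invertible $2\times2$ Gram matrix appearing in \eqref{eq:orth} and in the proof of Lemma \ref{lem:propmodu1}, the cross terms involving $e^{J\tau\cdot\Diamond}\Phi_p$ and $\tilde R$ dropping out because $e^{J\tau\cdot\Diamond}\Phi_p$ is concentrated near $D(t)$ while $\partial_{w_j}\widetilde Q$ is concentrated near $-\mathbf v t-y_0$, which are far apart along the solutions under consideration.

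I expect the only real difficulty to be organizational: there are on the order of a dozen correction terms to track, every sign matters, and one must be careful about the precise interplay of $J$, $\nabla_uw_j$ and $\partial_{w_j}\widetilde Q$. All the analytic content is already in Proposition \ref{prop:modulation} and Lemma \ref{lem:cmgnt}; the rest is the chain rule and the definition of the Poisson bracket.
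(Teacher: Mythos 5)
Your first two paragraphs are essentially the paper's own proof: the chain rule $d_uF(U[t,u])=d_UF(U[t,u])-d_UF(U[t,u])\big(\partial_{w_k}\widetilde Q\big)\,d_uw_k$ gives \eqref{eq:brackU1}, and \eqref{eq:brackU2} is then obtained by taking ${\mathcal E}(u)=G(U[t,u])$ and inserting $\nabla_uG(U[t,u])=\nabla_UG(U)-\langle \nabla_UG(U),\partial_{w_j}\widetilde Q\rangle\,\nabla_uw_j$; the paper does exactly this and nothing more, and your sign bookkeeping for $\{w_k,G(U)\}$ and $\{w_j,w_k\}$ is consistent with it.

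Your third paragraph, however, is not part of the paper's argument and, as written, would not survive scrutiny if it were really needed. The identification of $\nabla_uw_j$ with $J^{-1}\partial_{w_j}\widetilde Q$ obtained by differentiating \eqref{prop:modeq4}--\eqref{prop:modeq5} (equivalently \eqref{eq:coordinate01}) is only approximate: the Gram matrix is $1+O(|w|^2)$, cf. \eqref{eq:orth}, and the cross terms involving $e^{J\tau\cdot\Diamond}\Phi_p$ and $\tilde R$ are merely small by spatial separation, not zero, so ``drop out'' cannot produce an exact identity. The paper never uses such an identification: its proof consists solely of the chain rule plus the displayed formula for $\nabla_uG(U[t,u])$, with the correction term carrying $\nabla_uw_j$ kept as produced by that expansion (and indeed only \eqref{eq:brackU1} is used quantitatively later, e.g.\ in Lemma \ref{lem:brackU1}, where all such coupling terms end up inside $d_UF\,\textbf{A}$ and are estimated rather than simplified). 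So either delete the third paragraph, or state the corresponding term as $-\langle\nabla_UG(U),\partial_{w_j}\widetilde Q\rangle\,d_UF(U)\,J\nabla_uw_j$ without attempting to convert $J\nabla_uw_j$ into $\partial_{w_j}\widetilde Q$; if you believe the literal third term of \eqref{eq:brackU2} forces that conversion, that is a matter of how the (schematic) statement is read, not something your separation argument, nor the paper's proof, actually establishes.
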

\proof    We have, summing on repeated indexes
\begin{equation}  \begin{aligned} & \{ F(U[ t, u]) ,{\mathcal E}   \} =  d _u F(U[t,  u])
J \nabla _u{\mathcal E} \text{ with }\\&
d  _ uF(U[ t, u]) =d_UF (U[t,  u]) d  _ u U[ t, u]= d_UF  (U[ t, u])  - d_UF  (U[ t, u])
\left (\partial _{w_k} \widetilde{{Q}}\right )\ d  _ u  {w_k} .
\end{aligned}\nonumber \end{equation}
This yields   \eqref{eq:brackU1}. \eqref{eq:brackU2}  follows
for   ${\mathcal E}(u)= G(U[ t, u])$ if we use also $\nabla   _ uG(U[t,  u]) =\nabla _UG (U[ t, u])    -
\langle \nabla _UG (U[ t, u])  , \partial _{w_j} \widetilde{{Q}} \rangle   \nabla  _ u  {w_j} $.

\qed

The following   lemma   will play an important role later.

\begin{lemma} \label{lem:brackU1}  Set
  ${\mathcal E}={  \textbf{E}}$   in  \eqref{eq:brackU1}, with  $ {  \textbf{E}}$
	the energy in \eqref{eq:energyfunctional}. Consider a solution  $u=u(t)$
of $u_t=J \nabla   \mathbf{E}(u)$ with $(t,u(t))\in  B(\varepsilon_2)$ over an interval of time.
Then we have
 \begin{align}  \label{eq:braU111} &    \frac{d}{dt}  F(U[t, u])
=     d _{U}F(U )  J   \nabla _U\textbf{E}(U)+d_UF (U) \textbf{A}
\\&  \textbf{A} :=J
   \mathbf{{f}}(U,\widetilde{Q})
 -  ( \dot w_1 -E_ww_2)\partial _{w_1} \widetilde{{Q}}     -( \dot w_2 +E_ww_1)
  \partial _{w_2} \widetilde{{Q}}         \nonumber  \end{align}
 where  for
    \begin{equation} \label{eq:betatil} \begin{aligned}   & \widetilde{\beta }(u):=\beta  (|u|^2)u    \end{aligned}
\end{equation}
   we have  \begin{equation} \label{eq:brackU12} \begin{aligned}   & \mathbf{{f}}(U,\widetilde{Q}):=\int _{[0,1]^2}  \partial _\iota  \partial _s[    \widetilde{\beta }(\iota U+s\widetilde{Q} )]  d\iota  ds
 .      \end{aligned}
\end{equation}
\end{lemma}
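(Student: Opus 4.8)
The plan is to derive \eqref{eq:braU111}--\eqref{eq:brackU12} directly from the basic bracket identity \eqref{eq:brackU1} of Lemma \ref{lem:brackU}, applied with $\mathcal E = \textbf E$, and then to separate the contribution of the linear potential term from the purely nonlinear interaction term. First I would note that since $u=u(t)$ solves $u_t = J\nabla\textbf E(u)$ and $(t,u(t))\in B(\varepsilon_2)$, Lemma \ref{lem:cmgnt} gives the decomposition $u = U[t,u] + \widetilde Q$ with $\widetilde Q = Q[t,u] = e^{J\Theta\cdot\Diamond}Q_w$, and the chain rule gives $\frac{d}{dt}F(U[t,u]) = \partial_t\big(F(U[t,u])\big) + \{F(U[t,u]),\textbf E\}$; however, in the setup here $F$ has no explicit $t$-dependence and the time dependence of the map $u\mapsto U[t,u]$ is encoded through $\widetilde Q$, so the cleanest route is to compute $\frac{d}{dt}F(U[t,u])$ by expanding $\frac{dU}{dt}$ and matching it against \eqref{eq:brackU1}.

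The key algebraic step is the following. Applying \eqref{eq:brackU1} with $\mathcal E = \textbf E$ gives
\begin{equation}
\frac{d}{dt}F(U[t,u]) = d_UF(U)\,J\nabla_u\textbf E(u) - \sum_{k=1}^2\{w_k,\textbf E\}\,d_UF(U)\,\partial_{w_k}\widetilde Q. \nonumber
\end{equation}
Now I would replace $J\nabla_u\textbf E(u)$ by $J\nabla_U\textbf E(U)$ plus a correction: writing $\textbf E = \textbf E_0 + \textbf E_V$ and using $u = U + \widetilde Q$, one has $\nabla\textbf E_0(u) - \nabla\textbf E_0(U) = \nabla\textbf E_0(U+\widetilde Q) - \nabla\textbf E_0(U)$, whose quadratic-in-$(U,\widetilde Q)$ part, after subtracting also $\nabla\textbf E_0(\widetilde Q)$, is exactly the double integral $\mathbf f(U,\widetilde Q)$ in \eqref{eq:brackU12} — this is just Taylor's theorem with integral remainder for $\widetilde\beta(u) = \beta(|u|^2)u$ in the two variables $(\iota,s)$, together with the fact that the linear parts $-\Delta$ contribute linearly and hence cancel, and the $\nabla\textbf E_0(\widetilde Q)$ piece together with $\nabla\textbf E_V$ is absorbed using that $\widetilde Q$ solves (up to the modulation equations) the small-ground-state equation \eqref{eq:sp}. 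The remaining job is to identify $-\sum_k\{w_k,\textbf E\}\partial_{w_k}\widetilde Q$ with $-(\dot w_1 - E_w w_2)\partial_{w_1}\widetilde Q - (\dot w_2 + E_w w_1)\partial_{w_2}\widetilde Q$; since $w_k = w_k(t,u)$ and $u$ solves the Hamiltonian equation, the full time derivative is $\dot w_k = \partial_t w_k + \{w_k,\textbf E\}$, so this amounts to showing $\partial_t w_k$ combines with the $E_w$-terms as claimed — this follows from the explicit $t$-dependence of the ansatz (the factor $e^{-\frac\im2\mathbf v\cdot x - \frac\im4 t|\mathbf v|^2}$ and the translation by $t\mathbf v + y_0$ inside $Q_w$) together with \eqref{eq:sp11}, which expresses $\im Q_w$ in terms of $\partial_{w_1}Q$ and $\partial_{w_2}Q$, and the defining relation $E_w$ in \eqref{eq:sp}.

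So the steps in order are: (i) invoke \eqref{eq:brackU1} with $\mathcal E = \textbf E$; (ii) split $\nabla_u\textbf E(u) = \nabla_U\textbf E(U) + [\text{correction}]$ and identify the nonlinear part of the correction with $\mathbf f(U,\widetilde Q)$ via the two-variable Taylor formula \eqref{eq:brackU12}, checking that linear terms and the $\textbf E_V$/$\nabla\textbf E_0(\widetilde Q)$ contributions are handled by \eqref{eq:sp}; (iii) rewrite $\{w_k,\textbf E\}$ in terms of $\dot w_k$ and $\partial_t w_k$, and use \eqref{eq:sp11} plus the explicit $t$-dependence of $\widetilde Q$ to collect the coefficients into $\dot w_1 - E_w w_2$ and $\dot w_2 + E_w w_1$. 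I expect the main obstacle to be step (iii): carefully bookkeeping the explicit time derivatives hidden inside $\widetilde Q = e^{J\Theta\cdot\Diamond}Q_w$ — both from $\Theta$ (which depends on $t$ through $-\mathbf v t - y_0$ and $\frac t4|\mathbf v|^2$) and from the modulation parameter $w(t)$ — and verifying that the phase/translation contributions assemble precisely into the combination $E_w$ dictated by \eqref{eq:sp}, rather than producing spurious extra terms. The nonlinear Taylor expansion in step (ii) is conceptually routine but requires care that no boundary terms from the $[0,1]^2$ integration are dropped.
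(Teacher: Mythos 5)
Your route is essentially the paper's: apply \eqref{eq:brackU1} with $\mathcal E=\textbf{E}$, split $\nabla \textbf{E}(u)=\nabla \textbf{E}(U)+\nabla \textbf{E}(\widetilde Q)+\mathbf{f}(U,\widetilde Q)$ with $\mathbf{f}$ the two--variable Taylor remainder \eqref{eq:brackU12}, use the bound--state equation \eqref{eq:sp} for $Q_w$ together with the Galilean conjugation formula (this is \eqref{eq:nablaexp} in the paper), use \eqref{eq:sp11}, and use $\dot w_i=\partial_t w_i+\{w_i,\textbf{E}\}$. Two points in your write-up need correcting, though both are repairable within your own plan. First, your displayed identity is not what \eqref{eq:brackU1} gives: its left-hand side should be the bracket $\{F(U[t,u]),\textbf{E}\}$, whereas $\frac{d}{dt}F(U[t,u])$ carries in addition the explicit time derivative $d_UF(U)\,\partial_t U[t,u]=-\,d_UF(U)\,\partial_t\widetilde Q[t,u]$, which produces the terms proportional to $e^{J\Theta\cdot\Diamond}\mathbf v\cdot\Diamond Q_w$ and $\tfrac{|\mathbf v|^2}{4}\widetilde Q$ as well as $-\partial_t w_i\,\partial_{w_i}\widetilde Q$; these are precisely the pieces that cancel the Galilean terms arising when $\nabla\textbf{E}(\widetilde Q)$ is rewritten via \eqref{eq:nablaexp} and \eqref{eq:sp}, and that upgrade $\{w_i,\textbf{E}\}$ to $\dot w_i$ — if you take your display literally the computation does not close. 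Second, the $E_w$ parts of the coefficients do not come from $\partial_t w_k$: after the cancellation one is left with $E_w\, d_UF(U)J\widetilde Q$, and \eqref{eq:sp11} gives $J\widetilde Q=w_2\partial_{w_1}\widetilde Q-w_1\partial_{w_2}\widetilde Q$, which combined with $-\dot w_i\,\partial_{w_i}\widetilde Q$ yields exactly the stated coefficients $\dot w_1-E_ww_2$ and $\dot w_2+E_ww_1$. Finally, $Q_w$ satisfies \eqref{eq:sp} exactly (it is the bound state of $-\Delta+V$ shifted along $\mathbf v t+y_0$), so no ``up to the modulation equations'' caveat is needed, and since $V$ is linear the potential splits exactly between $U$ and $\widetilde Q$, so the Taylor step indeed concerns only $\widetilde\beta$, as in \eqref{eq:brackU12}.
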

\proof   It is elementary that, summing on repeated indexes,
\begin{equation*}  \begin{aligned}   &
\partial _t U[t, u]= -\partial _t \widetilde{{Q}} [t, u]=-
  J\frac{|\mathbf{v}|^2}{4} \widetilde{{Q}} + Je^{ J \Theta \cdot \Diamond }  \mathbf{v} _a \Diamond _a Q_w  -  \partial _t w_i\  \frac{\partial}{\partial w_i}    \widetilde{{Q}}
,      \end{aligned}
\end{equation*}
where   $\widetilde{{Q}}=e^{ J \Theta \cdot \Diamond }  Q_w$, see \eqref{eq:coordinate0}.
By \eqref{eq:brackU1}  for  ${\mathcal E}={  \textbf{E}}$ and
 by $\dot w_i= \frac{d}{dt}w_i= \partial _t w_i +\{  w_i, \textbf{E}\}  $, we get
\begin{equation} \label{eq:brackU13} \begin{aligned}   &
   \frac{d}{dt} F(U[t, u])=d_{U} F(U) \partial _t U[t, u] + \{  F(U[t, u]) , \textbf{E}  \}
=d _{U}F(U[ t,u])    J\nabla _u \textbf{E}  (u)\\&  -   \dot w_i
     d _{U}F(U )   \partial _{w_i} \widetilde{{Q}} - d_{U} F(U)J \left( \frac{|\mathbf{v}|^2}{4} \widetilde{{Q}}- Je^{ J \Theta \cdot \Diamond }  \mathbf{v} _a \Diamond _a Q_w \right )   .\end{aligned}
\end{equation}
We have $ \nabla _u\textbf{E} =-\Delta u + V(\cdot + \mathbf{v}t + y_0)u + \beta  (|u|^2)u$ and
  $
\beta  (|u|^2)u=\beta  (|\widetilde{Q}|^2)\widetilde{Q}+\beta  (|U|^2)U+\textbf{f}(U,\widetilde{Q})
 .$
We expand
\begin{equation} \label{eq:brackU-13}  \begin{aligned}   &
\nabla  \textbf{E}(u)= \nabla  \textbf{E}(U)+
\nabla  \textbf{E}(\widetilde{Q}) +\textbf{f}(U,\widetilde{Q}) .
  \end{aligned}
\end{equation}
We have
\begin{equation} \label{eq:nablaexp}  \begin{aligned}   & \nabla  \textbf{E}(\widetilde{Q}) = e^{  J \frac{\mathbf{v} \cdot x}{2}+   J\frac{|\mathbf{v}|^2}{4}t}
\left (
 \nabla  \textbf{E}(  Q_w (\cdot  + \mathbf{v} t+y_0) )  -   \mathbf{v} \cdot  \Diamond    Q_w(\cdot  + \mathbf{v} t+y_0) + \frac{\mathbf{v}^2}{4}  Q_w(\cdot  + \mathbf{v} t+y_0)\right )
 .\end{aligned}
\end{equation}
By \eqref{eq:sp} we have $\nabla \textbf{E}(  Q_w (\cdot  + \mathbf{v} t+y_0) )=E_{w}  Q_w (\cdot  + \mathbf{v} t+y_0)  $. So various terms cancel and  we get
\begin{equation*}   \begin{aligned}   &
   \frac{d}{dt} F(U[t, u])=   d _{U}F(U )  J   [\nabla _U\textbf{E}(U)+
   \mathbf{{f}}(U,\widetilde{Q})) ] -   \dot w_i
     d _{U}F(U )   \partial _{w_i} \widetilde{{Q}}
   + E_wd _{U}F(U )  J\widetilde{Q}  .\end{aligned}
\end{equation*}
We finally obtain \eqref{eq:braU111}  because
  by \eqref{eq:sp11} we have $J \widetilde{Q}= w_2 \partial _{w_1} \widetilde{{Q}} -w_1 \partial _{w_2} \widetilde{{Q}}$.

\qed

Using the notation of    Lemma \ref{lem:brackU1}  and of Lemma \ref{lem:cmgnt} we get the following elementary lemma.
\begin{lemma} \label{lem:brackU10}  We have, in the  notation of    Lemma \ref{lem:brackU1}  and of Lemma \ref{lem:cmgnt},
   \begin{equation} \label{eq:brackU101} \begin{aligned}   &   \widetilde{\beta }(u) =   \widetilde{\beta }  (  e^{J
\tau \cdot \Diamond }
  \Phi _{p}  ) +     \widetilde{\beta }  (\widetilde{Q} + e^{J
\tau \cdot \Diamond } P(p)P(\pi ) r'  )  +\mathbf{{f}}(e^{J
\tau \cdot \Diamond }
  \Phi _{p},\widetilde{Q} + e^{J
\tau \cdot \Diamond } P(p)P(\pi ) r')
 .      \end{aligned}
\end{equation}
\end{lemma}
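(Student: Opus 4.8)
The plan is to reduce \eqref{eq:brackU101} to the elementary two--variable decomposition identity already invoked in the proof of Lemma \ref{lem:brackU1} to obtain \eqref{eq:brackU-13}. Namely, for any map $g$ with $g(0)=0$ and any two functions $X,Y$ in the relevant space, applying the fundamental theorem of calculus twice gives
\begin{equation*}
g(X+Y)=g(X)+g(Y)+\int_{[0,1]^2}\partial_\iota\partial_s\big[g(\iota X+sY)\big]\,d\iota\,ds .
\end{equation*}
Indeed, $\int_0^1\partial_s g(\iota X+sY)\,ds=g(\iota X+Y)-g(\iota X)$, and then $\int_0^1\partial_\iota\big[g(\iota X+Y)-g(\iota X)\big]\,d\iota=g(X+Y)-g(Y)-g(X)+g(0)$, which is the asserted identity since $g(0)=0$.

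First I would apply this with $g=\widetilde\beta$, noting $\widetilde\beta(0)=\beta(0)\cdot 0=0$ by (H2), and with the splitting dictated by Lemma \ref{lem:cmgnt}: put $X:=e^{J\tau\cdot\Diamond}\Phi_p$ and $Y:=\widetilde Q+e^{J\tau\cdot\Diamond}P(p)P(\pi)r'$. By Lemma \ref{lem:cmgnt} we have $u=U[t,u]+\widetilde Q$ with $U[t,u]=e^{J\tau\cdot\Diamond}(\Phi_p+P(p)P(\pi)r')$ (here $r'$ denotes the coordinate called $r$ in Lemma \ref{lem:cmgnt}, in the primed convention of Section \ref{sec:enexp}), so that $u=X+Y$ exactly. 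The double integral in the identity above is, by the very definition \eqref{eq:brackU12} evaluated at the pair $(X,Y)$, the term $\mathbf f\big(e^{J\tau\cdot\Diamond}\Phi_p,\ \widetilde Q+e^{J\tau\cdot\Diamond}P(p)P(\pi)r'\big)$. Substituting these choices into the identity then yields \eqref{eq:brackU101}.

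The only thing to verify is that the above manipulations are legitimate, i.e.\ that one may differentiate under the integral sign and apply the fundamental theorem of calculus; this is immediate from $\beta\in C^\infty$ together with (H3) and the fact that $\Phi_p$, $P(p)P(\pi)r'$ and $\widetilde Q$ all lie in $H^1(\R^3)$ (indeed in $\Sigma_2$ under the running assumption \eqref{eq:add}), exactly as in the computation leading to \eqref{eq:brackU-13}. In short, there is no real obstacle: Lemma \ref{lem:brackU10} is nothing but the decomposition identity of Lemma \ref{lem:brackU1}, applied to the three--term splitting $u=e^{J\tau\cdot\Diamond}\Phi_p+e^{J\tau\cdot\Diamond}P(p)P(\pi)r'+\widetilde Q$ and grouping the last two summands together into the second argument of $\mathbf f$.
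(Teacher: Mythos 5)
Your argument is correct and is exactly the elementary computation the paper has in mind: the paper states Lemma \ref{lem:brackU10} without proof, and the identity $g(X+Y)=g(X)+g(Y)+\int_{[0,1]^2}\partial_\iota\partial_s g(\iota X+sY)\,d\iota\,ds$ with $g=\widetilde\beta$, $g(0)=0$, is the same device already used implicitly in \eqref{eq:brackU-13}, here applied to the splitting $u=e^{J\tau\cdot\Diamond}\Phi_p+\bigl(\widetilde Q+e^{J\tau\cdot\Diamond}P(p)P(\pi)r'\bigr)$ furnished by Lemma \ref{lem:cmgnt}. Nothing is missing.
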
\qed

\subsection{Set up for the discrete mode associated to the potential $V$}
\label{sec:varw}

 We  start stating following  elementary  and standard fact.

\begin{lemma} \label{lem:bddst1}
 Consider the function $Q_w$ of Prop. \ref{prop:bddst}.
Consider the operator  \begin{equation}\label{eq:sp2}
 \begin{aligned} &
 \widetilde{{\mathfrak h}}    :=    -\Delta +\mathbf{v}\cdot \Diamond + 4^{-1} {|\mathbf{v}|^2} +V(\cdot +\mathbf{v}t+y_0)- E_w \\  & \quad+
\begin{pmatrix}
\beta  (|\widetilde{Q}| ^2 )+2 \beta  '(|\widetilde{Q} | ^2 )\Re  \widetilde{Q} &
2 \beta  '(|\widetilde{Q}| ^2 )\Re \widetilde{Q}   \\
2 \beta  '(|\widetilde{Q}| ^2 )\Im \widetilde{Q} &     \beta  (|\widetilde{Q}| ^2 )+ 2 \beta  '(|\widetilde{Q}| ^2 )\Im \widetilde{Q}
 \end{pmatrix}  .
\end{aligned}
\end{equation}
Then we have the following equality:
\begin{equation}\label{eq:sp3}
 \begin{aligned} &
 \widetilde{{\mathfrak h}}   \frac{\partial}{\partial w_i}\widetilde{Q}=  (\frac{\partial}{\partial w_i} E_w )\widetilde{Q}.
\end{aligned}
\end{equation}

\end{lemma}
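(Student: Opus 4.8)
The strategy is to differentiate the profile equation \eqref{eq:sp} for $Q_w$ with respect to the modulation parameter $w_i$ and then transport the resulting identity under the Galilean/phase conjugation that produces $\widetilde Q$ from $Q_w$. First I would recall that, by \eqref{eq:coordinate0} and \eqref{eq:enexpnota}, $\widetilde Q = e^{J\Theta\cdot\Diamond}Q_w$ with $\Theta=(-\mathbf v t-y_0,\ 2^{-1}\mathbf v\cdot x+4^{-1}t|\mathbf v|^2)$, so that $\widetilde Q(x)=e^{J(\frac12\mathbf v\cdot x+\frac t4|\mathbf v|^2)}Q_w(x+\mathbf v t+y_0)$. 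Since the conjugating factor does not depend on $w$, differentiation in $w_i$ commutes with it: $\partial_{w_i}\widetilde Q = e^{J\Theta\cdot\Diamond}\,\partial_{w_i}Q_w$.

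Next I would linearize \eqref{eq:sp}. Writing $F(Q):=(-\Delta+V)Q+\beta(|Q|^2)Q-E_wQ$, the equation $F(Q_w)=0$ differentiated in $w_i$ gives $\mathfrak h_w\,\partial_{w_i}Q_w = (\partial_{w_i}E_w)\,Q_w$, where $\mathfrak h_w$ is exactly the real-linearized Schrödinger operator associated with $Q_w$: writing $Q_w=Q_1+JQ_2$ componentwise, the nonlinear term $\beta(|Q|^2)Q$ has real Fréchet derivative with matrix $\beta(|Q_w|^2)I_2 + 2\beta'(|Q_w|^2)\binom{Q_1}{Q_2}(Q_1,Q_2)$, i.e. the matrix appearing in \eqref{eq:sp2} but built from $Q_w$ rather than $\widetilde Q$, and without the $\mathbf v\cdot\Diamond+4^{-1}|\mathbf v|^2$ terms. (The $E_w$ factor on the right comes from the $-E_wQ$ term, using that $E_w$ is real so $\partial_{w_i}(E_wQ_w)=(\partial_{w_i}E_w)Q_w+E_w\partial_{w_i}Q_w$ and the last piece is absorbed into $\mathfrak h_w$.)

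Finally I would conjugate this identity. Using $e^{-\frac12 Jv\cdot x}(-\Delta)e^{\frac12 Jv\cdot x}=-\Delta - v\cdot\Diamond+\frac{|v|^2}{4}$ (the formula quoted just after \eqref{eq:conjNLS}), together with the fact that $e^{J\Theta\cdot\Diamond}$ is an isometry that translates $V$ into $V(\cdot+\mathbf v t+y_0)$ and translates the potential matrix built from $Q_w$ into the one built from $\widetilde Q$, one checks that $e^{J\Theta\cdot\Diamond}\mathfrak h_w\,e^{-J\Theta\cdot\Diamond} = \widetilde{\mathfrak h}$ with $\widetilde{\mathfrak h}$ exactly as in \eqref{eq:sp2}. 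Applying $e^{J\Theta\cdot\Diamond}$ to $\mathfrak h_w\partial_{w_i}Q_w=(\partial_{w_i}E_w)Q_w$ and using $\partial_{w_i}\widetilde Q=e^{J\Theta\cdot\Diamond}\partial_{w_i}Q_w$ yields \eqref{eq:sp3}. The only mildly delicate point is bookkeeping the componentwise real matrix structure and keeping straight that $J$ here means multiplication by $\im$ in the appropriate sense; this is entirely routine, so there is no real obstacle — the lemma is genuinely "elementary and standard" as stated.
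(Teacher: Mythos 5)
Your argument is correct and is precisely the routine computation the paper leaves unwritten (Lemma \ref{lem:bddst1} is stated as ``elementary and standard'' and closed with no proof): differentiate \eqref{eq:sp} in $w_i$, observe $\partial_{w_i}\widetilde Q=e^{J\Theta\cdot\Diamond}\partial_{w_i}Q_w$ since the conjugating factor is $w$-independent, and conjugate by $e^{J\Theta\cdot\Diamond}$ using the quoted identity $e^{-\frac12 Jv\cdot x}(-\Delta)e^{\frac12 Jv\cdot x}=-\Delta-v\cdot\Diamond+\frac{|v|^2}{4}$ together with the covariance of the real-linearized nonlinearity under phase multiplication and translation. The only remark worth adding is that the matrix displayed in \eqref{eq:sp2} appears to carry a typo (its entries should be quadratic in $\Re\widetilde Q$, $\Im\widetilde Q$, i.e.\ the outer-product form $\beta(|\widetilde Q|^2)I_2+2\beta'(|\widetilde Q|^2)\left(\begin{smallmatrix}(\Re\widetilde Q)^2 & \Re\widetilde Q\,\Im\widetilde Q\\ \Re\widetilde Q\,\Im\widetilde Q & (\Im\widetilde Q)^2\end{smallmatrix}\right)$ that you wrote), so your linearization is the operator actually intended.
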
 \qed

We  write equation \eqref{eq:NLSP}  with a special view at the evolution of the variable $w$.  Here we assume that for a certain interval of time we have $(t,u(t))\in  B(\varepsilon_2)$ with $B(\varepsilon_2) $ as in Proposition \ref{prop:modulation} .
Substituting
\eqref{eq:coordinate0}   in  \eqref{eq:NLSP} and using twice an expansion like \eqref{eq:brackU-13}  we get for $\eta :=e^{J
\tau ' \cdot \Diamond } P(p')P(\pi ) r'$,
\begin{equation*}
 \begin{aligned}   & \partial _t
  (e^{J
\tau ' \cdot \Diamond }\Phi _{p'} +\eta ) + \dot w _i \partial _{w_i}\widetilde{Q} +J4^{-1}  {\mathbf{v}^2} \widetilde{Q} +  e^{ J \Theta \cdot \Diamond } \mathbf{v}\cdot \nabla Q_w
    \\& =J\nabla \textbf{E}(e^{J
\tau ' \cdot \Diamond }\Phi _{p'}) + J\nabla \textbf{E}(\widetilde{Q} )   + J\nabla \textbf{E}(\eta )  + J \textbf{f}( \eta , \widetilde{Q} )+ J \textbf{f}( \eta + \widetilde{Q} ,e^{J
\tau ' \cdot \Diamond } \Phi _{p'}).\end{aligned}
\end{equation*}
 We substitute
$\nabla \textbf{E}(\widetilde{Q} ) $ using \eqref{eq:nablaexp},
we use    \eqref{eq:sp11}, that is
$ J \widetilde{Q}  =  w_2 \partial _{w_1} \widetilde{{Q}} -w_1 \partial _{w_2} \widetilde{{Q}}$,  and
\begin{equation*}
 \begin{aligned}     \textbf{f}( \eta , \widetilde{{Q}}) = \partial _s \widetilde{\beta}  (s\eta +\widetilde{{Q}})_{|s=0} + \int _{[0,1]^3}
\partial _{s}\partial _{s_1}\partial _{s_2}\widetilde{\beta}  (ss_2\eta +s_1\widetilde{{Q}}) dsds_1ds_2.
\end{aligned}
\end{equation*}
We  then get the following equation:
\begin{equation} \label{eq:prepw}
 \begin{aligned}   &
(\dot w_1 -E_ww_2)  \partial _{w_1}\widetilde{Q}
+ (\dot w_2 +E_ww_1)  \partial _{w_2}\widetilde{Q}   -J\widetilde{{\mathfrak h} } \eta
=-\partial _t
  (e^{J
\tau ' \cdot \Diamond }\Phi _{p'} +\eta )  +J\nabla \textbf{E}(e^{J
\tau ' \cdot \Diamond }\Phi _{p'})  \\&  +  J\nabla \textbf{E}_P(\eta )  +   J \textbf{f}( \eta + \widetilde{Q},e^{J
\tau ' \cdot \Diamond }\Phi _{p'}  )+ \int _{[0,1]^3}
\partial _{s}\partial _{s_1}\partial _{s_2}\widetilde{\beta}  (ss_2\eta +s_1\widetilde{Q}) dsds_1ds_2.
\end{aligned}
\end{equation}
Notice now that $\langle \eta , J \partial _{w_i}\widetilde{Q}\rangle =0$  for $i=1,2$  implies $\langle \eta , \widetilde{Q} \rangle =0$. So, see \cite{GNT},
\begin{equation*}
 \begin{aligned}   &  \langle J\widetilde{{\mathfrak h}} \eta  , J \frac{\partial }{\partial w_i} \widetilde{Q}\rangle =  \langle \eta  ,\widetilde{{\mathfrak h}}\frac{\partial }{\partial w_i}\widetilde{Q}\rangle =    \langle \eta  ,\widetilde{Q}\rangle    \frac{\partial }{\partial w_i}E_w =0.
\end{aligned}
\end{equation*}
   Applying $\langle \quad , J \partial _{w_i}\widetilde{Q}\rangle$   to \eqref{eq:prepw}   and using the above remarks
and
 \eqref{eq:orth}  we get
\begin{equation} \label{eq:prepw1}
 \begin{aligned}   &   (1+O(w^2))   \begin{pmatrix}   \dot w_1 -E_ww_2    \\
     -(\dot w_2 +E_ww_1)
 \end{pmatrix}  = \begin{pmatrix}   \langle \text{rhs\eqref{eq:prepw}}, J \partial _{w_1}\widetilde{Q}\rangle    \\
     \langle \text{rhs\eqref{eq:prepw}}, J \partial _{w_2}\widetilde{Q}\rangle
 \end{pmatrix}   .
\end{aligned}
\end{equation}

In the sequel we will use the following lemma.

\begin{lemma} \label{lem:orth}     \eqref{eq:coordinate01}   implies  for $i=1,2$
 \begin{equation} \label{eq:orth1}
 \begin{aligned}   &
\langle e^{J
\tau '\cdot \Diamond }
     f '  ,   e^{ J\frac 1 2 \mathbf{v} \cdot x } \phi _0 (\cdot +\mathbf{v}t +y_0)    \rangle =\sum _i\langle e^{J
\tau '\cdot \Diamond }
      \textbf{S} ^{0,1}_{k,m}(i),   e^{ J \frac 1 2 \mathbf{v} \cdot x   } \partial _{w_i} Q_w (\cdot +\mathbf{v}t +y_0)       \rangle   \\&  -  \cos \left ( 4^{-1} t  |\mathbf{v} |^2  \right )   \langle e^{J
\tau '\cdot \Diamond }
     f '  ,   e^{ J(\frac 1 2 \mathbf{v} \cdot x +\frac t 4  |\mathbf{v} |^2) }
			J\partial _{w_2} q_w (\cdot +\mathbf{v}t +y_0)	\rangle   \\& + \sin \left ( 4^{-1} t  |\mathbf{v} |^2  \right )   \langle e^{J
\tau '\cdot \Diamond }
     f '  ,   e^{ J(\frac 1 2 \mathbf{v} \cdot x +\frac t 4  |\mathbf{v} |^2) }
			J\partial _{w_1} q_w (\cdot +\mathbf{v}t +y_0)	\rangle
\end{aligned}
\end{equation}
where  the $ \textbf{S} ^{0,1}_{k,m}(i) $ are $ S^{0,1}_{k,m}(t,\pi , \Pi, \Pi (f'), z', f')$    symbols in the sense of   Def. \ref{eq:opSymb}.   We have similarly
\begin{equation} \label{eq:orth2}
 \begin{aligned}   &
\langle e^{J
\tau '\cdot \Diamond }
     f '  ,   Je^{ J\frac 1 2 \mathbf{v} \cdot x } \phi _0 (\cdot +\mathbf{v}t +y_0)    \rangle =\sum _i\langle e^{J
\tau '\cdot \Diamond }
      \textbf{S} ^{0,1}_{k,m}(i),   e^{ J \frac 1 2 \mathbf{v} \cdot x   } \partial _{w_i} Q_w (\cdot +\mathbf{v}t +y_0)       \rangle   \\&  +\sin \left ( 4^{-1} t    |\mathbf{v} |^2  \right )   \langle e^{J
\tau '\cdot \Diamond }
     f '  ,   e^{ J(\frac 1 2 \mathbf{v} \cdot x +\frac t 4  |\mathbf{v} |^2) }
			J\partial _{w_2} q_w (\cdot +\mathbf{v}t +y_0)	\rangle   \\& + \cos \left ( 4^{-1} t  |\mathbf{v} |^2  \right )   \langle e^{J
\tau '\cdot \Diamond }
     f '  ,   e^{ J(\frac 1 2 \mathbf{v} \cdot x +\frac t 4  |\mathbf{v} |^2) }
			J\partial _{w_1} q_w (\cdot +\mathbf{v}t +y_0)	\rangle .
\end{aligned}
\end{equation}

\end{lemma}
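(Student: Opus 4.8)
The plan is to read the two constraints \eqref{eq:coordinate01} as a $2\times2$ linear system, with an orthogonal (in fact involutory) coefficient matrix, in the two scalar unknowns
\[
A:=\langle e^{J\tau'\cdot\Diamond}f',\,e^{J\frac12\mathbf{v}\cdot x}\phi_0(\cdot+\mathbf{v}t+y_0)\rangle,\quad
B:=\langle e^{J\tau'\cdot\Diamond}f',\,Je^{J\frac12\mathbf{v}\cdot x}\phi_0(\cdot+\mathbf{v}t+y_0)\rangle,
\]
which are precisely the left-hand sides of \eqref{eq:orth1} and \eqref{eq:orth2}, and then to invert it. First I would rewrite \eqref{eq:coordinate01}: since $J$ commutes with every operator of the form $e^{Ja}$ with $a$ a scalar function (in particular with $e^{J\tau'\cdot\Diamond}$ and with $e^{J(\frac12\mathbf{v}\cdot x+4^{-1}t|\mathbf{v}|^2)}$), and since $e^{J\Theta\cdot\Diamond}\partial_{w_i}Q_w=e^{J(\frac12\mathbf{v}\cdot x+4^{-1}t|\mathbf{v}|^2)}\partial_{w_i}Q_w(\cdot+\mathbf{v}t+y_0)$, the constraint becomes, for $i=1,2$,
\[
\langle e^{J\tau'\cdot\Diamond}(f'+g),\,Je^{J(\frac12\mathbf{v}\cdot x+4^{-1}t|\mathbf{v}|^2)}\partial_{w_i}Q_w(\cdot+\mathbf{v}t+y_0)\rangle=0,\qquad g:=P(p')P(\pi)r'-f'.
\]
Using \eqref{eq:decomp2}, \eqref{eq:defproj} and \eqref{eq:defproj2} I would compute
\[
g=\sum_{j=1}^{\textbf{n}}z'_jP(p')\xi_j(\pi)+\sum_{j=1}^{\textbf{n}}\overline{z}'_jP(p')\overline{\xi}_j(\pi)+\bigl(P(p')P_c(\pi)-1\bigr)f',
\]
in which the first two groups of terms are linear in $z'$ with Schwartz coefficients depending smoothly on $(\pi,p')$, while $(P(p')P_c(\pi)-1)f'$ is a finite-rank smoothing operator (range in $\mathrm{Span}\{J\Diamond_j\Phi_{p'},\partial_{p'_j}\Phi_{p'},\xi_j(\pi),\overline{\xi}_j(\pi)\}$) applied to $f'$; hence $g$ is an $\textbf{S}^{0,1}_{k,m}$ symbol in $(t,\pi,\Pi,\Pi(f'),z',f')$.

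Next I would substitute $\partial_{w_1}Q_w=\phi_0+\partial_{w_1}q_w$ and $\partial_{w_2}Q_w=-J\phi_0+\partial_{w_2}q_w$ (from $Q_w=(w_1-Jw_2)\phi_0+q_w$) and factor $e^{J(\frac12\mathbf{v}\cdot x+4^{-1}t|\mathbf{v}|^2)}=e^{4^{-1}tJ|\mathbf{v}|^2}e^{J\frac12\mathbf{v}\cdot x}$ with $e^{4^{-1}tJ|\mathbf{v}|^2}=c\,I+s\,J$, where $c:=\cos(4^{-1}t|\mathbf{v}|^2)$, $s:=\sin(4^{-1}t|\mathbf{v}|^2)$ and $J^2=-I$. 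Since $Je^{4^{-1}tJ|\mathbf{v}|^2}\phi_0=(cJ-s)e^{J\frac12\mathbf{v}\cdot x}\phi_0$ and $Je^{J(\cdots)}(-J\phi_0)=e^{J(\cdots)}\phi_0=(c+sJ)e^{J\frac12\mathbf{v}\cdot x}\phi_0$, the $\phi_0$-parts of the two constraints equal $-sA+cB$ (for $i=1$) and $cA+sB$ (for $i=2$), so the system is
\[
\begin{pmatrix}-s&c\\ c&s\end{pmatrix}\begin{pmatrix}A\\ B\end{pmatrix}=\begin{pmatrix}\mathrm{RHS}_1\\ \mathrm{RHS}_2\end{pmatrix},\qquad
\mathrm{RHS}_i=-\langle e^{J\tau'\cdot\Diamond}f',Je^{J(\frac12\mathbf{v}\cdot x+4^{-1}t|\mathbf{v}|^2)}\partial_{w_i}q_w(\cdot+\mathbf{v}t+y_0)\rangle-\langle e^{J\tau'\cdot\Diamond}g,Je^{J(\frac12\mathbf{v}\cdot x+4^{-1}t|\mathbf{v}|^2)}\partial_{w_i}Q_w(\cdot+\mathbf{v}t+y_0)\rangle.
\]
The coefficient matrix squares to the identity, so $A=-s\,\mathrm{RHS}_1+c\,\mathrm{RHS}_2$ and $B=c\,\mathrm{RHS}_1+s\,\mathrm{RHS}_2$. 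Using $Je^{J(\cdots)}=e^{J(\cdots)}J$, the $f'$-against-$q_w$ pieces of $\mathrm{RHS}_1,\mathrm{RHS}_2$ assemble exactly into the $q_w$-terms displayed in \eqref{eq:orth1} and \eqref{eq:orth2}. For the $g$-pieces I would move the surviving $J$ and the rotation $e^{4^{-1}tJ|\mathbf{v}|^2}$ off $\partial_{w_i}Q_w$ and onto the $e^{J\tau'\cdot\Diamond}g$ slot (legitimate because $J$ is skew and $J$, the rotation commute with $e^{J\tau'\cdot\Diamond}$), turning each such term into $\langle e^{J\tau'\cdot\Diamond}\widetilde{g}_i,e^{J\frac12\mathbf{v}\cdot x}\partial_{w_i}Q_w(\cdot+\mathbf{v}t+y_0)\rangle$ with $\widetilde{g}_i$ a fixed linear combination—coefficients among $c^2,s^2,cs$, hence of modulus $\le1$—of $g$ and $Jg$; such a $\widetilde{g}_i$ is again an $\textbf{S}^{0,1}_{k,m}$ symbol. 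Relabelling $\widetilde{g}_i$ as the $\textbf{S}^{0,1}_{k,m}(i)$ of the statement gives \eqref{eq:orth1}, and reading off $B$ in place of $A$ gives \eqref{eq:orth2}.

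The only step carrying genuine content is the verification that $g=P(p')P(\pi)r'-f'$ is an $\textbf{S}^{0,1}_{k,m}$ symbol, i.e.\ that the mismatch between $P(p')P(\pi)r'$ and $f'$ is a $z'$-linear term with Schwartz coefficients plus a smoothing operator—mapping $\Sigma_{-n}$ into $\Sigma_n$ with norm $\le C\|f'\|_{\Sigma_{-n}}$—applied to $f'$. This is a direct consequence of \eqref{eq:decomp2} together with the facts that $P_{N_g}(p')$, the rank-one projections onto $\xi_j(\pi)$ and $\overline{\xi}_j(\pi)$, and $1-P_c(\pi)$ are finite-rank with Schwartz range and Schwartz-valued dual vectors, depending smoothly on $(\pi,p')$ in a neighbourhood of $(p_0,p_0)$ (see Lemma \ref{lem:basis} and \eqref{eq:defproj}--\eqref{eq:defproj2}). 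Everything else is bookkeeping of the two rotations $e^{J\tau'\cdot\Diamond}$ and $e^{4^{-1}tJ|\mathbf{v}|^2}$ and of the skew-symmetry of $J$, so I expect no essential obstacle beyond keeping these $J$-manipulations and the $\cos/\sin$ signs straight.
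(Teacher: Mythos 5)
Your proposal is correct and follows essentially the same route as the paper: replace $P(p')P(\pi)r'$ by $f'$ modulo $\textbf{S}^{0,1}_{k,m}$ symbols, split $\partial_{w_i}Q_w$ into its $\phi_0$ (resp.\ $-J\phi_0$) part plus $\partial_{w_i}q_w$, and invert the resulting $2\times 2$ rotation in $\cos(4^{-1}t|\mathbf{v}|^2)$, $\sin(4^{-1}t|\mathbf{v}|^2)$, which is precisely the paper's use of its rotation identity. The only caveat is that your (correct) computation actually produces \eqref{eq:orth2} with the opposite signs on the two $q_w$ terms—the same outcome the paper's own argument gives—so the printed signs in \eqref{eq:orth2} are a harmless typo in error terms that are only ever estimated in absolute value, not a gap in your argument.
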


\begin{proof}
The starting point is \eqref{eq:coordinate01}, that is   $\langle e^{J
\tau '\cdot \Diamond }
    P(p')P(\pi ) r'    , J e^{ J \Theta \cdot \Diamond } \partial _{w_i}   Q_w \rangle =0    $.
		We first have $P(p')P(\pi ) r'=P(\pi )r' +S^{0,1}_{k,m}( \pi , \Pi, \Pi (f'), z', f') $. We next use
		\eqref{eq:decomp2} to get $ P(\pi )r' =  P_c(\pi )f'  +S^{0,1}_{k,m}=f'+S^{0,1}_{k,m}. $  We therefore get
		\begin{equation*}
 \begin{aligned}   &
\langle e^{J
\tau '\cdot \Diamond }
      f  '  , J e^{ J(\frac 1 2 \mathbf{v} \cdot x+\frac t 4  |\mathbf{v} |^2)  } \partial _{w_i}    Q_w (\cdot +\mathbf{v}t +y_0)  \rangle =\langle e^{J
\tau '\cdot \Diamond }
      \textbf{S} ^{0,1}_{k,m}, J  e^{ J \frac 1 2 \mathbf{v} \cdot x   } \partial _{w_i} Q_w (\cdot +\mathbf{v}t +y_0)       \rangle  .
\end{aligned}
\end{equation*}
	Now recall from Prop. \ref{prop:bddst} that $ \partial _{w_1}  Q_w= \phi _0 + \partial _{w_1}q_w $ and
		$ \partial _{w_2}  Q_w=-J \phi _0 + \partial _{w_2}q_w $.  Use also
		
	\begin{equation*}
 \begin{aligned}   &
	\begin{pmatrix}     \cos \alpha    &
 -\sin \alpha   \\
 \sin \alpha &      \cos \alpha
 \end{pmatrix}          \begin{pmatrix}     e^{ J\alpha} \phi _0       \\
 J e^{J \alpha} \phi _0
 \end{pmatrix}   =     \begin{pmatrix}       \phi _0       \\
 J   \phi _0
 \end{pmatrix} .
\end{aligned}
\end{equation*}	
		This yields the desired formulas   \eqref{eq:orth1}--\eqref{eq:orth2}.
\end{proof}

\subsection{Set up for $\Pi$, $\tau$, $z$ and $f$}
\label{subsec:setup}

Given a function $F(\pi , u)$ and if $\pi  =\pi (t) $ has a given evolution in $t$,    we have
\begin{equation} \label{eq:differentiations} \begin{aligned} &    \frac d{dt}   F(\pi ,
u) =   \partial _\pi   F(\pi , u) \cdot \dot \pi  +
  \{   F(\pi , u)  , \textbf{E}(u)  \} .
\end{aligned}  \end{equation}
By continuity, by Proposition \ref{prop:modulation} we know that there exists a $T>0$ and an interval
$I_T=[0,T]$ s.t.  $ (t,u(t))\in B(\varepsilon_2)$  for $t\in I_T $ for all $u_0$ in Theor. \ref{theorem-1.1}
if $\varepsilon _0$ small enough. Then the representation \eqref{eq:coordinate0} is true for $u(t)$ with $t\in I_T $.
We   set $\pi (t)=\Pi (U[ t,u(t)]) $.

The functions $\Pi _j  (U)$ are invariant by the changes of variables in  \eqref{eq:quasilin51},
and so in particular do not depend on  the parameter $\pi$.   So we have  by
\eqref{eq:braU111}
\begin{equation*} \label{eq:mom} \begin{aligned}     & \dot   \Pi _j    = \langle   \nabla _{U} \Pi _j   ,   J\nabla _U\textbf{E}(U)\rangle
+  d _U \Pi _j\textbf{A}      . \end{aligned}  \end{equation*}
Then we have
  \begin{equation} \label{eq:mass} \begin{aligned}   &   \dot  \Pi _4  =d _U \Pi _4\textbf{A} \text{  and for $a\le 3$}  \\& \dot   \Pi _a =-    \langle   V(\cdot +\mathbf{v}t+y_0) \partial _{ x_a } [\Phi _{p'}+ P(p')P(\pi )r'] , \Phi _{p'}+ P(p')P(\pi )r'  \rangle +d _U \Pi _a\textbf{A}
  . \end{aligned}  \end{equation}
We have     $\tau '=\tau' (U[t,u])$.  In particular, by Lemma \ref{lem:brackU1}  we have
\begin{equation*}   \begin{aligned}     &\dot    D'_a    = \langle   \nabla _{U} D'_a    ,   J\nabla _U\textbf{E}(U)\rangle
+ d _U D'_a\textbf{A}      . \end{aligned}  \end{equation*}
      We have   $D ' = D +\resto ^{0,2} _{k,m} $   by Theorem \ref{thm:effham}.
Then  by Claim 8  in     Theorem \ref{thm:effham}, see also Lemma 2.8 \cite{CM},
  we have  \begin{align}    \nonumber   \dot D_a  '  - v'_a &=
    {\partial   _{\Pi _a} } K_0
  +\frac{1}{2}
    \partial _{\Pi _a}\langle V(\cdot +\mathbf{v}t+y_0)   (\Phi _{p'}+ P(p')P(\pi )r') ,   \Phi _{p'}+ P(p')P(\pi )r'  \rangle \\& +\{ \resto ^{0,2} _{k,m}     ,   K_0\} + 2^{-1}   \{ \resto ^{0,2} _{k,m}        ,\langle V(\cdot +\mathbf{v}t+y_0)  U, U\rangle\}   +  d _U D'_a\textbf{A}     .\label{eq:eqD}\end{align}
We similarly have \begin{equation}\label{eq:eqtheta}  \begin{aligned} &   \dot \vartheta  '  -  \omega '-  2^{-1}{(v')^2}  =
	\{ \resto ^{0,2} _{k,m}      ,   K_0\} +2^{-1}  \{ \resto ^{0,2} _{k,m}        ,\langle  V(\cdot +\mathbf{v}t+y_0)  U, U\rangle\} -
	\\&
    {\partial  _{\Pi _4}} K_0
  -
  2^{-1}  \partial _{\Pi _4} \langle     V(\cdot +\mathbf{v}t+y_0)   (\Phi _{p'}+ P(p')P(\pi )r') ,   \Phi _{p'}+ P(p')P(\pi )r'  \rangle   + d _U \vartheta  '\textbf{A}   .
\end{aligned}  \end{equation}
  We have
\begin{equation} \label{eq:eqz} \begin{aligned} &      \dot z_j =-\im  \partial _{\overline{z}_j} K_0 +     \dot \Pi \cdot \partial _{\pi  } z_j
  -\im
  2^{-1}   \partial _{\overline{z}_j} \langle
			V(\cdot +\mathbf{v}t+y_0)   U , U  \rangle +       d _U z_j\textbf{A}   .\end{aligned}  \end{equation}
We have
\begin{equation} \label{eq:eqf} \begin{aligned} &       \dot f =    \dot \Pi \cdot  \partial _{\pi  } f  + (P _c(p_0 ) P_c (\pi )P _c (p_0 ) )^{-1}   J \nabla _{f}K'+ d _U f\textbf{A},
 \\&  \nabla _{f}K':=\nabla _{f}   K_0
  +
   2^{-1} \nabla _{f}  \langle V(\cdot +\mathbf{v}t+y_0)  U, U\rangle
		  .\end{aligned}  \end{equation}
We couple equations \eqref{eq:mass},  \eqref{eq:eqD}, \eqref{eq:eqtheta},  \eqref{eq:eqz} and  \eqref{eq:eqf}      with \eqref{eq:prepw1}.

\section{Bootstrapping}
\label{sec:boot}

As in \cite{CM}, Theorem \ref{theorem-1.1} follows from the following Theorem.
\begin{theorem}\label{thm:mainbounds} Consider the constants   $0<\epsilon <\varepsilon _0  $  of Theorem \ref{theorem-1.1}. Then there   is a fixed and we have
$C >0$ such that   we have  $(t,u(t))\in B(\varepsilon_2) $  for all $t\in I= [0,\infty )$
\begin{align}
&   \|  f \| _{L^p_t(I,W^{ 1 ,q}_x)}\le
  C \epsilon \text{ for all admissible pairs $(p,q)$,}
  \label{Strichartzradiation}
\\& \| z ^\mu \| _{L^2_t(I)}\le
  C \epsilon \text{ for all multi indexes $\mu$
  with  $\textbf{e}\cdot \mu >\omega _0 $,} \label{L^2discrete}\\& \| z _j  \|
  _{W ^{1,\infty} _t  (I )}\le
  C \epsilon \text{ for all   $j\in \{ 1, \dots ,  \mathbf{{n}}\}$ }
  \label{L^inftydiscrete} \\&  \|  \omega  ' -  \omega _0  \| _{L_t^\infty  (I )}\le
  C \epsilon  \,  , \quad    \| v  ' -  v _0\| _{L^\infty_t (I ) }\le
  C \epsilon
  \label{eq:orbstab1} \\& \| (\dot w _1 - E_{ w} w_2, \dot w _2+ E_{ w} w_1)\|  _{L^\infty _t( I )\cap L^1 _t( I )} \le C   \epsilon .\label{eq:smallen}
\end{align}
Furthermore,  there exist $\omega _+$ and  $v _+$ such that \begin{align}
 &    \lim _{t\to +\infty} \omega  '(t)= \omega _+   \,  , \quad   \lim _{t\to +\infty}  v  ' (t)=  v _+
  \label{eq:asstab1}\\&  \label{eq:asstab2}  \lim _{t\to +\infty}  \dot D  ' (t)=  v _+ \, , \quad   \lim _{t\to +\infty}  \dot \vartheta   ' (t)=  \omega _++  4^{-1}{v^2_+} \\&  \label{eq:asstab3}  \lim _{t\to +\infty}  z(t)=0 .
\end{align}
\end{theorem}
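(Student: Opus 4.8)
The plan is to establish Theorem~\ref{thm:mainbounds} by a continuity/bootstrap argument, exactly as in \cite{CM}, running simultaneously on all the coordinates $(\Pi,\tau',z',f',w)$ whose equations were assembled at the end of Section~\ref{subsec:setup}. We work first under the extra hypothesis \eqref{eq:add}, so that $u(t)\in\Sigma_2$ and all coordinates are well defined and $C^1$ on an interval $I_T=[0,T]$; the passage to $u_0\in H^1$ is handled at the very end by a limiting argument identical to the one sketched in the introduction. Fix a large constant $C$ (to be chosen) and define $T^*$ to be the supremum of times $T$ such that $(t,u(t))\in B(\varepsilon_2)$ for $t\in[0,T]$ and the estimates \eqref{Strichartzradiation}--\eqref{eq:smallen} hold on $[0,T]$ with constant $2C$. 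We know $T^*>0$ by Proposition~\ref{prop:modulation} and the bounds \eqref{eq:U0}, $|w(0,u_0)|\le c\epsilon$, $|\tau_j(0,u_0)|\le c\epsilon$; the goal is to show that on $[0,T^*)$ all estimates in fact hold with constant $C$, which forces $T^*=\infty$.

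\textbf{Main steps of the bootstrap.} First, on $[0,T^*)$ treat the ``large soliton plus radiation'' block $(\Pi,\tau',z',f')$ exactly as in \cite{Cu0,Cu3,CM}: using the effective Hamiltonian of Theorem~\ref{thm:effham}, the equations \eqref{eq:mass}, \eqref{eq:eqD}, \eqref{eq:eqtheta}, \eqref{eq:eqz}, \eqref{eq:eqf} give the Strichartz bound \eqref{Strichartzradiation} on $f$ via the charge-transfer dispersive theory of \cite{RSS1} (this is where the moving potential $V(\cdot+\mathbf{v}t+y_0)$ enters, as in \cite{CM}), the $L^2_t$ discrete-mode bound \eqref{L^2discrete} via the normal-form/Fermi-Golden-Rule argument using hypothesis (H11) and Theorem~\ref{thm:effham}(3)--(5), and the $W^{1,\infty}_t$ bounds \eqref{L^inftydiscrete}, \eqref{eq:orbstab1} from the modulation equations plus conservation of $\Pi_4$ and the smallness of the ``$\mathbf{A}$'' and $V$-coupling terms. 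Second, for the trapped mode $w$ one uses \eqref{eq:prepw1}: the right-hand side of \eqref{eq:prepw} consists of terms that are either quadratic in $\eta$, or of the form $V\cdot(\text{ground state})$, or cubic-in-smallness nonlinear interactions; pairing against $J\partial_{w_i}\widetilde Q$ and using the orthogonality Lemma~\ref{lem:orth} (which converts $\langle f',e^{J\cdot}\phi_0(\cdot+\mathbf{v}t+y_0)\rangle$ into $\mathbf{S}^{0,1}$ symbols plus spacetime-decaying remainders governed by the integral $\int_0^\infty(1+||\mathbf{v}|\overrightarrow{e}t+y_0|^2)^{-1}dt<\varepsilon_2$), one bounds $\|(\dot w_1-E_w w_2,\dot w_2+E_w w_1)\|_{L^1_t\cap L^\infty_t}$ by $C\epsilon^2\le\tfrac12 C\epsilon$ (shrinking $\varepsilon_0$). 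This is precisely the mechanism of Theorem~\ref{thm:small en} applied to the trapped component, with the key gain coming from the weak-interaction smallness in \eqref{def:eps}. Crucially the coupling between $w$ and $(\Pi,\tau',z',f')$ is multiplied either by $\|f'\|$, $|z'|$, or by the $V$-localized kernel against the far-away ground state, so it is $o(\epsilon)$ relative to the leading terms and the two subsystems decouple at leading order.

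\textbf{Conclusion and asymptotics.} Having closed \eqref{Strichartzradiation}--\eqref{eq:smallen} with constant $C$ on $[0,T^*)$, a continuity argument shows $u(t)$ stays strictly inside $B(\varepsilon_2)$, so $T^*=\infty$. The existence of limits \eqref{eq:asstab1}--\eqref{eq:asstab3} then follows: from \eqref{eq:eqD}--\eqref{eq:eqtheta} the right-hand sides are in $L^1_t(I)$ (each term carries a factor $\|f'\|_{L^{2,-k}}$, $|z|$, or the $V$-kernel, all integrable by the closed estimates and \eqref{def:eps}), so $\omega',v',\dot D',\dot\vartheta'$ converge; and $z(t)\to 0$ follows from \eqref{L^2discrete} combined with $\dot z\in L^\infty_t$ (so $z^\mu\in L^2_t\cap W^{1,\infty}_t$ forces $z^\mu(t)\to 0$ for the resonant multi-indices, which then propagates down the normal-form hierarchy to all of $z$, exactly as in \cite{Cu3,CM}). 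Finally, the scattering statement \eqref{eq:scattering} and the decomposition $r=A+\widetilde r$ of Theorem~\ref{theorem-1.1} are read off from the convergence of the coordinates, the Strichartz bound on $f$ (which yields $e^{\im t\Delta}h_+$ and $\widetilde r$), and the convergence $w(t)e^{\im\int_0^t E_{w(s)}ds}\to w_+$ coming from \eqref{eq:smallen}; $A$ is the Schwartz, spacetime-decaying remainder produced by the coordinate changes \eqref{eq:quasilin51} and the inversion of $P_c(p_0)P_c(\pi)P_c(p_0)$.

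\textbf{The hard part} is the dispersive estimate for $f$: as in \cite{CM} the linearized flow is that of a \emph{charge-transfer-type} operator (a soliton at rest in the $U$-frame plus a translating potential $V(\cdot+\mathbf{v}t+y_0)$ whose center escapes to infinity), so one cannot use the stationary dispersive theory of \cite{Cu2,Cu3} but must invoke the time-dependent theory of \cite{RSS1}, now complicated by the additional trapped-mode channel; one must check that the extra coupling terms involving $\partial_{w_i}\widetilde Q$ and the $\phi_0$-localized pieces are admissible source terms for the Strichartz estimates, which is exactly where the weak-interaction integral in \eqref{def:eps} and Lemma~\ref{lem:orth} are used. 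Everything else is a careful but routine adaptation of \cite{Cu0,Cu3,CM} and \cite{GNT}, with the symbol calculus of Definitions~\ref{def:scalSymb}--\ref{def:opSymb} bookkeeping the sizes.
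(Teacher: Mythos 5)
Your overall architecture coincides with the paper's: the paper also reduces Theorem~\ref{thm:mainbounds} to a bootstrap statement (Proposition~\ref{prop:mainbounds}), closes the estimates for $(\Pi,\tau',z',f')$ with the effective Hamiltonian of Theorem~\ref{thm:effham}, the Fermi golden rule (H11), and the charge-transfer type Strichartz theory, closes the $w$-estimate \eqref{eq:smallen} from \eqref{eq:prepw1} using Lemma~\ref{lem:orth} and the moving-potential bound of Lemma~\ref{lem:movpot} (your claim that this drive is $O(\epsilon^2)$ is too strong — the $V$--soliton overlap term is only $O(\epsilon)$, but any preassigned small multiple of $\epsilon$ suffices, as in Lemma~\ref{lem:estw}), obtains the limits \eqref{eq:asstab1}--\eqref{eq:asstab3} from $L^1_t$ right-hand sides, and removes \eqref{eq:add} by the approximation argument of Sect.~\ref{sec:dropping}.

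However, there is a genuine gap exactly at the point you call ``the hard part.'' You propose to invoke the charge-transfer dispersive theory ``as in \cite{CM}'' and then merely ``check that the extra coupling terms \dots are admissible source terms.'' This mislocates the obstruction: the new difficulty is in the \emph{homogeneous} linearized flow, not in the sources. Since $-\Delta+V$ now has the eigenvalue $e_0$, the propagator of \eqref{eq:eqh222} without modification does not satisfy the weighted decay \eqref{eq:weights2} or the Strichartz bounds of Theorem~\ref{thm:strich}: the component of $h$ along the translating bound state $\phi_0(\cdot+\mathbf{D}(t))$ simply does not disperse, so the argument of \cite{CM,RSS1} cannot be cited verbatim. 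The paper's resolution — the one genuinely new ingredient relative to \cite{CM} — is the damping device: one adds and subtracts $\im\delta\textbf{P}_D h$ with $\textbf{P}_D$ the moving projection \eqref{eq:eqh224} and $\delta>\delta_0>|e_0+\omega_0|$, keeps $-\im\delta\textbf{P}_D h$ in the linear part (so that $e^{-\im t\sigma_3(-\Delta+\omega_0+V)-t\delta\textbf{P}}$ recovers the $\langle t\rangle^{-3/2}$ local decay used in Proposition~\ref{prop:weights2}), and shows in Lemma~\ref{lem:projpert} that the compensating term $+\im\delta\textbf{P}_D h$ is an admissible small source in $L^1_tH^1+L^2_tW^{1,6}$, precisely because the orthogonality \eqref{eq:coordinate01}, translated through Lemma~\ref{lem:orth} and combined with Lemma~\ref{lem:movpot}, makes $\langle f', e^{J\frac12\mathbf{v}\cdot x}\phi_0(\cdot+\mathbf{v}t+y_0)\rangle$ small in $L^1_t+L^2_t$. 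Without this (or an equivalent projection/decay mechanism for the trapped channel), your appeal to \cite{RSS1} and the subsequent closure of \eqref{Strichartzradiation} would fail; with it, the rest of your outline goes through as in the paper.
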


Theorem \ref{thm:mainbounds} will be obtained as a consequence of the following Proposition.

\begin{proposition}\label{prop:mainbounds} Consider the constants   $0<\epsilon <\varepsilon _0  $  of Theorem \ref{theorem-1.1}.     There exist  a  constant $c_0>0$  such that
for any  $C_0>c_0$ there is an  $ \varepsilon _0  >0 $ such that   if  $(t,u(t))\in B(\varepsilon_2)  $  for all $t\in I= [0,T] $ for some $T>0$
  and the inequalities  \eqref{Strichartzradiation}--\eqref{eq:smallen}
hold  for this $I $
and for $C=C_0$, and if furthermore  for $t\in I$
\begin{align} \label{eq:unds1}  &
	 \| \dot D ' -v'  \| _{L^1 (0,t )}<  C \epsilon    \langle t \rangle \, , \\&  \label{eq:unds2}   \|  p'  -p_0 \| _{L^\infty  (I )}<   C \epsilon,
\end{align}
then in fact for $I=[0,T]$  the inequalities  \eqref{Strichartzradiation}--\eqref{eq:smallen} hold  for   $C=C_0/2$  and the  inequalities  \eqref{eq:unds1}--\eqref{eq:unds2}
 hold  for   $C=c$  with $c$ a   fixed constant.
\end{proposition}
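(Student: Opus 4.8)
The plan is to run a standard continuity/bootstrap argument of the type used in \cite{CM}: assume the bounds \eqref{Strichartzradiation}--\eqref{eq:smallen} together with \eqref{eq:unds1}--\eqref{eq:unds2} with constant $C=C_0$ on $I=[0,T]$, and improve each one to $C=C_0/2$ (and $C=c$ for the auxiliary pair), for $\varepsilon_0$ small depending on $C_0$. Since all the evolution equations \eqref{eq:mass}, \eqref{eq:eqD}, \eqref{eq:eqtheta}, \eqref{eq:eqz}, \eqref{eq:eqf} and \eqref{eq:prepw1} are now set up, the proof is a matter of inserting the a priori bounds into their right-hand sides and controlling the three sources of error: (i) the Hamiltonian/normal-form terms coming from $K_0$, handled exactly as in \cite{Cu0,CM}; (ii) the potential terms $\langle V(\cdot+\mathbf v t+y_0)\,\cdot\,,\,\cdot\,\rangle$ and the drift correction $d_U(\cdot)\mathbf A$, which are small because the interaction integral in \eqref{def:eps} is small and $|\mathbf v|>M_0$; and (iii) the coupling between the trapped mode $w$ and the rest, read off from Lemma \ref{lem:orth} and \eqref{eq:prepw1}.

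I would organize the improvement in the following order. First, close \eqref{eq:unds2}: integrate $\dot p'=\dot\Pi-\dot\varrho$ using \eqref{eq:mass}; the term $\langle V(\cdot+\mathbf v t+y_0)\partial_{x_a}[\Phi_{p'}+P(p')P(\pi)r'],\dots\rangle$ is bounded pointwise by $C\langle \mathbf v t+y_0\rangle^{-N}$ for large $N$ (Schwartz decay of $V$, exponential decay of $\Phi_{p'}$, smallness of $r'$), which is integrable in $t$ with integral $\lesssim\varepsilon_2\le\epsilon$ by \eqref{prop:modeq0}; the $\mathbf A$-term contributes $\dot w$ factors, controlled by \eqref{eq:smallen}. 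Hence $\|p'-p_0\|_{L^\infty(I)}\le c\epsilon$, improving \eqref{eq:unds2}. Next, close \eqref{eq:unds1} by integrating \eqref{eq:eqD}: the leading term $\partial_{\Pi_a}K_0$ is $O(|\Pi-\pi|+|z|^2+\|f\|\cdot)$ hence $L^1_t$-small via \eqref{Strichartzradiation}--\eqref{L^2discrete}, the potential terms and the $\{\resto^{0,2}_{k,m},\cdot\}$ terms are again integrable with small integral, and the $\mathbf A$-term is $O(\dot w)$; this gives $\|\dot D'-v'\|_{L^1(0,t)}\le c\epsilon\langle t\rangle$. With these two auxiliary facts secured, the hypothesis $(t,u(t))\in B(\varepsilon_2)$ propagates for $\varepsilon_0$ small, since $|D'(t)|\le v'_0 t+c\epsilon\langle t\rangle$ and $\|U[t,u]-e^{\im\theta}\phi_{\omega_1}\|_{H^1}$ stays within $\varepsilon_2$ by \eqref{eq:orbstab1} and \eqref{eq:U0}.

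Then I would improve the principal bounds. For \eqref{L^2discrete} and part of \eqref{L^inftydiscrete}, use \eqref{eq:eqz}: substitute the normal form expansion of $K_0$ from Theorem \ref{thm:effham}, so that $\dot z_j=-\im\mathbf e_j z_j+(\text{quadratic resonant terms})+(\text{error})$, and invoke the Fermi golden rule Hypothesis (H11) exactly as in \cite{Cu0,Cu3,CM}: the FGR quadratic form is coercive, yielding $\|z^\mu\|_{L^2_t}\le C\epsilon$ for $\mathbf e\cdot\mu>\omega_0$ after absorbing the potential-induced term $\partial_{\overline z_j}\langle VU,U\rangle$, which is $L^1_t$ small in $t$ hence harmless in the energy estimate. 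The bound \eqref{L^inftydiscrete} then follows by integrating $\dot z_j$ and using Cauchy--Schwarz on the $L^2_t$ bounds. For the radiation bound \eqref{Strichartzradiation} I would apply Strichartz and the charge-transfer dispersive theory (\cite{RSS1}, as in \cite{CM}) to \eqref{eq:eqf}: the linear part is governed by $\mathcal H_{p'}$ plus the moving potential $V(\cdot+\mathbf v t+y_0)$, which is exactly a charge-transfer Hamiltonian because $|\mathbf v|>M_0$ separates the two centers; the source terms on the right of \eqref{eq:eqf} are either of type $\mathcal R^{1,1}$ (hence quadratically small, absorbable) or localized terms handled by the $L^2_t$ discrete bounds and the smallness of the interaction integral. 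For \eqref{eq:orbstab1}, integrate \eqref{eq:eqtheta} and the $\omega',v'$ evolution (which is $\dot p'$ again) and use \eqref{eq:unds2}. Finally \eqref{eq:smallen}, the estimate on $(\dot w_1-E_ww_2,\dot w_2+E_ww_1)$, comes from \eqref{eq:prepw1}: the right-hand side of \eqref{eq:prepw} paired with $J\partial_{w_i}\widetilde Q$ consists of (a) the term $-\partial_t(e^{J\tau'\cdot\Diamond}\Phi_{p'}+\eta)$ whose pairing with the localized $\partial_{w_i}\widetilde Q=e^{J\Theta\Diamond}(\pm J^{j}\phi_0+\partial_{w_i}q_w)$ is small because $\widetilde Q$ is concentrated near $x=-\mathbf v t-y_0$ while $\Phi_{p'}$ and the bulk of $\eta$ are near $x=D'(t)$, and the two move apart at speed $>M_0$; (b) the nonlinear coupling $J\mathbf f(\eta+\widetilde Q,e^{J\tau'\Diamond}\Phi_{p'})$, similarly spatially separated; (c) the genuinely small self-interaction $J\nabla\mathbf E_P(\eta)$ controlled by \eqref{Strichartzradiation}. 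Integrating in $t$ and using $\int_0^\infty\langle\mathbf v t+y_0\rangle^{-2}\,dt\le\epsilon$ yields $\|(\dot w_1-E_ww_2,\dot w_2+E_ww_1)\|_{L^\infty_t\cap L^1_t}\le C\epsilon$.

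The main obstacle I expect is the simultaneous treatment of the coupling in \eqref{eq:smallen} and \eqref{Strichartzradiation}: one needs the trapped-mode equation \eqref{eq:prepw1} and the radiation equation \eqref{eq:eqf} to close together, since $f$ feeds into the right-hand side of the $w$-equation through $\eta=e^{J\tau'\Diamond}P(p')P(\pi)r'$ and conversely $\widetilde Q$ appears as a moving potential in the linear evolution of $f$. The resolution, as in \cite{CM}, is that the spacetime separation of the two centers (quantified by the finiteness and smallness of $\int_0^\infty(1+||\mathbf v|\overrightarrow e t+y_0|^2)^{-1}\,dt$ in \eqref{def:eps}) decouples them to leading order: all cross terms carry a factor $\langle\mathbf v t+y_0\rangle^{-M}$ for arbitrarily large $M$, and when integrated against the $L^2_t$ or Strichartz norms of the other variable they produce a contribution bounded by the interaction integral times $C\epsilon$, i.e.\ $\le C\epsilon^2$, which is absorbed for $\varepsilon_0$ small. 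Everything else — the normal-form computations, the FGR coercivity, the charge-transfer dispersive estimates — is imported essentially verbatim from \cite{Cu0,Cu3,CM}, with $\pi(t)=\Pi(U[t,u(t)])$ playing the role that $\Pi(u(t))$ played there, the admissibility of this substitution being precisely what Lemma \ref{lem:reg1} and Theorem \ref{thm:effham} guarantee.
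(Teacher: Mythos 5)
There is a genuine gap, and it sits exactly at the point your plan dismisses as routine: the Strichartz bound \eqref{Strichartzradiation} for $f$. You propose to treat the linear part of \eqref{eq:eqf} as ``exactly a charge-transfer Hamiltonian'' and to import the dispersive estimates ``essentially verbatim'' from \cite{RSS1,CM}. But the whole point of this paper, as opposed to \cite{CM}, is that here $\sigma_p(-\Delta+V)=\{e_0\}$ is nonempty: the moving potential $V(\cdot+\mathbf v t+y_0)$ carries a bound state $\phi_0$, so the linearization $\mathcal K_{\omega_0}+\sigma_3V(\cdot+\mathbf D(t))$ has a moving trapped channel along which there is no dispersion, and the charge-transfer machinery of \cite{CM} does not apply as stated. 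Spatial separation of the soliton from the potential (your ``all cross terms carry a factor $\langle\mathbf v t+y_0\rangle^{-M}$'') does not help here, because the radiation $f$ is not localized and is not spatially separated from the moving potential; part of it can be captured by the bound state of $-\Delta+V$, and nothing in your argument rules this out or quantifies it.

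The paper's proof of Lemma \ref{lem:conditional4.2} handles precisely this: the orthogonality conditions \eqref{eq:coordinate01} built into the ansatz imply, via Lemma \ref{lem:orth}, that the component of $h=M^{-1}e^{\frac12 Jv_0\cdot x}f$ along the moving bound state is small (Lemma \ref{lem:projpert}, where $\langle f, e^{J\frac12\mathbf v\cdot x}\phi_0(\cdot+\mathbf D)\rangle$ is shown to be $O(\epsilon^2)$ in $L^1_t+L^2_t$). This smallness is then exploited by adding and subtracting the artificial damping term $\im\delta\mathbf P_D h$ in \eqref{eq:eqh222}: the term $-\im\delta\mathbf P_D h$ is kept in the linear flow, and Theorem \ref{thm:strich} together with Proposition \ref{prop:weights2} establishes Strichartz and $\langle t\rangle^{-3/2}$ weighted decay for the damped propagator $e^{-\im t\sigma_3(-\Delta+\omega_0+V)-t\delta\mathbf P}$, which requires the quantitative condition $\delta>\delta_0>|e_0+\omega_0|$; the compensating term $+\im\delta\mathbf P_D h$ is absorbed as a source by Lemma \ref{lem:projpert}. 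This new linear theory (Section \ref{sec:dispersion}) is the main technical novelty of the paper and cannot be bypassed by citing \cite{CM}. The remainder of your outline (normal forms and the Fermi golden rule for \eqref{L^2discrete}--\eqref{L^inftydiscrete}, the treatment of $d_Uf\,\mathbf A$ and of the $w$-equation \eqref{eq:prepw1} via Lemma \ref{lem:movpot}, and closing \eqref{eq:unds1}--\eqref{eq:unds2}) is consistent with what the paper does, but without the orthogonality-plus-damping mechanism the bootstrap for $f$, and hence for everything coupled to it, does not close.
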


The proof of  Theorem \ref{thm:mainbounds} and of   Proposition \ref{prop:mainbounds}
is very similar to the proof of Theorem 6.6 and Proposition 6.7 in \cite{CM}.

\subsection{Proof that Proposition \ref{prop:mainbounds} implies Theorem \ref{thm:mainbounds}}
\label{sec:propthm}

We start with the following lemma from \cite{CM}.
\begin{lemma}\label{lem:movpot}  Assume the hypotheses of   Proposition \ref{prop:mainbounds}
and consider
  a fixed $S^{0,0}_{2k,0}$ where $k>3$  and a fixed $q \in \mathcal{S}(\R ^3)$.
   Then   for $\varepsilon _0$ small enough
there exists a fixed constant   $c$ dependent on $c_1,$  $S^{0,0}_{2k,0}$  and  $q $ s.t.
\begin{equation} \label{eq:movpot1} \begin{aligned} &
   \|      q (\cdot + \mathbf{v} t+ D'+y_0)S^{0,0}_{2k,0} \| _{L^1((0,T),L^p_x)}  \le c \epsilon  \text{ for all $p\ge 1$}.
		  	 \end{aligned}  \end{equation}
\end{lemma}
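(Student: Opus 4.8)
\textbf{Overview.} The key point is that the potential-like factor $q(\cdot + \mathbf{v} t + D' + y_0)$ is a Schwartz function translated by $\mathbf{v} t + D' + y_0$, and we want to integrate its product (in some $L^p_x$ norm) against a bounded symbol over $(0,T)$. The symbol $S^{0,0}_{2k,0}$ is bounded in $\Sigma_{2k}$ uniformly, hence the product in $L^p_x$ is controlled, via H\"older and Sobolev embedding, by the $L^p_x$ norm of $\langle \cdot + \mathbf{v} t + D' + y_0\rangle^{-2k}$ (times a weighted norm of the symbol), which decays like $(1 + |\mathbf{v} t + D' + y_0|)^{-2k + 3/p}$. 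Since $k > 3$ this is integrable in $t$ provided the trajectory $t \mapsto \mathbf{v} t + D'(t) + y_0$ escapes to infinity at a linear rate; the bound on the right side of \eqref{eq:movpot1} is precisely $\epsilon$ because the defining quantity $\epsilon$ in \eqref{def:eps} already contains the time integral $\int_0^\infty (1+||\mathbf{v}|\overrightarrow{e} t + y_0|^2)^{-1}\,dt$.

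\textbf{Main steps.} First I would reduce the spatial norm: write $\|q(\cdot + \mathbf{v} t + D' + y_0) S^{0,0}_{2k,0}\|_{L^p_x}$ and use that $q$ is Schwartz so $|q(x)| \le C_N \langle x\rangle^{-N}$ for any $N$; choosing $N$ large and using the uniform $\Sigma_{2k}$-bound of the symbol together with H\"older and $\Sigma_{2k} \hookrightarrow L^\infty$ (valid since $2k > 3/2$), I bound this by $c\epsilon' \langle \mathbf{v} t + D' + y_0\rangle^{-2k + 3/p}$, where $\epsilon'$ absorbs the size of the symbol; by Definition \ref{def:scalSymb} a bound of type $S^{0,0}$ together with the bootstrap hypotheses \eqref{Strichartzradiation}--\eqref{eq:smallen} gives $\epsilon' \le c \epsilon$ (the symbol has an $i=0$, $j=0$ index, so strictly speaking it is merely bounded — but here one should note that in applications this particular symbol carries a factor that is $O(\epsilon)$; more robustly, one just keeps the symbol bound as a fixed constant and puts the $\epsilon$ entirely into the time integral, which is what the statement with the $c$ depending on $c_1$ suggests). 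Second, I would replace $D'(t)$ by $v' t$ up to an error: using \eqref{eq:unds1}, $|D'(t) - D'(0) - \int_0^t v'| \le C\epsilon \langle t\rangle$, and using \eqref{eq:orbstab1} that $v'$ is within $C\epsilon$ of $v_0$, I get that $|\mathbf{v} t + D'(t) + y_0| \ge |\mathbf{v} t + (v_0 t + y_0)| - C\epsilon\langle t\rangle - C$, i.e. the trajectory stays comparably close to the straight line $t\mapsto (\mathbf{v} + v_0) t + y_0$. Here one must be a little careful: $v_0$ is small compared to $|\mathbf{v}| > M_0$, so $\mathbf{v} + v_0$ points within a small angle $\varepsilon_1$ of $\mathbf{v}/|\mathbf{v}|$, which is exactly the set of directions $\overrightarrow{e}$ over which the supremum in \eqref{def:eps} is taken. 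Third, I would then estimate
\begin{align*}
\int_0^T \langle \mathbf{v} t + D' + y_0\rangle^{-2k + 3/p}\, dt \le \int_0^T \langle \mathbf{v} t + D' + y_0\rangle^{-2}\, dt \le c \sup_{\mathrm{dist}_{S^2}(\overrightarrow{e}, \mathbf{v}/|\mathbf{v}|)\le \varepsilon_1} \int_0^\infty (1 + ||\mathbf{v}|\overrightarrow{e} t + y_0|^2)^{-1}\, dt \le c\epsilon,
\end{align*}
where the first inequality uses $-2k + 3/p \le -2$ for $k > 3$ and the last bound is by the definition \eqref{def:eps} of $\epsilon$. Combining the three steps yields \eqref{eq:movpot1}.

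\textbf{Main obstacle.} The delicate point is the comparison in the second step: controlling the modulated trajectory $\mathbf{v} t + D'(t) + y_0$ from below by a straight-line trajectory whose direction lies in the admissible cone around $\mathbf{v}/|\mathbf{v}|$, uniformly over the bootstrap interval. This is where \eqref{eq:unds1} (the linear-in-$t$ drift bound on $\dot D' - v'$) and the smallness $|v'| \le C\epsilon \ll |\mathbf{v}|$ are both essential, and one needs $\varepsilon_0$ chosen small enough (depending on $M_0$, $\varepsilon_1$) so that the accumulated error $C\epsilon\langle t\rangle$ does not spoil either the lower bound on the modulus or the bound on the angle. The argument is essentially the same geometric estimate already carried out in the proof of Lemma \ref{lem:propmodu2}, and I would reuse that computation (splitting into $|\mathbf{v}|$ large, where everything is trivial by rapid oscillation/decay, and $|\mathbf{v}| \le C\delta^{-1}$, where one argues as in \eqref{eq:calU1}--\eqref{eq:calU4}). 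Once the trajectory comparison is in place, the remaining estimates are routine H\"older/Sobolev bookkeeping. The lemma is quoted from \cite{CM}, so in the paper itself one would simply cite it, but the self-contained argument is as above.
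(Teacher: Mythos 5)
Your first step (Schwartz decay of $q$ plus the uniform $\Sigma_{2k}$ bound on the symbol, reducing \eqref{eq:movpot1} to the time integral $\int_0^T\langle \mathbf{v}t+D'(t)+y_0\rangle^{-k}dt$) is exactly the paper's reduction. The gap is in how you bound that time integral. Your Step 3 asserts unconditionally that the modulated trajectory can be compared with a straight line in the cone, so that the integral is dominated by the supremum appearing in \eqref{def:eps}. But the modulation error is $C\epsilon\langle t\rangle$, i.e. it grows linearly, so the comparison $\langle \mathbf{v}t+D'(t)+y_0\rangle\sim\langle \mathbf{v}t+D'(0)+y_0\rangle$ only holds when the unperturbed line stays at distance $\gtrsim \epsilon t$ from the origin for all $t$; nothing in the hypotheses rules out the line $t\mapsto \mathbf{v}t+D'(0)+y_0$ passing (almost) through the origin, in which case your chain of inequalities is unjustified. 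Your proposed remedy --- reuse the dichotomy of Lemma \ref{lem:propmodu2}, with the case $|\mathbf{v}|$ large ``trivial by rapid oscillation/decay'' and the case $|\mathbf{v}|\le C\delta^{-1}$ handled as in \eqref{eq:calU1}--\eqref{eq:calU4} --- does not close this: there is no oscillatory factor left inside an $L^p_x$ norm, so oscillation is irrelevant here, and $|\mathbf{v}|$ exceeding a \emph{fixed} constant does not make the time integral $O(\epsilon)$ (a line through the origin contributes $\sim|\mathbf{v}|^{-1}$, which is only $O(\epsilon)$ if $|\mathbf{v}|\gtrsim\epsilon^{-1}$). Moreover the conclusion of Lemma \ref{lem:propmodu2} is only a lower bound $\ge M$ on the distance, which is a constant, not the linearly growing lower bound $\gtrsim\epsilon t$ needed to absorb the drift.

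The paper supplies the missing mechanism in Lemma \ref{lem:drift} (together with Lemma \ref{lem:D02}, which restates \eqref{def:eps} with $D'(0)+y_0$ in place of $y_0$). Writing $d_0=D'(0)+y_0$ and $D'(s)+s\mathbf{v}+y_0=d_0+s\mathbf{v}+I(s)$ with $|I(s)|\le 3C_0\epsilon s$, one splits: either $|d_0+s\mathbf{v}|\ge 6C_0\epsilon s$ for all $s$, and then the comparison with the straight line is legitimate and \eqref{eq:weakint} gives the $O(\epsilon)$ bound; or $|d_0+s_0\mathbf{v}|<6C_0\epsilon s_0$ for some $s_0$, and then a discriminant argument shows $-d_0/|d_0|$ lies within $O(\epsilon)$ of $\mathbf{v}/|\mathbf{v}|$, so \eqref{eq:weakint} applied along that direction forces $|\mathbf{v}|^{-1}<\kappa\epsilon$; the integral is then estimated directly by a time-of-flight change of variables, using that the drift speed is $\lesssim\epsilon\ll|\mathbf{v}|$, giving a bound $\lesssim|\mathbf{v}|^{-1}\lesssim\epsilon$. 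In other words, the smallness hypothesis is used not once but twice: once to control the good case, and once to force $|\mathbf{v}|$ to be of size $\epsilon^{-1}$ in the bad case. This second branch is the content your proposal is missing; without it, the lemma does not follow from the straight-line comparison alone.
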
\proof     This is Lemma 7.3 \cite{CM} but we reproduce the proof partially.
 We have  by   $k>3$ and Sobolev embedding,
\begin{equation*} \label{eq:potinter1} \begin{aligned} &    \|
  q(\cdot + \textbf{v} t+ D'+y_0)     S^{0,0}_{2k,0}  \| _{L^p_x}    \le   C_{q,k}    \|      S^{0,0}_{2k,0}  \| _{\Sigma  _{2k}} \langle D '(t)+t\textbf{v}+y_0\rangle ^{-k  }    .
	 	 \end{aligned}  \end{equation*}
Then   for a fixed $C= C_{q,k,S}   $
\begin{equation*} \label{eq:inteqD} \begin{aligned} &   \|      q (\cdot + \textbf{v} t+ D'+y_0)S^{0,0}_{2k,0} \| _{L^1((0,T),L^p_x)}  \le C       \|  \langle D '(s)+s\textbf{v}+y_0\rangle ^{- k }    \|   _{L^1( 0,T) }
				,\end{aligned}  \end{equation*}
				\begin{equation} \label{eq:inteqD1} \begin{aligned} &  \|  \langle D '(s)+s\textbf{v}+y_0\rangle ^{- k }    \|   _{L^1( 0,T) } = \|  \langle D '(0)+s \textbf{v}   +   I(s)+y_0\rangle ^{- k }    \|   _{L^1( 0,T) } \text{ where} \\&  I(s):=sv_0+   \int _0^s   (\dot D ' (\tau )
				- v_0) d\tau  \quad , \quad   |\dot I(s)|     \le  3C_0\epsilon  ,
				 \end{aligned}  \end{equation}
				where $|\dot I(s)|     \le  3C_0\epsilon$  follows by  \eqref{eq:unds1}--\eqref{eq:unds2} and  by $|v_0|\lesssim \epsilon  $.				
				Then \eqref{eq:movpot1} follows by Lemma \ref{lem:drift} below. \qed
			
By  $D'(0)=(\tau _1(0,u_0), \tau _2(0,u_0),\tau _3(0,u_0))$
we get $|D'(0)|<C\epsilon$ for fixed $C$ by \eqref{eq:sizeindata}
and Proposition \ref{prop:modulation}, see also the discussion at the end of Sect. \ref{subsec:lin}.		
After Lemma 2.9  \cite{CM} the following is proved.
\begin{lemma}\label{lem:D02}  For $\varepsilon _0$ in \eqref{eq:sizeindata} small enough we have \begin{equation}
\label{eq:weakint} \begin{aligned} &
    \sup _{\text{dist} _{S^2}( \overrightarrow{{e}}, \frac{\mathbf{v}}{|\mathbf{v}|} ) \le \varepsilon  _1} \int _0^\infty  (1+|  |\mathbf{v}|\overrightarrow{{e}} t+D'(0) +y_0  |^{ 2} )^{-1} dt  <10\epsilon .
\end{aligned}
\end{equation}
\end{lemma}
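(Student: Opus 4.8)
The plan is to deduce Lemma \ref{lem:D02} directly from the definition \eqref{def:eps} of $\epsilon$ together with the bound $|D'(0)|<C\epsilon$, by a perturbation argument on the integrand. First I would recall that, by \eqref{eq:sizeindata}, the quantity
\[
\sup _{\text{dist} _{S^2}(  \overrightarrow{{e}}, \frac{\mathbf{v}}{|\mathbf{v}|} ) \le \varepsilon  _1} \int _0^\infty  (1+|  |\mathbf{v}| \overrightarrow{{e}} t+y_0   |^{ 2} )^{-1} dt
\]
is a summand of $\epsilon$ and hence is itself $<\varepsilon_0$. Writing $g(\overrightarrow{e},t):=||\mathbf{v}|\overrightarrow{e}t+y_0|$ and $\widetilde g(\overrightarrow{e},t):=||\mathbf{v}|\overrightarrow{e}t+D'(0)+y_0|$, we have by the triangle inequality $|\widetilde g-g|\le |D'(0)|<C\epsilon$, so that $\langle\widetilde g\rangle^{-2}\le \langle g-C\epsilon\rangle^{-2}$ on the region where $g> C\epsilon$, and trivially $\langle\widetilde g\rangle^{-2}\le 1$ on the complementary region.

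The main thing to check is that shifting the argument by a bounded amount $C\epsilon$ increases the integral only by a controlled multiple. The key point is that $g(\overrightarrow e,t)=||\mathbf v|\overrightarrow e t+y_0|$ is, for each fixed $\overrightarrow e$, a convex function of $t$ which grows linearly with slope $|\mathbf v|\ge M_0>1$ as $t\to\infty$; hence the sublevel set $\{t\ge 0:\ g(\overrightarrow e,t)\le C\epsilon\}$ is an interval of length $\le 2C\epsilon/|\mathbf v|\le 2C\epsilon$, on which the integrand of the shifted integral is $\le 1$. Outside that interval one has $g(\overrightarrow e,t)> C\epsilon$ and there $\langle\widetilde g\rangle^{-2}\le \langle g-C\epsilon\rangle^{-2}\le 4\langle g\rangle^{-2}$ once $g\ge 2C\epsilon$, with an easy separate bound on the transition zone $C\epsilon<g<2C\epsilon$. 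Putting these together gives, uniformly in $\overrightarrow e$,
\[
\int_0^\infty \langle \widetilde g(\overrightarrow e,t)\rangle^{-2}\,dt \le 4\int_0^\infty\langle g(\overrightarrow e,t)\rangle^{-2}\,dt + C'\epsilon,
\]
and taking the supremum over $\text{dist}_{S^2}(\overrightarrow e,\mathbf v/|\mathbf v|)\le\varepsilon_1$ yields a bound $\le (4+C')\varepsilon_0 < 10\epsilon$ provided $\varepsilon_0$ is chosen small enough relative to the fixed constants (note $\epsilon$ and $\varepsilon_0$ are comparable to the size of the supremum term when that term dominates, and otherwise the $C\epsilon$ error is itself dominated by $\epsilon$).

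The only mild subtlety — the step I expect to need the most care — is that one must not simply write $10$ as an absolute constant: the factor $4$ coming from the shift and the additive $C'\epsilon$ must be absorbed into the constant $10$, which forces the smallness threshold $\varepsilon_0$ in \eqref{eq:sizeindata} to be taken depending on $M_0$ (through the lower bound $|\mathbf v|\ge M_0$ used to bound the length of the sublevel interval) and on the fixed $C$ in $|D'(0)|\le C\epsilon$. Since $|D'(0)|<C\epsilon$ has already been established from Proposition \ref{prop:modulation} and \eqref{eq:sizeindata} just above the statement, and $M_0$ is fixed throughout, this is harmless; one simply records that $\varepsilon_0$ may be shrunk further here. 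This is essentially the content of "Lemma 2.9 \cite{CM}" adapted to the present notation, so I would present it as a short perturbation estimate and refer to \cite{CM} for the routine details.
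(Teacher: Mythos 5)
Your perturbation estimate is fine as far as it goes, but the last step does not close, and it is exactly the step you flagged as delicate. What your splitting actually yields is a bound of the form $\big(4+cC/M_0\big)\epsilon$: a factor $4$ on the region where $\big||\mathbf{v}|\overrightarrow{e}t+y_0\big|\ge 2C\epsilon$, plus an additive contribution of order $C\epsilon/|\mathbf{v}|\le cC\epsilon/M_0$ from the sublevel interval. Here $C$ is the fixed modulation constant in $|D'(0)|<C\epsilon$ and $M_0>1$ is merely fixed, so the resulting multiplicative constant $4+cC/M_0$ has no reason to be below $10$. Crucially, shrinking $\varepsilon_0$ cannot repair this: both the error term and the target $10\epsilon$ scale linearly in $\epsilon$, so the ratio $(4+C')/10$ is unaffected by any smallness of $\varepsilon_0$. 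The displayed conclusion ``$\le(4+C')\varepsilon_0<10\epsilon$ for $\varepsilon_0$ small'' is also incorrect as written, since $\epsilon$ may be arbitrarily smaller than $\varepsilon_0$ (the correct comparison is with $\epsilon$, not $\varepsilon_0$), and the parenthetical claim that ``the $C\epsilon$ error is itself dominated by $\epsilon$'' begs the question when the whole point is an explicit absolute constant.

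To get a constant independent of $C$ and $M_0$ one needs a different mechanism, which is what the paper (following Lemma 2.9 of \cite{CM}, and the same scheme as in Lemmas \ref{lem:propmodu2} and \ref{lem:drift}) uses: a dichotomy in which either $\big||\mathbf{v}|\overrightarrow{e}t+y_0\big|\ge M$ for all $t\ge0$ and all admissible directions, with $M$ a large preassigned number — in which case the shift by $|D'(0)|\le C\epsilon\le1$ perturbs the integrand only by a factor $(1-1/M)^{-2}$ close to $1$, so the shifted supremum is at most, say, $2\epsilon$ — or else the smallness of the unshifted supremum forces $|\mathbf{v}|\gtrsim\epsilon^{-1}$ (as in \eqref{eq:calU2} and \eqref{eq:calU4}), and then one bounds the shifted integral by the whole-line integral $\pi/|\mathbf{v}|\lesssim\epsilon$, which is insensitive to any translate of $y_0$. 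Either branch gives a constant comfortably below $10$. Your argument would suffice to prove the weaker statement ``$<c\epsilon$ for some fixed $c$ depending on $C$ and $M_0$,'' but not the lemma as stated, and the proposed absorption into $10$ by shrinking $\varepsilon_0$ is not a valid move.
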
\qed

We now prove the following lemma.
			
\begin{lemma}
  \label{lem:drift}  For $\varepsilon _0>0$    in \eqref{eq:sizeindata} sufficiently small,  we have for a fixed $c$
	\begin{equation}\label{eq:drift1}
\begin{aligned} &  \|  \langle D '(s)+s \mathbf{{v}} +y_0\rangle ^{- k }    \|   _{L^1( 0,T) }<c \epsilon  .   \end{aligned}
\end{equation}

\end{lemma}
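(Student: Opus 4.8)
The plan is to bound the integral by exploiting that, under the bootstrap hypotheses, the curve $\gamma(s):=D'(s)+s\mathbf v+y_0$ moves essentially at the large speed $|\mathbf v|$ in a direction staying inside the cone used in \eqref{eq:weakint}, so that its passage near the support of $V$ is controlled by the weak--interaction bound \eqref{eq:weakint}. First I would record the consequences of \eqref{eq:unds1}--\eqref{eq:unds2}: exactly as in \eqref{eq:inteqD1} one gets $|v_0|\le c\epsilon$ and $\|v'-v_0\|_{L^\infty(0,T)}\le c\epsilon$, and then, writing $D'(s)-D'(0)=\int_0^s(\dot D'-v')(\tau)\,d\tau+\int_0^s v'(\tau)\,d\tau$, the pointwise bound $|\dot D'(s)|\le c\epsilon$ (the quantity ``$\dot I$'' of \eqref{eq:inteqD1}) and $|D'(s)-D'(0)|\le c\epsilon\,s$ for all $s\in[0,T]$. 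Hence, for $\varepsilon_0$ small (using $|\mathbf v|>M_0$), the scalar function $\sigma(s):=\gamma(s)\cdot\widehat{\mathbf v}$, with $\widehat{\mathbf v}:=\mathbf v/|\mathbf v|$, satisfies $\sigma'(s)=|\mathbf v|+\dot D'(s)\cdot\widehat{\mathbf v}\in[\tfrac12|\mathbf v|,2|\mathbf v|]$, so $s\mapsto\sigma(s)$ is a diffeomorphism; since $|\gamma(s)|\ge|\sigma(s)|$ we get the crude bound
\begin{equation*}
\int_0^T\langle\gamma(s)\rangle^{-k}\,ds\le\int_0^T\langle\sigma(s)\rangle^{-k}\,ds\le\frac{2}{|\mathbf v|}\int_{\mathbb R}\langle\sigma\rangle^{-k}\,d\sigma=\frac{2C_k}{|\mathbf v|},
\end{equation*}
which already yields \eqref{eq:drift1} when $\epsilon|\mathbf v|$ is bounded below by a fixed constant. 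So it remains to treat the case $\epsilon|\mathbf v|\le\tfrac1{40}$.

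In that case I would extract a pointwise lower bound for $|\gamma|$ from \eqref{eq:weakint}. For $s\ge 2/|\mathbf v|$ set $\overrightarrow{e}_s:=\big(s\mathbf v+(D'(s)-D'(0))\big)/\big|s\mathbf v+(D'(s)-D'(0))\big|$; from $|D'(s)-D'(0)|\le c\epsilon\,s$ one checks $\mathrm{dist}_{S^2}(\overrightarrow{e}_s,\widehat{\mathbf v})\le\arcsin(c\epsilon/M_0)<\varepsilon_1$ for $\varepsilon_0$ small, so \eqref{eq:weakint} applies with this direction. The affine trajectory $t\mapsto(D'(0)+y_0)+|\mathbf v|\overrightarrow{e}_s\,t$ coincides with $\gamma(s)$ at $t_s:=\big|s\mathbf v+(D'(s)-D'(0))\big|/|\mathbf v|\ge s/2\ge 1/|\mathbf v|$, and on the window $\{|t-t_s|\le 1/|\mathbf v|\}\subset(0,\infty)$ it stays within distance $1$ of $\gamma(s)$; restricting the integral in \eqref{eq:weakint} to that window gives $\frac{2}{|\mathbf v|}\big(1+(|\gamma(s)|+1)^2\big)^{-1}<10\epsilon$, i.e.\ $|\gamma(s)|\ge\rho(\epsilon|\mathbf v|)^{-1/2}$ for a fixed $\rho>0$ once $\epsilon|\mathbf v|\le\tfrac1{40}$.

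Combining this pointwise bound with $\langle\gamma\rangle^{-k}\le\langle\sigma\rangle^{-k}$ and the change of variables $s\mapsto\sigma$, we obtain
\begin{equation*}
\int_{2/|\mathbf v|}^T\langle\gamma(s)\rangle^{-k}\,ds\le\frac{2}{|\mathbf v|}\int_{\mathbb R}\min\!\big((\epsilon|\mathbf v|/\rho^2)^{k/2},\,\langle\sigma\rangle^{-k}\big)\,d\sigma\le \frac{c_k}{|\mathbf v|}\,(\epsilon|\mathbf v|)^{(k-1)/2}=c_k\,\epsilon\,(\epsilon|\mathbf v|)^{(k-3)/2}\le c_k\epsilon,
\end{equation*}
using $k>3$ and $\epsilon|\mathbf v|<1$. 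For the short interval $[0,2/|\mathbf v|]$ one splits on $|\gamma(0)|=|D'(0)+y_0|$: applying \eqref{eq:weakint} with $\overrightarrow{e}=\widehat{\mathbf v}$ restricted to $t\le 1/|\mathbf v|$ forces $|\mathbf v|\langle\gamma(0)\rangle^2\gtrsim\epsilon^{-1}$; if $|\gamma(0)|<8$ this gives $|\mathbf v|\gtrsim\epsilon^{-1}$, so the piece is $\le 2/|\mathbf v|\lesssim\epsilon$, while if $|\gamma(0)|\ge 8$ then $|\gamma(s)|\ge|\gamma(0)|/2$ on the interval and the piece is $\le\frac{2}{|\mathbf v|}(|\gamma(0)|/2)^{-k}\lesssim\epsilon\,|\gamma(0)|^{2-k}\le\epsilon$. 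Adding the three contributions proves \eqref{eq:drift1}. The delicate point — the only genuinely nontrivial step — is the geometric extraction of a quantitative lower bound on $|\gamma(s)|$ from the weak-interaction integral: one must pick the correctly tilted comparison direction $\overrightarrow{e}_s$ so that it stays in the admissible cone, localize \eqref{eq:weakint} on a window of length $\sim|\mathbf v|^{-1}$, and carry the dichotomy $\epsilon|\mathbf v|$ small versus bounded below through all the estimates.
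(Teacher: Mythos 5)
Your proof is correct, but it follows a genuinely different route from the paper's. Both arguments rest on the same two inputs — the weak--interaction bound \eqref{eq:weakint} of Lemma \ref{lem:D02} and the drift bounds coming from \eqref{eq:unds1}--\eqref{eq:unds2} (note that your pointwise bound $|\dot D'(s)|\le c\epsilon$ is exactly the paper's own assertion $|\dot I(s)|\le 3C_0\epsilon$ in \eqref{eq:inteqD1}, so you are on the same footing there) — but the decompositions differ. The paper splits according to whether the nominal line $d_0+s\mathbf v$, $d_0=D'(0)+y_0$, satisfies $\langle d_0+s\mathbf v\rangle\ge 6C_0\epsilon s$ for all $s$: in the good case $\langle \gamma(s)\rangle\sim\langle d_0+s\mathbf v\rangle$ and \eqref{eq:weakint} applies directly; in the bad case a discriminant computation forces $-\widehat d_0$ to be nearly parallel to $\widehat{\mathbf v}$, whence \eqref{eq:weakint} evaluated along $-\widehat d_0$ yields $|\mathbf v|^{-1}\lesssim\epsilon$, and the whole integral is then bounded by $O(|\mathbf v|^{-1})$ through the same rescaling/change-of-variables mechanism you use in your first regime. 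You instead split on the size of $\epsilon|\mathbf v|$: when $\epsilon|\mathbf v|$ is bounded below, the crude projection bound $O(|\mathbf v|^{-1})$ already suffices; when $\epsilon|\mathbf v|$ is small, your key new step — localizing \eqref{eq:weakint} on a moving window of length $2/|\mathbf v|$ around the time when the suitably tilted admissible ray $\overrightarrow e_s$ passes through $\gamma(s)$ — produces the quantitative pointwise lower bound $|\gamma(s)|\gtrsim(\epsilon|\mathbf v|)^{-1/2}$, which interpolated against the projection bound gives $\epsilon(\epsilon|\mathbf v|)^{(k-3)/2}\le\epsilon$ using $k>3$, together with a separate short-time analysis on $[0,2/|\mathbf v|]$. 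What each buys: the paper's argument is shorter, since its bad case immediately upgrades to $|\mathbf v|\gtrsim\epsilon^{-1}$ and needs no window localization or splitting of the time axis, and it only uses $k\ge 2$; yours avoids the antiparallel-geometry/discriminant analysis altogether and yields as a by-product an explicit lower bound on the distance of the soliton trajectory from the potential's center in the slow regime, at the price of using $k>3$ more essentially and of the extra case analysis at small times.
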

				\proof Set $d  _0:=D '(0)  +y_0$.
  If  $|\langle d  _0+s \mathbf{{v}}  \rangle | \ge 6C_0\epsilon s$ for all $s>0$, then since $|  I(s)|     \le  3C_0\epsilon s$ by
   \eqref{eq:inteqD1}
     we get  $  \langle D '(s)+s\mathbf{{v}}+y_0\rangle     \sim  \langle  d  _0+s \textbf{v}  \rangle       $ with fixed constants
for all $s>0$.
 Then \eqref{eq:drift1} follows from
				\eqref{eq:weakint}.

		\noindent	 Suppose for an $s_0>0$ that  $|  d  _0+s_0 \mathbf{v}|< 6C_0\epsilon s_0$ . Squaring this inequality and for
   $C_1= (6C_0)
				^2  |\mathbf{v}|^{-2} $ we get
			\begin{equation*}\label{eq:drift2}
\begin{aligned} &  |\mathbf{v}|^2  ( 1-C_1 \epsilon ^2)  s_0^2+2d  _0 \cdot  \textbf{v} s_0 +|d  _0 |^2<0.
  \end{aligned}
\end{equation*}	
			This implies    $ (d  _0 \cdot  \mathbf{v} )^2  >    |d  _0 | ^2    \,  |\textbf{v}| ^2   ( 1-C_1 \epsilon ^2) $  for the discriminant
and
\begin{equation*}
\begin{aligned} &   d  _0 \cdot  \mathbf{v}   < -     |d  _0 |     \,  |\mathbf{v}|      \sqrt{ 1-C_1 \epsilon ^2 } .   \end{aligned}
\end{equation*}
This implies $d  _0\neq 0$ and $\text{dist}_{S^2}(-\frac{d  _0}{|d  _0 |},  \frac{\textbf{v} }{|\mathbf{v} |}) =O(\epsilon ^2).$   From    \eqref{eq:weakint} we get
		\begin{equation*} \begin{aligned} &     {|\mathbf{v}|}^{ {-1}}
		\|    \langle d  _0    -\frac{d  _0}{|d  _0 |}  s   \rangle   ^{-   k } \|   _{L^1( \R _+) }   = {|\mathbf{v}|}^{ {-1}}
		\|   \langle | d  _0 |   -   s     \rangle   ^{-   k } \|   _{L^1( \R _+) }    <10 \epsilon
		    .
			 \end{aligned}  \end{equation*}	
			For $\epsilon _0>0$ in \eqref{eq:sizeindata} small,   we get $     |\mathbf{v}|^{-1} < \kappa \epsilon$    for  $\kappa =20 / \| \langle t \rangle ^{-k}\| _{L^1( \R  ) }$.
			We have
					\begin{equation*}   \begin{aligned} &     \|  \langle D '(s)+s\mathbf{v}+y_0\rangle ^{- k }    \|   _{L^1( 0,T) } \le       |\mathbf{v}|^{-1}
				 \|  \langle  d  _0+s +   I_1 ( {s}/ |\mathbf{v} |   )   \rangle ^{- k }    \|   _{L^1( 0,|\mathbf{v} |T) },	
			 \end{aligned}  \end{equation*}
			where    $ \frac d {ds} [I_1 ( {s}/ |\mathbf{v} |   )]  \le 3C_0\epsilon   / |\textbf{v} |$. We complete  the proof of   \eqref{eq:drift1}
 by
			\begin{equation*}   \begin{aligned} &
	   \|  \langle D '(s)+s\mathbf{v}+y_0\rangle ^{-  k }    \|   _{L^1( 0,T) }  \le       |\textbf{v}|^{-1}
		\| \langle   d  _0/ |\mathbf{v} |+s +   I_1 ( {s}/ |\textbf{v} |   )  \rangle ^{- k }
			     \|   _{L^1( 0,|\mathbf{v} |T) }\\&
		\le    2   |\mathbf{v}|^{-1}    \| \langle t\rangle ^{- k }\| _{L^1(\R )}  < 40\epsilon \text{ for $3C_0\epsilon _0  / |\mathbf{v} |<1/2$.}  \qquad \qquad  \qquad  \qquad \qquad \qed \end{aligned}  \end{equation*}		
	\qed					
				
			 \begin{lemma}\label{lem:extU}  Let $0<\varepsilon_4<\varepsilon_2$ and let $ B(\varepsilon_4)  $  an open neighborhood of
		 $\phi _{\omega _1}$    in $ H^1(\R ^3, \R ^{2 }) $ defined like \eqref{prop:modeq0.1}
 but with $\varepsilon_4$ instead of $\varepsilon_2$. Then
 under the hypotheses of Prop. \ref{prop:mainbounds}  for  the  $\varepsilon _0>0$ in \eqref{eq:sizeindata} sufficiently small we have $\tau ' (t)\in \mathcal{T}(t, \delta)$ (where $\delta>0$ is given in Lemma \ref{lem:propmodu1})  and $(t,u(t))\in B(\varepsilon_4)$ for $t\in [0,T]$.
\end{lemma}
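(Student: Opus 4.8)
The plan is to read off $u(t)$ the modulation data $(\tau',p',z',f',w)$ furnished by Lemma~\ref{lem:cmgnt}, together with the further spectral splitting \eqref{eq:decomp2}, and then to combine the bootstrap bounds \eqref{Strichartzradiation}--\eqref{eq:smallen} and the auxiliary bounds \eqref{eq:unds1}--\eqref{eq:unds2} (all granted by the hypotheses of Proposition~\ref{prop:mainbounds} on $[0,T]$ with $C=C_0$) to prove the two assertions: that the soliton translation $D'(t)$ stays in the ``good cone'' $B_{\R^3}(0,\varepsilon(\delta)\langle t\rangle)$ of Lemma~\ref{lem:propmodu2}, and that $u(t)$ stays $\varepsilon_4$-close in $H^1$ to a slowly translated copy of $\phi_{\omega_1}=\Phi_\varkappa$.

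For $\tau'(t)\in\mathcal T(t,\delta)$: Lemma~\ref{lem:propmodu2} already gives $B_{\R^3}(0,\varepsilon(\delta)\langle t\rangle)\times\R\subset\mathcal T(t,\delta)$ for all $t\ge 0$, provided the smallness hypothesis \eqref{prop:modeq0} of that lemma holds with $\varepsilon_2$ replaced by $\varepsilon(\delta)$; this follows from $\epsilon<\varepsilon_0\le\varepsilon(\delta)$ and the definition \eqref{def:eps} of $\epsilon$. Hence it suffices to check $D'(t)\in B_{\R^3}(0,\varepsilon(\delta)\langle t\rangle)$. I would write $D'(t)=D'(0)+\int_0^tv'(s)\,ds+\int_0^t(\dot D'(s)-v'(s))\,ds$ and estimate: $|D'(0)|\le C\epsilon$ (established right after Lemma~\ref{lem:movpot}); $\|v'\|_{L^\infty(0,t)}\le|v_0|+\|v'-v_0\|_{L^\infty}\lesssim\epsilon$ by \eqref{eq:orbstab1} together with $|v_0|\lesssim\epsilon$ (a consequence of \eqref{eq:p0}, the continuity and phase-invariance of $\Pi$, $\Phi_\varkappa=\phi_{\omega_1}$, and $\inf_\theta\|u_0-e^{\im\theta}\phi_{\omega_1}\|_{H^1}\le\epsilon$); and $\|\dot D'-v'\|_{L^1(0,t)}<C_0\epsilon\langle t\rangle$ by \eqref{eq:unds1}. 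This gives $|D'(t)|\le C'\epsilon\langle t\rangle$ with $C'=C'(C_0)$, and shrinking $\varepsilon_0$ so that $C'\varepsilon_0<\varepsilon(\delta)$ finishes this part.

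For $(t,u(t))\in B(\varepsilon_4)$ I take $\tau=\tau'(t)$ as the witness. A further shrinking of $\varepsilon_0$ makes the previous estimate give $|D'(t)|<\varepsilon_4\langle t\rangle$, so only $\|e^{-J\tau'(t)\cdot\Diamond}u(t)-\Phi_\varkappa\|_{H^1}<\varepsilon_4$ remains. By \eqref{eq:coordinate0},
$e^{-J\tau'(t)\cdot\Diamond}u(t)-\Phi_\varkappa=(\Phi_{p'(t)}-\Phi_\varkappa)+P(p'(t))P(\pi)r'(t)+e^{-J\tau'(t)\cdot\Diamond}Q[t,u(t)]$.
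The first two summands are bounded in $H^1$ by $\lesssim|p'(t)-\varkappa|+|z'(t)|+\|f'(t)\|_{H^1}$, using that $p\mapsto\Phi_p$ is smooth, that $P(p')$ is bounded, and that $P(\pi)r'=\sum_jz'_j\xi_j(\pi)+\sum_j\overline{z'}_j\overline\xi_j(\pi)+P_c(\pi)f'$ by \eqref{eq:decomp2}; by \eqref{eq:unds2} and $|p_0-\varkappa|\lesssim\epsilon$, by \eqref{L^inftydiscrete}, and by \eqref{Strichartzradiation} with the admissible pair $(p,q)=(\infty,2)$ from \eqref{admissiblepair} (together with the fact, from Theorem~\ref{thm:effham}, that the primed and unprimed coordinates differ by quantities of higher order in $|z|+\|f\|$), each of $|p'-\varkappa|$, $|z'|$, $\|f'\|_{H^1}$ is $\lesssim\epsilon$. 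Since $e^{-J\tau'(t)\cdot\Diamond}$ is an $H^1$-isometry, the last summand has norm $\|Q[t,u(t)]\|_{H^1}=\|e^{J\Theta\cdot\Diamond}Q_{w(t)}\|_{H^1}$.

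Controlling this last norm is, I expect, the main obstacle. Because $\Theta$ carries the Galilean phase $\tfrac12\mathbf v\cdot x$, one has $\|e^{J\Theta\cdot\Diamond}Q_w\|_{H^1}\simeq(1+|\mathbf v|)\|Q_w\|_{L^2}\simeq(1+|\mathbf v|)|w|$, so the naive bound $|w(t)|\lesssim\epsilon$ (from $|w(0,u_0)|\le c\epsilon$ and \eqref{eq:smallen}) only yields $|\mathbf v|\epsilon$, which need not be small; one must instead establish the sharper bound $|w(t)|\lesssim\epsilon\langle\mathbf v\rangle^{-1}$ on $[0,T]$. At $t=0$ this is forced: $Q[0,u_0]=u_0-U[0,u_0]$, so $\langle\mathbf v\rangle|w(0,u_0)|\lesssim\|Q[0,u_0]\|_{H^1}\le\inf_\theta\|u_0-e^{\im\theta}\phi_{\omega_1}\|_{H^1}+\inf_\theta\|U[0,u_0]-e^{\im\theta}\phi_{\omega_1}\|_{H^1}\lesssim\epsilon$ by \eqref{def:eps} and \eqref{eq:U0}. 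For $t>0$ I would reinsert this into \eqref{eq:prepw1}: every term of the right-hand side of \eqref{eq:prepw} is either localized near the soliton position $D'$ or is a nonlinear cross term between soliton-localized data and the boosted profile $\widetilde Q$ localized near $-\mathbf vt-y_0$, and pairing it against $J\partial_{w_i}\widetilde Q$ — localized near $-\mathbf vt-y_0$ and carrying the oscillation $e^{\im\mathbf v\cdot x/2}$ — produces, by separation of supports and non-stationary phase, a factor $\langle\mathbf vt+y_0+D'\rangle^{-N}$ with an extra gain $\langle\mathbf v\rangle^{-N}$. Integrating in $t$ and invoking Lemma~\ref{lem:drift} (via \eqref{eq:weakint}) gives $\|(\dot w_1-E_ww_2,\dot w_2+E_ww_1)\|_{L^1(0,T)}\lesssim\epsilon\langle\mathbf v\rangle^{-1}$, hence $|w(t)|\lesssim\epsilon\langle\mathbf v\rangle^{-1}$ and $\|Q[t,u(t)]\|_{H^1}\lesssim\epsilon$ on $[0,T]$. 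Summing the three contributions and choosing $\varepsilon_0$ small enough gives $\|e^{-J\tau'(t)\cdot\Diamond}u(t)-\Phi_\varkappa\|_{H^1}<\varepsilon_4$, i.e.\ $(t,u(t))\in B(\varepsilon_4)$ for all $t\in[0,T]$, completing the proof.
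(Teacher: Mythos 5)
Your handling of the first assertion is correct and close in spirit to the paper: the paper argues through Lemma \ref{lem:D02} and the dichotomies inside Lemmas \ref{lem:propmodu2} and \ref{lem:drift} (either $|\mathbf v|$ is large, in which case $\mathcal T(t,\delta)=\R^4$, or the separation $|D'(t)+\mathbf v t+y_0|$ stays larger than any preassigned $M$), while you instead verify directly that $D'(t)\in B_{\R^3}(0,\varepsilon(\delta)\langle t\rangle)$ from $|D'(0)|\lesssim\epsilon$, $|v_0|\lesssim\epsilon$, \eqref{eq:orbstab1} and \eqref{eq:unds1}, and then invoke Lemma \ref{lem:propmodu2} (whose hypothesis \eqref{prop:modeq0} with $\varepsilon(\delta)$ is indeed supplied by \eqref{def:eps}). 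Either route is acceptable for $\tau'(t)\in\mathcal T(t,\delta)$, and your estimate $|D'(t)|\le C'(C_0)\epsilon\langle t\rangle$ is also what is needed for the translation component in the definition of $B(\varepsilon_4)$.

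The problem is the second assertion, which you make hinge on the new bound $|w(t)|\lesssim\epsilon\langle\mathbf v\rangle^{-1}$ on $[0,T]$; the paper does not prove or use such a bound (it deduces $u(t)\in e^{J\tau'(t)\cdot\Diamond}B_{H^1}(\Phi_\varkappa,\varepsilon_4)$ directly from \eqref{Strichartzradiation}--\eqref{eq:smallen} and \eqref{eq:unds1}--\eqref{eq:unds2}, and its own refinement, Lemma \ref{lem:estw}, only yields $c\epsilon$, never $\epsilon\langle\mathbf v\rangle^{-1}$), and your argument for it does not close. You claim that every term of the right-hand side of \eqref{eq:prepw}, once paired with $J\partial_{w_i}\widetilde Q$, is either soliton-localized or a soliton--$\widetilde Q$ cross term and therefore gains $\langle\mathbf v\rangle^{-N}$ by separation of supports or non-stationary phase. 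That is false for the pure radiation terms: $J\nabla\mathbf E_P(\eta)=J\widetilde\beta(\eta)$ with $\eta=e^{J\tau'\cdot\Diamond}P(p')P(\pi)r'$, and the remainder $\int\partial_s\partial_{s_1}\partial_{s_2}\widetilde\beta(ss_2\eta+s_1\widetilde Q)$, contain $f$, which is neither localized away from the trapped-state center nor carries a usable phase (it is only of size $\epsilon$ in Strichartz norms). Pairing these against $\partial_{w_i}\widetilde Q$ gives contributions bounded only by quantities like $\|f\|_{L^2_tL^6_x}^2$ or $\|f\|_{L^2_tL^6_x}^2\|f\|_{L^\infty_tH^1}$, i.e. $O(\epsilon^2)$ in $L^1_t$ with no factor $\langle\mathbf v\rangle^{-1}$ — this is exactly how Lemma \ref{lem:estw} estimates them. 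Since in Theorem \ref{theorem-1.1} the velocity satisfies only $|\mathbf v|>M_0$ and is not controlled by any power of $\epsilon^{-1}$, $O(\epsilon^2)$ is not $O(\epsilon\langle\mathbf v\rangle^{-1})$, so $\|(\dot w_1-E_ww_2,\dot w_2+E_ww_1)\|_{L^1(0,T)}\lesssim\epsilon\langle\mathbf v\rangle^{-1}$, and hence $|w(t)|\lesssim\epsilon\langle\mathbf v\rangle^{-1}$, does not follow from your sketch. Moreover your $t=0$ step is circular: you bound $\|Q[0,u_0]\|_{H^1}$ via \eqref{eq:U0}, but \eqref{eq:U0} concerns $U[0,u_0]=u_0-Q[0,u_0]$ and the two infima over $\theta$ need not be attained at comparable phases; if you want $\langle\mathbf v\rangle|w(0)|\lesssim\epsilon$ you must prove it directly (e.g.\ by a frequency-localization estimate of $\langle h,e^{J\frac12\mathbf v\cdot x}\phi_0(\cdot+y_0)\rangle$ for $h$ small in $H^1$). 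So the central new step of your proof of $(t,u(t))\in B(\varepsilon_4)$ is not established, and it is precisely the point at which you depart from the paper's argument, which concludes this membership from the bootstrap bounds alone (in particular using only $|w(t)|\lesssim\epsilon$) together with the control of $D'(t)$ obtained in the first part.
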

\proof By  Lemma \ref{lem:D02} and by the argument
in Lemma \ref{lem:propmodu2}  for any preassigned $M>0$  if  $\varepsilon _0>0$ in
 \eqref{eq:sizeindata} is sufficiently small   we either have $|\mathbf{v}|\ge \epsilon ^{-\frac{1}{2}}$  or $| D'(0)+\mathbf{v} t+ y_0|\ge M $.
 Furthermore, the argument in Lemma \ref{lem:drift} shows that either $  \langle D '(s)+t\mathbf{{v}}+y_0\rangle     \sim  \langle  D'(0)+\mathbf{v} t+ y_0 \rangle       $ in $[0,T]$ for fixed constants or $|\mathbf{v}|\ge  c_o \epsilon ^{-1}$ for a fixed $c_o>0$.  In any case, we conclude that
 for any fixed $\delta _1>0$ for $\varepsilon _0>0$  sufficiently small
 we have
 $\tau ' (t)\in \mathcal{T}(t, \delta _1)$ for $t\in [0,T]$.

 \noindent  Since \eqref{Strichartzradiation}--\eqref{eq:smallen} and \eqref{eq:unds1}--\eqref{eq:unds2} imply for $\varepsilon _0>0$  sufficiently small that $u(t)\in e^{J \tau '(t)\cdot \Diamond }B_{H^1}(\varepsilon_4)$ for all $t\in [0,T]$
 we conclude $(t,u(t))\in B(\varepsilon_4)$ for $t\in [0,T]$.

  \qed

 \begin{lemma}\label{lem:dotPi}   Under  the hypotheses of   Proposition \ref{prop:mainbounds} and  for $\varepsilon _0$ small enough we have $\| \dot \Pi  _j\| _{L^1(I) } \le c \epsilon $ for  a fixed $c$ for all  $j$.
\end{lemma}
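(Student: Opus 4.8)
The plan is to estimate, term by term, the right‑hand sides of the equations \eqref{eq:mass} and show that each contribution has $L^1(I)$–norm $\le c\epsilon$. Writing $\mathbf{A}$ as in Lemma~\ref{lem:brackU1} and using the (anti)symmetry of $\Diamond_j$, one has $d_U\Pi_j\mathbf{A}=\langle\Diamond_jU,\mathbf{A}\rangle$, so that $\dot\Pi_4=\langle U,\mathbf{A}\rangle$ while, for $a=1,2,3$,
\begin{equation*}
\dot\Pi_a=-\langle V(\cdot+\mathbf vt+y_0)\partial_{x_a}[\Phi_{p'}+P(p')P(\pi)r'],\ \Phi_{p'}+P(p')P(\pi)r'\rangle+\langle\Diamond_aU,\mathbf{A}\rangle .
\end{equation*}
Throughout I use the bootstrap bounds \eqref{Strichartzradiation}--\eqref{eq:smallen}: in particular $\|f\|_{L^\infty_tH^1\cap L^2_tW^{1,6}_x}\le C_0\epsilon$, $|z(t)|\le C_0\epsilon$, and $|w(t)|\le C\epsilon$ (from $|w(0)|\le c\epsilon$ together with the $L^1$–bound \eqref{eq:smallen}), whence $\|U\|_{H^1}\le C$ and $\|\widetilde Q\|_{\Sigma_k}\le C\epsilon$; here $\Phi_{p'}$ and the $\xi_j(\pi)$ are Schwartz and localized near $D'(t)$, while $\widetilde Q=e^{J\Theta\cdot\Diamond}Q_w$ is Schwartz and localized near $-\mathbf vt-y_0$.

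For the potential term in $\dot\Pi_a$: after the change of variables $x\mapsto x+D'(t)$ it is a pairing of $V(\cdot+D'(t)+\mathbf vt+y_0)$ against $\partial_{x_a}[\Phi_{p'}+P(p')P(\pi)r']$ and $\Phi_{p'}+P(p')P(\pi)r'$. Splitting $P(p')P(\pi)r'$ into its ground‑state/discrete‑mode part (Schwartz, localized near the origin in these variables) and its radiation part, every product containing at least one localized factor is, by the Schwartz decay of $V,\Phi_{p'},\xi_j$, bounded pointwise in $t$ by $C\langle D'(t)+\mathbf vt+y_0\rangle^{-k}$, whose $L^1(0,T)$–norm is $\le c\epsilon$ by Lemma~\ref{lem:drift}. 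The one remaining piece, of the form $\langle V(\cdot+D'(t)+\mathbf vt+y_0)\partial_{x_a}f,f\rangle$, is bounded by $\|V\|_{L^{3/2}}\|f\|_{W^{1,6}_x}\|f\|_{L^6_x}$, so by Cauchy–Schwarz in $t$ and the Strichartz bounds for the admissible pair $(2,6)$ its $L^1(I)$–norm is $\le C\|V\|_{L^{3/2}}\|f\|_{L^2_tW^{1,6}_x}^2\le c\epsilon^2$.

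For $\langle\Diamond_jU,\mathbf{A}\rangle$ (and $\langle U,\mathbf{A}\rangle$ when $j=4$) I split $\mathbf{A}$ as in Lemma~\ref{lem:brackU1}. The terms $-(\dot w_1-E_ww_2)\langle\Diamond_jU,\partial_{w_1}\widetilde Q\rangle$ and $-(\dot w_2+E_ww_1)\langle\Diamond_jU,\partial_{w_2}\widetilde Q\rangle$ have $L^1(I)$–norm $\le C\|(\dot w_1-E_ww_2,\dot w_2+E_ww_1)\|_{L^1(I)}\le c\epsilon$ by \eqref{eq:smallen}, since $\sup_t|\langle\Diamond_jU,\partial_{w_i}\widetilde Q\rangle|\le C$. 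For $\langle\Diamond_jU,J\mathbf{f}(U,\widetilde Q)\rangle$ I use $\mathbf{f}(U,\widetilde Q)=\widetilde\beta(U+\widetilde Q)-\widetilde\beta(U)-\widetilde\beta(\widetilde Q)$ (which follows from \eqref{eq:brackU12} and vanishes when $\widetilde Q=0$), so by $(\mathrm{H3})$ and the Schwartz localization of $\widetilde Q$ one has $|\mathbf{f}(U,\widetilde Q)(x)|\le C\epsilon\,\langle x+\mathbf vt+y_0\rangle^{-k}(1+|U(x)|^{p-1})$. Pairing against the localized part of $\Diamond_jU$ once more gives a bound $C\langle D'(t)+\mathbf vt+y_0\rangle^{-k}$, integrable by Lemma~\ref{lem:drift}; pairing against the radiation part produces terms bounded by $C\epsilon\|\langle\cdot+\mathbf vt+y_0\rangle^{-k}f\|_{L^2_x}$ times Strichartz norms of $f$, which are closed by Hölder in $t$ together with the local–decay estimate $\|\langle\cdot+\mathbf vt+y_0\rangle^{-k}f\|_{L^2_tL^2_x}\le C\epsilon$ — which, using $|\mathbf v|>M_0$, follows from local smoothing for $e^{\im t\Delta}$ after a Galilean boost, the Duhamel formula, and the Strichartz bounds for $f$, exactly as in the corresponding estimates of \cite{CM}. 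Summing all contributions yields $\|\dot\Pi_j\|_{L^1(I)}\le c\epsilon$.

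I expect the main obstacle to be precisely this last point: controlling the coupling of the dispersive radiation $f$ with the small bound state $\widetilde Q=Q_w(\cdot+\mathbf vt+y_0)e^{\im(\cdots)}$, which travels away from the soliton at speed $>M_0$. There one cannot exploit spatial localization of $U$, and a crude bound by $\|f\|_{L^\infty_tH^1}$ fails to be integrable in time; what saves the estimate is that $\widetilde Q$, like $V(\cdot+\mathbf vt+y_0)$, is concentrated near the moving trajectory $-\mathbf vt-y_0$, so that only a weighted, time–integrated local norm of the radiation along that trajectory enters, and this is precisely what local smoothing (combined with the Strichartz bounds being bootstrapped) provides. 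Everything else is quadratic‑in‑$\epsilon$ bookkeeping resting on Lemma~\ref{lem:drift} and \eqref{eq:smallen}.
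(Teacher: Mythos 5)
Your overall architecture matches the paper's: split $\dot\Pi_j$ as in \eqref{eq:mass}, handle the potential term by separating localized pieces (integrable in $t$ via Lemma \ref{lem:drift}) from the quadratic-in-$f$ piece (H\"older plus Strichartz), and treat $d_U\Pi_j\textbf{A}$ by splitting $\textbf{A}$ into the $\mathbf{f}(U,\widetilde Q)$ part and the $\dot w$--coupling part. However, there is a genuine gap in your treatment of the terms $(\dot w_1-E_ww_2)\langle \Diamond _jU,\partial_{w_1}\widetilde Q\rangle$ and $(\dot w_2+E_ww_1)\langle \Diamond _jU,\partial_{w_2}\widetilde Q\rangle$: bounding them by $\sup_t|\langle \Diamond _jU,\partial_{w_i}\widetilde Q\rangle|\,\|(\dot w_1-E_ww_2,\dot w_2+E_ww_1)\|_{L^1}$ and invoking the bootstrap hypothesis \eqref{eq:smallen} only yields $C\,C_0\,\epsilon$, where $C_0$ is the (arbitrarily large) bootstrap constant. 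That is \emph{not} ``$c\epsilon$ for a fixed $c$'': being linear in $\epsilon$, it cannot be absorbed by shrinking $\varepsilon_0(C_0)$, and the fixed-constant conclusion of this lemma is exactly what is needed downstream to recover \eqref{eq:unds2} and \eqref{eq:orbstab1} with constants independent of, and smaller than, $C_0$, so the bootstrap would not close. Note also that $\sup_t|\langle \Diamond _jU,\partial_{w_i}\widetilde Q\rangle|$ is in general only $O(1)$, not small (e.g. when $y_0=0$ and the interaction is weak only because $|\mathbf{v}|$ is large, the two bumps overlap at $t=0$), so no trivial improvement of your pointwise bound is available. The paper avoids this by Cauchy--Schwarz in time: it writes $\|\dot w_1-E_ww_2\|_{L^2_t}\,\|\langle U,\Diamond_j\partial_{w_1}\widetilde Q\rangle\|_{L^2_t}$ and shows the second factor is $O(\sqrt{\epsilon})+O(C_0\epsilon)$, estimating the localized part by interpolating its $L^1_t$ bound (from Lemma \ref{lem:movpot}) with its $L^\infty_t$ bound, and the radiation part by $\|f\|_{L^2_tL^6_x}$; the product is then $O(\epsilon^{3/2})$, which is absorbable.

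A secondary remark: your appeal to a local-smoothing/weighted estimate $\|\langle\cdot+\mathbf{v}t+y_0\rangle^{-k}f\|_{L^2_tL^2_x}\le C\epsilon$ for the $f$--$\widetilde Q$ coupling inside $\mathbf{f}(U,\widetilde Q)$ is an unnecessary detour, and it is not where the real difficulty lies. The paper disposes of that contribution by plain H\"older in $x$ and Strichartz in $t$: the quadratic-in-$f$ piece is bounded by $\|f\|_{L^2_tL^6_x}\,\|\widetilde\beta''(\cdot)\widetilde Q\,(\cdots)f\|_{L^2_tL^{6/5}_x}=O(\|f\|_{L^2_tL^6_x}^2)$, while every piece containing a localized factor is handled by Lemma \ref{lem:movpot}/Lemma \ref{lem:drift}. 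So the delicate point of the proof is the constant bookkeeping described above, not the dispersive control of $f$ along the moving trajectory.
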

\proof We have $\| \langle      V(\cdot +\mathbf{v}t+y_0)  \partial _{ x_a } (\Phi _{p'}+ P(p')P(\pi )r') , \Phi _{p'}+ P(p')P(\pi )r'  \rangle\| _{L^1(I) } \le c \epsilon $ by an argument in \cite{CM}. We focus now on the additional terms not already present in  \cite{CM}.
We  have
\begin{equation*}   \begin{aligned} &
      |d _U \Pi _j\textbf{A}|\le  |\langle U, \mathbf{f}(U,\widetilde{Q})\rangle   | +
			 | \dot w_1 -E_ww_2|   |\langle U, \Diamond _j\partial _{w_1} \widetilde{{Q}}\rangle   | +   | \dot w_2 +E_ww_1|
   |\langle U, \Diamond _j\partial _{w_2} \widetilde{{Q}} \rangle   | .	 \end{aligned}  \end{equation*}
By \eqref{eq:coordin1}
\begin{equation*}   \begin{aligned}
    U =   \mathbf{S}^{0,0}_{n',m'}(  \pi ,  \Pi , \varrho ' ,z' ,f' ) +  e^{J
\tau '\cdot \Diamond } P(p')P_c(\pi )   f'.
\end{aligned} \end{equation*}
with $ \varrho '=\Pi (f').$
Composing with the map in \eqref{eq:quasilin51}  we obtain
\begin{equation}  \label{eq:expU} \begin{aligned}
    U =   \mathbf{S}^{0,0}_{n,m}(  \pi ,  \Pi , \varrho   ,z  ,f  ) +  e^{J
\tau  \cdot \Diamond } e^{J
\resto ^{0,2}_{n,m}\cdot \Diamond }P(p   )P_c(\pi )   f .
\end{aligned} \end{equation}
with $ \varrho  =\Pi (f ) $ for any preassigned pair $(n,m)$. This is obtained by taking both $n'$ and $m'$  sufficiently large,    using the fact that the pullback of symbols $\mathbf{S}^{i,j}_{n',m'}$ and
 $\resto ^{i,j}_{n',m'}$ are symbols $\mathbf{S}^{i,j}_{n ,m }$ and
 $\resto ^{i,j}_{n ,m }$ for any $n\le n'-CN_1$ and $m\le m'-CN_1$
for a fixed $C$. Furthermore we
have $p'=p+\resto ^{0,2}_{n ,m }$. For all this,  see  \cite{Cu0}.
We now have
\begin{equation*}   \begin{aligned} &
       \|\langle U, \mathbf{f}(U,\widetilde{Q})\rangle   \| _{L^1_t}\le  \int _{[0,1]^2}   \| \langle \mathbf{S}^{0,0}_{n,m}+ e^{J
\tau  \cdot \Diamond } e^{J
\resto ^{1,2}_{n,m}\cdot \Diamond }P(p   )P_c(\pi )   f, \partial _\iota  \partial _s[    \widetilde{\beta }(\iota U+s\widetilde{Q} )]\rangle   \| _{L^1_t}  d\iota  ds .	 \end{aligned}  \end{equation*}
We have
\begin{equation*}   \begin{aligned} &
          \| \langle \mathbf{S}^{0,0}_{n,m} , \partial _\iota  \partial _s[    \widetilde{\beta }(\iota U+s\widetilde{Q} )]\rangle   \| _{L^1_t}   \le  \| \mathbf{S}^{0,0}_{n,m}\widetilde{Q}   \| _{L^1_tL^2_x}   \| \widetilde{\beta }^{\prime \prime }(\iota U+s\widetilde{Q} )  U  \| _{L^\infty _tL^2_x}    .	 \end{aligned}  \end{equation*}
We have $\| \widetilde{\beta }^{\prime \prime }(\iota U+s\widetilde{Q} )  U  \| _{L^\infty _tL^2_x} \le c_1$ for a fixed $c_1$  by (H3)  and  by \eqref{Strichartzradiation}--\eqref{L^inftydiscrete} and \eqref{eq:unds2}.  These  imply also  $\| \mathbf{S}^{0,0}_{n,m}\widetilde{Q}  \| _{L^1 _tL^2_x} \le c_2 \epsilon$ for a fixed $c_2$ by Lemma \ref{lem:movpot}.

     We have
\begin{equation*}   \begin{aligned} &
          \| \langle e^{J
\tau  \cdot \Diamond } e^{J
\resto ^{1,2}_{n,m}\cdot \Diamond }P(p   )P_c(\pi )   f,   \partial _\iota  \partial _s[    \widetilde{\beta }(\iota U+s\widetilde{Q} )]\rangle   \| _{L^1_t}   \\& \le  \| f   \| _{L^\infty_tL^2_x}   \| \widetilde{\beta }^{\prime \prime }(\iota U+s\widetilde{Q} ) \widetilde{Q}\mathbf{S}^{0,0}_{n,m}  \| _{L^1 _tL ^{2}_x} \quad \quad \quad \quad \quad \quad \quad \quad \quad \quad \quad \quad \text{ (this is $O(\epsilon ^2)$ )}\\& +
  \| f   \| _{L^2_tL^6_x}  \| \widetilde{\beta }^{\prime \prime }(\iota U+s\widetilde{Q} ) \widetilde{Q}  e^{J
\tau  \cdot \Diamond } e^{J
\resto ^{1,2}_{n,m}\cdot \Diamond }P(p   )P_c(\pi )   f  \| _{L^2 _tL ^{\frac{6}{5}}_x}  \quad \quad \quad \text{ (this is $O(\| f   \| _{L^2_tL^6_x}^2)= O(\epsilon ^2)$ )}.	 \end{aligned}  \end{equation*}
 Then we conclude     $ \|\langle U, \mathbf{f}(U,\widetilde{Q})\rangle   \| _{L^1_t}\le c \epsilon$ for a  fixed $c$.

 \noindent We consider

 \begin{equation*}   \begin{aligned} &
      \| \dot w_1 -E_ww_2   \| _{L^2_t}  \| \langle U, \Diamond _j\partial _{w_1} \widetilde{{Q}}\rangle    \| _{L^2_t}\\& \le C \epsilon \left ( \| \langle \mathbf{S}^{0,0}_{n,m}, \Diamond _j\partial _{w_1} \widetilde{{Q}}\rangle    \| _{L^2_t}+\| \langle e^{J
\tau  \cdot \Diamond } e^{J
\resto ^{1,2}_{n,m}\cdot \Diamond }P(p   )P_c(\pi )   f, \Diamond _j\partial _{w_1} \widetilde{{Q}}\rangle    \| _{L^2_t}\right )
         .	 \end{aligned}  \end{equation*}
This is $O(\epsilon  ^{\frac{3}{2}})$  because for a fixed $C$ and using Lemma \ref{lem:movpot}
\begin{equation*}   \begin{aligned} &
       \| \langle e^{J
\tau  \cdot \Diamond } e^{J
\resto ^{1,2}_{n,m}\cdot \Diamond }P(p   )P_c(\pi )   f, \Diamond _j\partial _{w_1} \widetilde{{Q}}\rangle    \| _{L^2_t } \le C \| f\| _{L^2_t L^6_x} \le CC_0 \epsilon \\&   \| \langle \mathbf{S}^{0,0}_{n,m}, \Diamond _j\partial _{w_1} \widetilde{{Q}}\rangle    \| ^2_{L^2_t} \le   \| \langle \mathbf{S}^{0,0}_{n,m}, \Diamond _j\partial _{w_1} \widetilde{{Q}}\rangle    \|  _{L^1_t} \| \langle \mathbf{S}^{0,0}_{n,m}, \Diamond _j\partial _{w_1} \widetilde{{Q}}\rangle    \| _{L^\infty _t}\le C \epsilon
         .	 \end{aligned}  \end{equation*}

\qed

\begin{lemma}\label{lem:estw}   Under the hypotheses of Prop. \ref{prop:mainbounds}  for $\varepsilon _0>0$ sufficiently small,  for  any  preassigned  $c>0$
we have   in $[0,T]$
\begin{equation}\label{eq:estw1}
 \begin{aligned} &     \|    \dot w_1 -E_ww_2   \| _{L^1\cap L^2 }+ \|     \dot w_2 +E_ww_1   \| _{L^1\cap L^2 }   \le c \epsilon  .
\end{aligned}
\end{equation}
\end{lemma}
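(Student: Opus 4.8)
\emph{Plan.} I would read \eqref{eq:estw1} off the linear system \eqref{eq:prepw1}. Under the hypotheses of Proposition \ref{prop:mainbounds} one has $\|(\dot w_1-E_ww_2,\dot w_2+E_ww_1)\|_{L^1}\le C_0\epsilon$ and $|w(0)|\le c\epsilon$, so $|w(t)|\lesssim\epsilon$ and the matrix $1+O(w^2)$ on the left of \eqref{eq:prepw1} is boundedly invertible with inverse $1+O(\epsilon^2)$; hence it suffices to bound, in $L^1(0,T)\cap L^2(0,T)$, the pairing of the right-hand side of \eqref{eq:prepw} with $J\partial_{w_i}\widetilde Q$ for $i=1,2$ by $c\epsilon$. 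Throughout I would use that $\partial_{w_i}\widetilde Q=e^{J\Theta\cdot\Diamond}\partial_{w_i}Q_w$ is, up to the $O(|w|^2)$ correction $e^{J\Theta\cdot\Diamond}\partial_{w_i}q_w$ (harmless since $\|\partial_{w_i}q_w\|_{\Sigma_k}\lesssim|w|^2$ by Proposition \ref{prop:bddst}), a phase-modulated translate of $\phi_0$, hence exponentially localized near $-\mathbf v t-y_0$, far from the soliton profile $\Phi_{p'}$ which lives near $D'(t)$.

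Then I would go through the right-hand side of \eqref{eq:prepw} term by term. The term $-J\widetilde{\mathfrak h}\eta$ drops out of the pairing because $\langle J\widetilde{\mathfrak h}\eta,J\partial_{w_i}\widetilde Q\rangle=\langle\eta,\widetilde{\mathfrak h}\partial_{w_i}\widetilde Q\rangle=\langle\eta,\widetilde Q\rangle\,\partial_{w_i}E_w=0$ by Lemma \ref{lem:bddst1} and $\langle\eta,\widetilde Q\rangle=0$. For the solitary-wave block I would use \eqref{eq:LagrMult} and the gauge/translation covariance of $\mathbf E_0$ to write
\[
-\partial_t\big(e^{J\tau'\cdot\Diamond}\Phi_{p'}\big)+J\nabla\mathbf E_0\big(e^{J\tau'\cdot\Diamond}\Phi_{p'}\big)=-\sum_j(\dot\tau'_j-\lambda_j(p'))J\Diamond_je^{J\tau'\cdot\Diamond}\Phi_{p'}-\sum_j\dot p'_j\,e^{J\tau'\cdot\Diamond}\partial_{p'_j}\Phi_{p'},
\]
so the $O(1)$ solitary-wave part cancels exactly; by \eqref{eq:mass}, \eqref{eq:eqD}, \eqref{eq:eqtheta} and Lemma \ref{lem:dotPi} each coefficient $\dot\tau'_j-\lambda_j(p')$, $\dot p'_j$ is a sum of $O(\epsilon^2)$ normal-form terms, of $\mathbf A$-terms, and of $V$-couplings controlled by $\langle D'(t)+\mathbf v t+y_0\rangle^{-k}$, while the paired inner products $\langle J\Diamond_je^{J\tau'\cdot\Diamond}\Phi_{p'},e^{J\Theta\cdot\Diamond}\phi_0\rangle$ are $O(\langle D'(t)+\mathbf v t+y_0\rangle^{-k})$ for every $k$ by spatial separation, so their $L^1(0,T)\cap L^2(0,T)$ norms are $\le c\epsilon$ by Lemmas \ref{lem:movpot}--\ref{lem:drift}. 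The residual piece $J\nabla\mathbf E_V(e^{J\tau'\cdot\Diamond}\Phi_{p'})=JV(\cdot+\mathbf v t+y_0)e^{J\tau'\cdot\Diamond}\Phi_{p'}$ is itself $O(\langle D'(t)+\mathbf v t+y_0\rangle^{-k})$ in $L^2_x$ and treated the same way. For the $\partial_t\eta$ term I would differentiate the constraint $\langle\eta,J\partial_{w_i}\widetilde Q\rangle=0$, trade $\langle\partial_t\eta,J\partial_{w_i}\widetilde Q\rangle$ for $-\langle\eta,J\partial_t\partial_{w_i}\widetilde Q\rangle$, expand $\partial_t\partial_{w_i}\widetilde Q$ (producing $\tfrac{|\mathbf v|^2}{4}$- and $\mathbf v\cdot\Diamond$-multiples of the $\partial_{w_j}\widetilde Q$ plus $\dot w_j\,\partial_{w_j}\partial_{w_i}\widetilde Q$), and invoke Lemma \ref{lem:orth} to see that the resulting $\langle\eta,e^{J\Theta\cdot\Diamond}\phi_0\rangle$-type quantities carry a small factor (an $O(|z|+\|f\|)$ symbol against a separation factor, plus $O(|w|^2)$); the $\dot w_j$-contributions generated here are returned to the left side of \eqref{eq:prepw1} and absorbed. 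Finally, the genuinely nonlinear terms $J\nabla\mathbf E_P(\eta)$, $J\mathbf f(\eta+\widetilde Q,e^{J\tau'\cdot\Diamond}\Phi_{p'})$ and $\int_{[0,1]^3}\partial_s\partial_{s_1}\partial_{s_2}\widetilde\beta(ss_2\eta+s_1\widetilde Q)$ I would bound by Hölder in $x$ against the bounded profile $\partial_{w_i}\widetilde Q$ together with (H3), the Strichartz bound \eqref{Strichartzradiation}, \eqref{L^2discrete}--\eqref{L^inftydiscrete} and $\|f\|_{L^\infty_tH^1}\lesssim\epsilon$; each is of higher order in the small data (the mixed term $\mathbf f$ gaining additionally from the $\Phi_{p'}$--$\widetilde Q$ separation), hence $O(\epsilon^2)$.

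Collecting the pieces, the pairing is bounded by $c\epsilon+C\epsilon\,\|(\dot w_1-E_ww_2,\dot w_2+E_ww_1)\|_{L^1(0,T)\cap L^2(0,T)}$, and absorbing the last term for $\varepsilon_0$ small yields \eqref{eq:estw1}. The real point — and the source of the statement for an arbitrarily small preassigned $c$ — is to check that, after the exact cancellation of the solitary-wave part, every surviving contribution reaches the $\phi_0$-profile either through the separation factor $\langle D'(t)+\mathbf v t+y_0\rangle^{-k}$ accompanied by a further small coefficient (a deviation of the modulation parameters from their solitary-wave values, a power of $|w|$, or a norm of $z,f$), or through a quadratic small-data estimate, so that the whole right-hand side of \eqref{eq:prepw1} is in fact $O(\epsilon^2)$, exactly paralleling the $\|u(0)\|^2$ bound in Theorem \ref{thm:small en} and the analogous lemma of \cite{CM}. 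Thus the main obstacle is the bookkeeping: organizing the many coupling terms so each is manifestly a product of two small or spatially separated factors (or the cancelling part), and handling the circular appearance of $\dot w$ through the modulation equations and $\mathbf A$ carefully enough to close it by absorption; this is where Lemmas \ref{lem:brackU1}, \ref{lem:orth}, \ref{lem:movpot}, \ref{lem:dotPi} and the normal-form output of Theorem \ref{thm:effham} carry the argument.
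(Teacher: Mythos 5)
Your overall strategy coincides with the paper's: invert the $1+O(w^2)$ matrix in \eqref{eq:prepw1} and bound, term by term, the pairing of the right-hand side of \eqref{eq:prepw} with $J\partial_{w_i}\widetilde Q$, using the separation Lemmas \ref{lem:movpot}--\ref{lem:drift} for the soliton-localized pieces and the smallness of $\eta,z,f$ for the nonlinear ones. Your exact cancellation of the solitary-wave block via the modulation equations is a harmless detour: the paper does not need it, since $\langle J\nabla\mathbf E(e^{J\tau'\cdot\Diamond}\Phi_{p'}),J\partial_{w_1}\widetilde Q\rangle$ and $\langle \partial_t e^{J\tau'\cdot\Diamond}\Phi_{p'},\partial_{w_1}\widetilde Q\rangle$ are already small purely by spatial separation (or fast oscillation when $|\mathbf v|$ is large). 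One bookkeeping caveat: the lemma asks for $L^1\cap L^2$ with an arbitrarily small preassigned $c$, while Lemma \ref{lem:movpot} is an $L^1_t$ statement; an $L^1$ bound alone does not give the $L^2$ bound at order $\epsilon$, and the paper in fact proves $L^1\cap L^\infty$ bounds on the pairings (pointwise smallness from the dichotomy ``separation $\ge M$ or $|\mathbf v|$ large plus nonstationary phase'') and interpolates, so wherever you quote Lemmas \ref{lem:movpot}--\ref{lem:drift} for $L^2$ smallness you need this pointwise-in-$t$ input as well.

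The genuine gap is in your treatment of $\langle\partial_t\eta,J\partial_{w_i}\widetilde Q\rangle$. Differentiating the constraint and moving the time derivative onto $\partial_{w_i}\widetilde Q$ reintroduces the coefficients $\tfrac{|\mathbf v|^2}{4}$ and $\mathbf v$ coming from the Galilean phase and translation of $\widetilde Q$ -- exactly the large terms that the derivation of \eqref{eq:prepw} (via \eqref{eq:nablaexp} and \eqref{eq:sp11}) was designed to cancel before any pairing. Your claim that Lemma \ref{lem:orth} then supplies ``a small factor'' does not close the estimate: \eqref{def:eps} puts no upper bound on $|\mathbf v|$ in terms of $\epsilon^{-1}$ (taking $|\mathbf v|$ huge only makes the integral term smaller), while the gains you list -- $O(|w|^2)\lesssim\epsilon^2$ from the $q_w$ corrections, $\|f\|_{H^1}\lesssim\epsilon$, and at most one integration by parts in the phase since $f\in H^1$ only, i.e.\ a single $|\mathbf v|^{-1}$ -- cannot beat $|\mathbf v|^2$; for instance $\tfrac{|\mathbf v|^2}{4}\langle\eta,\partial_{w_2}\widetilde Q\rangle=\tfrac{|\mathbf v|^2}{4}\,O(|w|^2\|\eta\|_{L^2})$ is not $O(\epsilon)$ uniformly in $\mathbf v$. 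In addition, this route produces raw $\dot w_j$ rather than the good combinations $\dot w_1-E_ww_2$, $\dot w_2+E_ww_1$, and the leftover piece $E_ww_{j'}\langle\eta,J\partial_{w_j}\partial_{w_i}\widetilde Q\rangle$ has no uniform-in-$T$ $L^1_t$ bound from the estimates you invoke (Strichartz only gives $L^2_t$ for the $f$-part). The repair is either to recombine before estimating: what actually appears is $\langle\eta,(\tfrac{|\mathbf v|^2}{4}+\mathbf v\cdot\Diamond)\partial_{w_i}\widetilde Q\rangle$ plus harmless terms, and by Lemma \ref{lem:bddst1} together with $\langle\eta,\widetilde Q\rangle=0$ the $|\mathbf v|$-dependent parts cancel exactly, leaving $-\langle\eta,(-\Delta+V(\cdot+\mathbf vt+y_0)-E_w+\cdots)\partial_{w_i}\widetilde Q\rangle$ with no large coefficients; or, as the paper does, not to move the time derivative at all, but to expand $\partial_t\bigl(e^{J\tau'\cdot\Diamond}P(p')P(\pi)r'\bigr)$ as in \eqref{eq:terms} and insert the evolution equations of Sect.~\ref{sec:equations} for $\dot\tau'$, $\dot p'$, $\dot r'$ -- none of which carries velocity factors -- and then conclude by separation and Lemma \ref{lem:orth}.
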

\proof  We will bound only the first term in the left. We use \eqref{eq:prepw1}.  Furthermore we will only bound
\begin{equation}\label{eq:estw2}
 \begin{aligned} &     \|    \langle \text{rhs\eqref{eq:prepw}}, J \partial _{w_1}\widetilde{Q}\rangle     \| _{L^1\cap L^\infty }   \le c \epsilon  .
\end{aligned}
\end{equation}
All the other terms can be bounded  similarly.
  By Lemma \ref{lem:movpot}  we have, see \eqref{eq:betatil} for $\widetilde{\beta}$,
\begin{equation*}
 \begin{aligned}   & \| \langle J\nabla \textbf{E}(e^{J
\tau ' \cdot \Diamond }\Phi _{p'}) , J \partial _{w_1}\widetilde{Q}\rangle     \| _{L^1  }   \le c \epsilon .
\end{aligned}
\end{equation*}
Schematically, omitting factors irrelevant in the computation,  we have

\begin{equation*}
 \begin{aligned}   &   \langle J\nabla \textbf{E}_P(e^{J
\tau ' \cdot \Diamond } P(p')P(\pi ) r') , J \partial _{w_1}\widetilde{Q}\rangle     \sim  \langle \widetilde{\beta} (  P(p')P(\pi ) r') ,  \phi _0 (\cdot +  {D}'+y_0 )\rangle \\& = \langle \widetilde{\beta} ( \textbf{S} ^{0,1}_{k ,m }  +  e^{J\resto ^{0,2}_{k ,m }  \cdot \Diamond } f  ) , \phi _0 (\cdot + {D}'+y_0 )\rangle  =  \langle \widetilde{\beta} ( \textbf{S} ^{0,1}_{k ,m }    ) ,  \phi _0 (\cdot + {D}'+y_0 )\rangle \\& + \langle \widetilde{\beta} ( e^{J\resto ^{0,2}_{k ,m }  \cdot \Diamond } f  ) ,  \phi _0 (\cdot + {D}'+y_0 )\rangle  + \langle  \mathbf{f} ( e^{J\resto ^{0,2}_{k ,m }  \cdot \Diamond } f,  \textbf{S} ^{0,1}_{k ,m }  ) ,  \phi _0 (\cdot + {D}'+y_0 )\rangle  .
\end{aligned}
\end{equation*}
Then bounding one by one the   terms in the r.h.s. by routine arguments and using  Lemma \ref{lem:movpot}, we  get

\begin{equation*}
 \begin{aligned}   & \| \langle J\nabla \textbf{E}_P(e^{J
\tau ' \cdot \Diamond } P(p')P(\pi ) r') ,  J \partial _{w_1}\widetilde{Q}\rangle     \| _{L^1 \cap L^ \infty }   \le c \epsilon .
\end{aligned}
\end{equation*}
Similarly
\begin{equation*}
 \begin{aligned}   & \int _{[0,1]^3} dsds_1ds_2\| \langle \partial _{s}\partial _{s_1}\partial _{s_2}\widetilde{\beta}  (ss_2e^{J
\tau ' \cdot \Diamond } P(p')P(\pi ) r' +s_1\widetilde{Q}) ,  J \partial _{w_1}\widetilde{Q}\rangle     \| _{L^1 \cap L^ \infty }   \le c \epsilon
\end{aligned}
\end{equation*}
and

\begin{equation*}
 \begin{aligned}   &  \| \langle J \textbf{f}( e^{J
\tau ' \cdot \Diamond } P(p')P(\pi ) r' + \widetilde{Q},e^{J
\tau ' \cdot \Diamond }\Phi _{p'}  ) ,  J \partial _{w_1}\widetilde{Q}\rangle     \| _{L^1 \cap L^ \infty }\le     \\& \int _{[0,1]^2}d\iota  ds\| \langle \widetilde{\beta }^{\prime\prime}(\iota (e^{J
\tau ' \cdot \Diamond } P(p')P(\pi ) r' + \widetilde{Q})+se^{J
\tau ' \cdot \Diamond }\Phi _{p'} )e^{J
\tau ' \cdot \Diamond } P(p')P(\pi ) r' \widetilde{Q}   ,    \partial _{w_1}\widetilde{Q}\rangle     \| _{L^1 \cap L^ \infty }\le c \epsilon  \ .
\end{aligned}
\end{equation*}
 Schematically we have
 \begin{equation}\label{eq:terms}
 \begin{aligned}   &    \langle
\partial _t   (e^{J
\tau ' \cdot \Diamond } P(p')P(\pi ) r') ,J \partial _{w_1}\widetilde{Q}\rangle \sim  \langle \dot \tau 'P(p')P(\pi ) r' , \Diamond \phi _0 (\cdot +D'+y_0)\rangle
 \\& +
 \langle    (\partial _t P(p')P(\pi )) r' + P(p')P(\pi )\dot r'  , \phi _0 (\cdot +  D'+y_0)\rangle  .
\end{aligned}
\end{equation}
 We have $p' =p+ \resto ^{0,2}_{k,m}$, see \cite{Cu0}.
 We also have $\tau ' =\tau + \resto ^{0,2}_{k,m}$ by \eqref{eq:quasilin51}.
 For the time derivatives we use also the equations in Sect. \ref{sec:equations}.  In particular we have  $(\partial _t P(p')P(\pi )) r'=S^{0,1}_{k,m}$ and one by one the terms in the r.h.s. of \eqref{eq:terms}
 satisfy the desired bounds. Similarly it is elementary to see also

\begin{equation*}
 \begin{aligned}   &
    \| \langle \partial _t
   e^{J
\tau ' \cdot \Diamond }\Phi _{p'}   ,    \partial _{w_1}\widetilde{Q}\rangle     \| _{L^1 \cap L^ \infty }\le c \epsilon  \ .
\end{aligned}
\end{equation*}

\qed

 \begin{lemma}\label{lem:cont} Under the hypotheses of  we can extend
   $u(t)$ for all $t\ge 0$ with  $(t,u(t))\in B(\varepsilon_2) $.
   Furthermore \eqref{Strichartzradiation}--\eqref{eq:smallen} hold
   for a fixed $C$ in $[0,\infty )$ and we have $\displaystyle \lim _{t\nearrow\infty} z  (t)=0$.
 \end{lemma}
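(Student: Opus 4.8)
The plan is a routine continuation (bootstrap) argument, since essentially all the analytic content has already been packed into Proposition \ref{prop:mainbounds}. First I would let $T^\ast$ be the supremum of the set of $T>0$ for which $(t,u(t))\in B(\varepsilon_2)$ for all $t\in[0,T]$, the bounds \eqref{Strichartzradiation}--\eqref{eq:smallen} hold on $[0,T]$ with $C=C_0$, and \eqref{eq:unds1}--\eqref{eq:unds2} hold on $[0,T]$ with the constant $C=C_0$ appearing in the hypotheses of Proposition \ref{prop:mainbounds}. By Proposition \ref{prop:modulation} and local wellposedness of \eqref{eq:NLSP}, $T^\ast>0$: for $\varepsilon_0$ small the decomposition \eqref{eq:coordinate0} is available near $t=0$, and all the quantities occurring in \eqref{Strichartzradiation}--\eqref{eq:unds2} are $O(\epsilon)$ near $t=0$ by \eqref{eq:U0} and the discussion at the end of Section \ref{subsec:lin}. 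I would also record here that, as long as the decomposition \eqref{eq:coordinate0} is valid, $\|u(t)\|_{H^1}$ stays bounded by a fixed constant (the ground state $\phi_{\omega'}$ has uniformly bounded $H^1$ norm for $\omega'$ near $\omega_1$, while $e^{J\Theta\cdot\Diamond}Q_w$ and $\eta$ are $O(\epsilon)$); since \eqref{eq:NLSP} is $H^1$-subcritical by (H2)--(H3), this a priori bound guarantees that $u(t)$ itself does not blow up before $T^\ast$, so the only obstruction to enlarging $[0,T]$ is the failure of one of the stated inequalities or of the inclusion $(t,u(t))\in B(\varepsilon_2)$.

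Next I would show $T^\ast=\infty$. On $[0,T^\ast)$ all the hypotheses of Proposition \ref{prop:mainbounds} are satisfied, hence that proposition upgrades \eqref{Strichartzradiation}--\eqref{eq:smallen} to hold on $[0,T^\ast)$ with $C=C_0/2$ and \eqref{eq:unds1}--\eqref{eq:unds2} to hold with a fixed constant $c$; taking $C_0$ large enough that $c<C_0$, these are \emph{strict} improvements. Moreover, by Lemma \ref{lem:extU}, on $[0,T^\ast)$ the validity of \eqref{Strichartzradiation}--\eqref{eq:smallen} and \eqref{eq:unds1}--\eqref{eq:unds2} forces $(t,u(t))\in B(\varepsilon_4)$ for some $\varepsilon_4<\varepsilon_2$, i.e.\ $u(t)$ stays in the interior of the region in which the decomposition is defined. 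If $T^\ast<\infty$, then by continuity of $t\mapsto u(t)$ in $H^1$ near $T^\ast$ (using the a priori $H^1$ bound just noted) and continuity in $t$ of the coordinates $(\tau,\Pi,z,f,w)$, all of these strict inequalities persist on $[0,T^\ast+\sigma]$ for some $\sigma>0$, contradicting the definition of $T^\ast$. Hence $T^\ast=\infty$, $(t,u(t))\in B(\varepsilon_2)$ for all $t\ge0$, and \eqref{Strichartzradiation}--\eqref{eq:smallen} hold on $[0,\infty)$ with the fixed constant $C=C_0$.

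Finally, for $\lim_{t\to\infty}z(t)=0$ I would argue componentwise. Fix $j\in\{1,\dots,\mathbf{n}\}$. By (H7) we have $N_j\mathbf{e}_j(\omega)<\omega<(N_j+1)\mathbf{e}_j(\omega)$, and since $|\omega_0-\omega_1|\le C\epsilon$ this yields, for $\varepsilon_0$ small, $\mathbf{e}\cdot\mu=(N_j+1)\mathbf{e}_j(\omega_0)>\omega_0$ for the multi-index $\mu$ with $\mu_j=N_j+1$ and $\mu_i=0$ for $i\neq j$. Thus $z_j^{N_j+1}$ is among the monomials controlled by \eqref{L^2discrete}, so $z_j^{N_j+1}\in L^2_t([0,\infty))$. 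On the other hand \eqref{L^inftydiscrete}, now valid on $[0,\infty)$, gives $\dot z_j\in L^\infty_t([0,\infty))$ and $|z_j|\le C\epsilon$, so $z_j$, and hence $z_j^{N_j+1}$, is bounded and uniformly continuous on $[0,\infty)$; a bounded uniformly continuous function that lies in $L^2_t([0,\infty))$ must tend to $0$ at infinity, so $z_j^{N_j+1}(t)\to0$ and therefore $z_j(t)\to0$. Since $j$ was arbitrary, $z(t)\to0$.

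I do not expect a genuine analytic obstacle in this lemma: the substance is entirely in Proposition \ref{prop:mainbounds} and in the preceding Lemmas \ref{lem:extU}--\ref{lem:estw}. The only point that requires care is the bookkeeping in the first two paragraphs, namely choosing the constants defining $T^\ast$ so that the conclusion of Proposition \ref{prop:mainbounds} is a strict improvement of the defining inequalities, which is exactly the way that proposition is stated ($C_0\to C_0/2$ for \eqref{Strichartzradiation}--\eqref{eq:smallen}, and $C_0\to c$ for \eqref{eq:unds1}--\eqref{eq:unds2}).
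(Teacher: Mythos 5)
Your proposal is correct and takes essentially the same route as the paper, whose proof consists of invoking a standard continuity argument together with Proposition \ref{prop:mainbounds} and Lemma \ref{lem:extU}, and then citing Lemma 7.1 of \cite{CM} for $\lim_{t\nearrow\infty}z(t)=0$; your bootstrap bookkeeping is the spelled-out version of that continuity argument. The direct Barbalat-type argument you give for $z\to 0$ (using that $z_j^{N_j+1}$ is controlled by \eqref{L^2discrete}, since $(N_j+1)\mathbf{e}_j>\omega_0$ and $N_j\mathbf{e}_j<\omega_0$ by (H7), combined with the $W^{1,\infty}_t$ bound \eqref{L^inftydiscrete}) is precisely the content of the cited lemma, so nothing is missing.
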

 \proof We can apply a  standard continuity argument,    Prop. \ref{prop:mainbounds}
 and Lemma \ref{lem:extU} to conclude that    $(t,u(t))\in B(\varepsilon_2) $   for all $t\ge 0$ and that  \eqref{Strichartzradiation}--\eqref{eq:smallen} hold on $[0,\infty )$. The fact that $  \lim _{t\nearrow\infty} z  (t)=0$ follows by   Lemma 7.1 \cite{CM}.

  \qed

\begin{lemma}\label{lem:scattf}   There is  a fixed $C$ and  $f_+  \in H^1$  and a function $\varsigma :[0, \infty   )\to \R ^4$
such that for the variable $f$ in
\eqref{Strichartzradiation} we have
\begin{equation}\label{eq:scattering1}
 \begin{aligned} &    \lim _{t\nearrow \infty } \|   f(t) - e^{J \varsigma (t) \cdot \Diamond }
  e^{ -J  t   \Delta    } f_+ \| _{H^1 }=0 .
\end{aligned}
\end{equation}

\end{lemma}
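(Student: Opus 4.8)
The plan is to rewrite equation \eqref{eq:eqf} as a linear charge--transfer Schr\"odinger equation for $f$ with a source that is integrable in time with values in $H^1$ (or lies in a dual Strichartz space), and then to read off the asymptotic state from the wave operator of the linear flow.

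\textbf{Step 1: the equation for $f$.} Using Theorem \ref{thm:effham}, in particular claim (2) which identifies the quadratic part of $K_0$ in $f$ with $-\tfrac{\im}{2}\langle J^{-1}\mathcal H_\pi P_c(\pi)f,P_c(\pi)f\rangle$, together with \eqref{eq:defproj2} and Lemma \ref{lem:projections} (so that $(P_c(p_0)P_c(\pi)P_c(p_0))^{-1}=1+O(|\pi-p_0|)$), I would bring \eqref{eq:eqf} to the form
\begin{equation*}
\dot f+J\widetilde{\mathcal H}(t)f=\mathcal N(t).
\end{equation*}
Here $\widetilde{\mathcal H}(t)$ gathers \emph{all} the terms linear in $f$: it equals $\mathcal H_{p(t)}$, perturbed by a time--dependent potential of size $O(|\pi-p_0|+|z|+\|f\|)$ in $B(H^1)$ (coming from $Z_0$, from the $\Pi(f)$--dependence of $H_2'$ and $Z_1$, and from the correction $(P_c(p_0)P_c(\pi)P_c(p_0))^{-1}-1$), which is small by \eqref{Strichartzradiation}--\eqref{eq:orbstab1} and \eqref{eq:unds2}, plus the translating external potential $P_c(\pi)V(\cdot+\mathbf v t+y_0)P_c(\pi)$ arising from the $f$--linear part of $\tfrac12\nabla_f\langle V(\cdot+\mathbf v t+y_0)U,U\rangle$. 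The source $\mathcal N(t)$ contains everything else: the cubic and higher terms $J\nabla_f\textbf{E}_P(f)$ together with the $f$--bilinear couplings with the solitons contained in $\resto$, the monomials $z^\mu\overline z^\nu JG_{\mu\nu}$ from $Z_1$, the remainder $\nabla_f\resto$ of Theorem \ref{thm:effham}(5), the $f$--independent ($V$--localised) part of $\tfrac12\nabla_f\langle V(\cdot+\mathbf v t+y_0)U,U\rangle$, the modulation term $\dot\Pi\cdot\partial_\pi f$, and the term $d_Uf\,\textbf{A}$ from Lemma \ref{lem:brackU1}.

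\textbf{Step 2: $\mathcal N$ is integrable in time into $H^1$ (or dual Strichartz).} This is the main technical step, and it is where the bootstrap bounds of Theorem \ref{thm:mainbounds} enter, exactly as in \cite{CM}. The nonlinear terms, after H\"older in $t$ and Sobolev in $x$ (recall $p\in(1,5)$ from (H3)), are $O(\epsilon^2)$ in $L^1_tH^1$ by \eqref{Strichartzradiation}; each $z^\mu\overline z^\nu JG_{\mu\nu}$ has $\mathbf e\cdot(\mu-\nu)\in\sigma_e(\mathcal H_\pi)$, hence $\mathbf e\cdot\mu>\omega_0$, so $\|z^\mu\|_{L^2_t}\le C\epsilon$ by \eqref{L^2discrete} and, since $G_{\mu\nu}\in\Sigma_k$, the term lies in $L^2_tW^{1,6/5}_x$; $\nabla_f\resto$ is treated the same way using Theorem \ref{thm:effham}(5) and \eqref{L^2discrete}--\eqref{Strichartzradiation}; the $V$--localised source and the factor $J\textbf{f}(U,\widetilde Q)$ inside $d_Uf\,\textbf{A}$ carry a factor localised near $x=-\mathbf v t-y_0$ and are $L^1_tH^1$ by the weak--interaction estimate of Lemma \ref{lem:movpot}; the remaining pieces of $d_Uf\,\textbf{A}$, namely $(\dot w_1-E_ww_2)\partial_{w_1}\widetilde Q$ and $(\dot w_2+E_ww_1)\partial_{w_2}\widetilde Q$, are $L^1_tH^1$ by Lemma \ref{lem:estw}; and $\dot\Pi\cdot\partial_\pi f$ is $L^1_tH^1$ since $\dot\Pi\in L^1_t$ by Lemma \ref{lem:dotPi} and $\partial_\pi f$ is controlled by Lemma \ref{lem:reg1} (as in \cite{CM}, the mild derivative loss here is absorbed by first working under \eqref{eq:add} and interpolating).

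\textbf{Step 3: charge--transfer propagator and conclusion.} In the frame in which $f$ is defined the soliton potential of $\widetilde{\mathcal H}(t)$ sits near the origin, with $\omega(t)\to\omega_+$ and $v(t)\to v_+=O(\epsilon)$ by \eqref{eq:asstab1}, while $V(\cdot+\mathbf v t+y_0)$, evaluated at $x-D'(t)$, travels to spatial infinity with asymptotic speed $\ge M_0-O(\epsilon)>0$, because $\|\dot D'-v'\|_{L^1(0,t)}\lesssim C\epsilon\langle t\rangle$ and $|v'|=O(\epsilon)$ by \eqref{eq:unds1}--\eqref{eq:unds2} and \eqref{eq:asstab2}; the two channels are therefore asymptotically transverse. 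Hence $\widetilde{\mathcal H}(t)$ is a charge--transfer Hamiltonian in the sense of \cite{RSS1} as adapted in \cite{CM}, and under (H10) (no resonances at $\pm\omega$, no embedded eigenvalues) the associated two--parameter propagator $\mathcal U(t,s)$ on $L^2_c(p_0)$ is uniformly bounded on $H^1$, obeys Strichartz estimates, and the strong limit $\Omega_+:=\lim_{t\to\infty}\mathcal M(t)^{-1}\mathcal U(t,0)$ exists in $B(H^1)$, where $\mathcal M(t):=e^{J\varsigma(t)\cdot\Diamond}e^{-Jt\Delta}$ and $\varsigma(t)\in\R^4$ is built from \eqref{eq:asstab1}--\eqref{eq:asstab2} so as to absorb the accumulated phase $\int_0^t\bigl(\omega'(s)+\tfrac14(v'(s))^2\bigr)\,ds$ and the residual translation of the modulation (the contribution of $V$ to $\Omega_+$ being trivial because $V$ escapes). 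Writing $f(t)=\mathcal U(t,0)\bigl[f(0)+\int_0^t\mathcal U(0,s)\mathcal N(s)\,ds\bigr]$ and using Step 2 together with the uniform $H^1$--bound on $\mathcal U(0,s)$ (resp. the Strichartz estimates for $\mathcal U$), the limit $g_+:=f(0)+\int_0^\infty\mathcal U(0,s)\mathcal N(s)\,ds$ exists in $H^1$; then from
\begin{equation*}
\mathcal M(t)^{-1}f(t)=\mathcal M(t)^{-1}\mathcal U(t,0)\Bigl(g_+-\int_t^\infty\mathcal U(0,s)\mathcal N(s)\,ds\Bigr)
\end{equation*}
and $\|\int_t^\infty\mathcal U(0,s)\mathcal N(s)\,ds\|_{H^1}\to0$ one obtains $\mathcal M(t)^{-1}f(t)\to\Omega_+g_+=:f_+$ in $H^1$, with $\|f_+\|_{H^1}\le C(\|f(0)\|_{H^1}+\|\mathcal N\|_{L^1_tH^1})\le C\epsilon$. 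Since $\mathcal M(t)$ is an isometry of $H^1$, this is exactly \eqref{eq:scattering1}. The main obstacle is Step 2 for the $Z_1$ and modulation terms, and the charge--transfer Strichartz estimates and existence of $\Omega_+$ in Step 3; these are the points where the weak--interaction hypothesis (through \eqref{def:eps}, \eqref{eq:weakint} and Lemma \ref{lem:movpot}) and the spectral hypotheses (H7)--(H8) and (H10) enter, and they run parallel to the corresponding arguments of \cite{CM}.
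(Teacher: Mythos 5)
Your Steps 1--2 (Duhamel for $f$ with a source that is $L^1_tH^1$ plus dual Strichartz, controlled by the bootstrap bounds, Lemma \ref{lem:movpot}, Lemma \ref{lem:estw} and Lemma \ref{lem:dotPi}) are in the spirit of what the paper does: it passes to $h=M^{-1}e^{\frac12 Jv_0\cdot x}f$, writes the frozen-coefficient equation \eqref{eq:eqh222} with remainders satisfying \eqref{eq:nablaH444}, and quotes Sect.~11 of \cite{CM} for the standard Cook/wave-operator argument. The genuine gap is in your Step 3. You assert that the two-parameter propagator of the full time-dependent operator (soliton linearization plus the translating potential $V(\cdot+\mathbf v t+y_0)$) is bounded on $H^1$, obeys Strichartz estimates, and admits a free wave operator, "the contribution of $V$ to $\Omega_+$ being trivial because $V$ escapes," invoking (H10). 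But (H10) concerns $\mathcal H_\omega$ only; by (H1) the escaping potential $-\Delta+V$ has the eigenvalue $e_0<0$, so the moving channel of your charge-transfer Hamiltonian carries a bound state. For such models, dispersive and Strichartz estimates hold only after projecting away the bound states of \emph{both} channels, and asymptotic completeness produces, in general, a nonvanishing component travelling with $V$ in its bound state $\phi_0(\cdot+\mathbf v t+y_0)$; restricting to $L^2_c(p_0)$ removes only the discrete modes of $\mathcal H_{p_0}$, not this one. Such a component is incompatible with convergence to $e^{J\varsigma(t)\cdot\Diamond}e^{-Jt\Delta}f_+$, so your claimed limit does not follow from what you have proved. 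This is exactly the new difficulty of this paper relative to \cite{CM}, and it is why the conclusion is not "trivial because $V$ escapes."

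The paper's mechanism for closing this hole has three ingredients that your proposal never uses: (i) the ansatz itself peels off the trapped energy $Q_w$ with the orthogonality conditions \eqref{eq:coordinate01}, which via Lemma \ref{lem:orth} make $f$ almost orthogonal to the moving bound state; (ii) Lemma \ref{lem:projpert} converts this into the quantitative bound \eqref{eq:projpert1} for $\textbf{P}_Dh$ in $L^1_tH^1+L^2_tW^{1,6}$; (iii) the equation \eqref{eq:eqh222} is rewritten with the artificial damping $-\im\delta\textbf{P}_Dh$ (the compensating $+\im\delta\textbf{P}_Dh$ being absorbed into $R_1+R_2$ thanks to (ii)), and Theorem \ref{thm:strich} / Proposition \ref{prop:weights2} establish Strichartz and weighted decay for the damped flow precisely because $\delta>|e_0+\omega_0|$ kills the moving bound-state channel. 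With these in hand the scattering of $f$ is indeed the routine argument you sketch; without them, the linear input of your Step 3 is not available. A secondary, less serious difference: the paper freezes the linearization at $(\omega_0,v_0)$ and puts the time dependence into small $L^1\cap L^\infty$ coefficients $\mathcal A(t)$, rather than working with $\mathcal H_{p(t)}$ directly; your time-dependent formulation could be made to work, but the wave-operator claims would need the same damping/projection device.
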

\proof The proof of   Lemma \ref{lem:scattf}  is the same of Sect. 11 in \cite{CM} and is a standard consequence
of the estimates \eqref{Strichartzradiation}--\eqref{L^inftydiscrete},  of   \eqref{eq:projpert1}
and  \eqref{eq:nablaH444}  below in $I=[0, T)=[0,\infty )$ applied to \eqref{eq:eqh222} below, where  $  h= M ^{-1}  e^{  \frac 12 J v_0 \cdot x}   f  $.

\qed

 We can now  apply  \cite{CM}
which proves the following facts, that yield   Theor. \ref{thm:mainbounds}
assuming Prop. \ref{prop:mainbounds}.

\begin{itemize}
\item For $\varepsilon _0$ small enough,  \eqref{eq:unds2} holds for   $ C=c< C_0/2$ with $c$ a fixed constant. Furthermore,  \eqref{eq:orbstab1} holds for $ C=c< C_0/2$ with $c$ a fixed constant.
    \item We have
\begin{equation} \label{eq:divcenter1}   | D'(t)+t\mathbf{v}+y_0|\ge t2 ^{-1}{|\mathbf{v}|} - | D'(0) +y_0|
 \end{equation}

\item We have
\begin{equation} \label{eq:parasstab2}  \lim _{t\to +\infty} ( \dot D  '     -v ' )=0 \, , \quad   \lim _{t\to +\infty}  \left (\dot \vartheta   '  - \omega ' -   4^{-1}{(v')^2 }  \right )=0.
 \end{equation}

 \item  There exist $\omega _+$ and $v_+$    such that the limits  \eqref{eq:asstab1}
  are
true.

\end{itemize}

\section{Proof of Proposition \ref{prop:mainbounds} }
\label{sec:pfprop}

\begin{lemma}\label{lem:conditional4.2} Assume
the hypotheses of Prop. \ref{prop:mainbounds}.    Then  there is a fixed $c$
such that for all admissible pairs $(p,q)$
\begin{equation}
  \|  f \| _{L^p_t([0,T],W^{ 1 ,q}_x)}\le
  c  \epsilon  + c   \sum _{\textbf{e}\cdot \mu >\omega _0 }| z ^\mu  | ^2_{L^2_t( 0,T  )}
  \label{4.5}
\end{equation}
where we sum only on multiindexes such that $\textbf{e}\cdot \mu -
\textbf{e} _j <\omega _0$ for any  $j$ such that
for the $j$--th component of $\mu $ we have   $\mu _j\neq 0$.
\end{lemma}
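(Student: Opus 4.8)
The plan is to derive from \eqref{eq:eqf} a Duhamel equation for the conjugated radiation $h:=M^{-1}e^{\frac12 Jv_0\cdot x}f$ --- this is \eqref{eq:eqh222} below --- and to run Strichartz estimates on it. The linear part of the $h$--equation is a Schr\"odinger operator perturbed by two potentials: the (modulated) soliton potential, centred near $D'(t)$, which stays bounded, and the external potential $V(\cdot+\mathbf v t+y_0)$, centred near $-\mathbf v t-y_0$; since $|\mathbf v|>M_0$ these two centres separate linearly in $t$, so we are dealing with a charge--transfer Hamiltonian, for which the Strichartz and weighted--smoothing estimates \eqref{eq:projpert1}--\eqref{eq:nablaH444} below hold on $[0,T]$ with constants independent of $T$, exactly as in \cite{RSS1,CM}. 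Applying these to the Duhamel formula reduces \eqref{4.5} to estimating, in the relevant dual admissible norms, the inhomogeneous terms of \eqref{eq:eqf}; the free term is $c\|f(0)\|_{H^1}\le c\epsilon$ by Proposition \ref{prop:modulation} and \eqref{eq:U0}.

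The terms that already appear in \cite{CM} --- the nonlinearity $\nabla_f\textbf{E}_P(f)$, the effective--Hamiltonian pieces $\nabla_f\psi$, $\nabla_f Z_0$, $\nabla_f\resto$, the $\pi$--variation of the quadratic part $H_2'$, and the modulation term $\dot\Pi\cdot\partial_\pi f$ --- are estimated exactly as in \cite{Cu0,CM}. Schematically: $\nabla_f\textbf{E}_P(f)$ is cubic near $0$ (by (H2)) and $|f|^{p-1}f$ with $p\in(1,5)$ for large $f$ (by (H3)), hence $O((C_0\epsilon)^3)$ by Sobolev and \eqref{Strichartzradiation}; $\nabla_f\resto$ obeys $\|\nabla_f\resto\|_{\Sigma_k}\le C(|z|^{N+2}+\|f\|_{L^{2,-k}}\|f\|_{H^1})$ by Theorem \ref{thm:effham}(5), and after peeling one power of $z$ (using $(N+1)\mathbf e_1>\omega>\omega_0$ with \eqref{L^2discrete}, resp. $\|f\|_{L^2_tL^{2,-k}_x}\lesssim\|f\|_{L^2_tL^6_x}$) this is $O((C_0\epsilon)^2)$; $\nabla_f\psi$ and $\nabla_f Z_0$ are handled as in \cite{Cu0} and contribute $O((C_0\epsilon)^2)$; the $\pi$--variation of $H_2'$ is a localized operator of size $|\pi(t)-p_0|\le C_0\epsilon$ (by \eqref{eq:unds2}) applied to $f$, hence $O((C_0\epsilon)^2)$; and $\dot\Pi\cdot\partial_\pi f$ is $\le c\epsilon$ by $\|\dot\Pi\|_{L^1_t}\le c\epsilon$ (Lemma \ref{lem:dotPi}). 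For $\varepsilon_0=\varepsilon_0(C_0)$ small all of these are $\le c\epsilon$ with $c$ fixed.

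The genuinely new terms, coming from the trapped mode $Q_w$, are the $V(\cdot+\mathbf v t+y_0)$--contribution hidden in $\textbf{A}$, the soliton--radiation coupling $J\textbf{f}(U,\widetilde{Q})$ inside $\textbf{A}$, the explicit $\nabla_f\langle V(\cdot+\mathbf v t+y_0)U,U\rangle$, and the $\dot w$--part $(\dot w_1-E_w w_2)\partial_{w_1}\widetilde{Q}+(\dot w_2+E_w w_1)\partial_{w_2}\widetilde{Q}$ of $\textbf{A}$. Inserting the decomposition \eqref{eq:expU}, the first three split into sums of (a Schwartz function translated by $\mathbf v t+D'+y_0$) times (a symbol $\textbf{S}^{0,0}_{n,m}$), which are $O(\epsilon)$ in $L^1_tL^2_x$ by the weak--interaction Lemma \ref{lem:movpot}, plus terms (Schwartz in space, centred at $-\mathbf v t-y_0$, amplitude $O(|w|)\le c\epsilon$) times $f$, which are $O(\epsilon\,C_0\epsilon)$ in $L^2_tL^{6/5}_x$ by $\|f\|_{L^2_tL^6_x}\le C_0\epsilon$. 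The $\dot w$--part of $\textbf{A}$ is $\le c\epsilon$ by Lemma \ref{lem:estw}, whose own proof uses only \eqref{eq:prepw1}, Lemma \ref{lem:movpot} and the bootstrap hypotheses, not the present lemma. Hence these terms too are $\le c\epsilon$.

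The single remaining term is the normal--form term $\nabla_f Z_1=\sum_{\mathbf e(\omega(\pi))\cdot(\mu-\nu)\in\sigma_e(\mathcal H_\pi)}z^\mu\overline{z}^\nu\,JG_{\mu\nu}(\pi,\Pi,\varrho)$ of \eqref{e.12a}, which produces the sum on the right of \eqref{4.5}. Each $G_{\mu\nu}\in\Sigma_k$ is fixed and Schwartz--like, so in the dual admissible pair $(2,6/5)$ one only needs $\|z^\mu\overline{z}^\nu\|_{L^2_t(0,T)}=\|\,|z|^{\mu+\nu}\,\|_{L^2_t(0,T)}$; by Lemma \ref{lem:normfor} the set of relevant $(\mu,\nu)$ is $\pi$--independent, and the resonance condition forces $\mathbf e\cdot(\mu+\nu)\ge|\mathbf e\cdot(\mu-\nu)|>\omega(\pi)>\omega_0$, so $\mu+\nu$ contains a minimal sub--multiindex $\mu'$ with $\mathbf e\cdot\mu'>\omega_0$ but $\mathbf e\cdot\mu'-\mathbf e_j<\omega_0$ for every $j$ in its support --- precisely the restriction in \eqref{4.5}. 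Peeling off the extra powers of $z$ in $L^\infty_t$ by $C_0\epsilon$ (hypothesis \eqref{L^inftydiscrete}), bounding $z^{\mu'}$ in $L^2_t$ by \eqref{L^2discrete}, and symmetrizing by AM--GM yields $\le c\epsilon+c\sum_{\mathbf e\cdot\mu>\omega_0}|z^\mu|^2_{L^2_t(0,T)}$, i.e.\ \eqref{4.5}. The main obstacle is not internal to this lemma, which follows \cite{CM} closely, but upstream: establishing the charge--transfer Strichartz and weighted estimates \eqref{eq:projpert1}--\eqref{eq:nablaH444} for the moving two--centre Hamiltonian uniformly in $T$, where $|\mathbf v|>M_0$ and the weak--interaction integral in \eqref{def:eps} are decisive and a \cite{RSS1}--style argument adapted as in \cite{CM} is needed; a secondary, purely book-keeping, point is to check that every coupling term genuinely factors as (weak--interaction quantity) times (bootstrap quantity), or as a power $\ge 3$ of bootstrap quantities, so that no term is controlled only by $\|f\|_{L^p_tW^{1,q}_x}$ with a constant depending on $C_0$.
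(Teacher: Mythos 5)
Your overall architecture matches the paper's: pass to $h=M^{-1}e^{\frac12 Jv_0\cdot x}f$, obtain the equation \eqref{eq:eqh222}, estimate the coupling terms with $\widetilde{Q}$ through Lemma \ref{lem:movpot} and the bootstrap norms (the paper does exactly this, showing the new term $d_Uf\,\textbf{A}$ in \eqref{eq:eqf} has the structure $\mathcal{A}\cdot\Diamond f+R_1+R_2$ with \eqref{eq:nablaH444}), and let the normal--form coefficients of $Z_1$ produce the sum over $\mathbf{e}\cdot\mu>\omega_0$ in \eqref{4.5}. But there is a genuine gap at precisely the point you defer ``upstream'': you claim the Strichartz and weighted estimates for the moving two--centre Hamiltonian hold ``exactly as in \cite{RSS1,CM}''. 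They do not, and this is the whole difference between this paper and \cite{CM}: here $\sigma_p(-\Delta+V)=\{e_0\}$, so the operator $\sigma_3(-\Delta+\omega_0+V)$ carried by the moving potential has the eigenvalues $\pm(e_0+\omega_0)$, and the linear flow does not decay along the translated bound state $\phi_0(\cdot+\mathbf{v}t+y_0+D')$. A direct citation of the charge--transfer estimates of \cite{CM} therefore fails, and no $T$--uniform bound \eqref{4.5} can be closed that way.

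The paper's proof resolves this by inserting $0=\im\delta\textbf{P}_Dh-\im\delta\textbf{P}_Dh$ into \eqref{eqh222bis}, with $\textbf{P}_D$ the moving projection \eqref{eq:eqh224}: the damping term $-\im\delta\textbf{P}_Dh$ is incorporated into the linear problem, and Theorem \ref{thm:strich} (with the requirement $\delta>\delta_0>|e_0+\omega_0|$, which is what makes $e^{-\im t\sigma_3(-\Delta+\omega_0+V)-t\delta\textbf{P}}$ decay like $\langle t\rangle^{-3/2}$ in weighted norms) supplies the Strichartz estimate for this damped, projected evolution; the compensating term $+\im\delta\textbf{P}_Dh$ is then absorbed into $R_1+R_2$ by Lemma \ref{lem:projpert}, whose proof is not a black--box linear estimate but rests on the orthogonality conditions \eqref{eq:coordinate01} of the ansatz, rewritten as Lemma \ref{lem:orth}, combined with Lemma \ref{lem:movpot}. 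Your proposal never invokes \eqref{eq:coordinate01} or Lemma \ref{lem:orth}, so the component of $h$ along the discrete mode of the moving potential is left uncontrolled; in particular quoting \eqref{eq:projpert1} as if it were part of the \cite{RSS1}--type linear theory misattributes the one estimate whose proof requires the new structural input of this paper. With the damping/projection mechanism and Lemma \ref{lem:projpert} added, the rest of your term--by--term estimates (old terms as in \cite{Cu0,CM}, new couplings via Lemma \ref{lem:movpot}, Lemma \ref{lem:estw}, and the $Z_1$ term giving the $z^\mu$ sum) does follow the paper's line.
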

\proof Compared to  \cite{CM}, the one additional term in \eqref{eq:eqf}
here is the term  $d _U f\textbf{A}$, which we now analyze.
By the fact that the inverse of \eqref{eq:quasilin51} has the same structure
(the flows which yield \eqref{eq:quasilin51} when reversed yield the inverse of \eqref{eq:quasilin51}, see Lemma 3.4 \cite{CM})
we have
\begin{equation}
\begin{aligned} &
     f   =e^{J\resto ^{0,2}_{k,m}( \pi , \Pi ,\Pi (f'),  z', f' )\cdot \Diamond }   f'+ \textbf{S}^{1,1}_{k,m}( \pi , \Pi ,\Pi (f'),  z', f' )   . \end{aligned}\nonumber
\end{equation}
Hence
\begin{equation}
\begin{aligned} &
     d_Uf   =e^{J\resto ^{0,2}_{k,m} \cdot \Diamond }  d_U f'+ Jd_U\resto ^{0,2}_{k,m} \cdot \Diamond   (f- \textbf{S}^{1,1}_{k,m})
      +d_U\textbf{S}^{1,1}_{k,m}. \end{aligned}\nonumber
\end{equation}
Notice that we have $  d_U\resto ^{0,2}_{k,m}\in B(\Sigma _{-k'}, \R^{4})$ and  $  d_U\textbf{S}^{1,1}_{k,m}\in B(\Sigma _{-k'}, \Sigma _{ k'})$ with   norms
\begin{equation}
\begin{aligned} &
      \| d_U\resto ^{0,2}_{k,m} \| _{B(\Sigma _{-k'}, \R^{4})} \le C \| r'\| _{\Sigma _{-k'}} \\& \| d_U\textbf{S}^{1,1}_{k,m} \| _{B(\Sigma _{-k'}, \Sigma _{ k'})} \le C(  |\pi - \Pi | + |\Pi (f') | + \|   r' \|  _{\Sigma _{-k'}}). \end{aligned}\nonumber
\end{equation}
Then
\begin{equation}
\begin{aligned} & \| d_U\textbf{S}^{1,1}_{k,m}\mathbf{f}(U, \widetilde{ Q})\| _{L^1_t H^1+ L^2_t H ^{1, S} }\lesssim C_0\epsilon  \int _{[0,1]^2} d\iota d\kappa \| \widetilde{\beta }^{\prime \prime }(\iota U+\kappa \widetilde{Q} ) \widetilde{Q} U\| _{L^1_t L ^{2, -S}  + L^2_t L ^{2, -S} }\\& \lesssim C_0\epsilon  \int _{[0,1]^2} d\iota d\kappa (\|  \widetilde{Q}  \mathbf{S}^{0,0}_{n,m} \| _{L^1_t L _x^{2 }}+ \|  \widetilde{Q}    e^{J
\tau  \cdot \Diamond } e^{J
\resto ^{0,2}_{n,m}\cdot \Diamond }P(p   )P_c(\pi )   f)\| _{L^2_t L _x^{2 }}) =O(\epsilon ^2)
      \end{aligned}\nonumber
\end{equation}
and  similarly
\begin{equation}
\begin{aligned} & \| d_U\resto ^{0,2}_{k,m}\mathbf{f}(U, \widetilde{Q})\| _{L^\infty_t +L^1_t  }\lesssim C_0\epsilon  \int _{[0,1]^2} d\iota d\kappa \| \widetilde{\beta }^{\prime \prime }(\iota U+\kappa \widetilde{Q} ) \widetilde{Q} U\| _{L^\infty_t L ^{2, -S} +L^1_t L ^{2, -S}   }  =O(\epsilon ^2 ).
      \end{aligned}\nonumber
\end{equation}
So we conclude
\begin{equation}\label{eq:pert1}
\begin{aligned} &
      Jd_U\resto ^{0,2}_{k,m}\mathbf{f}(U, \widetilde{Q}) \cdot \Diamond    f- d_U\resto ^{0,2}_{k,m}\mathbf{f}(U, \widetilde{Q})\cdot \Diamond  \textbf{S}^{1,1}_{k,m}
      +d_U\textbf{S}^{1,1}_{k,m}\mathbf{f}(U, \widetilde{Q}) =\mathcal{A}\cdot \Diamond    f +R_1 +R_2    \end{aligned}
\end{equation}
with for any preassigned $c$ \begin{equation} \label{eq:nablaH444} \begin{aligned} &
     \| \mathcal{A}  \| _{L^\infty(  0,T)   \cap L^1( 0,T) }+    \| R_1  \| _{L^1 ([0,T],H^{ 1 } )} +    \| R_2  \| _{L^2 ([0,T],H^{ 1 ,S} )} \le c \epsilon .
		  \end{aligned}  \end{equation}
We have
\begin{equation}
\begin{aligned} &
     d_Uf'=  (P_c(\pi ) P_c(p_0 )) ^{-1}P_c(\pi ) P (p_0 ) d_Ur', \\& d _Ur '  =   (
    P (p' )P(\pi ) P(p_0 )  )^{-1}  P (p' )
\big [ e^{ - J \tau '\cdot \Diamond}    -
 J \Diamond _j P (p' )  r' \, d_U\tau _j'-      \partial _{p_j'} P (p' )  r' \, d_Up _j'
 \big  ]   . \end{aligned}\nonumber
\end{equation}
Proceeding like above we conclude that
\begin{equation}
\begin{aligned} &
     e^{J\resto ^{0,2}_{k,m} \cdot \Diamond }  d_U f'\mathbf{f}(U, \widetilde{Q})= e^{J(\resto ^{0,2}_{k,m}-\tau ') \cdot \Diamond }  P_c(p_0)\mathbf{f}(U, \widetilde{Q})+  \mathcal{A}\cdot \Diamond    f +R_1 +R_2  , \end{aligned}\nonumber
\end{equation}
where the last three terms are like  those in the r.h.s.  of \eqref{eq:pert1}.
We have
\begin{equation}
\begin{aligned} & \|  \mathbf{f}(U, \widetilde{ Q})\| _{L^1_t H^1+ L^2_t H ^{1, S} }\le   \int _{[0,1]^2} d\iota d\kappa \| \widetilde{\beta }^{\prime \prime }(\iota U+\kappa \widetilde{Q} ) \widetilde{Q} U\| _{L^1_t H^1+ L^2_t H ^{1, S} }\\& \le  C  \|   \widetilde{Q}  \mathbf{S}^{0,0}_{n,m} \| _{L^1_t L _x^{2 }} (1+\|  f \|_{L^1_t H _x^{1 }} )+\|  \widetilde{Q}   ( e^{J
\tau  \cdot \Diamond } e^{J
\resto ^{0,2}_{n,m}\cdot \Diamond }P(p   )P_c(\pi )   f)^2\| _{L^2_t H _x^{1 }} \\& \le c \epsilon +\|  \widetilde{Q}  \| _{L^\infty_t W _x^{1,3 }}\| f  \| _{L^2_t W _x^{1,6 }}\le c  \epsilon +C(C_0)  \epsilon^2.
      \end{aligned}\nonumber
\end{equation}
Therefore $\mathbf{f}(U, \widetilde{ Q})$  is of the form $R_1+R_2$ with the estimate in \eqref{eq:nablaH444}.

Summing up, for $  h=    M ^{-1} e^{ \frac 12 J v_0 \cdot x}  f$ with $M$ defined in \eqref{eq:Homega1},  we have
\begin{equation} \label{eqh222bis} \begin{aligned} &        \im     \dot { {h}}  =\mathcal{K}_{\omega _0
 }    {h}  + \sigma _{3}   P_c(\mathcal{K}_{\omega _0
 }  )   V (\cdot + \mathbf{v} t+y_0+ D' +\resto ^{0,2}_{k,m} )
	 {h} +
\sigma _3 \mathcal{A}_4(t)     P_c (\mathcal{K}_{\omega _0
 })  {h}\\& - \sum _{a=1}^{3}\im \mathcal{A}_a(t)     P_c (\mathcal{K}_{\omega _0
 }) \partial _{x_a} {h}+
      \sum _{|\textbf{e}    \cdot(\mu-\nu)|>\omega   _0  }
z^\mu \overline{z}^\nu \mathbf{G}_{\mu \nu}(  t,\Pi (f)   )   +R_1 + R_2 \, ,   \end{aligned}  \end{equation}
where:
\begin{equation} \label{eq:eqh223} \begin{aligned} &     \mathbf{G}_{\mu \nu}(  t,\Pi (f)   ):= M^{-1} e^{J \frac{v_0\cdot x}2 } G_{\mu \nu}(  t,\Pi (f)   )   ,  \end{aligned}  \end{equation}
with $G_{\mu \nu}(  t,\Pi (f)   )$ the coefficients of $Z_1$, see \eqref{eq:partham} and where \eqref{eq:nablaH444}  are satisfied.

\noindent  Notice that in \eqref{eqh222bis} we can drop $\resto ^{0,2}_{k,m}$  from the argument of $V$,
absorbing the difference inside $R_1 + R_2$, so that  $ \sigma _{3}   P_c(\mathcal{K}_{\omega _0
 }  )   V (\cdot + \mathbf{v} t+y_0+ D'  )
	 {h} $ becomes the second term  in the r.h.s  of   \eqref{eqh222bis}.

\noindent Set $\textbf{D}:= \mathbf{v} t+y_0+ D'  $.   Set
\begin{equation} \label{eq:comm1}
\begin{aligned}   \widetilde{g}(t)u(t,x):=e^{\im \sigma _3(-  \frac{t}{4}\mathbf{v}^2  -\frac{\mathbf{v}\cdot x}{2} ) }
  u(t, x+\textbf{D}(t)).
\end{aligned}
\end{equation}
Recall that
\begin{equation}\label{eq:comm12} \begin{aligned} &
\widetilde{g}(t)^{-1}u=e^{ \im \sigma _3(  \frac{t}{4}\mathbf{v}^2   +\frac{\mathbf{v}\cdot (x-\textbf{D}(t))}{2} ) }
  u(t, x-\textbf{D}(t)) ,\\&
\ [\widetilde{g}(t)^{-1}, \im \partial _t -\mathcal{K}_0]u  =\im   (\dot {\textbf{D}}  -\mathbf{v} ) \cdot \nabla _x (\widetilde{g}^{-1}(t)u),\\&
[\widetilde{g}(t)^{-1}, \partial_{x_j}]u  = - \im \sigma_3 \frac{\mathbf{v}_{ j}}{2}\widetilde{g}^{-1}(t)u.
\end{aligned}
\end{equation}
Set now $g (t) =\widetilde{g}(t) e^{\im \sigma _3 \int _0^t  \hat \varphi  (s) ds} $ for a  $\hat \varphi$ which will be introduced later. Then, irrespective of the $\hat \varphi$, we have
$ V (\cdot + \textbf{D} )
	 =   \widetilde{g}V\widetilde{g}  ^{-1}  . $
We now   set
\begin{equation} \label{eq:eqh224} \begin{aligned} &      \textbf{P}_D :=   {g}  \textbf{P}    {g}
^{-1}\text{  where  $ \textbf{P} :=\phi _0 \langle \ , \phi _0 \rangle +\sigma _1\phi _0 \langle \ ,  \sigma _1\phi _0 \rangle  $ } . \end{aligned}  \end{equation}
Then, for a fixed $\delta >0$, we  add   to   \eqref{eqh222bis} the term  $ \im \delta \textbf{P}_Dh  -\im  \delta \textbf{P}_Dh  =0  $.   We will think of  $-\im  \delta \textbf{P}_Dh $ as a damping term in
\eqref{eqh222bis} and   $ \im \delta \textbf{P}_Dh$ as a reminder term, since it can be   absorbed inside the reminder
$R_1+R_2$, as we show now.

\begin{lemma}
  \label{lem:projpert}    Under  the hypotheses of Prop. \ref{prop:mainbounds}, for  $\varepsilon _0$ small enough
	we have    for any  preassigned  $c>0$ and irrespective of the    $\hat \varphi$,
	\begin{equation}\label{eq:projpert1}
\begin{aligned} &
  \|   \textbf{P}_Dh \| _{L^1( [0,T], H^1)  +L^2( [0,T], W^{1,6})    }\le c\epsilon .
 \end{aligned}
\end{equation}
\end{lemma}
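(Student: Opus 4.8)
The plan is to exploit that $\textbf{P}_D=g\textbf{P}g^{-1}$ is a rank-two operator whose range is the moving, high-momentum doubled bound state of $-\Delta+V$, and whose two coefficients are $L^2$-pairings of the radiation against $\phi_0$ transported to the escaping centre $-\textbf{D}(t)=-(\mathbf{v} t+y_0+D'(t))$ of the potential. First I would dispose of $\hat\varphi$: the factor $E:=e^{\im\sigma_3\int_0^t\hat\varphi(s)\,ds}$ in $g=\widetilde g\,E$ is a constant-in-$x$ diagonal unitary and, using the bilinear extension of $\langle\cdot,\cdot\rangle$, satisfies $E\textbf{P}E^{-1}=\textbf{P}$; hence $\textbf{P}_D=\widetilde g\,\textbf{P}\,\widetilde g^{-1}$ and
$$\textbf{P}_Dh=(\widetilde g\phi_0)\,\langle\widetilde g^{-1}h,\phi_0\rangle+(\widetilde g\sigma_1\phi_0)\,\langle\widetilde g^{-1}h,\sigma_1\phi_0\rangle .$$
For each $t$ the profiles $\widetilde g\phi_0,\widetilde g\sigma_1\phi_0$ are translates by $-\textbf{D}(t)$ of the fixed Schwartz functions $\phi_0,\sigma_1\phi_0$ modulated by $e^{-\frac{\im}{2}\sigma_3\mathbf{v}\cdot x}$, so their $H^1$- and $W^{1,6}$-norms are bounded uniformly in $t$. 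Thus the lemma reduces to estimating the two scalar functions $t\mapsto\langle\widetilde g^{-1}h,\phi_0\rangle$ and $t\mapsto\langle\widetilde g^{-1}h,\sigma_1\phi_0\rangle$ in $L^1(0,T)+L^2(0,T)$ with a bound that, after multiplication by the profile norms, is $\le c\epsilon$.

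Second I would compute these coefficients. Using $h=M^{-1}e^{\frac12 Jv_0\cdot x}f$ and \eqref{eq:comm12}, and transferring all modulations and the constant matrices $M^{-1},\sigma_1,J$ onto the test function (so $|v_0|\lesssim\epsilon$ is harmless), $\langle\widetilde g^{-1}h,\phi_0\rangle$ and $\langle\widetilde g^{-1}h,\sigma_1\phi_0\rangle$ become linear combinations with bounded coefficients of $\langle f,e^{J\frac12\mathbf{v}\cdot x}\phi_0(\cdot+\mathbf{v} t+y_0)\rangle$ and $\langle f,J e^{J\frac12\mathbf{v}\cdot x}\phi_0(\cdot+\mathbf{v} t+y_0)\rangle$ — up to the soliton translation/rotation $e^{J\tau'\cdot\Diamond}$ relating $f$ to $e^{J\tau'\cdot\Diamond}f'$ through the change of variables \eqref{eq:quasilin51}, whose effect is an $\textbf{S}^{1,1}_{k,m}$ error and a shift of the bump's centre by a $\resto^{1,2}_{k,m}$-amount. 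These are exactly the quantities governed by the orthogonality constraint \eqref{eq:coordinate01}: expanding $\partial_{w_1}Q_w=\phi_0+\partial_{w_1}q_w$, $\partial_{w_2}Q_w=-J\phi_0+\partial_{w_2}q_w$ and invoking Lemma \ref{lem:orth} for $i=1,2$ (which captures both the $\phi_0$- and the $J\phi_0$-directions) expresses each of these pairings as a sum of: (a) $\langle e^{J\tau'\cdot\Diamond}\textbf{S}^{0,1}_{k,m}(i),e^{J\frac12\mathbf{v}\cdot x}\partial_{w_i}Q_w(\cdot+\mathbf{v} t+y_0)\rangle$, which, after splitting $\textbf{S}^{0,1}_{k,m}$ via \eqref{eq:expU} into a spatially localized $\mathbf{S}^{0,0}_{n,m}$ piece and a translate of $P_c(\pi)f$, is $\le c\epsilon$ in $L^1(0,T)$ by Lemma \ref{lem:movpot} and $\lesssim\|f\|_{L^2_tL^6_x}\le C_0\epsilon$ in $L^2(0,T)$ by \eqref{Strichartzradiation}; (b) the $\cos/\sin(\frac{t}{4}|\mathbf{v}|^2)$-weighted terms involving $\partial_{w_i}q_w(\cdot+\mathbf{v} t+y_0)$, which are $\lesssim\|f\|_{L^2_tL^6}\,\|w\|_{L^\infty_t}^2\lesssim\epsilon^3$ since $\|\partial_{w_i}q_w\|_{\Sigma_k}\lesssim|w|^2$ (Proposition \ref{prop:bddst}) and $\|w\|_{L^\infty_t}\lesssim\epsilon$ (from the bound on $w(0)$ and \eqref{eq:smallen}); and (c) the $z$-mode and $\textbf{S}^{1,1}_{k,m},\resto^{1,2}_{k,m}$ error terms, each pairing a spatially localized profile against the bump at $-\textbf{D}(t)$, hence $\lesssim\|\langle\textbf{D}(t)\rangle^{-k}\|_{L^1_t}\,(\|z\|_{L^\infty_t}+\|f\|_{L^\infty_tH^1}+\|w\|_{L^\infty_t}^2)$, which is $O(\epsilon^2)$ in $L^1(0,T)$ by Lemma \ref{lem:movpot}.

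Finally I would reassemble: multiplying the $L^1(0,T)$-bounded part of each coefficient by $\widetilde g\phi_0$ (resp.\ $\widetilde g\sigma_1\phi_0$) produces the $L^1((0,T),H^1)$ summand of $\textbf{P}_Dh$, and the $L^2(0,T)$-bounded part the $L^2((0,T),W^{1,6})$ summand; choosing $\varepsilon_0=\varepsilon_0(c,C_0)$ small enough and using that the decay rate $\|\langle\textbf{D}(t)\rangle^{-k}\|_{L^1_t}$ supplied by Lemmas \ref{lem:movpot}--\ref{lem:drift} already incorporates, through the weak-interaction quantity in \eqref{def:eps}/\eqref{eq:weakint}, the $|\mathbf{v}|^{-1}$ that offsets the $\langle\mathbf{v}\rangle$-growth of the moving profiles, one concludes $\|\textbf{P}_Dh\|_{L^1((0,T),H^1)+L^2((0,T),W^{1,6})}\le c\epsilon$. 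The main obstacle I expect is the bookkeeping of the error terms generated by decoding the constraint \eqref{eq:coordinate01} through the time-dependent coordinates \eqref{eq:coordinate0} and \eqref{eq:quasilin51}, together with the quantitatively delicate point that the crude bound $|\langle f,\phi_0(\cdot+\textbf{D})\rangle|\lesssim\|f\|_{L^6}$ alone does not suffice — both the orthogonality \eqref{eq:coordinate01} and the escape of the centre $-\textbf{D}(t)$ (hence the weak-interaction hypothesis) must be used to produce a bound small enough to absorb the $\langle\mathbf{v}\rangle$-weight of the moving bound state.
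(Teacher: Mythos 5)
Your proposal is correct and follows essentially the same route as the paper: reduce the rank-two projection $\textbf{P}_Dh$ to the two scalar coefficients $\langle \widetilde g^{-1}h,\phi_0\rangle$, $\langle \widetilde g^{-1}h,\sigma_1\phi_0\rangle$, rewrite them (after undoing the Galilei transform, $M$, and the $O(\epsilon)$ modulation $v_0$, at the cost of an $O(\epsilon\|f\|_{L^6_x})$ error) as pairings of $f$ with modulated translates of $\phi_0$ and $J\phi_0$ centered at $-\textbf{D}(t)$, and control these in $L^1_t+L^2_t$ via the orthogonality identities of Lemma \ref{lem:orth}, the moving-bump decay of Lemma \ref{lem:movpot}, and the bootstrap bounds on $f$, $z$, $w$. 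Your removal of $\hat\varphi$ at the operator level ($E\textbf{P}E^{-1}=\textbf{P}$) is only a cosmetic variant of the paper's observation that the final bound on \eqref{eq:projpert4} is uniform in the phase $\lambda(t)$.
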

	 \proof     Obviously it is enough to  prove
\begin{equation}\label{eq:projpert2}
\begin{aligned} &
  \|    \langle  e^{-\im \sigma _3 \int _0^t  \hat \varphi  (s) ds}\widetilde{g}^{-1} h , \psi   \rangle \| _{L^1( 0,T )  +L^2(  0,T )    }\le c\epsilon   \text{   for  $\psi =  \phi _0, \sigma _1\phi _0 $. }
 \end{aligned}
\end{equation}
We will consider the case   $\psi =  \phi _0  $. The other case is similar.
We have    from   $  h= M ^{-1}  e^{  \frac 12 J v_0 \cdot x}   f  $

\begin{equation} \label{eq:projpert3}
\begin{aligned} &   \langle  e^{-\im \sigma _3 \int _0^t  \hat \varphi  (s) ds} \widetilde{g}^{-1} h , \phi _0 \rangle  = \langle
  M ^{-1}  e^{    J \frac{v_0 \cdot x}{2}}   f ,  e^{ \im \sigma _3(  \frac{t}{4}\mathbf{v}^2  +\frac{\mathbf{v}\cdot x}{2}-   \int _0^t  \hat \varphi  (s) ds ) }\phi _0 ( \cdot +\textbf{D} )\rangle \\& = \langle     e^{    J \frac{v_0 \cdot x}{2}}    f ,   (M^{-1})^T e^{ \im \sigma _3(  \frac{t}{4}\mathbf{v}^2  +\frac{\mathbf{v}\cdot x}{2}-   \int _0^t  \hat \varphi  (s) ds ) }  M^T   (M^{-1})^T \phi _0 (\cdot +\textbf{D} ) \rangle    \\& =  \langle   e^{    J \frac{v_0 \cdot (x+\textbf{D} )}{2}}      f ,     e^{ J  (   \frac{t}{4}\mathbf{v}^2  +\frac{\mathbf{v}\cdot x}{2}+ \frac{v_0\cdot  \textbf{D}  }{2} -   \int _0^t  \hat \varphi  (s) ds ) }      (M^{-1})^T \phi _0 (\cdot +\textbf{D} ) \rangle  \\& = \langle       f ,     e^{ J  (  \frac{t}{4}\mathbf{v}^2+ \frac{v_0\cdot  \textbf{D}  }{2}  +\frac{\mathbf{v}\cdot x}{2}-   \int _0^t  \hat \varphi  (s) ds) }      (M^{-1})^T \phi _0 (\cdot +\textbf{D} ) \rangle    +   O(\epsilon
			\|   f  \| _{L^6_x}).
 \end{aligned}
\end{equation}
We  used $ (M^{-1})^T \im \sigma _3 M^T = \overline{M } \im \sigma _3 \overline{M }^{-1}=J. $
We have $\|    O(\epsilon
			\|   f  \| _{L^6_x})    \| _{ L^2(  0,T )    } \le C(C_0) \epsilon ^2$.

\noindent Ignoring the $  O(\epsilon
			\|   f  \| _{L^6_x})$ term, we can write the last line in
   \eqref{eq:projpert3}  in the form
\begin{equation} \label{eq:projpert4}
\begin{aligned} &   \langle      f ,     e^{  J      \frac{\mathbf{v}\cdot  x  }{2}  }    e^{ J       \lambda (t) }    \phi _0 (\cdot +\textbf{D} ) \rangle   + \im   \langle      f ,     e^{ J      \frac{\mathbf{v}\cdot  x   }{2}   }    e^{ J       \lambda (t) } J   \phi _0 (\cdot +\textbf{D} ) \rangle   \\& = e^{-\im \lambda (t)}  \langle      f ,     e^{  J      \frac{\mathbf{v}\cdot  x  }{2}  }        \phi _0 (\cdot +\textbf{D} ) \rangle  + (\sin \lambda (t) +\im  \cos \lambda (t) ) \langle      f ,     e^{  J      \frac{\mathbf{v}\cdot  x  }{2}  }   J     \phi _0 (\cdot +\textbf{D} ) \rangle  ,
       \end{aligned}
\end{equation}
for some real valued function  $\lambda (t)$.

By   the fact that $\phi _0$ is a Schwartz function  and by Lemmas  \ref{lem:orth}  and  \ref{lem:movpot},   we conclude that the    $L^1( 0,T )  +L^2(  0,T )  $  norm of
  \eqref{eq:projpert4}  is bounded by $ C(C_0) \epsilon ^2$, independently of $\lambda (t)$  .  This yields   \eqref{eq:projpert2}
   for  $\psi =  \phi _0  $.
  The   case     $\psi =  \sigma _1\phi _0  $ is similar.

\qed

We can rewrite  \eqref{eqh222bis}
\begin{equation} \label{eq:eqh222} \begin{aligned} &        \im     \dot { {h}}  =\mathcal{K}_{\omega _0
 }    {h}  + \sigma _{3}   P_c(\mathcal{K}_{\omega _0
 }  )   V (\cdot + \mathbf{v} t+y_0+ D'   )
	 {h}-\im  \delta \textbf{P}_Dh +
\sigma _3 \mathcal{A}_4(t)     P_c (\mathcal{K}_{\omega _0
 })  {h}\\& - \sum _{a=1}^{3}\im \mathcal{A}_a(t)     P_c (\mathcal{K}_{\omega _0
 }) \partial _{x_a} {h}+
      \sum _{|\textbf{e}    \cdot(\mu-\nu)|>\omega   _0  }
z^\mu \overline{z}^\nu \mathbf{G}_{\mu \nu}(  t,\Pi (f)   )   +R_1 + R_2 \, ,   \end{aligned}  \end{equation}

Then the  proof   of   {Lemma} \ref{lem:conditional4.2}     is exactly the same as in \cite{CM} using   Theorem \ref{thm:strich} below.

We set now
\begin{equation}
  \label{eq:g variable}
g=h+ Y \,  , \quad Y:=\sum _{| \textbf{e}  \cdot(\mu-\nu)|>\omega _0} z^\mu
\overline{z}^\nu
   R ^{+}_{\mathcal{K}_{\omega _0
 }  }  (\textbf{e}  \cdot(\mu-\nu) )
     \textbf{G}_{\mu \nu}(t,0)  .
\end{equation}

\begin{lemma}\label{lemma:bound g} Assume the hypotheses of Prop. \ref{prop:mainbounds}  and let  $T> \varepsilon _0^{-1}$. Then   for fixed $s>1$ there  exist
a fixed $c$
such that if   $\varepsilon_0$ is sufficiently small, for any preassigned and large $L>1$  we have $\| g
\| _{L^2((0,T ), L^{2,-s}_x)}\le ( c  +  {C_0}L ^{-1} ) \epsilon  $.
\end{lemma}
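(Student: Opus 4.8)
The plan is to derive the equation satisfied by $g$ from \eqref{eq:eqh222}, to observe that the substitution \eqref{eq:g variable} removes the resonant normal form source modulo harmless remainders, and then to close the bound by Duhamel's formula against the dispersive estimates for the damped charge transfer Hamiltonian supplied by Theorem \ref{thm:strich}. First I would differentiate \eqref{eq:g variable}: using \eqref{eq:eqz} and \eqref{eq:partham} the leading part of $\frac{d}{dt}\( z^\mu\overline{z}^\nu\)$ is $\mp\im\,\mathbf{e}\cdot(\mu-\nu)z^\mu\overline{z}^\nu$, which together with $\( \mathcal{K}_{\omega_0}-\mathbf{e}\cdot(\mu-\nu)\) R^{+}_{\mathcal{K}_{\omega_0}}(\mathbf{e}\cdot(\mu-\nu))\mathbf{G}_{\mu\nu}=\mathbf{G}_{\mu\nu}$ makes the sum $\sum z^\mu\overline{z}^\nu\mathbf{G}_{\mu\nu}(t,\Pi(f))$ in \eqref{eq:eqh222} cancel in the equation for $g$. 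What survives is: the difference $\sum z^\mu\overline{z}^\nu\( \mathbf{G}_{\mu\nu}(t,\Pi(f))-\mathbf{G}_{\mu\nu}(t,0)\)$; the commutator terms coming from the higher order part of $\dot z$ and from $\partial_t\mathbf{G}_{\mu\nu}$ (which is small, of $\resto^{1,2}$ type, through $\dot\pi,\dot\Pi,\dot\varrho$); and the terms obtained by feeding $Y$ into the $V$, $\mathbf{P}_D$ and $\mathcal A$ parts of the operator. Hence, writing $P_c=P_c(\mathcal{K}_{\omega_0})$, the function $g$ solves
\begin{equation*}
\im\dot g=\( \mathcal{K}_{\omega_0}+\sigma_3 P_c V(\cdot+\mathbf{v}t+y_0+D')-\im\delta\mathbf{P}_D+\sigma_3\mathcal{A}_4 P_c-\sum_{a=1}^3\im\mathcal{A}_a P_c\partial_{x_a}\) g+\mathcal{F}_g,
\end{equation*}
where $\mathcal{F}_g$ collects $R_1+R_2$ together with all the remainders just listed.

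Next I would run Duhamel's formula for the propagator of the operator on the right, relying on Theorem \ref{thm:strich} — the charge transfer estimates in the spirit of \cite{RSS1}, as used in \cite{CM} — which give Strichartz and Kato type local smoothing for this moving potential Hamiltonian uniformly in $t$ and in the large $\mathbf{v}$, the $\mathcal{A}_a$ coefficients (which are $O(c\epsilon)$ in $L^1_t\cap L^\infty_t$ by \eqref{eq:nablaH444}) being treated as a perturbation. For the homogeneous term I would use $h(0)=M^{-1}e^{\frac12 Jv_0\cdot x}f(0)$ with $\| f(0)\|_{L^{2,s}}\le C\epsilon$ because $u_0\in\Sigma_2$; the $Y(0)$ part is dealt with by bounding $Y$ itself, since $\| Y\|_{L^2((0,T),L^{2,-s}_x)}\lesssim\sum\| z^\mu\overline{z}^\nu\|_{L^2_t}\,\| R^{+}_{\mathcal{K}_{\omega_0}}(\mathbf{e}\cdot(\mu-\nu))\mathbf{G}_{\mu\nu}\|_{L^{2,-s}}\lesssim C_0\epsilon$, using \eqref{L^2discrete}--\eqref{L^inftydiscrete} (note $|\mathbf{e}\cdot(\mu-\nu)|>\omega_0$ forces $\mathbf{e}\cdot\mu>\omega_0$ or $\mathbf{e}\cdot\nu>\omega_0$, so at least one $z$-factor is square integrable in time). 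For the source $\mathcal{F}_g$: the pieces $R_1\in L^1_t H^1$ and $R_2\in L^2_t H^{1,S}$ contribute $\le c\epsilon$ by the inhomogeneous smoothing estimates (with the usual Christ--Kiselev truncation for the retarded integral); the $\mathcal{A}$-terms acting on $g$ give $\le c\epsilon\,\| g\|_{L^2((0,T),L^{2,-s}_x)}$, absorbed into the left hand side for $\varepsilon_0$ small; and the remaining terms — $V(\cdot+\textbf{D})Y$, $\mathcal{A}_a P_c\partial_{x_a}Y$, $\sigma_3\mathcal{A}_4 P_c Y$, the $\delta\mathbf{P}_D Y$ term (handled as in Lemma \ref{lem:projpert}, cf. \eqref{eq:projpert1}), the $\partial_t\mathbf{G}_{\mu\nu}$ and higher order $\dot z$ commutators, and the $\mathbf{G}_{\mu\nu}(\cdot,\Pi(f))-\mathbf{G}_{\mu\nu}(\cdot,0)$ difference — are all linear in $C_0\epsilon$ but carry an extra small factor: a power of $\epsilon$ coming from $\| z\|_{W^{1,\infty}_t}$ and $\| f\|$, or the smallness of $\|\langle\mathbf{v}t+y_0+D'\rangle^{-N}\|_{L^1_t}$ (which by \eqref{eq:divcenter1} and Lemma \ref{lem:movpot} is controlled by the weak interaction quantity in $\epsilon$ and by $|\mathbf{v}|^{-1}$), since $Y$, $\mathbf{P}_D$ and $V(\cdot+\textbf{D})$ are all concentrated near the diverging point $\textbf{D}(t)$. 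Choosing $\varepsilon_0=\varepsilon_0(L)$ small enough turns this extra factor into $L^{-1}$, so that the $C_0$-linear contribution of $\mathcal{F}_g$ is $\le C_0 L^{-1}\epsilon$. Collecting everything and absorbing $c\epsilon\,\| g\|_{L^2((0,T),L^{2,-s}_x)}$ into the left side yields $\| g\|_{L^2((0,T),L^{2,-s}_x)}\le(c+C_0 L^{-1})\epsilon$.

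I expect the main obstacle to be the charge transfer dispersive theory itself — the global in time, $\mathbf{v}$-uniform Strichartz and local smoothing bounds for $\mathcal{K}_{\omega_0}+\sigma_3 P_c(\mathcal{K}_{\omega_0})V(\cdot+\mathbf{v}t+y_0+D')-\im\delta\mathbf{P}_D$, perturbed by the small $\mathcal{A}_a$ drifts — but this is exactly Theorem \ref{thm:strich}, so within the present lemma the real care goes into the bookkeeping: tracking every term produced by \eqref{eq:g variable} and by the equations of Section \ref{sec:equations}, and making sure that the only terms proportional to the bootstrap constant $C_0$ are those that gain the factor $L^{-1}$, all others being genuinely $O(c\epsilon)$ or $O(c\epsilon\,\| g\|)$.
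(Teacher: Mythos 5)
Your proposal is correct and follows essentially the same route as the paper, whose proof here is simply a citation of Lemma 8.5 of \cite{CM}: differentiate $g=h+Y$, use the resolvent identity to cancel the resonant source up to the $\mathbf{G}_{\mu\nu}(t,\Pi(f))-\mathbf{G}_{\mu\nu}(t,0)$ difference and higher-order commutator terms, and close by Duhamel against the linear theory of Section \ref{sec:dispersion} (Theorem \ref{thm:strich}, Proposition \ref{prop:weights2}), with every $C_0$-proportional remainder carrying an extra small factor that is made $\le L^{-1}$ by shrinking $\varepsilon_0$. The only slip is the remark that $Y$ is concentrated near $\mathbf{D}(t)$: it is in fact concentrated near the soliton and hence away from the moving potential, which is precisely why Lemma \ref{lem:movpot} gives the needed $L^1_t$ smallness of the coupling terms, so the argument is unaffected.
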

\proof The proof is exactly the same of  Lemma 8.5 in \cite{CM}. \qed

\begin{lemma}\label{lem:fgr}  There is a    set of variables
   $\zeta =z+O(z^2)$ such that     for a fixed $C$
   we have

\begin{equation}  \label{equation:FGR3} \begin{aligned}   & \| \zeta  -
 z  \| _{L^2_t}
\le CC_0\epsilon ^2\, , \quad  \| \zeta  -
 z \| _{L^\infty _t} \le C \epsilon ^3
\end{aligned}
\end{equation}
  \begin{equation}  \label{equation:FGR5} \begin{aligned}
 &\partial _t \sum _{j=1}^{\textbf{n}} \textbf{e} _j
 | \zeta _j|^2  = -  \Gamma (\zeta )+ \mathfrak{r}
\end{aligned}
\end{equation}
and s.t., for a fixed constant $c_0$ and a preassigned but arbitrarily large constant $L$, we have
\begin{equation} \label{eq:FGR8} \begin{aligned} &      \Gamma (\zeta ):=     4\sum _{\Lambda  >\omega
_0 } \Lambda      \Im \left    \langle R_{ \mathcal{H}_{\omega _0}}^+ (\Lambda  )
\sum _{
\textbf{e}  \cdot \alpha =\Lambda   }\zeta ^{ \alpha }   \textbf{G}_{ \alpha 0}(t,0),
 \sigma _3 \sum _{
\textbf{e}  \cdot \alpha =\Lambda   }\overline{\zeta} ^{ \alpha } \   \overline{\textbf{G}}_{ \alpha 0}(t,0)\right \rangle , \\&   \|  \mathfrak{r}\|_{
L^1[0,T]}\le (1+C_0)( c _0 +  {C_0}L^{-1})  \epsilon ^{2}.
\end{aligned}
\end{equation}

\end{lemma}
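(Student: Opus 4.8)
The plan is to follow the Fermi golden rule argument of \cite{CM} (Section 9 there), adapting it to the present time‑dependent situation. First I would compute $\partial_t\sum_{j}\textbf{e}_j|z_j|^2$ starting from \eqref{eq:eqz} and from the form \eqref{eq:partham} of $K_0$. In this differentiation the term $H_2'$ contributes only the phases $e^{\pm\im\textbf{e}_jt}$ and drops out of $\partial_t|z_j|^2$; $Z_0$ is real and contributes nothing after taking real parts; and the contributions of $\psi$, $\textbf{E}_P(f)$, of $\resto$ in Theorem \ref{thm:effham}, of the drift $\dot\Pi\cdot\partial_\pi z_j$, of $d_Uz_j\textbf{A}$ and of the term $\partial_{\overline z_j}\langle V(\cdot+\mathbf v t+y_0)U,U\rangle$ all go into the remainder $\mathfrak r$; they are estimated in $L^1_t([0,T])$ exactly as the analogous objects in \cite{CM}, using \eqref{Strichartzradiation}--\eqref{eq:smallen}, Lemma \ref{lem:dotPi}, and Lemma \ref{lem:movpot} (to absorb the Schwartz‑type factors moving with the potential). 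What is left is the coupling coming from $Z_1$, of the form $\sum_{|\textbf{e}\cdot(\mu-\nu)|>\omega_0}z^\mu\overline z^\nu\langle JG_{\mu\nu},f\rangle$ — only $|\textbf{e}\cdot(\mu-\nu)|\ge\omega>\omega_0$ occurs, since $\sigma_e(\mathcal H_{\omega_0})=(-\infty,-\omega]\cup[\omega,\infty)$ — and this is the term that carries the dissipation.

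Next I would pass to $h=M^{-1}e^{\frac 12 Jv_0\cdot x}f$, so that $f$ is replaced by $h$ in the coupling and $h$ solves \eqref{eq:eqh222}, and then substitute $h=g-Y$ with $Y$ the explicit resonant profile of \eqref{eq:g variable}. Inserting $h=g-Y$ into the $Z_1$‑coupling (and replacing $\textbf G_{\mu\nu}(t,\Pi(f))$ by $\textbf G_{\mu\nu}(t,0)$, an error of size $O(|\Pi(f)|)=O(\|f\|_{H^1}^2)$ that goes into $\mathfrak r$) gives three types of terms. The part containing $g$ is bounded in $L^1_t$ by $\|z^\mu\overline z^\nu\|_{L^2_t}\|g\|_{L^2_t L^{2,-s}_x}$; since $|\textbf{e}\cdot(\mu-\nu)|>\omega_0$ forces $\textbf{e}\cdot\mu>\omega_0$ or $\textbf{e}\cdot\nu>\omega_0$, the first factor is $O(\epsilon^2)$ by \eqref{L^2discrete}--\eqref{L^inftydiscrete}, and then Lemma \ref{lemma:bound g} bounds the whole contribution by $(1+C_0)(c_0+C_0L^{-1})\epsilon^2$ — exactly the structure of $\mathfrak r$ claimed in \eqref{eq:FGR8}, and what lets the bootstrap close for $L$ large. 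The part containing $Y$, after symmetrizing in $(\mu,\nu)$ and pairing with $\overline z_j$, produces monomials $z^\alpha\overline z^\beta\langle R^+_{\mathcal K_{\omega_0}}(\Lambda)\textbf G_{\alpha'0},\sigma_3\overline{\textbf G}_{\beta'0}\rangle$; those that are genuinely resonant, i.e.\ $\textbf{e}\cdot\alpha=\textbf{e}\cdot\beta=\Lambda>\omega_0$, sum to $-\Gamma(\zeta)$ once one uses the Stone formula $\Im R^+_{\mathcal H_{\omega_0}}(\Lambda)=\pi\delta(\mathcal H_{\omega_0}-\Lambda)$ and the definition of $\Gamma$. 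The non‑resonant monomials oscillate with phase $\textbf{e}\cdot(\alpha-\beta)\neq0$ — here (H8)--(H9) guarantee the phase stays away from $0$ — so one integration by parts in $t$ turns each into a total $t$‑derivative plus lower‑order terms; the total‑derivative part defines the correction in $\zeta_j=z_j+\{\text{finite sum of monomials in }(z,\overline z)\text{ of degree}\ge2\}$ and the lower‑order terms go into $\mathfrak r$. The bounds \eqref{equation:FGR3} for $\zeta-z$ then follow at once from \eqref{L^2discrete}--\eqref{L^inftydiscrete}.

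The one place where the conclusion uses more than bookkeeping is the positivity $\Gamma(\zeta)\ge0$, which is precisely the Fermi golden rule hypothesis (H11): it asserts that the quadratic form $\xi\mapsto\Im\langle R^+_{\mathcal H_{\omega_0}}(\Lambda)\xi,\sigma_3\overline\xi\rangle$ is nonnegative on the finite‑dimensional spaces spanned by the profiles $\sum_{\textbf{e}\cdot\alpha=\Lambda}\zeta^\alpha\textbf G_{\alpha0}(t,0)$. I expect the main obstacle to be the $L^1_t$ bookkeeping of $\mathfrak r$: one must check term by term that everything other than $-\Gamma$ is either $O(\epsilon^2)$ with a fixed constant or carries the factor $C_0L^{-1}$ coming from the local smoothing bound on $g$, and in particular that the terms not present in \cite{CM} — the $d_Uf\textbf{A}$, $d_Uz_j\textbf{A}$ pieces and the pieces of $V$ localized near the moving ground state — do not spoil this; for these one relies again on Lemma \ref{lem:movpot} and on the integrability in time of the interaction with $V$ already exploited in Lemmas \ref{lem:dotPi} and \ref{lem:conditional4.2}.
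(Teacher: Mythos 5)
Your proposal is correct and follows essentially the same route as the paper, which for this lemma simply cites the Fermi golden rule normal-form argument of \cite{CM,Cu2}: differentiate $\sum_j\mathbf{e}_j|z_j|^2$, isolate the $Z_1$ coupling, substitute $h=g-Y$, control the $g$-part via Lemma \ref{lemma:bound g} (source of the $C_0L^{-1}$ factor), extract $-\Gamma(\zeta)$ from the resonant part by Plemelj, and absorb the non-resonant oscillatory monomials into the correction $\zeta=z+O(z^2)$ by integration by parts, with the new $V$- and $\textbf{A}$-terms handled through Lemma \ref{lem:movpot}. Only note that the nonnegativity $\Gamma(\zeta)\ge 0$ is not part of the lemma's conclusion and in the paper comes from Lemma 10.5 of \cite{Cu2}, while (H11) is the stronger quantitative lower bound \eqref{eq:FGR} used afterwards to close the bootstrap.
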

For the proof see \cite{CM,Cu2}. By \cite{Cu2} Lemma 10.5 we have $ \Gamma  (\zeta )\ge 0$.
  We make now the following hypothesis:
\begin{itemize}
\item[(H11)]  there exists a  fixed constant $\Gamma >0$ s.t. for all $\zeta \in
\mathbb{C} ^{\mathbf{n}}$ we have:\end{itemize}
\begin{equation} \label{eq:FGR} \begin{aligned} &  \Gamma (\zeta )
 \ge \Gamma  \sum _{ \substack{ \mathbf{e}\cdot  \alpha
> \omega _0
\\
   \mathbf{e}
\cdot  \alpha -\mathbf{e} _k   < \omega _0 \\ \forall \, k \, \text{
s.t. } \alpha _k\neq 0}}  | \zeta ^\alpha  | ^2 .
\end{aligned}
\end{equation}

\noindent Then integrating    and exploiting \eqref{equation:FGR3} we get for $t\in [0,T]$ and fixed $c$
\begin{equation} \sum _j \textbf{e} _j  |z
_j(t)|^2 +4\Gamma \sum _{ \substack{ \text{$\alpha$ as in (H11)}}}  \|z ^\alpha \| _{L^2(0,t)}^2\le
c(1 +  C_0  +   {C_0^2} L^{-1} )\epsilon ^2.\nonumber
\end{equation}
From  the last inequality and from Lemma \ref{lem:conditional4.2}
we conclude that for $\varepsilon _0>0$  sufficiently small and any $T>0$,
\eqref{Strichartzradiation}--\eqref{L^inftydiscrete} in  $I=[0,T]$ and with $C=C_0$  imply
\eqref{Strichartzradiation}--\eqref{L^inftydiscrete} in  $I=[0,T]$  with $C=c(1+\sqrt{C_0} +  {C_0}L^{-\frac{1}{2}})$ for fixed $c$.

We bound the r.h.s. of \eqref{eq:prepw1}.
By Lemma \ref{lem:movpot} we have for a fixed $c$

\begin{equation*}
 \begin{aligned}   &\| \langle |\partial _t
   e^{J
\tau ' \cdot \Diamond }\Phi _{p'}| + |\nabla \textbf{E}(e^{J
\tau ' \cdot \Diamond }\Phi _{p'})|+J|\textbf{f}( \eta + \widetilde{Q},e^{J
\tau ' \cdot \Diamond }\Phi _{p'}  )| , J|\partial _{w_i}\widetilde{Q}|\rangle \| _{L^1_t}\le c \epsilon .  \end{aligned}
\end{equation*}
We have

\begin{equation*}
 \begin{aligned}   &\| \langle  \nabla \textbf{E}_P(\eta ) ,   \partial _{w_i}\widetilde{Q} \rangle \| _{L^1_t}= \| \langle \beta (|e^{J
\tau ' \cdot \Diamond } P(p')P(\pi ) r'|^2)e^{J
\tau ' \cdot \Diamond } P(p')P(\pi ) r' ,   \partial _{w_i}\widetilde{Q}   \rangle \| _{L^1_t}  .  \end{aligned}
\end{equation*}
Next, $r'=S_{k,m}^{0,1}+ e^{J\resto _{k,m}^{0,2}\cdot \Diamond }f$.  Then the above can be bounded by
\begin{equation*}
 \begin{aligned}   &\| \langle  \nabla \textbf{E}_P(e^{J
\tau ' \cdot \Diamond }S_{k,m}^{0,1} ) ,   \partial _{w_i}\widetilde{Q} \rangle \| _{L^1_t} + \| \langle  \nabla \textbf{E}_P(e^{J(\resto _{k,m}^{0,2}+\tau ') \cdot \Diamond }f ) ,   \partial _{w_i}\widetilde{Q} \rangle \| _{L^1_t}\\& + \| \langle  \mathbf{f}(e^{J
\tau ' \cdot \Diamond }S_{k,m}^{0,1},e^{J(\resto _{k,m}^{0,2}+\tau ') \cdot \Diamond }f ) ,   \partial _{w_i}\widetilde{Q} \rangle \| _{L^1_t}
  \le c \epsilon.  \end{aligned}
\end{equation*}

This completes the proof of  Proposition  \ref{prop:mainbounds}.

\section{Linear dispersion}
\label{sec:dispersion}
Set $\mathcal{K}_0= \sigma _3( -\Delta +\omega _0)$,
$\mathcal{K}_1=\mathcal{K} _{\omega _0}=\mathcal{K}_0 +\mathcal{V}_1   $,
 $\mathcal{K}_2= \mathcal{H}_0 +\mathcal{V}_2   $ where $\mathcal{V}_2=
 \sigma _3 {V}$. Set $ \mathcal{V}_2^D(t,x):= \mathcal{V}_2(x+\textbf{D}(t))$
   $P_c:=P_c(\mathcal{K}_1)$, $\mathcal{K }(t)=\mathcal{K}_0 +\mathcal{V}_1 +\mathcal{V}_2^D(t)$
We have the following result.

\begin{theorem}\label{thm:strich}
   Consider  for $P_cF(t)=F(t)$ and  $P_cu(0)=u_0$
	the equation
   \begin{equation} \label{eq:strich1}\im \dot u -
P_c\mathcal{K}(t)P_cu- \im  P_c v(t) \cdot \nabla _x u  +  \varphi (t) P_c \sigma _3u=F-\im \delta \textbf{P}_{\textbf{D}} u
 \end{equation}
   for $(v(t),\varphi (t))\in C^1 ([0,T]  , \R ^3\times \R  )$.
	Fix $\delta _0> |e_0+\omega _0|$.
	For $\mathbf{v}$ the vector   in Theor.\ref{theorem-1.1}, set
   \begin{equation} \label{eq:strich2} \begin{aligned} &  c(T):=\|  (\varphi  (t), v(t)  )  \| _{L^\infty _t[0,T] + L^1 _t[0,T]   } +\| \mathbf{v}- \dot {\textbf{D}} (t)\| _{L^\infty _t[0,T]  }  . \end{aligned}
 \end{equation}
    Then for any $\sigma _0>3/2$
     there exist  a    $c_0>0$  and a  $C>0$  such that, if $c(T)<c_0  $,  $\sigma >\sigma _0$  and  $\delta >\delta _0$, then    for any admissible pair   $ (p,q)$, see \eqref{admissiblepair}, we have  for $i=0,1$
\begin{equation}\label{eq:strich3} \|  u \|
_{L^p_t( [0,T],W^{i,q}_x)}\le
 C (\|  u_0 \| _{H^i }  +  \|  F \|
_{L^2_t( [0,T],H^{i, \sigma}_x)+ L^1_t( [0,T],H^{i }_x)} ) .
\end{equation}
\end{theorem}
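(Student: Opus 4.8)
The plan is to treat equation \eqref{eq:strich1} as a charge transfer type problem, following the structure of \cite{RSS1} and of the corresponding argument in \cite{CM}, but now with two potential bumps moving apart: the fixed (in the moving frame) potential $\mathcal V_1$ coming from the large soliton and the potential $\mathcal V_2^D(t)=\mathcal V_2(x+\textbf D(t))$ coming from $V$, whose center $\textbf D(t)\approx \mathbf v t$ escapes to infinity at speed $\sim|\mathbf v|>M_0$. First I would record the standard Strichartz and smoothing estimates for the free group $e^{-\im t\mathcal K_0}$, and then, since $\mathcal K_1=\mathcal K_0+\mathcal V_1$ has $0$ and $\pm\omega_0$ as its only spectral singularities with no resonances (this is guaranteed by (H10), after the translation $\delta_0>|e_0+\omega_0|$ makes the $e_0$ of $-\Delta+V$ a genuinely interior point once we project with $P_c=P_c(\mathcal K_1)$), I would upgrade to Strichartz and local smoothing estimates for $e^{-\im t\mathcal K_1}P_c$; these are available from \cite{Cu2,Cu3}. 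The extra first-order and zeroth-order terms $-\im P_c v(t)\cdot\nabla_x u$ and $\varphi(t)P_c\sigma_3 u$ are absorbed perturbatively using $c(T)<c_0$: the point is that $v,\varphi\in L^\infty_t+L^1_t$ with small norm, and the $L^1_t$ part is handled by Duhamel and Gr\"onwall while the small $L^\infty_t$ part is handled by a bootstrap with a small constant. The damping term $-\im\delta\textbf P_{\textbf D}u$ is, by construction, a rank-two nonnegative projector conjugated by a unitary, so it only helps (it contributes with a favorable sign to the energy identity) and at worst it is an $L^\infty$-bounded perturbation localized near $x\approx-\textbf D(t)$, hence disjoint in space from the soliton for large $t$.

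The main body of the proof is the handling of the moving potential $\mathcal V_2^D(t)$. Here I would follow the charge transfer scheme: because $\textbf D(t)$ moves away at a speed bounded below, $\mathcal V_2^D$ is, after subtracting the escaping bulk, localized in a region travelling at velocity $\dot{\textbf D}(t)\approx\mathbf v$, and the hypothesis $\|\mathbf v-\dot{\textbf D}(t)\|_{L^\infty_t}<c_0$ says precisely that this velocity is close to the constant $\mathbf v$. One performs the Galilei boost $\widetilde g(t)$ of \eqref{eq:comm1}, which turns $\mathcal V_2^D$ into the fixed potential $\mathcal V_2$ at the cost of producing the commutator term $\im(\dot{\textbf D}-\mathbf v)\cdot\nabla_x$ (of size $c(T)$, absorbable) and leaves a two-channel linear operator $\mathcal K_0+\mathcal V_1^{\text{moving}}+\mathcal V_2$ whose two scattering centers separate linearly in time. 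The dispersive estimate for such a two-bump charge transfer Hamiltonian — Strichartz estimates with loss-free constants, plus local smoothing near each bump — is exactly the content of \cite{RSS1} adapted to matrix Schr\"odinger operators as in \cite{Cu2}; here one must check the no-resonance and no-embedded-eigenvalue hypotheses for each channel, which are (H10) for $\mathcal K_1$ and hypothesis (H1) (0 is not a resonance of $-\Delta+V$) together with $\delta>\delta_0$ ensuring the trapped eigenvalue $e_0$ of $-\Delta+V$ does not collide with the continuous spectrum of the $\mathcal K_1$-channel.

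Concretely the steps in order are: (i) Strichartz $+$ smoothing for $e^{-\im t\mathcal K_0}$; (ii) the same for $e^{-\im t\mathcal K_1}P_c$ via the resolvent bounds and absence of resonances from (H10); (iii) the two-potential (charge transfer) estimate for $\mathcal K(t)=\mathcal K_0+\mathcal V_1+\mathcal V_2^D$ projected by $P_c$, obtained by the Galilei boost $\widetilde g(t)$ and the separation $|\textbf D(t)|\gtrsim t|\mathbf v|$ (which is \eqref{eq:divcenter1} in the bootstrap region), invoking the \cite{RSS1} machinery; (iv) treat $v(t)\cdot\nabla$, $\varphi(t)\sigma_3$ and $-\im\delta\textbf P_{\textbf D}$ as perturbations small in $L^\infty_t+L^1_t$, closing a standard continuity/bootstrap argument on $[0,T]$; (v) feed $F$ into the estimate by Duhamel, splitting $F\in L^2_tH^{i,\sigma}+L^1_tH^i$ and using smoothing for the first piece and the energy estimate for the second; (vi) commute everything with $\langle\nabla\rangle$ to get the $i=1$ version, using that $\mathcal V_1,\mathcal V_2$ are Schwartz so $[\langle\nabla\rangle,\mathcal V_j]$ is bounded. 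The main obstacle is step (iii): proving the charge transfer dispersive estimate uniformly in $T$ with constants independent of $T$ and of the (time dependent, only $L^\infty_t+L^1_t$ small) coefficients $v,\varphi$. This requires combining the outgoing/incoming decomposition near each potential well with a careful use of local smoothing to control the interaction between the two wells as they separate; since this is essentially \cite{RSS1} in the matrix setting of \cite{Cu2,CM} with an additional harmless damping, I would carry it out by reduction to those references rather than from scratch, checking only that the extra damping term and the $\varphi,v$ terms do not destroy the positivity/absorption estimates.
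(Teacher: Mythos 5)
Your overall skeleton (Galilei boost $\widetilde g(t)$, reduction to a charge--transfer type dispersive estimate in the spirit of \cite{RSS1}, absorption of the small $v(t)\cdot\nabla$ and $\varphi(t)\sigma_3$ terms, Duhamel for $F$, commutation for $i=1$) is the same as the paper's, which reduces \eqref{eq:strich3} to the weighted decay and smoothing bounds of Proposition \ref{prop:weights2} for the propagator of \eqref{eq:smooth0}. However, there is a genuine gap in your treatment of the term $-\im\delta\,\textbf{P}_{\textbf{D}}u$ and, with it, of the bound state of the moving channel. You treat this term as a harmless perturbation (``it only helps \dots at worst an $L^\infty$-bounded perturbation''), and you claim in your step (iii) a charge--transfer Strichartz estimate for $P_c\mathcal{K}(t)P_c$ alone. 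But $P_c=P_c(\mathcal{K}_1)$ only removes the discrete spectrum of the soliton channel; it does nothing to the bound state $\phi_0$ of $-\Delta+V$ carried by the moving potential $\mathcal{V}_2^D$. By (H1) this channel has the eigenvalue $e_0$, so the undamped charge--transfer evolution has a non-decaying component travelling with $\textbf{D}(t)$, and the dispersive/Strichartz estimate you invoke in step (iii) is false without the damping: this is precisely the difference with \cite{CM}, where $\sigma_p(-\Delta+V)=\emptyset$. Moreover $\delta$ is a fixed constant $>\delta_0$, not small, so it cannot be absorbed by a smallness bootstrap, and a favorable sign in the $L^2$ energy identity does not by itself give local decay or Strichartz bounds.

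The paper's proof instead builds the damping into the linear propagator: after the boost one compares with the semigroup $e^{-\im t\,\sigma_3(-\Delta+\omega_0+V)-t\delta\textbf{P}}$, where $\textbf{P}$ is the projection of \eqref{eq:eqh224} onto the eigenspaces at $\pm(e_0+\omega_0)$, and the key new ingredient relative to Proposition 9.2 of \cite{CM} is the weighted bound $\|\langle x-x_0\rangle^{-\sigma}e^{-\im t\sigma_3(-\Delta+\omega_0+V)-t\delta\textbf{P}}\langle x-x_1\rangle^{-\sigma}\|_{L^2\to L^2}\le C\langle t\rangle^{-3/2}$, which is exactly where the hypothesis $\delta\ge\delta_0>|e_0+\omega_0|$ is used (the discrete modes are exponentially damped so that the $T_1$/$T_0$ resolvent identity $(1-\im T_1)(1+\im T_0)=1$ closes). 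Your stated role for $\delta>\delta_0$ --- preventing $e_0$ from ``colliding with the continuous spectrum of the $\mathcal{K}_1$-channel'' --- is not the actual mechanism. To repair the proposal you would need to keep $-\im\delta\textbf{P}_{\textbf{D}}u$ inside the generator throughout step (iii), prove the damped weighted decay above, and only then run the charge--transfer and bootstrap arguments; the compensating term $+\im\delta\textbf{P}_{D}h$ is controlled separately (in the nonlinear application, via the orthogonality conditions as in Lemma \ref{lem:projpert}), not by sign considerations inside the linear estimate.
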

\proof   Consider the problem
\begin{equation} \label{eq:smooth0}\begin{aligned}  &\im \dot u -  \mathcal{K}_0
u- \im   v(t) \cdot \nabla _x u  +  \varphi (t) \sigma _3u =\mathcal{V}_2^D u+   {G} u -\im \delta P_d u  -\im \delta \textbf{P}_{\textbf{D}} u
 \, , \quad u(t_0) =u_0,\end{aligned}
 \end{equation}
 where   $P_d=1-P_c$ and
 \begin{equation*}
 G(t):=   \mathcal{V}_1   - P_d\mathcal{K}(t)P_c-\mathcal{K}(t)P_d.
\end{equation*}
By the proof of Theorem 9.1 in \cite{CM}, Theorem \ref{thm:strich} is a consequence of
{Proposition}
  \ref{prop:weights2} below.

\qed

\begin{proposition}
  \label{prop:weights2}    Let $U(t,t_0)$ be the group associated to   \eqref{eq:smooth0}.
	 Then for $\sigma >3/2$ there exists a fixed
	 $C>0$
  such that for all $0\le t_0 < t \le T$
\begin{equation}  \label{eq:weights2}
\begin{aligned}   &\| \langle x - x_0 \rangle ^{-\sigma} U(t,t_0)  \langle x - x_1 \rangle ^{-\sigma} \| _{L^2\to L^2}\le C \langle t -t_0 \rangle ^{- \frac{3}{2}}  \quad \text{   $\forall \ ( x_0, x_1)\in \R^6$.}
\end{aligned}
\end{equation}
and
\begin{equation} \label{eq:weights21}
\begin{aligned}
\int_0^T ||\<x-x(t)\>^{-\sigma}U(t,t_0) u_0 ||_{L^2_x}^2  dt \leq C ||u_0||_{L^2_x} ^2   \text{   $\forall \ x(t) \in C^0([0,T], \R^3)$.}
\end{aligned}
\end{equation}
\end{proposition}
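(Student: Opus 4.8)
The plan is to treat the propagator $U(t,t_0)$ of \eqref{eq:smooth0} as a perturbation of the free-type evolution generated by $\mathcal{K}_0 - \im v(t)\cdot\nabla_x + \varphi(t)\sigma_3$, which after the gauge/boost transformation $\widetilde g(t)$ of \eqref{eq:comm1}--\eqref{eq:comm12} becomes (up to lower order terms absorbable because $c(T)<c_0$) the constant-coefficient operator $\sigma_3(-\Delta+\omega_0)$. The key point is that the ``potential'' part $\mathcal{V}_2^D u + Gu - \im\delta P_d u - \im\delta\mathbf{P}_{\textbf{D}}u$ is, in the boosted frame, a sum of a moving Schr\"odinger-type potential $\mathcal{V}_2(x+\textbf{D}(t))$ which travels to infinity at speed $\sim|\mathbf{v}|$ (by \eqref{eq:divcenter1}), a perturbation $G$ supported near the origin but with small coefficients coming from $P_d$, and the rank-two damping $\mathbf{P}_{\textbf{D}}$ again moving with $\textbf{D}(t)$. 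This is precisely the structure of a \emph{charge-transfer Hamiltonian} as in \cite{RSS1}, and the two required bounds \eqref{eq:weights2}--\eqref{eq:weights21} are the local-decay and local-smoothing estimates that the charge-transfer theory provides. I would first reduce, via $\widetilde g(t)$ and \eqref{eq:comm12}, to a pair of channels: one centered at the origin (governed by $\mathcal{K}_1=\mathcal{K}_{\omega_0}$, whose spectral/dispersive properties are available from (H6), (H10) and the fact that $0,\pm\omega_0$ are regular points, via \cite{Cu2}), and one centered at $\textbf{D}(t)$ (governed by $-\Delta + V$, whose $0$-nonresonance is (H1) and whose only eigenvalue is $e_0$, removed here by the damping term $-\im\delta\mathbf{P}_{\textbf{D}}$ with $\delta>\delta_0>|e_0+\omega_0|$).

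Concretely the steps are: (1) conjugate \eqref{eq:smooth0} by $\widetilde g(t)$ to move the $V$-channel to a fixed location, picking up the commutator terms $\im(\dot{\textbf{D}}-\mathbf{v})\cdot\nabla$ and $-\im\sigma_3\frac{\mathbf{v}}{2}\cdot(\cdot)$ from \eqref{eq:comm12}; since $\|\mathbf{v}-\dot{\textbf{D}}\|_{L^\infty}\le c(T)<c_0$ these are small and can be carried along in a fixed-point/Duhamel argument. (2) Establish the $\langle t-t_0\rangle^{-3/2}$ weighted decay for each single channel separately: for the origin channel this is the dispersive estimate for $e^{-\im t\mathcal{K}_1}P_c$ from \cite{Cu2} (using (H10) so that the edges are regular), and for the moving channel it is the analogous estimate for $-\Delta+V$ damped by $\delta\mathbf{P}$, which holds for $\delta$ large by a Birman--Schwinger/resolvent argument removing the single bound state $e_0$. (3) Couple the two channels: because their centers separate linearly in $t$ (estimate \eqref{eq:divcenter1}, itself a consequence of the bounds in Proposition \ref{prop:mainbounds}), the interaction integrals $\int \langle x\rangle^{-\sigma}\,U_{\mathrm{origin}}\,(\text{channel 2 potential})\,U_{\textbf{D}}\,\langle x\rangle^{-\sigma}$ decay integrably in time; summing the resulting Volterra series (as in \cite{RSS1}) gives \eqref{eq:weights2} with $x_0,x_1$ arbitrary, using the translation-uniformity of the one-channel bounds. (4) Deduce the local-smoothing bound \eqref{eq:weights21} either directly from \eqref{eq:weights2} by a $TT^*$ / Cotlar--Stein argument against the moving weight $\langle x-x(t)\rangle^{-\sigma}$, or by running the charge-transfer smoothing estimate of \cite{RSS1} in parallel with the decay estimate; the arbitrariness of the continuous curve $x(t)$ is handled by the uniformity in the translation parameters already built into step (2).

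I expect the main obstacle to be step (3): controlling the coupling between the two channels uniformly in the translation parameters $(x_0,x_1)$ and for \emph{every} continuous curve $x(t)$, while simultaneously absorbing the time-dependent, non-self-adjoint perturbations $\im(\dot{\textbf{D}}-\mathbf{v})\cdot\nabla$, $\varphi(t)\sigma_3$ and $-\im\delta\mathbf{P}_{\textbf{D}}$. The damping term $-\im\delta\mathbf{P}_{\textbf{D}}$ is essential here — it both removes the unstable $e_0$-mode of $-\Delta+V$ and keeps the total evolution contractive enough that the Volterra iteration converges — but one must check that adding $+\im\delta\mathbf{P}_{\textbf{D}}$ back (as done around \eqref{eq:eqh222}) stays within the $R_1+R_2$ error class, which is exactly Lemma \ref{lem:projpert}. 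Everything else (the single-channel dispersive estimates, the separation \eqref{eq:divcenter1}, the smallness $c(T)<c_0$) is quantitative input already available, so the real work is the careful bookkeeping of the charge-transfer perturbation series, which mirrors \cite{CM} Section 9 and \cite{RSS1} with the extra damping term inserted.
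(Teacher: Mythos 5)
Your plan is correct and follows essentially the same route as the paper: the paper proves Proposition \ref{prop:weights2} by repeating the charge-transfer argument of Proposition 9.2 in \cite{CM} (conjugation by $g(t)$, two-channel Volterra/Duhamel bookkeeping with the smallness $c(T)<c_0$), with the single modification that the $V$-channel propagator is replaced by the damped semigroup $e^{-\im t\sigma_3(-\Delta+\omega_0+V)-t\delta\mathbf{P}}$, whose weighted $\langle t\rangle^{-3/2}$ decay holds precisely because $\delta\ge\delta_0>|e_0+\omega_0|$ kills the eigenvalues $\pm(e_0+\omega_0)$ — exactly the mechanism you single out. Your observation that reinserting $+\im\delta\mathbf{P}_{\mathbf{D}}$ must be absorbed into $R_1+R_2$ via Lemma \ref{lem:projpert} is also where the paper places that step (in the application, not in the proof of the proposition itself).
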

The proof   is the same of Proposition 9.2 in \cite{CM} with a small  difference. Notice that in  \cite{CM}
the operator  $\sigma _3(-\Delta +\omega _0 +V)   $  does not have eigenvalues, while here  it does have the eigenvalues $\pm ( e_0+\omega _0)$, with projection  on the   vector space generated by the eigenspaces given by the operator $\textbf{P}$ introduced in
\eqref{eq:eqh224}.

Now,  the  proof is  exactly the same   of Proposition 9.2 in \cite{CM}  except for the following modification. The analogue of (9.43) \cite{CM}  is now
\begin{equation} \label{eq:reduh0}\begin{aligned} &  (\im \partial _t -  \mathcal{K}_0)
g^{-1} u- \im   \hat v (t) \cdot \nabla _x  g^{-1}u +\im \delta \mathbf{P}  g^{-1}u = \sigma _3{V}  g^{-1}u
\\& + g^{-1}
 {\mathcal{V} _1 -\im \delta P_d  - \mathcal{K}_1P_d  +    P_d \sigma _3 {V}  (\cdot +\mathbf{D}) P_c -\sigma _3 {V}  (\cdot +\mathbf{D})P_d}  ] u \end{aligned}
 \end{equation}
 where $g (t) =\widetilde{g}(t) e^{\im \sigma _3 \int _0^t  \hat \varphi  (s) ds} $
 like after \eqref{eq:comm12}, where we choose
   the same $\hat \varphi  $ of  \cite{CM}
and where
 $ \sigma _3{V}  (x)= g ^{-1}(t)\sigma _3 {V}  (x+\mathbf{D}) g (t)$ and   by \eqref{eq:eqh224} we have $ \mathbf{P}   = g ^{-1}(t)\mathbf{P}_{\mathbf{D}} g (t)$.

The operator of formula (9.46)
in \cite{CM}  has to be changed into
\begin{equation*}\label{eq:T1}
\begin{aligned}&
 T_1f (s)      := W_2 \int _{t_0}^s   e^{-  \im (s-\tau  )  \sigma _3  (-\Delta + \omega _0 +V)  -    (s-\tau  ) \delta  \textbf{P}  }  W_1f(\tau   ) d\tau  ,
\end{aligned}
\end{equation*}
where $W_1W_2=  \sigma _3 V-\im \mathbf{\delta}\mathbf{P} $.
 Then, for
 \begin{equation*} \label{eq:duh3} \begin{aligned}
 &    {T}_0f (s)      := W_2 \int _{t_0}^s   e^{-\im (s-\tau ) (-\Delta + \omega _0  ) }     W_1f(\tau   ) d\tau
 \end{aligned}
\end{equation*}
  we have
$(1-\im  {T}_1)(1+\im  {T}_0)=1$.  Furthermore, we have
for a fixed $C_{  \sigma }$  for any $\sigma > 5/2$
 \begin{equation*}   \begin{aligned}
 &  \| \langle x -x_0\rangle ^{-\sigma }   e^{-  \im t  \sigma _3  (-\Delta + \omega _0 +V)  -   t \delta  \textbf{P}  } \langle x -x_1\rangle ^{-\sigma} \| _{L^2\to L^2}\le  C_{ \sigma } \langle  t \rangle ^{-\frac{3}{2}} \text{ $\forall  t\ge 0$ and $(x_0,x_1)\in \R ^6$ }
 \end{aligned}
\end{equation*}
which follows by  the condition  $\delta \ge \delta _0> |e_0+\omega _0|$.
Then  the proof in \cite{CM} yields {Proposition}
  \ref{prop:weights2}.

\section{Dropping the hypothesis $u_0\in \Sigma _2$}
\label{sec:dropping}

Up to now we have assumed $u_0\in \Sigma _2$, that is \eqref{eq:add},
to guarantee that
 as we remark at the end of Sect. \ref{sec:enexp} the coordinates
 of $U[t,u(t)]$  belong to the image of the map \eqref{eq:quasilin51}
 in the sense of \eqref{eq:in52}--\eqref{eq:in54}.
For the same reason
in the series \cite{Cu2,Cu3,CM} it is assumed that $u_0\in \Sigma _\ell$   for fixed $\ell \gg 1$ with depends on the $N=N_1$  in Hypothesis (H7).
This is used only in order to make sense of the pullback by means of \eqref{eq:quasilin51}
of the form $\Omega$  discussed in claim (8) of  {Theorem}
  \ref{thm:effham}.
However everywhere in \cite{Cu2,Cu3,CM} and here the distance of $u(t)$ and of $u_0$ from ground states is measured  only
with the metric of $H^1(\R ^3)$.

Now we discuss briefly the fact that
we can drop   \eqref{eq:add} and assume only $u_0\in H ^1$.
Let  $u_0\in H ^1$ with $u_0\not \in \Sigma _2$ and let $ \{ u _n(0)\} _{n\ge 1}$ be a sequence with $u_n(0)\to u_0$ in  $ H ^1$ and with $u_n(0)\in  \Sigma _2$  for any $n\ge 1$.
We can apply our result to each solution $u_n(t)$.
By the well posedness of   \eqref{NLSP} and by the continuity of the maps defined in Proposition
\ref{prop:modulation}, in \eqref{eq:coordinate} and at the beginning of Sect. \ref{sec:enexp}
we have for the coordinates of   $u_n(t)$ and of $u(t)$
\begin{equation}\label{eq:lim0}\begin{aligned} &      (\tau _n '(t), p_n'(t), z_n'(t),f'_n(t), w_{ n} (t)) \to   (\tau   '(t), p '(t), z '(t),f' (t), w  (t))     \end{aligned} \end{equation}
 in $ \R ^8\times \C  ^{\mathbf{n}}\times H^1\times \C$. Furthermore, since   \eqref{eq:quasilin51} is a local     homeomorphism of $  \R ^4 \times  \C ^{\mathbf{n}} \times (  H^1      \cap L_c^2(p_0)) $, see \eqref{eq:in53} and the comments  immediately below \eqref{eq:in53},
 we also have  a limit
\begin{equation}\label{eq:lim1} \begin{aligned} &      (\tau _n  (t), p_n (t), z_n (t),f _n(t), w_{ n} (t)) \to   (\tau    (t), p  (t), z  (t),f  (t), w  (t))     \end{aligned} \end{equation}
with on the left   the final coordinates of $u_n(t)$.
Notice that on the right of \eqref{eq:lim1} we have the final coordinates of $u (t)$
 since map \eqref{eq:quasilin51}  makes them  correspond to the initial coordinates of $u (t)$.

  No use
in Sections \ref{sec:boot}--\ref{sec:dispersion} is made of the hypothesis
that $u_0\in \Sigma _2$. Hence Theorem \ref{thm:mainbounds} holds also for the coordinates on the right in \eqref{eq:lim1}.  From this and Lemma \ref{lem:cmgnt} we conclude that \begin{equation}\label{eq:compl0}
\begin{aligned}&  u(t)=    e^{J \tau '(t)\cdot \Diamond} (  \Phi _{p'(t) } +P(p'(t))P(\Pi (t)) r'(t))+e^{ J(\frac 1 2 \mathbf{v} \cdot x+\frac t 4  |\mathbf{v} |^2)  }  Q_w (\cdot +\mathbf{v}t +y_0)  ,\\& p'(t)= \Pi (t) +\resto ^{0,2}_{k,m} (\Pi (t), t, z(t), f(t)), \\& r'(t)=e^{J \resto ^{0,2}_{k,m} (\Pi (t),   z(t), f(t))\cdot \Diamond} (f +\mathbf{S}^{0,1}_{k,m}(\Pi (t),   z(t), f(t))),      \end{aligned}
\end{equation}
where we are making use of \eqref{eq:quasilin51} and of claims (6)--(7) of Theorem \ref{thm:effham}.

Finally, the proof that \eqref{eq:compl0} yields \eqref{eq:scattering} is   in  \cite{CM},
especially in Sect. 12.
Notice that the proof in \cite{CM} of the facts we list now      makes only    use
of  $u_0\in H^1$.

The facts needed to obtain  \eqref{eq:scattering}  are Lemma \ref{lem:dotPi},   $\displaystyle \lim _{t \nearrow \infty}\mathbf{S}^{0,1}_{k,m} =0$ in $H^1$,
$\displaystyle \lim _{t \nearrow \infty}\resto ^{0,2}_{k,m} =0$ in $\R ^4$
and $ \displaystyle \lim _{t \nearrow \infty}(\tau '(t) + \varsigma (t)) =\zeta _0 $ for $\varsigma$
the function in Lemma \ref{lem:scattf} and for some  $\zeta _0\in \R^4$.
 This is proved in
\cite{CM}.

\appendix

\section{Implicit function theorem}

\begin{theorem}\label{thm:1}
Let $F\in C^{\infty}(B_X(0,\delta_0)\times B_Y(0,\delta_0);Y)$ with $F(0,0)=0$.
Further, assume there exists $\delta_1,\delta_2>0$ s.t.\
\begin{align}\label{1}
\sup_{(x,y)\in B_X(0,\delta_1)\times B_Y(0,\delta_2)}\|D_yF(x,y)^{-1}\|\leq 2.
\end{align}
Now, set $\delta_3\in (0,\delta_1)$ s.t.
\begin{align}\label{2}
\sup_{x\in B_X(0,\delta_3)}\|F(x,0)\|&\leq \frac{1}{8}\delta_4,
\end{align}
where
\begin{align}\label{3}
\delta_4:=\min\(\delta_2, \frac{1}{8}\(\sup_{x\in B_{X}(0,\delta_1), y\in B_Y(0,\delta_2)} \|D_{yy}F(x,y)\|\)^{-1}\).
\end{align}
Then  there exits a function  $y(\cdot )\in C^\infty(B_X(0,\delta_3);B_Y(0,\delta_4))$
s.t.  for any $x\in B_X(0,\delta_3)$  and for $y\in B_Y(0,\delta_4) $
we have $F(x,y )=0$ if and only if $y=y(x)$.
\end{theorem}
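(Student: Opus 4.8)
The plan is to reduce the statement to a parameter‑dependent contraction argument, i.e.\ the classical proof of the implicit function theorem, while bookkeeping the explicit radii. Fix $x\in B_X(0,\delta_3)$; since $\delta_3<\delta_1$ and $\delta_4\le\delta_2$, all points $(x,y)$ with $y\in\overline{B_Y(0,\delta_4)}$ lie in the region where \eqref{1} holds, so $D_yF(x,0)$ is invertible with $\|D_yF(x,0)^{-1}\|\le2$, and solving $F(x,y)=0$ for $y\in B_Y(0,\delta_4)$ is equivalent to finding a fixed point of
\[
T_x(y):=y-D_yF(x,0)^{-1}F(x,y),\qquad y\in\overline{B_Y(0,\delta_4)} .
\]
I would work on the closed ball $\overline{B_Y(0,\delta_4)}$, which is a complete metric space, and show that $T_x$ maps it into itself and is a contraction there.

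For the self‑mapping property, write $F(x,y)-F(x,0)=\int_0^1 D_yF(x,ty)\,y\,dt$, so that $T_x(y)-T_x(0)=D_yF(x,0)^{-1}\int_0^1\bigl(D_yF(x,0)-D_yF(x,ty)\bigr)y\,dt$; with $M:=\sup_{B_X(0,\delta_1)\times B_Y(0,\delta_2)}\|D_{yy}F\|$, which satisfies $M\delta_4\le\tfrac18$ by \eqref{3}, this gives $\|T_x(y)-T_x(0)\|\le 2\cdot\tfrac12 M\|y\|^2\le M\delta_4^{\,2}\le\tfrac18\delta_4$ for $y\in\overline{B_Y(0,\delta_4)}$, while \eqref{2} gives $\|T_x(0)\|=\|D_yF(x,0)^{-1}F(x,0)\|\le\tfrac14\delta_4$; hence $T_x\bigl(\overline{B_Y(0,\delta_4)}\bigr)\subset B_Y(0,\tfrac38\delta_4)$. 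The contraction estimate is entirely analogous, starting from $F(x,y_1)-F(x,y_2)=\int_0^1 D_yF(x,y_2+s(y_1-y_2))(y_1-y_2)\,ds$ and using $\|y_2+s(y_1-y_2)\|\le\delta_4$, and yields Lipschitz constant $\tfrac14$. The Banach fixed point theorem then produces a unique $y(x)\in\overline{B_Y(0,\delta_4)}$ with $F(x,y(x))=0$; it automatically lies in the open ball $B_Y(0,\delta_4)$, and since any zero of $F(x,\cdot)$ in $B_Y(0,\delta_4)$ is such a fixed point, the ``if and only if'' part of the statement follows.

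It remains to recover the smoothness of $x\mapsto y(x)$. First I would get local Lipschitz continuity: from $y(x_i)=T_{x_i}(y(x_i))$ and the uniform contraction bound one obtains $\|y(x_1)-y(x_2)\|\le\tfrac43\|T_{x_1}(y(x_2))-T_{x_2}(y(x_2))\|$, and the right‑hand side is $O(\|x_1-x_2\|)$ because $x\mapsto D_yF(x,0)^{-1}F(x,z)$ is of class $C^\infty$ for fixed $z$. Next, differentiating the identity $F(x,y(x))=0$ and using that $D_yF(x,y(x))$ is invertible (as $(x,y(x))$ stays in the region of \eqref{1}), the standard argument shows $y\in C^1$ with $Dy(x)=-D_yF(x,y(x))^{-1}D_xF(x,y(x))$. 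Finally, since the right‑hand side is a composition of the $C^\infty$ maps $F$, $(x,y)\mapsto D_yF(x,y)^{-1}$ and $D_xF$ with $x\mapsto(x,y(x))$, a bootstrap on the order of differentiability upgrades this to $y\in C^\infty(B_X(0,\delta_3);B_Y(0,\delta_4))$.

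The one point requiring care is the constant bookkeeping in the self‑mapping step: the factor $\tfrac18$ in the size hypothesis \eqref{2} on $F(x,0)$ and in the definition \eqref{3} of $\delta_4$ through $\|D_{yy}F\|$ are precisely what guarantees that $T_x$ lands back inside $B_Y(0,\delta_4)$. The contraction estimate and the smoothness bootstrap are then routine.
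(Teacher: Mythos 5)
Your proof is correct and follows essentially the same route as the paper: the same modified-Newton map $y\mapsto y-D_yF(x,0)^{-1}F(x,y)$, the same use of the $\|D_{yy}F\|$ bound to get contraction constant $\tfrac14$, and the same constant bookkeeping with $\delta_4$; invoking the Banach fixed point theorem on the closed ball instead of writing out the iterates is an immaterial difference. In fact you go a bit further than the paper's proof, which stops at existence and uniqueness of the fixed point, by also supplying the Lipschitz/$C^1$/bootstrap argument for the $C^\infty$ dependence of $y(x)$ on $x$.
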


\begin{proof}
First, for $(x,y)\in B_{X}(0,\delta_1)\times B_Y(0,\delta_2)$, we have
\begin{align*}
F(x,y)=0\quad \Leftrightarrow \quad y=y-\(D_yF(x,0)\)^{-1}F(x,y).
\end{align*}
So, we set
\begin{align*}
\Phi(x;y):=y-\(D_yF(x,0)\)^{-1}F(x,y)
\end{align*}
and seek for the fixed point of $\Phi$.

Now, set
\begin{align*}
y_0=0,\quad y_{n+1}=\Phi(x;y_{n-1})\ \mathrm{for}\ n\in\N.
\end{align*}
We show
\begin{itemize}
\item
$\forall n\in \N$, $y_n\in B_Y(0,\delta_4)$
\item
$y_n$ converges.
\end{itemize}
Indeed, by the continuity of $F$ w.r.t.\ $y$, $\lim y_n$ is the fixed point of $\Phi(x;\cdot)$.

Now, let $y,y'\in B_Y(0,\delta_4)$, we have
\begin{align*}
&\Phi(x;y)-\Phi(x,y')=\(D_yF(x,0)\)^{-1}\int_0^1\(D_yF(x,0)-D_yF (x,y'+t(y-y'))\)(y-y')\,dt\\&=
-\(D_yF(x,0)\)^{-1}\int_0^1\int_0^1\(D_{yy}F(x,s(y'+t(y-y')))(y'+t(y-y'))\)(y-y')\,dsdt.
\end{align*}
Therefore, we have
\begin{align*}
&\|\Phi(x;y)-\Phi(x,y')\|\leq
\|\(D_yF(x,0)\)^{-1}\| \\&\times\int_0^1\int_0^1\|\(D_{yy}F(x,s(y'+t(y-y')))(y'+t(y-y'))\)\|\,dsdt\|y-y'\|\\&
2\(\sup_{x\in B_{X}(0,\delta_1), y\in B_Y(0,\delta_2)} \|D_{yy}F(x,y)\|\)\delta_4\|y-y'\|\\&
\leq \frac 1 4 \|y-y'\|.
\end{align*}
On the other hand,
\begin{align*}
\|y_1\|=\|\Phi(x;0)\|=\|D_yF(x,0)F(x,0)\|\leq 2\sup_{x\in B_X(0,\delta_3)}\|F(x,0)\|\leq \frac 1 4 \delta_3.
\end{align*}
Therefore, we have
\begin{align*}
\|y_n\|\leq \sum_{k=1}^n\|y_k-y_{k-1}\|\leq \sum_{k=1}^n4^{-k}\|y_1\|\leq 2\|y_1\|\leq \frac{1}{2}\delta_3.
\end{align*}
Therefore, for all $n\in \N$, $y_n\in B_Y(0,\delta_3)$.
Further, we by
\begin{align*}
\|y_n-y_m\|\leq \sum_{k=m+1}^n\|y_k-y_{k-1}\|\leq \sum_{k=m+1}^n4^{-k}\|y_1\|.
\end{align*}
$\{y_n\}$ is a Cauchy sequence so it has a limit.

Finally, if there exist two $y,y'\in B_{Y}(0,\delta_3)$ s.t.\ $F(x,y)=F(x,y')=0$ we have
\begin{align*}
\|y-y'\|=\|\Phi(x;y)-\Phi(x;y')\|\leq \frac 1 4 \|y-y'\|
\end{align*}
So, we have $y=y'$.
This gives the uniqueness.
\end{proof}

\section{$\omega\mapsto \phi_\omega$ in $C^1(\mathcal{O},H^2)$ implies $\omega\mapsto \phi_\omega$ in $C^\infty(\mathcal{O},\Sigma_n)$ for any $n\in \N$}
\label{sec:reg}

\begin{proposition}
Assume $(\mathrm{H1})$--$(\mathrm{H3})$, $(\mathrm{H6})$ and

\begin{itemize} \item[$\mathrm{(H4)'}$] There exists an open interval $\mathcal{O}\subset \R_+$ such
that equation \eqref{eq:B}
admits a positive radial solution  $\phi _ {\omega }\in H^2$
for $\omega\in\mathcal{O}$.
Further, assume $\omega\mapsto \phi_\omega$ is in $C^1(\mathcal{O},H^2)$.
\end{itemize}
Then, the map $\omega\mapsto \phi_\omega$ is in  $C^\infty(\mathcal{O},\Sigma_n)$ for arbitrary $n\in \N$.
\end{proposition}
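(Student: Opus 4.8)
The plan is to run a standard elliptic bootstrap combined with the implicit function theorem applied to the stationary equation in the right functional setting. First I would note that $(\mathrm{H6})$ guarantees that the linearized operator $L_+^{(\omega)}=-\Delta+\omega+\beta(\phi_\omega^2)+2\beta'(\phi_\omega^2)\phi_\omega^2$ has trivial kernel when restricted to the radial sector, since on radial functions $\partial_{x_j}\phi_\omega$ is excluded; hence $L_+^{(\omega)}$ is an isomorphism $H^2_{\mathrm{rad}}\to L^2_{\mathrm{rad}}$ (it is a relatively compact perturbation of $-\Delta+\omega$ which is Fredholm of index $0$ for $\omega>0$, and it has trivial radial kernel). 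This is the linearization of the map $G(\omega,u):=-\Delta u+\omega u+\beta(|u|^2)u$, which by $(\mathrm{H2})$--$(\mathrm{H3})$ is smooth from $\mathcal{O}\times H^2_{\mathrm{rad}}$ into $L^2$ in a neighborhood of $(\omega_0,\phi_{\omega_0})$ (using that $H^2(\R^3)\hookrightarrow L^\infty$, so the Nemytskii operator $u\mapsto \beta(|u|^2)u$ is smooth on $H^2$). The implicit function theorem then produces a $C^\infty$ branch $\omega\mapsto\widetilde\phi_\omega\in H^2$ solving $G(\omega,\widetilde\phi_\omega)=0$ near $\omega_0$; by the $C^1$ hypothesis in $(\mathrm{H4})'$ this branch coincides locally with $\phi_\omega$, so $\omega\mapsto\phi_\omega$ is already $C^\infty$ into $H^2$.

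Next I would upgrade the spatial regularity/decay, i.e. pass from $H^2$ to $\Sigma_n$. The key point is that for $\omega\in\mathcal{O}\subset\R_+$ the solution $\phi_\omega$ decays exponentially: rewriting $(-\Delta+\omega)\phi_\omega=-\beta(\phi_\omega^2)\phi_\omega=:g_\omega$, and noting $g_\omega$ decays (since $\phi_\omega\in L^\infty$ tends to $0$ at infinity and $\beta(0)=0$), a comparison/maximum principle argument with the Yukawa kernel of $(-\Delta+\omega)^{-1}$ gives $|\phi_\omega(x)|\le Ce^{-c|x|}$ with $c<\sqrt\omega$, and elliptic regularity bootstraps this to all derivatives: $|\partial^\alpha\phi_\omega(x)|\le C_\alpha e^{-c|x|}$. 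In particular $\phi_\omega\in\Sigma_n$ for every $n$. To get that the *map* $\omega\mapsto\phi_\omega$ is $C^\infty$ into $\Sigma_n$, I would reapply the implicit function theorem, but now in the scale of weighted spaces: fix $n$, work with $X_n:=\{u: \langle x\rangle^k u\in H^2\ \forall k\le n\}$ (the natural $\Sigma$-type $H^2$ space), observe $G$ is still smooth $\mathcal{O}\times X_n\to (\text{corresponding weighted }L^2)$ — crucially because the exponential decay just established lets one control $\langle x\rangle^k\beta(\phi_\omega^2)\phi_\omega$ — and that $L_+^{(\omega)}$ remains an isomorphism between the weighted spaces (its inverse preserves exponential/polynomial decay, again by the Yukawa-kernel estimate). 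The IFT branch in $X_n$ must agree with $\phi_\omega$ by uniqueness, giving $\omega\mapsto\phi_\omega\in C^\infty(\mathcal{O},\Sigma_n)$. Finally, since higher derivatives in $x$ are controlled by elliptic regularity uniformly in $\omega$ on compact subintervals, one gets smoothness into $\Sigma_n$ for every $n$, as the $\Sigma_n$ norm combines the $n$ weighted $L^2$ norms with $n$ derivatives and the equation trades derivatives for the smooth nonlinearity and the Laplacian.

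The main obstacle I anticipate is the bookkeeping in the weighted-space implicit function theorem: one must verify that $L_+^{(\omega)}:X_n\to Y_n$ is invertible with inverse bounded \emph{locally uniformly in $\omega$}, and that the nonlinear map $u\mapsto\beta(|u|^2)u$ is $C^\infty$ into the weighted target space. Both reduce to the a priori exponential decay of $\phi_\omega$ and its derivatives together with the fact that the resolvent kernel of $-\Delta+\omega$ is $\frac{e^{-\sqrt\omega|x-y|}}{4\pi|x-y|}$, which maps $\langle x\rangle^{-k}$-weighted spaces to $\langle x\rangle^{-k}$-weighted spaces for $\omega>0$; so the decay estimate is really the crux, and everything else is a routine (if lengthy) iteration on $n$. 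An alternative, slightly cleaner route avoiding weighted-space IFT is to differentiate the equation in $\omega$ directly: $\partial_\omega\phi_\omega$ solves $L_+^{(\omega)}\partial_\omega\phi_\omega=-\phi_\omega$, whose right-hand side is Schwartz, so $\partial_\omega\phi_\omega$ is Schwartz with $\omega$-locally-uniform bounds, and inductively every $\partial_\omega^j\phi_\omega$ satisfies such an equation; this gives $C^\infty$ regularity into $\Sigma_n$ by a telescoping argument. I would present whichever of these is shorter, most likely the direct differentiation argument.
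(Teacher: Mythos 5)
Your proposal is correct, and the route you say you would actually present (differentiating the stationary equation in $\omega$, using (H6) to invert $L_+$ on the radial sector, and inducting on the order of the $\omega$-derivative after first establishing exponential decay and $\phi_\omega\in\Sigma_n$) is essentially the paper's own argument, which writes $\partial_\omega\phi_{\omega_0+\varepsilon}=-(A+B_\varepsilon)^{-1}\phi_{\omega_0+\varepsilon}$ and bootstraps $C^m\Rightarrow C^{m+1}$ in $\Sigma_n$ via a Neumann series for the inverse, at the same level of detail as your sketch. Your preliminary implicit-function-theorem step in $H^2_{\mathrm{rad}}$ (and the weighted-space IFT alternative) is a harmless addition but is not needed for that induction.
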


\begin{proof}(Sketch). 
By a standard bootstrapping argument one can show $\phi_\omega\in H^n$ for arbitrary $n$.
Further, by maximum principle, one can show $\phi_\omega$   decays  exponentially.
Therefore, $\phi_\omega\in \Sigma_n$ for arbitrary $n$.
Further, $\omega\mapsto \phi_\omega$ is in $C^0(\mathcal{O},\Sigma_n)$.

Next, fix $\omega_0\in \mathcal O$.
Differentiating
\begin{align*}
0=-\Delta \phi_\omega + \omega \phi_\omega + \beta(\phi_\omega^2)\phi_\omega,
\end{align*}
with respect to $\omega$, we have
\begin{align}\label{eq:c1ci1}
-\phi_\omega= \(-\Delta +\omega + \beta(\phi_\omega^2) + 2 \beta'(\phi_\omega^2)\phi_\omega^2\)\partial_\omega\phi_\omega.
\end{align}
Now, set
\begin{align*}
&A:= -\Delta +\omega_0 + \beta(\phi_{\omega_0}^2) + 2 \beta'(\phi_{\omega_0}^2)\phi_{\omega_0}^2\\&
B_{\varepsilon}:=\varepsilon+\beta(\phi_{\omega_0+\varepsilon}^2) + 2 \beta'(\phi_{\omega_0+\varepsilon}^2)\phi_{\omega_0+\varepsilon}^2-\beta(\phi_{\omega_0}^2) - 2 \beta'(\phi_{\omega_0}^2)\phi_{\omega_0}^2
\end{align*}
Then, $A$ is invertible as an operator on $A:L^2_{\mathrm{rad}}(\R^3)\to L^2_{\mathrm{rad}}(\R^3)$.
Since \eqref{eq:c1ci1} can be written as
\begin{align*}
-\phi_{\omega_0+\varepsilon}=(A+B_{\varepsilon})\partial_\omega\phi_{\omega_0+\varepsilon}.
\end{align*}
Therefore, since $B_0=0$, for sufficiently small $\varepsilon$, we have
\begin{align}\label{eq:c1ci2}
\partial_\omega \phi_{\omega_0+\varepsilon} =-\(\sum_{k=0}^\infty(-1)^k(A^{-1}B_\varepsilon)^k\)A^{-1}\phi_{\omega_0+\varepsilon}.
\end{align}
Now, we can show that if $\omega\mapsto \phi_\omega$ is   in $C^m(\mathcal{O},\Sigma_n)$, then $\epsilon \to (A+B_\varepsilon)^{-1}$ is $C^m$ with values in $B(\Sigma^n,\Sigma^n)$.
By induction, one can show $\omega\mapsto \phi_\omega$ is in $C^\infty (\mathcal{O},\Sigma_n)$.
\end{proof}

\section*{Acknowledgments}   S.C. was partially funded    by the grant FIRB 2012 (Dinamiche Dispersive)  from MIUR,  the Italian Ministry of Education,
University and Research, 
and by a FRA (2013) from the University of Trieste.
M.M. was supported by the Japan Society for the Promotion of Science (JSPS) with the Grant-in-Aid for Young Scientists (B) 24740081.

Department of Mathematics and Geosciences,  University
of Trieste, via Valerio  12/1  Trieste, 34127  Italy

{\it E-mail Address}: {\tt scuccagna@units.it}
\\
Department of Mathematics and Informatics, Faculty of Science, Chiba University,
Chiba 263-8522, Japan

{\it E-mail Address}: {\tt maeda@math.s.chiba-u.ac.jp}

\end{document}